\definecolor{vertfonce}{rgb}{0.20, 0.46, 0.25}
\definecolor{rougefonce}{rgb}{0.64, 0.09, 0.20}
\theoremstyle{plain}
\newtheorem{theorem}{Theorem}[section]
\newtheorem{corollary}[theorem]{Corollary}
\newtheorem{lemma}[theorem]{Lemma}
\newtheorem{proposition}[theorem]{Proposition}
\theoremstyle{definition}
\newtheorem{definition}[theorem]{Definition}
\newtheorem{assumption}[theorem]{Assumption}
\theoremstyle{remark}
\newtheorem{remark}[theorem]{Remark}
\numberwithin{equation}{section}
\theoremstyle{definition}
\DeclarePairedDelimiterX\braket[2]{\langle}{\rangle}{#1\,\delimsize\vert\,\mathopen{}#2}
\def\eps{\varepsilon}
\def\R{{\mathbb R}}% real numbers
\def\C{{\mathbb C}}% complex numbers
\def\N{{\mathbb N}}% nonnegative integers
\mathchardef\mhyphen="2D % math hypehn
\def \Op{{\rm Op}}
\def \bA{\mathbf{A}}
\def \magSch{\mathcal{L}_{h}}
\def \tmagSch{\tilde{\mathcal{L}}_{h}}
\def \magSchbar{\check{\mathcal{L}}_{\hbar}}
\def \dom{{\rm Dom}}
\def \O{\mathscr{O}}
\def \A{\mathcal{A}}
\def \B{\mathcal{B}}
\def \cQ{\mathcal{Q}}
\def \fQ{\mathfrak{Q}}
\def \fq{\mathfrak{q}}
\def \fM{\mathfrak{M}}
\def \bz{\mathbf{z}}
\def \ct{\check{t}}
\def \ft{\mathfrak{t}}
\def \fx{\mathfrak{x}}
\def\restriction#1#2{\mathchoice
              {\setbox1\hbox{${\displaystyle #1}_{\scriptstyle #2}$}
              \restrictionaux{#1}{#2}}
              {\setbox1\hbox{${\textstyle #1}_{\scriptstyle #2}$}
              \restrictionaux{#1}{#2}}
              {\setbox1\hbox{${\scriptstyle #1}_{\scriptscriptstyle #2}$}
              \restrictionaux{#1}{#2}}
              {\setbox1\hbox{${\scriptscriptstyle #1}_{\scriptscriptstyle #2}$}
              \restrictionaux{#1}{#2}}}
\def\restrictionaux#1#2{{#1\,\smash{\vrule height .8\ht1 depth .85\dp1}}_{\,#2}}
\title[]{Low-energy eigenstates in a vanishing magnetic field}
\author[L. Benedetto]{Lino Benedetto}
\address[L. Benedetto]{DMA, École normale supérieure, Université PSL, CNRS, 75005 Paris, France \& Univ Angers, CNRS, LAREMA, SFR MATHSTIC, F-49000 Angers, France} 
\email{lbenedetto@dma.ens.fr}
\numberwithin{equation}{section}
\begin{document}

\begin{abstract}
    This paper is dedicated to the spectral analysis of the semiclassical purely magnetic Laplacian $\mathcal{L}_h$, $h>0$, on the plane $\R^2$ in the situation where the magnetic field $B$ vanishes nondegenerately on an open smooth curve $\Gamma$. We prove the existence of a discrete spectrum for energy windows of the scale $h^{4/3}$ and give complete asymptotics in the semiclassical paramater $h$ for eigenvalues in such windows. Our strategy relies on the microlocalization of the corresponding eigenfunctions close to the zero locus $\Gamma$ and, following the recent paper \cite{BFKRV}, on the implementation of a Born-Oppenheimer strategy through the use of operator-valued pseudodifferential calculus and superadiatic projectors. This allows us to reduce our spectral analysis to that of effective semiclassical pseudodifferential operators in dimension 1 and apply the well-known semiclassical techniques {\it à la} Helffer-Sjöstrand.
\end{abstract}

\maketitle

\tableofcontents

\newpage

\section{Motivation and results}
\label{sect:intro}

% Notes pour l'introduction: 
% - Montgomery: sa courbe d'annulation est toujours compacte; a écrit une partie du lien entre sous-laplacien et géométrie sous-riemannienne avec les opérateurs magnétiques; ses asymptotiques spectrales se limitent au premier mode et fond du puits;
% - Raymond (breaking zero locus): exemple où l'invariance par translation n'est plus réalisée, d'où apparition de bound states;

% - Raymond, Bonnaillie-Noël, Hérau: étude spectrale des opérateurs magnétiques partiellement semiclassique par une méthode de Born-Oppenheimer et constructions WKB; pour avoir des asymptotiques à tout ordre des premieres valeurs propres, ils doivent faire leurs constructions WKB puis estimer le spectral gap (long et difficile): il faut faire des commentaires sur la différence de stratégie; emphase mise sur le spectral gap: celui-ci apparaît aux ordres inférieures (en h) et à cause de la géométrie, i.e. courbure; remarque 1.11, ils ont des puissances 1/2 de h, j'ai pas mieux?

% - Kordyukov: papier sur spectral gaps for periodic magnetic Schrödinger, j'ai l'impression qu'il considère des champs magnétiques avec des zéros isolés (mais il parle aussi de cas d'annulation plus général); intéressant mais je ne suis pas sûr comment arriver à le relier à mon travail;

% - Raymond, Fahs, Vu Ngoc, Le Treust: relire l'introduction;

% - Raymond, Dombrowski: je dois bien faire référence à cet article et expliquer son rapport avec mon travail;

\subsection{About the magnetic Laplacian with vanishing magnetic field}
This paper is dedicated to the description of the spectrum of the semiclassical magnetic Laplacian $$\mathcal{L}_{h,\bA} := (-ih\nabla - \bA)^2,\quad h>0,$$ on the whole plane $\R^2$. Here the vector potential $\bA: \R^2 \rightarrow \R^2$ is supposed to be smooth and generating a magnetic field $B = \nabla\times \bA$ which vanishes nondegenerately on a curve $\Gamma$, i.e. 
\begin{equation*}
    \Gamma = \{B = 0\}.
\end{equation*}
More precisely, the unbounded operator $\mathcal{L}_{h,\bA}$ is defined as the unique self-adjoint extension of the operator given by the quadratic form
\begin{equation*}
    \forall \varphi \in C_c^\infty(\R^2),\quad \mathcal{Q}_{h,\bA}(\varphi) := \int_{\R^2} |(-ih\nabla - \bA)\varphi(z)|^2 dz.
\end{equation*}
Its domain is then simply given by
\begin{equation*}
    \dom(\mathcal{L}_{h,\bA}) = \{\varphi \in H_\bA^{1}(\R^2)\,:\,\mathcal{L}_{h,\bA}\varphi \in L^2(\R^2)\}.
\end{equation*}

We are interested in the spectral theory of the operator $\mathcal{L}_{h,\bA}$ as the semiclassical parameter $h$ goes to $0$. In contrast with the context of an electric potential $V\in C^\infty(\R^2)$ bounded by below, where its vanishing has no consequence on the spectral analysis of its associated Schrödinger operator $-h^2\Delta + V$ (up to a fixed translation), the zero locus of the magnetic field $B$ plays a significant role, both spectrally and classically. This observation was first made by Montgomery in \cite{Mon} where, in the setting of a compact curve $\Gamma$, the concentration at the zero locus of $B$ of the first eigenmodes was established, as well as the first-order asymptotics in $h$ for the corresponding eigenvalues. Coming from motivations in subRiemannian (sR) geometry, Montgomery proved that the zero locus of the magnetic field, which gives rise to \emph{singular curves} in sR geometry, persists under quantization. 

More precise asymptotics of the first eigenvalue was given by Pan and Kwek in \cite{PK} and its role in the description of the nucleation phenomenon for superconductors subject to non-homogeneous magnetic fields. In \cite{HelKor}, Helffer and Kordyukov extended this first-order asympotic eigenvalue to higher dimensions with a vanishing of the magnetic field on a hypersurface and considered the problem of spectral gaps in between the first eigenvalues. Complete asymptotics of the first eigenvalues of $\mathcal{L}_{h,\bA}$ were obtained by Dombrowski and Raymond (\cite{DR}), as well as by Bonnaillie-Noël, Hérau and Raymond (\cite{BonRayHer}). In both these works, the strategy consists in producing good quasimodes and estimating the spectral splitting between the first eigenvalues by localization and reduction of dimension using Feshback-Grushin projections (or corrected versions of it).
For higher eigenvalues, Weyl laws for semiexcited states were obtained by Keraval (\cite{Ker}).

\vspace{0.25cm}

The aim of this article is to extend the previous results using an alternative approach developed in \cite{BFKRV} and to give a complete asymptotic description, i.e. up to order $\O(h^\infty)$, of the lower part of the discrete spectrum of the magnetic Laplacian $\mathcal{L}_{h,\bA}$, in particular beyond the semiexcited regime.

\subsection{Confinement and localization close to the zero locus}

As stated above, the zero locus of the magnetic field $B$ plays a crucial role in the spectral theory of the magnetic Laplacian. In this article, we consider the case of an open and non-self-intersecting curve
$$\Gamma = \{\gamma(x)\,:\,x\in\R\} \subset \R^2,$$ 
going to infinity. This choice is in contrast to most of the previous works in which the zero locus was supposed to be compact. We refer to Section \ref{subsect:locdescSch} for the precise assumptions made about $\Gamma$.

\subsubsection{Existence of a discrete spectrum}

In this setting, it is not straightforward that the self-adjoint operator $\mathcal{L}_{h,\bA}$ has any discrete part in its spectrum. Indeed, it is known that within the framework of magnetic Laplacians, the absolute value $|B|$ of the magnetic field usually plays the role of a confining potential and allows for so-called "magnetic bottles" (\cite{HeMo}). Here, as $\Gamma$ goes to infinity, this is not enough to conclude to the existence of a discrete spectrum. This issue was considered in \cite{BonRay}, \cite{Ray}, where confinement is induced by non-smooth zero loci and in \cite{BonRayHer} in the case of a straight line zero locus. In our present situation, since $\Gamma$ is smooth but not necessarily straight, we will observe that under some further assumptions made precise in Section \ref{spectral}, we can single out the transverse derivative $\partial_{\vec n} B$ of the magnetic field on $\Gamma$ as playing the role of a confining potential.  

\begin{theorem}
    \label{thm:spectrediscret}
    Under Assumptions \ref{assum:uniformtubular}-\ref{assum:kappa}, there exists $E_0>0$ such that, for any $E \in (-\infty,E_0)$, one can find $h_0 > 0$ such that for $h\in(0,h_0)$ the spectrum of $\magSch$ in the energy window $(-\infty,Eh^{4/3}]$ is purely discrete.
\end{theorem}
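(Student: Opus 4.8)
\emph{Strategy.} The plan is to characterise the bottom of the essential spectrum by a Persson-type formula and to bound it from below by a quantity of size $\asymp h^{4/3}$; the theorem then follows by a scaling argument in $h$. Writing $D_\bullet:=-i\partial_\bullet$, recall that
\[
\inf\sigma_{\mathrm{ess}}(\magSch)=\lim_{R\to+\infty}\ \inf\Bigl\{\tfrac{\mathcal{Q}_{h,\bA}(\varphi)}{\|\varphi\|^{2}}\ :\ \varphi\in C_c^\infty(\R^2)\setminus\{0\},\ \supp\varphi\subset\{|z|>R\}\Bigr\}.
\]
So it suffices to exhibit $E_0>0$, $h_1>0$, $C>0$ such that for $h\in(0,h_1)$ and $\varphi\in C_c^\infty(\R^2)$ supported outside a (possibly $h$-independent) large ball one has $\mathcal{Q}_{h,\bA}(\varphi)\ge\bigl(E_0h^{4/3}-Ch^{5/3}\bigr)\|\varphi\|^{2}$; indeed, given $E<E_0$ one then picks $h_0\le h_1$ with $E_0-Ch_0^{1/3}>E$, so that $\inf\sigma_{\mathrm{ess}}(\magSch)>Eh^{4/3}$ for $h<h_0$ and $\sigma(\magSch)\cap(-\infty,Eh^{4/3}]$ is finite.

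\emph{Splitting and the region away from $\Gamma$.} I would introduce a quadratic partition of unity $\chi_{\mathrm{in}}^{2}+\chi_{\mathrm{out}}^{2}=1$ on $\R^2$, with $\chi_{\mathrm{in}}$ supported in a tubular neighbourhood $\mathcal{T}_\delta:=\{\mathrm{dist}(\cdot,\Gamma)<\delta\}$ of fixed small width $\delta$ --- a genuine tubular chart with metric and gauge estimates uniform along $\Gamma$ by Assumption~\ref{assum:uniformtubular} --- and $\chi_{\mathrm{out}}$ supported in $\{\mathrm{dist}(\cdot,\Gamma)>\delta/2\}$, all derivatives being $\O(1)$. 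The magnetic IMS formula gives $\mathcal{Q}_{h,\bA}(\varphi)=\mathcal{Q}_{h,\bA}(\chi_{\mathrm{in}}\varphi)+\mathcal{Q}_{h,\bA}(\chi_{\mathrm{out}}\varphi)-h^{2}\bigl(\||\nabla\chi_{\mathrm{in}}|\varphi\|^{2}+\||\nabla\chi_{\mathrm{out}}|\varphi\|^{2}\bigr)$, and the localisation error $\O(h^2)$ is $o(h^{4/3})$. On each of the two connected components of $\{\mathrm{dist}(\cdot,\Gamma)>\delta/2\}$ the field $B$ has a fixed sign, and from the elementary identity $\|(\Pi_1\mp i\Pi_2)\psi\|^{2}=\mathcal{Q}_{h,\bA}(\psi)\pm h\int_{\R^2}B|\psi|^{2}\ge 0$ with $\Pi_k=-ih\partial_k-\bA_k$ one gets, on each component, $\mathcal{Q}_{h,\bA}(\psi)\ge h\int_{\R^2}|B||\psi|^{2}$; hence $\mathcal{Q}_{h,\bA}(\chi_{\mathrm{out}}\varphi)\ge c_\delta h\,\|\chi_{\mathrm{out}}\varphi\|^{2}$ with $c_\delta:=\inf_{\mathrm{dist}(\cdot,\Gamma)\ge\delta/2}|B|>0$ by the hypotheses on $B$. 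Since $c_\delta h\gg h^{4/3}$, this term is harmless.

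\emph{The region near $\Gamma$: reduction to a transverse model.} In the tubular coordinates $(s,t)$ ($s$ an arclength parameter on $\Gamma$, $t$ the signed distance) the metric is $dt^{2}+(1-t\kappa(s))^{2}ds^{2}$, with $1-t\kappa$ close to $1$ on $\mathcal{T}_\delta$ thanks to the curvature control in Assumption~\ref{assum:kappa}; after the adapted gauge transformation and the conjugation by $\sqrt{1-t\kappa}$ (which only produces an $\O(h^{2})$ potential), $\magSch$ reads $(hD_t)^{2}+(1-t\kappa)^{-2}(hD_s-a_h(s,t))^{2}$ with $a_h(s,t)=\tfrac12\beta(s)t^{2}+\O(t^{3})$, where $\beta(s):=\partial_{\vec n}B(\gamma(s))\neq0$ by the nondegenerate vanishing; the confinement hypothesis amounts to $\beta_R:=\inf_{|s|\ge R}|\beta(s)|>0$ with $\beta_\infty:=\liminf_{R\to\infty}\beta_R>0$. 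I would then cut $\chi_{\mathrm{in}}$ further by an $\O(1)$-scale (width $\ell$) partition in $s$, freeze $\beta$ to its value $\beta_j$ on each cell (paying $\O(\delta)+\O(\ell)$ relative errors), Fourier-transform in $s$ on each cell, and bound each fibre operator $(hD_t)^{2}+(\xi-a_h(s_j,t))^{2}$ from below by $\inf_{\xi\in\R}\inf\sigma_{L^2(\R_t)}$. After the anisotropic blow-up $t=h^{1/3}\tau$ this fibre operator equals $h^{4/3}\bigl(-\partial_\tau^{2}+(\xi'-\tfrac12\beta_j\tau^{2})^{2}\bigr)$ up to lower order, and $\inf_{\xi'}\inf\sigma\bigl(-\partial_\tau^{2}+(\xi'-\tfrac12\beta_j\tau^{2})^{2}\bigr)=\nu_0|\beta_j|^{2/3}$ by the scaling $\tau\mapsto|\beta_j|^{-1/3}\tau$, with $\nu_0>0$ the Montgomery constant (cf.\ \cite{Mon}). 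Summing the cells and keeping those with $|s|\ge R$ gives $\mathcal{Q}_{h,\bA}(\chi_{\mathrm{in}}\varphi)\ge h^{4/3}\bigl((1-C\delta)\nu_0\beta_R^{2/3}-Ch^{1/3}\bigr)\|\chi_{\mathrm{in}}\varphi\|^{2}$; combining with the $\chi_{\mathrm{out}}$ estimate and letting $R\to\infty$ in Persson's formula, any $E_0\in(0,\nu_0\beta_\infty^{2/3})$ works once $\delta$ is small enough.

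\emph{Main obstacle.} The IMS splitting, the $\chi_{\mathrm{out}}$ estimate, and the algebra of the tubular reduction are routine. The real difficulty is to make the reduction of $\chi_{\mathrm{in}}\varphi$ to the one-dimensional Montgomery model quantitative with errors genuinely $o(h^{4/3})$, \emph{uniformly as $|s|\to\infty$} along the non-compact curve. Two points require care: (i) under the blow-up $t=h^{1/3}\tau$ the subleading $\O(t^{3})$ term of the phase $a_h$ becomes a $\tau$-unbounded perturbation of the model potential $(\xi'-\tfrac12\beta\tau^{2})^{2}$, which cannot be absorbed by a crude uniform form bound; one must first localise the relevant low-energy states to $|\tau|=\O(1)$ (i.e.\ $|t|=\O(h^{1/3})$) by Agmon-type estimates for the fibre operator with its quartic potential, on which scale the perturbation of the fibre potential is only $\O(h^{5/3})$; (ii) one must simultaneously microlocalise the longitudinal frequency $hD_s$ to a bounded set, so that freezing $\beta(s)$ and the commutators with $hD_s$ cost $o(h^{4/3})$ rather than a fixed multiple of $h^{4/3}$ --- exactly the bookkeeping for which the operator-valued pseudodifferential calculus and superadiabatic projectors of \cite{BFKRV} provide a systematic substitute for ad hoc cut-offs, and which is compatible with the uniformity built into Assumptions~\ref{assum:uniformtubular}--\ref{assum:kappa}. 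An alternative, more hands-on route is a Helffer--Mohamed-type partition of $\mathcal{T}_\delta$ adapted to the scale $h^{1/3}$ (in the spirit of \cite{HeMo}), at the cost of a more delicate control of the localisation errors.
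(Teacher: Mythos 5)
Your skeleton — Persson's criterion, an IMS splitting into a region away from $\Gamma$ handled by $\cQ_{h}(\varphi)\ge h\int|B||\varphi|^2$, and a near-$\Gamma$ region whose lower bound is governed by the Montgomery threshold, yielding $E_0=\tilde{\mu}_{1,c}\,(\liminf\delta)^{2/3}$ — is exactly the paper's strategy. But the part you yourself flag as the ``main obstacle'' is where the proof actually lives, and the devices you propose to close it do not work as stated. First, Agmon-type estimates for the fibre operator cannot deliver what Persson's theorem needs: you must bound the quadratic form from below on \emph{arbitrary} test functions supported far out, not on eigenfunctions or low-lying fibre states, so there is nothing to localise by Agmon at this stage. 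The paper's substitute is elementary: an IMS partition in the rescaled transverse variable $\ct=h^{-1/3}t$ at a large fixed scale $r_0$, together with a commutator identity $[D_{\ct},\check{P}_\hbar]=i\partial_{\ct}\check{A}_\hbar m_\hbar^{-1}+\O(\hbar)$ and Assumption \ref{increasingBnearGamma}, which gives $\check{\cQ}_\hbar(\check\varphi)\ge(c_1\delta_0 r_0-C\hbar)\|\check\varphi\|^2$ on $\{|\ct|>r_0\}$; only on $\{|\ct|\le r_0\}$ are the cubic and quartic remainders absorbed, and there they carry an explicit factor $\hbar$ by Assumptions \ref{assum:delta}--\ref{assum:kappa} (choose $\eps=\hbar$ in the Cauchy--Schwarz splitting), so no localisation of low-energy states is ever invoked. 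Your ordering (freeze, Fourier, then localise fibre states) does not produce the needed form lower bound.

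Second, your freezing step with $\O(1)$-width cells and ``$\O(\delta)+\O(\ell)$ relative errors'' is not justified under the paper's hypotheses, because $\delta$ is only bounded \emph{below}: the cross terms created by replacing $\delta(s)$ by $\delta(s_j)$ cost, after the transverse localisation $|t|\le r_0h^{1/3}$, an absolute error of size $\eps^{-1}\ell^2\delta(s_j)^{4/3}r_0^4h^{4/3}$, i.e.\ a relative error $\sim\ell^2\delta(s_j)^{2/3}$, which is not small where $\delta$ is large; you would need cells of width $\sim\delta^{-1/3}$ (a Helffer--Mohamed-type partition, which you mention only as an aside), and you would also need to handle the non-multiplicative $hD_s$ cross terms. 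The paper sidesteps freezing entirely: the change of variables $\ft=\delta(x)^{1/3}\ct$ and the symmetrised operator $\Xi(\fx,D_\fx)=\delta^{-1/6}D_\fx\delta^{-1/6}$ (self-adjoint, commuting with everything acting in $\ft$) reduce the principal part of the form, via the spectral theorem, directly to $\inf_\nu\tilde{\mu}_1(\nu)=\tilde{\mu}_{1,c}$ times $\delta(\fx)^{2/3}$, uniformly and with no cell decomposition; in particular no microlocalisation of $hD_s$ is required for this theorem (your obstacle (ii) only becomes relevant in the later, finer analysis of Section \ref{sect:microlocalization}). As it stands, then, your proposal identifies the right threshold and the right regions but leaves the decisive quantitative step open, and the tools you point to for closing it would have to be replaced by (or reworked into) the transverse IMS/commutator estimate and the weighted change of variables described above.
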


With this theorem in hand, one can immediately adapt the proof of \cite{DR},\cite{Mon} to obtain the following Agmon-type localization result for eigenfunctions corresponding to eigenvalues in the energy window $(-\infty, Eh^{4/3}]$.

\begin{theorem}[\cite{DR},\cite{Mon}]
\label{thm:firstlocgamma}
    Let $E \in (-\infty,E_0)$. There exist $C>0$, $\alpha >0$ and $h_0 > 0$ such that, for $h\in(0,h_0 ]$ and for any eigenpair $(\mu_h, \psi_h)$ of $\mathcal{L}_{h,\bA}$ satisfying $\mu_h \leq Eh^{4/3}$, we have
    \begin{equation*}
        \int_{\R^2} e^{2\alpha h^{-1/3}{\rm dist}(z,\Gamma)} |\psi_h (z)|^2 \,dz \leq C \Vert \psi_h \Vert_{L^2(\R^2)}^2.
    \end{equation*}
\end{theorem}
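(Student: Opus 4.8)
The plan is to run the classical Agmon estimate adapted to the magnetic setting, exactly as in \cite{DR}, \cite{Mon}, using the lower bound on the quadratic form that already underlies Theorem \ref{thm:spectrediscret}. The starting point is the magnetic IMS-type localization formula: for a real-valued Lipschitz weight $\Phi$ (to be chosen below) and a magnetic eigenfunction $\psi_h$ with $\mathcal{L}_{h,\bA}\psi_h = \mu_h \psi_h$, one has
\begin{equation*}
    \mathcal{Q}_{h,\bA}\!\left(e^{\Phi/h}\psi_h\right) - \|h\,(\nabla\Phi/h)\,e^{\Phi/h}\psi_h\|_{L^2(\R^2)}^2 = \mu_h \|e^{\Phi/h}\psi_h\|_{L^2(\R^2)}^2,
\end{equation*}
i.e. $\mathcal{Q}_{h,\bA}(e^{\Phi/h}\psi_h) - \||\nabla\Phi|\,e^{\Phi/h}\psi_h\|^2 = \mu_h\|e^{\Phi/h}\psi_h\|^2$, which follows by expanding $(-ih\nabla-\bA)(e^{\Phi/h}u)$ and taking the real part of the pairing against $e^{\Phi/h}\psi_h$. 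The point is that the magnetic vector potential only contributes through the unitary gauge factor and does not affect the weight, so the argument is formally identical to the electric case once one has a good lower bound on $\mathcal{Q}_{h,\bA}$.

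The key step is the effective lower bound coming from the analysis behind Theorem \ref{thm:spectrediscret}: on a region at distance $\geq \delta$ from $\Gamma$ the magnetic field is bounded below, $|B(z)| \geq c\min(\mathrm{dist}(z,\Gamma),1)$, and the diamagnetic/magnetic-bottle inequality yields a pointwise lower bound for the form density of the shape $\mathcal{Q}_{h,\bA}(u) \gtrsim h\!\int |B|\,|u|^2$ away from a small neighbourhood of $\Gamma$, while on any fixed tube of size $\O(1)$ around $\Gamma$ the form controls $h^{4/3}$ times the mass there after subtracting a constant-times-$h^{4/3}$ term (this is precisely the content that makes $(-\infty,Eh^{4/3}]$ the relevant window). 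Choosing $\Phi(z) = \alpha\,h^{2/3}\,\chi(\mathrm{dist}(z,\Gamma))$ with $\chi$ a smoothed version of the distance truncated at a fixed scale, one gets $|\nabla\Phi|^2 \leq \alpha^2 h^{4/3}$, so the gradient term is absorbed: plugging into the IMS formula,
\begin{equation*}
    \int_{\R^2} \bigl(W_h(z) - \mu_h - \alpha^2 h^{4/3}\bigr)\, e^{2\Phi/h}|\psi_h(z)|^2\, dz \leq 0,
\end{equation*}
where $W_h(z) \geq c\, h\, \min(\mathrm{dist}(z,\Gamma),1) - C h^{4/3}$ is the effective potential from the lower bound. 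Splitting the integral into $\{\mathrm{dist}(z,\Gamma) \leq R h^{2/3}\}$, where $e^{2\Phi/h}$ is bounded and the integrand is controlled by $\|\psi_h\|^2$, and its complement, where for $h$ small $W_h(z) - \mu_h - \alpha^2 h^{4/3} \geq \tfrac{c}{2} h\,\mathrm{dist}(z,\Gamma) > 0$ (using $\mu_h \leq E h^{4/3} \ll h \cdot h^{2/3}$), one concludes
\begin{equation*}
    \int_{\R^2} e^{2\Phi/h}|\psi_h|^2\,dz \leq C\|\psi_h\|_{L^2(\R^2)}^2.
\end{equation*}
Since $2\Phi/h = 2\alpha h^{-1/3}\chi(\mathrm{dist}(z,\Gamma))$ and $\chi(d) = d$ for $d$ below a fixed scale, this gives the stated estimate after adjusting $\alpha$ (the behaviour beyond the fixed scale being harmless, or handled by iterating the bound).

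The main obstacle is obtaining the clean pointwise lower bound $W_h(z) \gtrsim h\min(\mathrm{dist}(z,\Gamma),1)$ uniformly up to infinity along $\Gamma$: this is where Assumptions \ref{assum:uniformtubular}--\ref{assum:kappa} (uniform tubular neighbourhood, uniform nondegeneracy of $\partial_{\vec n}B$, control of the curvature) are essential, since on a compact curve it is automatic but here one must ensure the constants do not degenerate as $|x|\to\infty$. Once this uniform lower bound is in place — which is exactly the estimate already established in the course of proving Theorem \ref{thm:spectrediscret} — the Agmon argument is routine, and we only sketch it, referring to \cite{DR}, \cite{Mon} for the details of the IMS formalism in the magnetic case.
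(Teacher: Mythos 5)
Your overall strategy is the right one, and it is in fact the same as the paper's: the paper does not reprove this statement but invokes the Agmon argument of \cite{DR} (Proposition 3.4 there) and \cite{Mon}, which is exactly the scheme you outline — the magnetic Agmon identity $\cQ_{h,\bA}(e^{\Phi/h}\psi_h)=\mu_h\|e^{\Phi/h}\psi_h\|^2+\||\nabla\Phi|e^{\Phi/h}\psi_h\|^2$, the global lower bound $\cQ_{h,\bA}(\varphi)\geq h\int|B||\varphi|^2$, and the pointwise bound $|B(z)|\gtrsim \min(\mathrm{dist}(z,\Gamma),1)$ coming from Assumptions \ref{assum:confinement} and \ref{increasingBnearGamma}. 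However, two steps as written are not correct. First, your splitting scale is wrong: you split at $\{\mathrm{dist}(z,\Gamma)\leq Rh^{2/3}\}$ and claim that on the complement $W_h-\mu_h-\alpha^2h^{4/3}\geq \tfrac{c}{2}h\,\mathrm{dist}(z,\Gamma)>0$, ``using $\mu_h\leq Eh^{4/3}\ll h\cdot h^{2/3}$''. But $h\cdot h^{2/3}=h^{5/3}\ll h^{4/3}$, so this inequality is backwards, and on the annulus $\{Rh^{2/3}<\mathrm{dist}(z,\Gamma)\ll h^{1/3}\}$ the effective potential $c\,h\,\mathrm{dist}$ is of order at most $h^{4/3}\cdot o(1)$ and cannot absorb $\mu_h+\alpha^2h^{4/3}\sim h^{4/3}$. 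The correct threshold is $\mathrm{dist}(z,\Gamma)\geq R h^{1/3}$ with $R$ large compared to $(E+\alpha^2+C)/c$ — consistent with the localization scale $\hbar=h^{1/3}$ announced in the paper — and on $\{\mathrm{dist}\leq Rh^{1/3}\}$ the weight $e^{2\alpha h^{-1/3}\mathrm{dist}}\leq e^{2\alpha R}$ is bounded, so the argument closes there.

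Second, truncating the weight at a fixed spatial scale does not yield the stated estimate. With $\Phi=\alpha h^{2/3}\chi(\mathrm{dist})$ and $\chi$ constant beyond a fixed distance, your conclusion controls only a saturated weight, which for large $\mathrm{dist}(z,\Gamma)$ is much smaller than the required $e^{2\alpha h^{-1/3}\mathrm{dist}(z,\Gamma)}$; ``adjusting $\alpha$'' cannot repair this, and ``iterating the bound'' is not substantiated. The standard fix is to take $\Phi_m=\alpha h^{2/3}\min(\mathrm{dist}(\cdot,\Gamma),m)$ (so that $e^{\Phi_m/h}\psi_h$ is in the form domain), observe that outside $\Omega_0$ the effective potential is $hb_0\gg (E+\alpha^2)h^{4/3}$ so the constants in the Agmon estimate are uniform in $m$, and then let $m\to\infty$ by monotone convergence; with this modification (and the corrected $h^{1/3}$ threshold) your plan does prove the theorem, with no need for the weight to saturate at all since $|\nabla\,\mathrm{dist}|\leq 1$ gives $|\nabla\Phi_m|^2\leq\alpha^2h^{4/3}$ globally.
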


This last estimate shows that, for the energy window we are interested in, we are led to localize our analysis close to the curve $\Gamma := \{\gamma(x),~x\in\R\}$ on the scale $$\hbar = h^{1/3}.$$

\subsubsection{Tubular coordinates and heuristic}

This leads to considering tubular coordinates $(x,t)$ on a neighborhood of the zero locus by considering the map
\begin{equation*}
    \forall (x,t)\in\R^2,\quad \Phi(x,t) = \gamma(x) + t\, \vec n(x),
\end{equation*}
where $\vec n(x)$ denotes the unitary vector field normal to $\Gamma$ at $\gamma(x)$ satisfying
\begin{equation*}
    {\rm det}(\gamma'(x),\vec n(x)) = 1,~x\in\R.
\end{equation*} 
The curvature $k(x)$ at the point $\gamma(x)$ is given in this parametrization by the relation
\begin{equation*}
    \gamma''(x) = k(x)\vec n(x).
\end{equation*}

More precisely, we consider \emph{rescaled tubular coordinates} $(x,\ct)=(x,\hbar t)$ on a neighborhood of $\Gamma$ (see Section \ref{subsect:locdescSch}). For the sake of clarity, we adopt here a heuristic point of view in order to introduce the leading operator of the analysis presented in this paper.
In a first approximation, near a point $(x,0)\in \Gamma$, the zero locus can be assumed to be straight, that is, given by $\ct = 0$, and that the magnetic field cancels linearly so that we can write $B(x,\ct) = \delta(x)\ct + \O(\ct^2)$ where $\delta(x)$ is the derivative of $B$ with respect to $\ct$. Therefore, by careful consideration of the homogeneity with respect to the semiclassical parameter $h>0$, at leading order near $\Gamma$, the operator to consider is given by
\begin{equation}
    \label{eq:heuristic}
    h^{4/3}\left[D_{\ct}^2+\left(\hbar D_x - \frac{\delta(x)}{2}\ct^2\right)^2\right].
\end{equation}

\subsection{Montgomery operators}
\label{subsect:Montgomery}
This heuristic led previous works to consider the self-adjoint realizations on $\R$ of the following differential operators
\begin{equation*}
    \fM(\nu) = D_{\ft}^2 + \left(\nu - \frac{1}{2} \ft^2 \right)^2,\quad\ft\in\R,
\end{equation*}
with parameter $\nu \in \R$, called the {\it Montgomery operators} (\cite{Mon},\cite{PK},\cite{HelKor},\cite{DR}), whose essential properties we recall here. Their domain, endowed with the graph norm, is given by the Hilbert space
\begin{equation*}
    B^{2,4}(\R) = \{u\in L^2(\R)\,:\, u''\in L^2(\R),\  \ft^4 u \in L^2(\R)\}.
\end{equation*}
For any $\nu\in\R$, the operator $\fM(\nu)$ has compact resolvent and thus have discrete spectrum:
\begin{equation*}
    \sigma(\fM(\nu)) := \{\tilde{\mu}_{1}(\nu)\leq \tilde{\mu}_{2}(\nu)\leq \dots\}.
\end{equation*}
For $k\geq 1$, the map $\nu \in\R\mapsto \tilde{\mu}_k(\nu)$ is called the $k$-th \emph{dispersive curve}.

\vspace{0.25cm}

The family $(\fM(\nu))_{\nu\in\R}$ has been studied by many authors; see, for example, \cite{HL},\cite{HP}. 
In particular, it is an analytic family of type B in the sense of Kato (\cite{Kato}), and, as a consequence, every dispersive curve $\nu\in\R\mapsto \tilde{\mu}_{k}(\nu)$, $k\geq1$, is smooth.
Moreover, since one can prove that the eigenvalues of the Montgomery operators are simple, it is possible to single out an eigenfunction associated to the $k$-the dispersive curve
\begin{equation}
    \label{eq:basisMon}
    \nu\in\R\mapsto \tilde{u}_k(\nu)\in L^2(\R_{\ft}),\quad k\geq 1,
\end{equation}
normalized in $L^2(\R_\ft)$ and depending smoothly on the parameter $\nu$. It follows from usual considerations in the spectral theory of Schrödinger operators with electric potential that each eigenfunction $\tilde{u}_k(\nu)$, for $k\geq 1$ and $\nu\in\R$, is smooth and decreases exponentially with respect to the variable $\ft\in\R$.

\vspace{0.25cm}

The main focus in the literature concerning this family of differential operators has been on the existence and uniqueness of critical points of the dispersive curves.
We recall here the most recent result on this question.

\begin{theorem}[\cite{HL}, Theorem 1.5]
    \label{thm:HLmontgomery}
    There exists $k_0\geq1$ such that for $k=1$ or $k\geq k_0$, $\tilde{\mu}_k$ admits a unique critical point $\nu_{k,c}$. Moreover, it corresponds to a global nondegenerate minimum.
\end{theorem}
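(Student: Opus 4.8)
The plan is to deduce the three assertions from two ingredients: (a) the blow-up $\tilde{\mu}_k(\nu)\to+\infty$ as $\nu\to\pm\infty$; and (b) the positivity of the second derivative at critical points, i.e. $\tilde{\mu}_k''(\nu_c)>0$ whenever $\tilde{\mu}_k'(\nu_c)=0$. Granting (a) and (b), a one-dimensional topological argument finishes the proof: by (a) the smooth function $\tilde{\mu}_k$ attains a global minimum and hence has at least one critical point; if it had two critical points $\nu_1<\nu_2$, both would be strict local minima by (b), so $\tilde{\mu}_k$ would attain its maximum over $[\nu_1,\nu_2]$ at an interior point $\nu_*$, which would then be a critical point that is also a local maximum, contradicting (b); hence the critical point is unique, it is a nondegenerate minimum by (b), and it is the global minimum by (a). The split into the cases $k=1$ and $k\geq k_0$ reflects that (a) holds for every $k$ while (b) is what one can establish only for $k=1$ and for $k$ large.

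For (a) I would use harmonic approximation. As $\nu\to-\infty$ the potential $\big(\nu-\tfrac12\ft^2\big)^2=\nu^2+|\nu|\ft^2+\tfrac14\ft^4$ is a single nondegenerate well at $\ft=0$ of frequency $\sqrt{|\nu|}$, so $\tilde{\mu}_k(\nu)=\nu^2+(2k-1)\sqrt{|\nu|}+\O(1)\to+\infty$. As $\nu\to+\infty$ the potential is a symmetric double well with bottoms at $\ft=\pm\sqrt{2\nu}$, locally $\sim 2\nu\,\big(\ft\mp\sqrt{2\nu}\big)^2$, so a standard double-well/tunnelling analysis gives $\tilde{\mu}_k(\nu)=(2m-1)\sqrt{2\nu}+o(\sqrt{\nu})$ for $k\in\{2m-1,2m\}$, with an exponentially small splitting between the two curves of each pair, and again $\tilde{\mu}_k(\nu)\to+\infty$. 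These are the classical asymptotics recalled in the works on the Montgomery family.

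For (b) I would use second-order perturbation theory. Writing $W_\nu:=\nu-\tfrac12\ft^2$, so that $\partial_\nu\fM(\nu)=2W_\nu$, $\partial_\nu^2\fM(\nu)=2$ and $\fM(\nu)=D_\ft^2+W_\nu^2$, one obtains for the simple eigenvalue $\tilde{\mu}_k$ with normalized eigenfunction $\tilde{u}_k$, at a critical point $\nu_c$ (where $\langle W_{\nu_c}\tilde{u}_k,\tilde{u}_k\rangle=0$),
\begin{equation*}
  \tilde{\mu}_k''(\nu_c)=2-8\sum_{j\neq k}\frac{\big|\langle W_{\nu_c}\tilde{u}_k,\tilde{u}_j\rangle\big|^2}{\tilde{\mu}_j(\nu_c)-\tilde{\mu}_k(\nu_c)},
\end{equation*}
equivalently $\tilde{\mu}_k''(\nu_c)=2-8\langle W_{\nu_c}\tilde{u}_k,R_kW_{\nu_c}\tilde{u}_k\rangle$ with $R_k:=\big(\fM(\nu_c)-\tilde{\mu}_k(\nu_c)\big)^{-1}\Pi_k^\perp$ the reduced resolvent ($\Pi_k^\perp$ projecting off $\tilde{u}_k$); thus (b) amounts to an upper bound on the subtracted term. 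For $k=1$ the operator $R_1$ is positive with norm $\big(\tilde{\mu}_2(\nu_c)-\tilde{\mu}_1(\nu_c)\big)^{-1}$, and since $\|W_{\nu_c}\tilde{u}_1\|^2=\langle(\fM(\nu_c)-D_\ft^2)\tilde{u}_1,\tilde{u}_1\rangle=\tilde{\mu}_1(\nu_c)-\|D_\ft\tilde{u}_1\|_{L^2(\R)}^2$, the claim reduces to the concrete inequality $\tilde{\mu}_1(\nu_c)-\|D_\ft\tilde{u}_1\|_{L^2(\R)}^2<\tfrac14\big(\tilde{\mu}_2(\nu_c)-\tilde{\mu}_1(\nu_c)\big)$, which I would verify by quantitative bounds on $\tilde{\mu}_1(\nu_c)$, on the first spectral gap at $\nu_c$ and on $\nu_c$ itself (well-chosen trial states for the gap, or validated numerics); a structural guide here is that $\nu\mapsto\tilde{\mu}_1(\nu)-\nu^2$ is concave, the $\nu^2$-coefficient in $\langle\fM(\nu)u,u\rangle$ being independent of the normalized $u$. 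For $k\geq k_0$ I would place myself in the semiclassical double-well regime of $\nu$ where the minimum of the $k$-th curve lives: uniform WKB yields the eigenvalues, the exponentially small gaps and the (well-localized) eigenfunctions, and inserting these into the formula above shows the subtracted term is $<2$ uniformly in $k\geq k_0$, the remaining bounded range of $\nu$ being handled separately via the concentration of the relevant eigenfunctions where $|W_\nu|$ is small.

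The main obstacle is the quantitative core of (b). For $k=1$ the scalar inequality is sharp, with little margin, and genuinely needs either carefully engineered test functions for the gap or a computer-assisted estimate. For $k\geq k_0$ the difficulty is the uniformity in $k$ of the WKB and tunnelling expansions, together with the matching of the single-well and double-well pictures across the crossover scale of $\nu$. The honest gap in the statement, the intermediate range $2\leq k<k_0$, is exactly where neither the explicit low-$k$ computation nor the large-$k$ asymptotics apply, so that closing it would require either a finite—possibly large—case analysis or a new structural identity for the Montgomery family.
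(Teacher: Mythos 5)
First, note that this statement is not proved in the paper at all: it is imported verbatim from Helffer--Léautaud (\cite{HL}, Theorem 1.5), so there is no internal proof to compare with, and your attempt has to be judged as a self-contained proof of that external result. As such, it has a genuine gap. Your step (a) (divergence of $\tilde{\mu}_k$ at $\pm\infty$) and the topological reduction ``(a)+(b) $\Rightarrow$ unique critical point which is a nondegenerate global minimum'' are fine, but step (b) — positivity of $\tilde{\mu}_k''$ at every critical point, for $k=1$ and uniformly for large $k$ — is precisely the content of the cited theorem, and in your write-up it is not established: for $k=1$ you reduce it to a numerical-looking scalar inequality which you propose to check by ``well-chosen trial states \dots or validated numerics'', and for $k\geq k_0$ you assert that ``uniform WKB \dots shows the subtracted term is $<2$ uniformly'' without carrying out any of the (delicate) uniform semiclassical analysis. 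A plan whose decisive quantitative step is deferred is not a proof; the hard work of \cite{HL} (a semiclassical study of the large-$k$ regime, and a separate fine analysis of the ground-state curve) is exactly what is missing here.

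Two more concrete points about the perturbative scheme itself. For $k=1$, the potential $(\nu-\tfrac12\ft^2)^2$ is even in $\ft$, so the eigenfunctions have alternating parity and $W_{\nu}=\nu-\tfrac12\ft^2$ is even; hence $\langle W_{\nu_c}\tilde{u}_1,\tilde{u}_2\rangle=0$ and the relevant gap in your bound is $\tilde{\mu}_3-\tilde{\mu}_1$, not $\tilde{\mu}_2-\tilde{\mu}_1$ — your stated sufficient inequality is therefore not the right one (this actually helps you, but it shows the reduction was not set up correctly). More seriously, for $k\geq k_0$ the minimum of the $k$-th curve sits in the double-well regime where consecutive eigenvalues form tunneling pairs with exponentially small gaps; a bound of the reduced resolvent by the inverse of the nearest gap is then useless, and one must invoke the parity cancellation of the matrix elements to discard the tunneling partner before any WKB estimate can give a term $<2$ uniformly in $k$. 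Neither this cancellation nor the uniformity in $k$ of the WKB expansions (nor the matching across the crossover range of $\nu$) is addressed, so the large-$k$ half of (b) remains unproven as well. The restriction to $k=1$ or $k\geq k_0$ in the statement already excludes the intermediate $k$, so your ``honest gap'' there is not an issue; the issue is that the two regimes the theorem does cover are only sketched, not proved.
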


In what follows, we denote by $\tilde{\mu}_{k,c} := \tilde{\mu}_k(\nu_{k,c})$ the corresponding minimal value of the $k$-th dispersive curve, as well as by $\tilde{u}_{k,c} := \tilde{u}_k(\nu_c)$ the corresponding eigenfunction. Theorem \ref{thm:HLmontgomery} is conjectured to hold for all dispersive curves of the Montgomery operators. Note also that, for any $k\geq 1$, $\nu\mapsto\tilde{\mu}_{k}(\nu)$ admits at least one critical point 
as it diverges (polynomially) to $+\infty$ as $\nu$ goes to $\pm \infty$ (\cite{HL}).

\subsection{Born-Oppenheimer strategy}

Returning to the leading operator given in Equation \eqref{eq:heuristic}, we observe that we are dealing with a partially semiclassical differential operator. This situation, due to the anisotropy of $\mathcal{L}_{h,\bA}$, is a trademark in the theory of magnetic Laplacians and is even considered in \cite{BonRayHer} as the starting point of their analysis. Following their approach, we introduce operator-valued symbols and their associated $\hbar$-semiclassical quantization (see Appendix \ref{sect:pseudoropvalued}).

\subsubsection{Operator-valued pseudodifferential calculus}

Explicitly, the heuristic described above is rigorously implemented by first microlocalizing on a compact subset $K_E$ of the phase space $T^*\Gamma$ the eigenfunctions corresponding to eigenvalues in the energy window $(-\infty,Eh^{4/3}]$. This leads us to consider a symbol $n_{\hbar} \in S^0(\R^2,\mathscr{L}(B^{2,4}(\R),L^2(\R)))$ whose principal symbol is given by
\begin{equation}
    \forall (x,\xi)\in\R^2,\quad n_{0}(x,\xi)  = D_{\ct}^2 + \left(\mathring{\Xi}(\xi) - \frac{\mathring{\delta}(x)}{2}\ct^2\right)^2,
\end{equation}
where $\mathring{\Xi}$ and $\mathring{\delta}$ are both bounded truncations of $\xi$ and $\delta$ respectively, coinciding with them on the compact set $K_E$ (see Section \ref{subsect:reducbounded}). We denote by $\mathcal{N}_\hbar$ the quantization of the symbol $n_\hbar^w$ acting on $L^2(\R,B^{2,4}(\R))$.

The Born-Oppenheimer strategy then consists of diagonalizing the principal symbol $n_0$ and considering each eigenvalue individually to effectively reduce the dimension. 
To this end, we observe that for all $(x,\xi)\in\R^2$, $n_{0}(x,\xi)$ is a self-adjoint operator with compact resolvent and thus has a discrete spectrum
\begin{equation*}
    \sigma(n_0(x,\xi)) := \{\mathring{\mu}_{1}(x,\xi)\leq \mathring{\mu}_{2}(x,\xi)\leq \dots\}.
\end{equation*}
This is where the family of Montgomery operators comes into play. The spectral theory of $n_0(x,\xi)$, $(x,\xi)\in\R^2$, and $(\fM(\nu))_{\nu\in\R}$ are related by the following rescaling: writing $$\ft = \mathring{\delta}(x)^{1/3}\ct,$$ and considering the associated unitary operator $\mathcal{T}(x):L^2(\R_{\ct})\rightarrow L^2(\R_\ft)$ defined by 
\begin{equation*}
    \forall f\in L^2(\R_{\ct}),\quad \mathcal{T}(x)f(\ft) = \mathring{\delta}(x)^{1/6}f(\mathring{\delta}(x)^{1/3}\ct),
\end{equation*}
the operator $n_{0}(x,\xi)$ is intertwined trough the unitary $\mathcal{T}$ with
\begin{equation}
    \label{eq:n0rescaling}
   \mathring{\delta}(x)^{2/3}\left(D_{\ft}^2 + \left(\mathring{\Xi}(\xi)\mathring{\delta}(x)^{-1/3} - \frac{1}{2} \ft^2 \right)^2\right) = \mathring{\delta}(x)^{2/3}\fM(\mathring{\Xi}(\xi) \mathring{\delta}(x)^{-1/3}).
\end{equation}
In particular, we obtain this way that each eigenvalue of $n_0(x,\xi)$ is simple and that, for all $k\geq 1$,
\begin{equation}
    \label{eq:smootheigenvalues}
    \mathring{\mu}_{k}(x,\xi) = \mathring{\delta}(x)^{2/3}\tilde{\mu}_{k}(\mathring{\Xi}(\xi)\mathring{\delta}(x)^{-1/3}).
\end{equation}

Using Equation \eqref{eq:basisMon}, the same rescaling allows us to single out an eigenvector for each $n\geq 1$ that depends smoothly on $(x,\xi)\in\R^2$ by setting:
\begin{equation}
    \label{eq:smootheigenfunction}
    \mathring{u}_k(x,\xi) = \mathcal{T}^*(x)\tilde{u}_k(\mathring{\Xi}(\xi)\mathring{\delta}(x)^{-1/3}).
\end{equation}

These spectral properties satisfied by the principal symbol $n_0$ allow us to consider the symbols $\Pi_0^k \in S^0(\R^2,\mathscr{L}(L^2(\R),B^{2,4}(\R)))$, $k\geq 1$, defined at each point $(x,\xi)\in\R^2$, as the orthogonal projection on the subspace generated by the eigenfunction $\mathring{u}_k(x,\xi)$.

\subsubsection{Superadiabatic projectors}

In order to effectively implement the Born-Oppenheimer strategy, we deviate however from previous works and follow the approach proposed by \cite{BFKRV}, that is, through the construction of approximate stable subspaces (up to $\O(\hbar^\infty)$) using the symbolic properties of the operator-valued semiclassical pseudodifferential calculus. In anticipation of Section \ref{subsect:reductionscheme}, this construction results in the existence of a symbol $\Pi_\hbar^k\in S^0(\R^2,\mathscr{L}(L^2(\R),B^{2,4}(\R)))$ associated with the eigenvalue $\mu_k$, $k\geq1$, satisfying
\begin{equation*}
    \Pi_\hbar^k = \Pi_0^k + \O(\hbar),
\end{equation*}
as well as
\begin{equation*}
    \Pi_\hbar^{k,w}\circ \Pi_\hbar^{k,w}=\Pi_\hbar^{k,w}+\mathscr{O}(\hbar^\infty)\quad\mbox{and}\quad [\mathcal{N}_\hbar,\Pi_\hbar^{k,w}]=\mathscr{O}(\hbar^\infty).
\end{equation*}
The operators $\Pi_\hbar^{k,w}$, $k\geq1$, are called \emph{superadiabatic projectors}, and allow us to look for quasimodes and study the spectrum of $\mathcal{N}_\hbar$ by restriction to each corresponding (approximate) stable subspace ${\rm Ran}(\Pi_\hbar^{k,w})$.

\subsection{Main results}

In order to state our results concerning the spectral theory of the magnetic Laplacian $\mathcal{L}_{h,\bA}$, we make precise the definition of equivalence of spectra of two operators.

\begin{definition}[\cite{FLTRVN}]
    \label{def:spectracoincide}
    The spectra of two self-adjoint operators $T_1$ and $T_2$ depending on $h$ are said to coincide in an interval $I_h$ modulo $\O(h^\alpha)$, 
    $\alpha\in\R\cup \{+\infty\}$, when there exists $C, h_0 >0$ such that, for all $h\in(0,h_0)$,
    \begin{enumerate}
        \item $T_1$ and $T_2$ have discrete spectrum in $I_h + [-Ch^\alpha,C h^\alpha]$;
        \item for all interval $L_h \subset I_h$, we can find an interval $K_h$ such that $L_h\subset K_h$ with ${\rm d}_{H}(K_h,L_h)\leq C h^\alpha$ and 
        \begin{equation*}
            {\rm rank} \mathbb{1}_{L_h}(T_1)\leq {\rm rank} \mathbb{1}_{K_h}(T_2)\quad\mbox{and}\quad {\rm rank} \mathbb{1}_{L_h}(T_2)\leq {\rm rank} \mathbb{1}_{K_h}(T_1),
        \end{equation*}
        where ${\rm d}_{H}$ denotes the Hausdorff distance:
        \begin{equation*}
            {\rm d}_{H}(A,B) = \sup_{(a,b)\in A\times B}\max(d(a,B),d(b,A)).
        \end{equation*}
    \end{enumerate}
\end{definition}

\subsubsection{Dimensional reduction}

We can now state our main result where we use, among others, the eigenvalues $\mathring{\mu}_k$ and their corresponding eigenfunctions $\mathring{u}_k$.

\begin{theorem}
    \label{thm:main}
    Under Assumptions \ref{assum:uniformtubular}-\ref{assum:kappa}, let $E\in(-\infty,E_0)$ as in Theorem \ref{thm:spectrediscret}. Then, there exists $k_E \geq 1$ such that the spectrum of $\mathcal{L}_{h,\bA}$ in $I_h = (-\infty,E h^{4/3})$ coincides modulo $\O(h^\infty)$ with that of a bounded operator of the form
    \begin{equation*}
        h^{4/3}\begin{bmatrix}
            \mathring{\mu}_{\hbar,1}^{w} & 0 & \cdots & 0 \\
            0 & \mathring{\mu}_{\hbar,2}^{w} &  & \vdots \\
            \vdots &  &  \ddots & 0 \\
            0 & \cdots & 0 & \mathring{\mu}_{\hbar,k_E}^{w}
         \end{bmatrix},
    \end{equation*}
    acting on $L^2(\R,\C^{k_E}$), where for $k\in\{1,\dots,k_E\}$, the \emph{effective Hamiltonian} $\mu_{\hbar,k}^w$ is a (uniquely defined) $\hbar$-semiclassical self-adjoint pseudodifferential operator with admissible symbol
     \begin{equation*}
        \mathring{\mu}_{\hbar,k} \sim \mathring{\mu}_{k} + \sum_{j\geq 1}\hbar^j \mathring{\mu}_{j,k},\quad \mbox{with}\quad \forall j\geq 1,\ \mathring{\mu}_{j,k}\in S^0(\R^2).
    \end{equation*}
    and where the subprincipal symbol $\mu_{1,k}$ is given by
    \begin{equation*}
        \mathring{\mu}_{1,k} = 2k\langle \mathring{u}_k, p_0^2 \ct \,\mathring{u}_k\rangle- 2 \mathring{\kappa}\langle \mathring{u}_k, p_0 \ct^3 \mathring{u}_k\rangle +  {\rm Im} \langle \mathring{u}_k, \{n_0,\mathring{u}_k\}\rangle + \mu_k {\rm Im} \langle \partial_x \mathring{u}_k,\partial_\xi \mathring{u}_k\rangle.
    \end{equation*}
    where $\mathring{\kappa}(x) = \frac{1}{6}\partial_{t}^{2}\tilde{B}(x,0) - \frac{1}{3}k(x)\mathring{\delta}(x)$ and $p_0(x,\xi) = \mathring{\Xi}(\xi) - \frac{\mathring{\delta}(x)}{2}\ct^2$.
\end{theorem}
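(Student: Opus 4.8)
The plan is to combine the Agmon localization of Theorem~\ref{thm:firstlocgamma}, the microlocalization onto $K_E$, and the superadiabatic projectors $\Pi_\hbar^{k,w}$ to reduce the spectral analysis of $\mathcal{L}_{h,\bA}$ to that of the operator-valued pseudodifferential operator $\mathcal{N}_\hbar$, and then to block-diagonalize $\mathcal{N}_\hbar$ along the stable subspaces $\mathrm{Ran}(\Pi_\hbar^{k,w})$. First I would pass to rescaled tubular coordinates $(x,\ct)$ near $\Gamma$ and conjugate $\mathcal{L}_{h,\bA}$ by the corresponding change of variables (with the Jacobian weight); writing the transported operator in powers of $\hbar=h^{1/3}$ yields, after factoring out $h^{4/3}$, an operator whose principal part is the heuristic operator of \eqref{eq:heuristic}. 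Using Theorem~\ref{thm:firstlocgamma} and an energy cutoff, I would show that eigenfunctions with eigenvalue $\leq Eh^{4/3}$ are, modulo $\O(h^\infty)$, in the range of a microlocal cutoff supported in a compact $K_E\subset T^*\Gamma$; on that set the bounded truncations $\mathring{\Xi},\mathring{\delta}$ agree with $\xi,\delta$, so $\mathcal{L}_{h,\bA}$ and $h^{4/3}\mathcal{N}_\hbar$ have the same spectrum in $I_h$ modulo $\O(h^\infty)$ in the sense of Definition~\ref{def:spectracoincide}.

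Next I would invoke the superadiabatic projectors: since $[\mathcal{N}_\hbar,\Pi_\hbar^{k,w}]=\O(\hbar^\infty)$ and $\Pi_\hbar^{k,w}$ is an approximate projector, the operator $\mathcal{N}_\hbar$ is, up to $\O(\hbar^\infty)$, block-diagonal with respect to the decomposition $\bigoplus_k \mathrm{Ran}(\Pi_\hbar^{k,w})\oplus(\text{rest})$. Because the dispersive curves are ordered and $\mathring{\mu}_k(x,\xi)=\mathring{\delta}(x)^{2/3}\tilde\mu_k(\mathring{\Xi}(\xi)\mathring{\delta}(x)^{-1/3})$ grows with $k$, only finitely many bands — say $k\le k_E$ — can contribute eigenvalues below $Eh^{4/3}$; a lower bound on $n_0$ restricted to the orthogonal complement of the first $k_E$ eigenspaces, together with the Gårding inequality, handles the remainder. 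On each band I would intertwine $\Pi_\hbar^{k,w}\mathcal{N}_\hbar\restriction{}{\mathrm{Ran}(\Pi_\hbar^{k,w})}$ with a scalar $\hbar$-pseudodifferential operator $\mathring{\mu}_{\hbar,k}^w$ on $L^2(\R)$ via the (approximate) isometry built from $\mathring{u}_k(x,\xi)$, using the composition formula for operator-valued symbols. This produces the claimed block matrix; uniqueness of $\mathring{\mu}_{\hbar,k}$ follows from the standard fact that the full symbol of a scalar $\hbar$-$\Psi$DO is determined by the operator modulo $\O(\hbar^\infty)$.

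The remaining, and most delicate, step is the identification of the subprincipal symbol $\mathring{\mu}_{1,k}$. Here I would expand the Moyal product $\overline{\mathring{u}_k}\,\#\,n_\hbar\,\#\,\mathring{u}_k$ to first order in $\hbar$, being careful that the effective symbol is $\langle \mathring{u}_k, n_\hbar \mathring{u}_k\rangle$ corrected by the $\O(\hbar)$ term of $\Pi_\hbar^k$ and by the curvature contribution hidden in $\mathcal{N}_\hbar$: the $\O(\hbar)$ part of $n_\hbar$ contributes the transverse-derivative and curvature terms $2k\langle \mathring{u}_k,p_0^2\ct\,\mathring{u}_k\rangle$ and $-2\mathring\kappa\langle \mathring{u}_k, p_0\ct^3\mathring{u}_k\rangle$ (using $\fM(\nu)\tilde u_k=\tilde\mu_k\tilde u_k$ and the identity $\partial_\nu\tilde\mu_k = 2\langle \tilde u_k,(\nu-\tfrac12\ft^2)\tilde u_k\rangle$, rescaled), while the $\tfrac1{2i}\{\cdot,\cdot\}$ term of the Moyal product and the first-order term of $\Pi_\hbar^k$ together produce the two imaginary-part terms $\mathrm{Im}\langle \mathring{u}_k,\{n_0,\mathring{u}_k\}\rangle$ and $\mu_k\,\mathrm{Im}\langle\partial_x\mathring{u}_k,\partial_\xi\mathring{u}_k\rangle$ (a Berry-phase type contribution coming from the non-triviality of the eigenbundle). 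The bookkeeping of these four contributions, and checking that all spurious $\O(\hbar)$ terms cancel so that $\mathring{\mu}_{\hbar,k}$ is a genuine admissible symbol with the stated $\mathring{\mu}_{1,k}$, is where I expect the main technical difficulty to lie; the algebraic structure, however, is dictated entirely by the $\hbar$-expansion of the operator-valued calculus recalled in Appendix~\ref{sect:pseudoropvalued} and by the rescaling \eqref{eq:n0rescaling}.
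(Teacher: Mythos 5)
Your proposal follows essentially the same route as the paper: Agmon and microlocal localization to replace $\mathcal{L}_{h,\bA}$ by the truncated operator $h^{4/3}\mathcal{N}_\hbar$, the superadiabatic projectors and partial isometries of \cite{BFKRV} to reduce to the scalar effective operators $(\ell_\hbar^k)^w\circ\mathcal{N}_\hbar\circ(\ell_\hbar^k)^{w,*}$, and the operator-valued composition formula to identify $\mathring{\mu}_{1,k}$ exactly as in Proposition \ref{prop:effectiveham}. The only substantive points you leave implicit are those the paper supplies in detail: the two-sided comparison of spectra with multiplicities (quasimodes in both directions, which for the reverse direction requires the exponential transverse decay of eigenfunctions of $\mathcal{N}_\hbar$, together with the rough Weyl bounds of Corollaries \ref{cor:roughweyllaw1} and \ref{cor:roughweyllaw2}), and the admissibility of $\mathring{\mu}_{\hbar,k}$ in $S^0(\R^2)$ despite the $\hbar^\eta$-cutoffs, which comes from the exponential decay of the Montgomery eigenfunctions rather than from a cancellation of spurious terms.
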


Theorem \ref{thm:main} is a diagonalization result since it reduces the spectral analysis of the magnetic Laplacian $\mathcal{L}_{h,\bA}$ to that of a finite family of semiclassical pseudodifferential operators in one dimension: the spectrum of $\mathcal{L}_{h,\bA}$ is the superposition (counting multiplicities) of the spectra of $\mathring{\mu}_{\hbar,k}^w$, $k\in\{1,\dots,k_E\}$.

\subsubsection{Description of the spectrum in a nondegenerate well}

In order to give a complete description of the spectrum of these effective Hamiltonians, we make the following assumption on the transverse derivative of the magnetic field.

\begin{assumption}
    \label{assum:uniquewell}
    The map $x\in\R\mapsto \delta(x)$ admits a unique global nondegenerate minimum $\delta_{c}$ attained at 0.
\end{assumption}

This allows to apply directly the known results in the literature (\cite{DimSj},\cite{HeSjII},\cite{HeRo}) on the spectral theory of one-dimensional semiclassical pseudodifferential operators in the situation of a unique nondegenerate potential well. We first take a look at the bottom of the spectrum of the magnetic Laplacian in the semiexcited regime.

\begin{corollary}
    \label{cor:bottomsemiexcited}
    Let $\eta >0$. Under Assumptions \ref{assum:uniformtubular}-\ref{assum:kappa} and Assumption \ref{assum:uniquewell}, denoting by $\lambda_n(h)$, the $n$-th eigenvalue of $\mathcal{L}_{h,\bA}$, $n\in\N$, in the energy window $(-\infty,\delta_c^{2/3}\tilde{\mu}_{1,c}h^{4/3} + h^{4/3+\eta}]$, then there exists $h_0>0$ and a real valued smooth function
    \begin{equation*}
        f(\sigma;h) \sim f_0(\sigma)+hf_1(\sigma)+\dots,\quad \sigma\in\R,\ h\in (0,h_0),
    \end{equation*}
    converging in the smooth topology, such that
    \begin{equation}
        \label{eq:asympgenstuct1}
        \lambda_n(h) = h^{4/3} f\left(\hbar\left(n+\frac{1}{2}\right);\hbar\right) + \O(h^\infty),\ n\in\N,\ h\in (0,h_0),
    \end{equation}
    where the remainder $\O(h^\infty)$ is uniform for all eigenvalues in the spectral window.
    Moreover, the first terms of the asymptotic are given by
    \begin{equation}
        \label{eq:asymptprecise}
        \lambda_n(h) = h^{4/3}\left(\delta_c^{2/3}\tilde{\mu}_{1,c} + \hbar \delta_c^{2/3}\left(\langle L \tilde{u}_{1,c},\tilde{u}_{1,c}\rangle + (2n+1)c_1\right) + \O(\hbar^2) \right),
    \end{equation}
    where we have set
    \begin{equation*}
        L = 2\delta_c^{-4/3}\mathring{\kappa}(0)\left(\frac{\ft^2}{2}-\nu_{1,c}\right)\ft^3 + 2 \delta_c^{-1/3}k(0)\left(\nu_{1,c}-\frac{\ft^2}{2}\right)^2\ft,
    \end{equation*}
    and
    \begin{equation*}
        c_1 = \left(\frac{\alpha \,\tilde{\mu}_{1,c}\, \partial_\nu^2\tilde{\mu}_{1}(\nu_{1,c})}{2}\right)^{1/2}, \quad \alpha = \frac{1}{2}\delta_c^{-1}\delta''(0) >0.
    \end{equation*}
\end{corollary}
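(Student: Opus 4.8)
The plan is to reduce Corollary \ref{cor:bottomsemiexcited} to Theorem \ref{thm:main} together with the classical WKB/harmonic-approximation theory of one-dimensional semiclassical pseudodifferential operators in a nondegenerate potential well. First I would invoke Theorem \ref{thm:main}: in the window $I_h = (-\infty, E h^{4/3})$ with $E$ slightly above $\delta_c^{2/3}\tilde\mu_{1,c}$, the spectrum of $\mathcal{L}_{h,\bA}$ coincides modulo $\O(h^\infty)$ with that of the block-diagonal operator $h^{4/3}\,\mathrm{diag}(\mathring\mu_{\hbar,1}^w,\dots,\mathring\mu_{\hbar,k_E}^w)$. The key observation is that only the first block is relevant at the bottom: by Equation \eqref{eq:smootheigenvalues}, $\mathring\mu_k(x,\xi) = \mathring\delta(x)^{2/3}\tilde\mu_k(\mathring\Xi(\xi)\mathring\delta(x)^{-1/3})$, and since $\tilde\mu_k(\nu) \geq \tilde\mu_{k,c} > \tilde\mu_{1,c}$ for $k\geq 2$ near the relevant regime (the dispersive curves are ordered and, by the rescaling, $\inf \mathring\mu_k = \delta_c^{2/3}\tilde\mu_{k,c}$), the symbols $\mathring\mu_{\hbar,k}$ for $k\geq 2$ are bounded below by a constant strictly larger than $\delta_c^{2/3}\tilde\mu_{1,c}$ for small $\hbar$; hence by the sharp Gårding inequality $\mathring\mu_{\hbar,k}^w \geq \delta_c^{2/3}\tilde\mu_{1,c} + c_0 - \O(\hbar)$ on $L^2(\R)$, so these blocks contribute no spectrum in the window $(-\infty,\delta_c^{2/3}\tilde\mu_{1,c} + \hbar^\eta\cdot\hbar^{-1}\hbar^{4/3}\cdot\dots)$. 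Thus the spectrum in the stated window coincides modulo $\O(h^\infty)$ with that of $h^{4/3}\mathring\mu_{\hbar,1}^w$.

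Next I would analyze the effective operator $\mathring\mu_{\hbar,1}^w$, an $\hbar$-pseudodifferential operator on $L^2(\R)$ with symbol $\mathring\mu_{\hbar,1} \sim \mathring\mu_1 + \hbar\mathring\mu_{1,1} + \dots$. By Assumption \ref{assum:uniquewell} the map $x\mapsto\delta(x)$ has a unique nondegenerate global minimum $\delta_c$ at $x=0$; combined with Theorem \ref{thm:HLmontgomery} ($\tilde\mu_1$ has a unique nondegenerate global minimum $\tilde\mu_{1,c}$ at $\nu_{1,c}$), the principal symbol $\mathring\mu_1(x,\xi) = \mathring\delta(x)^{2/3}\tilde\mu_1(\mathring\Xi(\xi)\mathring\delta(x)^{-1/3})$ attains its unique global minimum $\delta_c^{2/3}\tilde\mu_{1,c}$ at a single nondegenerate critical point $(x_0,\xi_0) = (0, \delta_c^{1/3}\nu_{1,c})$ of the phase space — here I would compute the Hessian: $\partial_\xi^2\mathring\mu_1 = \delta_c^{2/3}\cdot\delta_c^{-2/3}\partial_\nu^2\tilde\mu_1(\nu_{1,c}) = \partial_\nu^2\tilde\mu_1(\nu_{1,c})$ at the critical point (using $\mathring\Xi(\xi)=\xi$ near $\xi_0$), and $\partial_x^2\mathring\mu_1$ involves $\delta''(0)$; the cross term vanishes because $\partial_\nu\tilde\mu_1(\nu_{1,c})=0$. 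This puts us exactly in the setting of the harmonic approximation for $1$D semiclassical $\hbar$-pseudodifferential operators with a unique nondegenerate well (\cite{DimSj},\cite{HeSjII},\cite{HeRo}), which yields: the $n$-th eigenvalue of $\mathring\mu_{\hbar,1}^w$ in a window of size $\hbar^\eta$ above the minimum admits a full asymptotic expansion of the form $f(\hbar(n+\tfrac12);\hbar)$ with $f(\sigma;h)\sim f_0(\sigma)+hf_1(\sigma)+\dots$ converging in the smooth topology, with $f_0(0)=\delta_c^{2/3}\tilde\mu_{1,c}$ and $f_0'(0) = \sqrt{\det(\mathrm{Hess}\,\mathring\mu_1)/4}\cdot$ (appropriate normalization) $= \big(\tfrac{1}{4}\partial_x^2\mathring\mu_1\cdot\partial_\xi^2\mathring\mu_1\big)^{1/2}$, giving the $(2n+1)c_1$ term after identifying $\partial_x^2\mathring\mu_1$ via $\alpha = \tfrac12\delta_c^{-1}\delta''(0)$.

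Multiplying by $h^{4/3}$ and translating back through the coincidence-of-spectra Definition \ref{def:spectracoincide} gives \eqref{eq:asympgenstuct1}. To obtain the explicit two-term expansion \eqref{eq:asymptprecise}, I would trace through the subprincipal correction: write $\mathring\mu_{\hbar,1}^w = \mathring\mu_1^w + \hbar\,\mathring\mu_{1,1}^w + \O(\hbar^2)$ and apply first-order perturbation theory, so that the $\hbar$-correction to the $n$-th eigenvalue is, to leading order, $\langle \mathring\mu_{1,1}(x_0,\xi_0)\rangle$ evaluated against the $n$-th harmonic-oscillator state, plus the harmonic term $(2n+1)c_1$. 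Evaluating $\mathring\mu_{1,1}$ at the critical point $(0,\delta_c^{1/3}\nu_{1,c})$ via the formula in Theorem \ref{thm:main} and using the rescaling $\mathcal{T}(0)$ that intertwines $n_0(0,\cdot)$ with $\delta_c^{2/3}\fM(\nu)$ (so that $\mathring u_1(0,\xi_0) = \mathcal{T}^*(0)\tilde u_{1,c}$, $p_0\mapsto$ the Montgomery multiplier $\nu_{1,c}-\tfrac12\ft^2$ after rescaling, and $\mathring\kappa(0)$, $k(0)$ appearing with the powers of $\delta_c$ displayed): the terms $2k\langle\mathring u_k,p_0^2\ct\,\mathring u_k\rangle$ and $-2\mathring\kappa\langle\mathring u_k,p_0\ct^3\mathring u_k\rangle$ reassemble, after rescaling $\ft=\delta_c^{1/3}\ct$, into exactly $\delta_c^{2/3}\langle L\tilde u_{1,c},\tilde u_{1,c}\rangle$ with $L$ as stated; the two Im-terms involving $\partial_x\mathring u_1, \partial_\xi\mathring u_1$ and $\{n_0,\mathring u_1\}$ vanish at the critical point (they are odd/imaginary-pairing contributions that die at a symmetric nondegenerate minimum, or get absorbed into the definition of the effective symbol's subprincipal part — a standard feature of these reductions). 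The main obstacle is precisely this last bookkeeping: carefully carrying the unitary rescaling $\mathcal{T}(x)$ and the truncations $\mathring\Xi,\mathring\delta$ through the subprincipal symbol of Theorem \ref{thm:main}, checking that the Im-pairings vanish (or contribute harmlessly) at $(x_0,\xi_0)$, and matching the harmonic coefficient $c_1$ — in particular verifying the identity $c_1 = \big(\tfrac12\alpha\,\tilde\mu_{1,c}\,\partial_\nu^2\tilde\mu_1(\nu_{1,c})\big)^{1/2}$ by computing $\partial_x^2(\mathring\delta(x)^{2/3})|_{x=0} = \tfrac23\delta_c^{-1/3}\delta''(0) = \tfrac43\delta_c^{2/3}\alpha$ and combining with $\tilde\mu_{1,c}$ from the factor structure of $\mathring\mu_1$. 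Everything else — the localization to the first block, the existence of the expansion, the smooth convergence — is a direct citation of Theorem \ref{thm:main} and the cited one-dimensional semiclassical literature.
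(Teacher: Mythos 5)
Your overall route is the same as the paper's: reduce via Theorem \ref{thm:main} to the first effective Hamiltonian $\mathring{\mu}_{\hbar,1}^{w}$ (the higher blocks lying spectrally above the window because $\inf\tilde{\mu}_k>\tilde{\mu}_{1,c}$ for $k\geq 2$), apply the one-well harmonic/Birkhoff analysis of \cite{Sjo1},\cite{DimSj} at the unique nondegenerate minimum of the principal symbol $\mathring{\mu}_1$, identify $f_1(0)$ with the subprincipal symbol $\mathring{\mu}_{1,1}$ evaluated at the bottom of the well, and convert the first two terms of $\mathring{\mu}_{1,1}$ into $\delta_c^{2/3}\langle L\tilde{u}_{1,c},\tilde{u}_{1,c}\rangle$ through the rescaling unitary $\mathcal{T}$. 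Up to that point your proposal and the paper coincide, including the Hessian computation.

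There is, however, a genuine gap at exactly the step you flag as the main obstacle: the vanishing at the well bottom of the two remaining contributions ${\rm Im}\langle \mathring{u}_1,\{n_0,\mathring{u}_1\}\rangle$ and $\mathring{\mu}_1\,{\rm Im}\langle\partial_x\mathring{u}_1,\partial_\xi\mathring{u}_1\rangle$. Your proposed justification --- that these are ``odd/imaginary-pairing contributions that die at a symmetric nondegenerate minimum, or get absorbed into the effective symbol's subprincipal part'' --- is not a proof and is not the actual mechanism: no symmetry of $\delta$ is assumed anywhere, and these terms are explicit constituents of the subprincipal symbol in Theorem \ref{thm:main}, so they cannot be ``absorbed''. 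The correct argument, which is what the paper does, is a direct computation: writing $n_0(x,\xi)=\mathring{\delta}(x)^{2/3}\mathcal{T}^*(x)\fM(\Theta(x,\xi))\mathcal{T}(x)$ with $\Theta=\xi\,\mathring{\delta}^{-1/3}$ and $\mathring{u}_1=\mathcal{T}^*\tilde{u}_1(\Theta)$, every $x$-derivative enters only through $\mathring{\delta}'(x)$ (via $\partial_x\Theta$ and via $\partial_x\mathcal{T}$, which is $-\tfrac{\mathring{\delta}'}{3\mathring{\delta}}$ times the generator $\mathcal{I}=\tfrac12+\ft\partial_{\ft}$), so each of the two terms carries an overall prefactor $\mathring{\delta}'(x)$ and vanishes at $x=0$ precisely because $0$ is the minimum of $\delta$, i.e. $\delta'(0)=0$. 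Without carrying out this computation, the two-term expansion \eqref{eq:asymptprecise} is not established. A minor additional caution: your normalization of the harmonic coefficient, $f_0'(0)=\bigl(\tfrac14\partial_x^2\mathring{\mu}_1\,\partial_\xi^2\mathring{\mu}_1\bigr)^{1/2}$, does not match the standard harmonic approximation (the level spacing is governed by $\bigl(\det{\rm Hess}\,\mathring{\mu}_1\bigr)^{1/2}$), so the identification of $c_1$ must be redone carefully from the Hessian eigenvalues $\tfrac23\delta_c^{-1/3}\delta''(0)\tilde{\mu}_{1,c}$ and $\partial_\nu^2\tilde{\mu}_1(\nu_{1,c})$.
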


This result is consistent with the asymptotics found for the first eigenvalues of $\mathcal{L}_{h,\bA}$ in \cite{DR}. However, it should be noted that, beyond the fact that we can deal with semiexcited states, Corollary \ref{cor:bottomsemiexcited} is an improvement since the asymptotic presented here is in powers of $\hbar = h^{1/3}$, whereas the one given in \cite{DR} is in powers of $h^{1/6}$.

A similar result also holds for any effective Hamiltonian $\mathring{\mu}_{\hbar,k}$, $k\geq 1$, as long as the conclusion of Theorem \ref{thm:HLmontgomery} is satisfied by the $k$-th dispersive curve $\nu\in\R\mapsto\tilde{\mu}_k(\nu)$.

\vspace{0.25cm}

To go beyond the semiexcited regime, our second application concerns energy windows of the form $[E_1 h^{4/3}, E_2 h^{4/3}]$, $E_1, E_2 < E_0$, where the interval $I = [E_1,E_2]$ contains no critical values for the effective Hamiltonians.

\begin{assumption}
    \label{assum:regularvalues}
    There exists $\eps >0$ such that $d\mathring{\mu_k} \neq 0$, for all $(x,\xi)\in \mathring{\mu}_k^{-1}(I_\eps)$, for $k\in\{1,\dots,k_I\}$, where $I_\eps = [E_1-\eps,E_2+\eps]$ and $k_I = k_{E_2}$.
\end{assumption}

In this situation, the description of the spectrum is given by an extension to all orders of the Bohr-Sommerfeld rules (\cite{HeRo1},\cite{HeRo}). To state the result, we introduce for $k\in\{1,\dots,k_I\}$ the smooth map
\begin{equation*}
    \forall E\in I_\eps,\quad J_k(E) := \int_{\gamma_k(E)}\xi dx,
\end{equation*}
where $\gamma_k(E)$ is the periodic bicharacteristic associated with the Hamiltonian $\mathring{\mu}_k$ and the energy level $E$. Keeping the same notation for them, we extend the maps $J_k$, $k\in\{1,\dots,k_I\}$ to the real line $\R$ in such a way that they are increasing. In particular, they are smooth diffeomorphisms from $[E_1,E_2]$ onto their images. It then follows from \cite{DimSj}[Lemma 5.2] that the following quantities
\begin{equation*}
    \beta_k := \frac{1}{2\pi}\int_{J_K(\gamma_k(E))}\xi dx -J_k(E), \quad E\in I_\eps,
\end{equation*}
are constants. By direct application of \cite{DimSj}[Theorem 15.10], we obtain the following result.

\begin{corollary}
    \label{cor:spectrumexcited}
    Under Assumptions \ref{assum:uniformtubular}-\ref{assum:kappa} and Assumptions \ref{assum:uniquewell}-\ref{assum:regularvalues}, there exist $h_0>0$ and, for each $k\in \{1,\dots,k_I\}$, a smooth map $J_k(I_\eps) \ni \sigma \rightarrow g^k(\sigma;h)\in\R$ with an asymptotic expansion
    \begin{equation*}
        g^k(\sigma;h) \sim \sigma + hg_1^k(\sigma)+\dots,\quad \sigma \in J(I_\eps),\ h\in (0,h_0),
    \end{equation*}
    converging in the smooth topology, such that the spectrum of the magnetic Laplacian $\mathcal{L}_{h,\bA}$ in the energy window $[E_1 h^{4/3},E_2 h^{4/3}]$ coincides, modulo $\O(h^\infty)$, with the disjoint union
    \begin{equation*}
        h^{4/3}\left(\bigsqcup_{k=1}^{k_I} J_{k}^{-1}\left(\{g^k(\sigma;\hbar)\,:\,\sigma \in \left(\hbar(\N+\frac{1}{2})+\beta_k\right)\cap J_k(I_\eps)\}\right)\right)\cap [E_1 h^{4/3},E_2 h^{4/3}].
    \end{equation*}
\end{corollary}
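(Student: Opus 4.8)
\textbf{Proof proposal for Corollary \ref{cor:spectrumexcited}.}

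The plan is to combine the dimensional reduction of Theorem \ref{thm:main} with the all-orders Bohr-Sommerfeld quantization rules of Helffer-Robert and Dimassi-Sjöstrand, applied to each effective Hamiltonian $\mathring{\mu}_{\hbar,k}^w$ separately. First, by Theorem \ref{thm:main}, the spectrum of $\mathcal{L}_{h,\bA}$ in the window $I_h = (-\infty, E_0 h^{4/3})$ coincides modulo $\O(h^\infty)$ with that of the direct-sum operator $h^{4/3}\bigoplus_{k=1}^{k_E}\mathring{\mu}_{\hbar,k}^w$ on $L^2(\R,\C^{k_E})$; since $E_2 < E_0$ we may restrict to the subwindow $[E_1 h^{4/3}, E_2 h^{4/3}]$ and, because the operator is block-diagonal, the spectrum there is the disjoint union (with multiplicities) of the spectra of $h^{4/3}\mathring{\mu}_{\hbar,k}^w$ for $k \in \{1,\dots,k_I\}$, where $k_I = k_{E_2}$ accounts for the fact that blocks with $k > k_{E_2}$ have $\mathring{\mu}_k > E_2$ on all of phase space and hence contribute nothing in $I_\eps$. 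So it suffices to describe $\sigma(\mathring{\mu}_{\hbar,k}^w) \cap [E_1, E_2]$ for each such $k$.

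Second, I would verify the hypotheses needed to run the microlocal normal-form analysis of \cite{DimSj}[Section 15] for $\mathring{\mu}_{\hbar,k}^w$ near the energy interval $I = [E_1, E_2]$. The symbol $\mathring{\mu}_{\hbar,k}$ is admissible with principal part $\mathring{\mu}_k \in S^0(\R^2)$; by Assumption \ref{assum:regularvalues}, $E$ is a regular value of $\mathring{\mu}_k$ for every $E \in I_\eps$, so the energy surfaces $\mathring{\mu}_k^{-1}(E)$ are smooth compact curves (compactness following from the confining behaviour of $\mathring{\mu}_k$, which grows like $\mathring{\delta}(x)^{2/3}$ in $x$ and like $\mathring{\Xi}(\xi)^2$ in $\xi$ on $K_E$, together with Assumption \ref{assum:uniquewell}). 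Under Assumption \ref{assum:uniquewell} the potential $\delta$ has a unique nondegenerate well, which together with the structure of $\tilde\mu_k$ (divergent at $\pm\infty$, with the relevant dispersive curve having a well by Theorem \ref{thm:HLmontgomery} when applicable) guarantees that for $E$ in the relevant range the level set $\mathring{\mu}_k^{-1}(E)$ is a single periodic bicharacteristic $\gamma_k(E)$; the Hamiltonian flow on it has no fixed points (regular value), so the classical period is smooth and nonvanishing in $E$. This is exactly the "single well / single periodic orbit" regular-energy regime in which \cite{DimSj}[Theorem 15.10] produces a microlocal Birkhoff normal form: there is a classical action $J_k(E) = \int_{\gamma_k(E)} \xi\, dx$, extended to an increasing diffeomorphism on $\R$, and an $h$-dependent function $g^k(\sigma;h) \sim \sigma + h g_1^k(\sigma) + \dots$, smooth and converging in the smooth topology, such that $\lambda \in \sigma(\mathring{\mu}_{\hbar,k}^w)$ near $I$ if and only if $J_k(\lambda) = g^k\big(\hbar(n+\tfrac12) + \beta_k;\hbar\big)$ for some $n\in\N$, where $\beta_k$ is the Maslov/subprincipal correction, which \cite{DimSj}[Lemma 5.2] shows is an energy-independent constant on $I_\eps$. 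Inverting, the part of $\sigma(\mathring{\mu}_{\hbar,k}^w)$ in $I_\eps$ is $J_k^{-1}\big(\{g^k(\sigma;\hbar) : \sigma \in (\hbar(\N+\tfrac12)+\beta_k)\cap J_k(I_\eps)\}\big)$, modulo $\O(\hbar^\infty) = \O(h^\infty)$.

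Third, I would assemble the pieces: scaling back by $h^{4/3}$, taking the disjoint union over $k\in\{1,\dots,k_I\}$, and intersecting with $[E_1 h^{4/3}, E_2 h^{4/3}]$ to discard the spectral points that the normal form produces in the buffer region $I_\eps \setminus I$, yields precisely the claimed formula; the uniform $\O(h^\infty)$ remainder propagates through because there are only finitely many blocks and Definition \ref{def:spectracoincide} is transitive (the reduction error from Theorem \ref{thm:main} and the Bohr-Sommerfeld error are both $\O(h^\infty)$, and their composition is again $\O(h^\infty)$). The main obstacle I anticipate is not the invocation of \cite{DimSj}[Theorem 15.10] itself but the careful checking of its geometric hypotheses for the \emph{operator-valued-reduced} symbols $\mathring{\mu}_k = \mathring{\delta}(x)^{2/3}\tilde\mu_k(\mathring{\Xi}(\xi)\mathring{\delta}(x)^{-1/3})$: one must confirm that, on the energy range under consideration, the truncations $\mathring{\Xi}$ and $\mathring{\delta}$ are inert (i.e. $\mathring{\mu}_k^{-1}(I_\eps)$ lies in the region where they coincide with $\xi$ and $\delta$, which is what the choice of $K_E$ guarantees), that the level sets are single smooth closed curves rather than unions of several, and that no critical point of the dispersive curve $\tilde\mu_k$ creates a spurious equilibrium in phase space inside $I_\eps$ — this is why Assumption \ref{assum:regularvalues} together with Assumption \ref{assum:uniquewell} is exactly the right hypothesis, and why for $k$ beyond $k=1$ one implicitly needs the conclusion of Theorem \ref{thm:HLmontgomery} (or at least monotonicity of $\tilde\mu_k$ away from its wells) to hold on the relevant $\nu$-range.
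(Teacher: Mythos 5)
Your proposal is correct and follows essentially the same route as the paper: the paper obtains this corollary by combining the dimensional reduction of Theorem \ref{thm:main} with a direct application of the all-orders Bohr--Sommerfeld result \cite{DimSj}[Theorem 15.10] to each effective Hamiltonian $\mathring{\mu}_{\hbar,k}^w$, using \cite{DimSj}[Lemma 5.2] for the constancy of $\beta_k$. Your additional verification of the geometric hypotheses (regular values, inert truncations, single periodic orbit, with the caveat about Theorem \ref{thm:HLmontgomery} for $k>1$) is exactly the implicit content of Assumptions \ref{assum:uniquewell}--\ref{assum:regularvalues} that the paper leaves to the reader.
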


\subsection{Organization of the article}

In Section \ref{spectral}, by means of the tubular coordinates $(x,t)$ and the rescaling $\ct = h^{1/3}t$, we present a local normal form of the magnetic Laplacian close to the curve $\Gamma$, introduced by \cite{DR}. From this normal form, we deduce Theorem \ref{thm:spectrediscret}, whose proof is inspired by \cite{BonRayHer}. In Section \ref{sect:microlocalization}, we analyze the microlocalization properties of the eigenfunctions associated with the eigenvalues contained in the energy window $(-\infty, E_0 h^{4/3})$. Building on Theorem \ref{thm:firstlocgamma}, we further prove microlocalization in a compact region of $T^*\Gamma$. This allows us to reduce the spectral analysis of the magnetic Laplacian to that of the bounded pseudodifferential operator $\mathcal{N}_\hbar$, briefly introduced above, with an operator-valued symbol $n_\hbar$. The Born-Oppenheimer dimensional reduction is done in Section \ref{sect:dimreduction}: we recall there the general scheme developed in \cite{BFKRV} and apply it to the operator $\mathcal{N}_\hbar$ to prove Theorem \ref{thm:main}. The spectral analysis of these effective Hamiltonians is conducted in Section \ref{subsect:onedimensionpseudos} and we prove there Corollary \ref{cor:bottomsemiexcited}. In Appendix \ref{sect:pseudoropvalued}, we recall the main definitions and results concerning pseudodifferential calculus in $\R^n$ with operator-valued symbols.

%%%%%%%%%%%%%%%%%%%%%%%%%%%%%%%%%%%%%%%%%%%%%%%%%%%%%%%%%%%%%%%%%%%%%%%%%%%%%%%%%%%%%%%%%%%%%%%%%%%%%%%%%%%%%%%%%%%%%%%%%%%%%%%%%%%%%%%%%%

\section{Confinement and discrete spectrum}
\label{spectral}

\subsection{Towards a local normal form}
\label{subsect:locdescSch}

We give an explicit description of the magnetic Laplacian $\mathcal{L}_{h,\bA}$ close to the curve $\Gamma = \{\gamma(x),~x\in\R\}$, which we suppose parametrized by arc length.

\subsubsection{Local tubular coordinates}
As introduced in Section \ref{sect:intro}, we consider the standard tubular coordinates $(x,t)$ on a neighbourhood of the curve by considering the map
\begin{equation}
    \label{eq:standardtubucoord}
    \forall (x,t)\in\R^2,\quad \Phi(x,t) = \gamma(x) + t\, \vec n(x).
\end{equation}

In what follows, we will assume that there exists a uniform distance from the curve $\Gamma$ for which the previous coordinates make sense.

\begin{assumption}
    \label{assum:uniformtubular}
    There exists $d_0 > 0$ such that the tubular coordinates 
    \begin{equation*}
        \begin{aligned}
        \Phi\,:\,&\R\times (-d_0,d_0) \rightarrow \R^2\\
        &(x,t) \mapsto \gamma(x) + t\,\vec n(x),
        \end{aligned}
    \end{equation*}
    define a diffeomorphism and we denote by $\Omega_0 = \Phi(\R \times (-d_0,d_0))$ the corresponding tubular neighborhood.
\end{assumption}

Observe that the Jacobian of the diffeomorphism $\Phi$, that we denote by $m$, is given by 
\begin{equation*}
    \forall (x,t)\in\R\times (-d_0,d_0),\quad m(x,t) = 1 - tk(x).
\end{equation*}

Assumption \ref{assum:uniformtubular} implies in particular that the curvature $k$ must be bounded and we denote by $K>0$ such a bound. Similarly, it implies the existence of $m_0 >0$ such that $m \geq m_0$ on $\R\times(-d_0,d_0)$.

\subsubsection{Magnetic Laplacian in tubular coordinates}

We now give a description of the magnetic Laplacian in the tubular coordinates. The pull-back by the diffeomorphism $\Phi$ of the magnetic potential $\bA$, denoted by $(\tilde{A}_1,\tilde{A}_2)$, is given for $(x,t)\in\R\times(-d_0,d_0)$ by
\begin{equation*}
    \tilde{A}_1(x,t) = (1-tk(x)) \,\bA(\Phi(x,t)) \cdot \gamma'(x),\quad 
    \tilde{A}_2(x,t) = \bA(\Phi(x,t)) \cdot \vec n(x).
\end{equation*}
Introducing the notation $\tilde{B} = B\circ \Phi$, we get 
\begin{equation*}
    \partial_x \tilde{A}_2 - \partial_t \tilde{A}_1 = m\tilde{B}.
\end{equation*}

The quadratic form $\mathcal{Q}_{h,\bA}$ has the following expression in these coordinates:
for any $\varphi \in H^1(\R^2)$ whose support is contained in $\Omega_{0}$, writing $\psi = \varphi \circ \Phi$, we have
\begin{multline*}
    \int_{\R^2} |(-ih\nabla+\bA)\varphi|^2\,dx =\\
    \int_{\R\times(-d_0,d_0)} \left\{m(x,t)^{-2}|(-ih\partial_x + \tilde{A}_1(x,t))\psi|^2 + |(-ih\partial_t + \tilde{A}_2(x,t))\psi|^2 \right\}m(x,t)\,dxdt.
\end{multline*}
As $\R\times (-d_0,d_0)$ is simply connected, there exists a choice of gauge such that
\begin{equation*}
    \tilde{A}_1(x,t) = -\int_0^t (1-t'k(x))\tilde{B}(x,t')\,dt' \quad \mbox{and}\quad \tilde{A}_2(x,t) = 0.
\end{equation*}
In what follows, we will simply write $\tilde{A}$ for $\tilde{A}_1$ and denote by $\mathcal{L}_h$ the magnetic Laplacian for this choice of gauge.

As it is preferable to consider transformations preserving the $L^2$ norm, we introduce the unitary transformation 
\begin{equation*}
    \tilde{\varphi} = m^{-1/2} \varphi \circ \Phi,
\end{equation*} 
for all $\varphi \in H^1(\R^2)$ whose support is contained in $\Omega_{0}$. Then, thanks to the previous expression of the quadratic form, the magnetic Laplacian $\mathcal{L}_{h}$ coincide through this unitary transformation (and up to the change of gauge), 
close to the curve $\Gamma$ with the operator
\begin{equation*}
    \tmagSch = P_{1}^2 + P_{2}^2 - h^2\frac{k(x)^2}{4m^2},
\end{equation*}
where $P_1 = hD_t$ and $P_2 = m^{-1/2}(hD_x + \tilde{A}(x,t))m^{-1/2}$, defined on a domain of $L^2(\R\times(-d_0,d_0);dx dt)$. More precisely, we can write 
\begin{equation*}
    \magSch \varphi = \tmagSch \tilde{\varphi}.
\end{equation*}

\subsubsection{Behavior of the magnetic field close to its zero locus}
We now give the precise assumptions we make on our magnetic field. We first make an assumption on $B$ away from $\Gamma$.
\begin{assumption}
    \label{assum:confinement}
    We suppose that the intensity of the magnetic field $|B|$ is uniformly bounded by below outside of the tubular neighborhood $\Omega_0$: there exists a constant $b_0 > 0$ such that 
    \begin{equation*}
        \forall z\in \R^2 \setminus \Omega_0,\ |B(z)| \geq b_0.
    \end{equation*}
\end{assumption}

The following assumption describe the behavior of $B$ an the tubular neighborhood $\Omega_0$.

\begin{assumption}
    \label{increasingBnearGamma}
    We suppose that the map $(x,t)\in\R\times(-d_0,d_0)\mapsto \partial_{t}\tilde{B}(x,t)=\nabla B(\Phi(x,t))\cdot \vec n(x)$ is uniformly bounded by below: there exists $\delta_{0}>0$ such that 
    \begin{equation*}
        \forall (x,t) \in\R\times(-d_0,d_0),\ \partial_{t}\tilde{B}(x,t) > \delta_{0}.
    \end{equation*}
\end{assumption}

In what follows, we denote by $\delta$ the transversal derivative $x\in\R\rightarrow \partial_{t}\tilde{B}(x,0)\in\R$.
By the previous assumption, $\delta$ is bounded by below: we denote by $\delta_{\rm min} > 0$ its minimum. Equivalently, the last assumption states that the annulation of $B$ on the curve $\Gamma$ is uniformly nondegenerate transversely. 

As one expects to study concentration on the curve $\Gamma$, in order to further describe our magnetic Laplacian $\magSch$, we are led to perform a Taylor expansion of the vector potential near $\Gamma$, i.e. at $t = 0$. 
We have, for $(x,t)\in\R\times(-d_0,d_0)$,
\begin{equation*}
    \tilde{B}(x,t) = \delta(x)t + \partial_{t}^{2}\tilde{B}(x,0)\frac{t^2}{2}+\O(t^3),
\end{equation*}
from which we deduce 
\begin{equation*}
    \tilde{A}(x,t) = \frac{\delta(x)}{2}t^2 + \kappa(x)t^3 + \O(t^4),
\end{equation*}
where
\begin{equation*}
    \kappa(x) = \frac{1}{6}\partial_{t}^{2}\tilde{B}(x,0) - \frac{1}{3}k(x)\delta(x).
\end{equation*}

The fact that the first term of the expansion of $\tilde{A}$ is in $t^2$ leads us to consider a rescaling with respect to the transvere variable $t$. More precisely, we recall the notation
\begin{equation*}
    \hbar = h^{1/3},
\end{equation*}    
introduced in Section \ref{sect:intro} and we consider the new variable
\begin{equation*}
    \ct = \hbar t.
\end{equation*}
The coordinates $(x,\ct)$ are called the \emph{dilated tubular coordinates}. Then we have
\begin{equation}
    \label{eq:magdilatedcoord}
        D_{t} = \hbar^{-1}D_{\ct}\quad\mbox{and}\quad
        \tilde{A}(x,t) = \hbar^{2}\frac{\delta(x)}{2}\ct^2+\hbar^3 \kappa(x)\ct^3+\hbar^{4}\check{r}(x,\ct) = \hbar^{2}\check{A}_{\hbar}(x,\ct).
\end{equation}
where the remainder $\check{r(x,\ct)}$ is of order $4$ in the variable $\ct$ at each point $x$, that is, $\check{r}(x,\ct) = \O_x(\ct^4)$.

This leads us to consider the operator
\begin{equation}
    \label{eq:rescaledmagSch}
    \magSchbar = D_{\ct}^2 + m_{\hbar}^{-1/2}\left(\hbar D_{x} - \check{A}_{\hbar}(x,\ct)\right)m_{\hbar}^{-1}\left(\hbar D_{x} - \check{A}_{\hbar}(x,\ct)\right)m_{\hbar}^{-1/2}
    -\hbar^{2}\frac{k(x)^2}{4m_{\hbar}^2},
\end{equation}
where $m_{\hbar}(u,\ct) = 1-\hbar \ct k(u)$, defined on a subspace of $L^2(\R\times (-\hbar^{-1}d_0,\hbar^{-1}d_0);dx d\ct)$. Introducing the notation 
\begin{equation}
    \label{eq:checkvarphi}
    \check{\varphi} = \hbar^{1/2}\tilde{\varphi}(x,\hbar t),
\end{equation}
and denoting by $\check{\cQ}_{\hbar}$ the quadratic form associated to $\magSchbar$, we can write 
\begin{equation}
    \label{eq:checkquadratic}
    \mathcal{Q}_h (\varphi) = \hbar^{4} \check{\cQ}_{\hbar}( \check{\varphi}).
\end{equation}

\subsubsection{Behavior of the transversal derivative}
\label{subsubsect:delta}
As stated before and as our analysis will show, the transversal derivative $\delta$ will play the role of a confining potential in the study of low-energy eigenfunctions. 
For this reason, we make the following hypotheses about the behavior of $\delta$ at infinity.

\begin{assumption}
    \label{assum:delta}
    The transversal derivative $\delta$ has controlled oscillations in the sense that
    \begin{equation*}
        \exists C_\delta > 0,~\forall x \in\R,\quad|\delta '(x)| \leq C_{\delta} \delta(x)^{2/3}.
    \end{equation*}
    Moreover, there exists $\delta_{*} > \delta_{\rm min}$ such that 
    \begin{equation*}
        \liminf_{x\rightarrow \pm\infty} \delta(x) > \delta_{*}.
    \end{equation*}
\end{assumption}

The following assumption makes sure that the magnetic potential is well approximated close to the curve $\Gamma$ by the first terms of its asymptotic given by Equation \eqref{eq:magdilatedcoord}.

\begin{assumption}
    \label{assum:kappa}
    The second-order transversal derivative of the potential $\tilde{A}$ on $\Gamma$ is controlled by $\delta$, which can be stated as
    \begin{equation*}
        \exists C_{\kappa} >0,~\forall x\in\R,\quad|\kappa(x)| \leq C_{\kappa} \delta(x),
    \end{equation*}
    while the remainder $\check{r}$ is uniformly of order 4 on $\Gamma$:
    \begin{equation*}
        \exists C_{rmd},~\forall (x,\ct)\in\R\times(-\hbar^{-1}d_0, \hbar^{-1}d_0),\quad |\check{r}(x,\ct)|\leq C_{rmd} |\ct|^4.
    \end{equation*}
\end{assumption}

In order to emphasize the confining role of the transversal derivative $\delta$, we consider yet another change of coordinates. We set
\begin{equation}
    \label{eq:coordfrak}
    \mathfrak{x} = x\quad\mbox{and}\quad\mathfrak{t} = \delta(x)^{1/3} \ct.
\end{equation}
The derivatives in these new coordinates become
\begin{equation*}
    D_{\ft} = \delta(x)^{-1/3} D_{\ct} \quad\mbox{and}\quad
    D_{\fx} = D_{x} + \frac{1}{3}\delta'(x)\delta(x)^{-1}\ft D_{\ft}. 
\end{equation*}
The space $L^2(\R^2;dx d\ct)$ is sent by this change of coordinats onto $L^2(\R^2;\delta(\fx)^{-1/3}\,d\fx d\ft)$. So as to get an operator on the space  $L^2(\R^2;d\fx d\ft)$, we shall conjugate by the squared root of the weight. 
In these new canonical coordinates $(\fx,\ft)$, the operator $\magSchbar$ is unitarily equivalent to
\begin{equation}
    \label{eq:frakL}
    \mathfrak{L}_\hbar = \delta(\fx)^{2/3}D_{\ft}^2 + \mathfrak{P}_{\hbar}^{2} - \hbar^{2} \frac{k(\fx)}{\mathfrak{m}_{\hbar}},
\end{equation}
where we have set
\begin{equation*}
    \mathfrak{P}_{\hbar} = \delta^{-1/6} \mathfrak{m}_{\hbar}^{-1/2}\left(\hbar D_{\fx} - \mathfrak{A}_{\hbar} + \frac{\hbar}{3}\delta'\delta^{-1} \ft D_{\ft}\right)\mathfrak{m}_{\hbar}^{-1/2}\delta^{1/6},
\end{equation*}
as well as
\begin{equation*}
    \mathfrak{m}_{\hbar}(\fx,\ft) = 1 - \hbar\ft \delta(\fx)^{-1/3}  k(\fx)\quad\mbox{and}\quad 
    \mathfrak{A}_{\hbar}(\fx,\ft) = \check{A}_\hbar(\fx,\delta^{-1/3}(\fx)\ft).
\end{equation*}
To conclude, we introduce the notation 
$$\mathfrak{U}_\hbar: L^2(\Omega_0;dz)\rightarrow L^2(\R^2;d\fx d\ft),$$ 
the norm-preserving linear operator associated to the composition of the various changes of coordinates introduced so far. We can finally write
\begin{equation}
    \label{eq:reductionfrak}
    \magSch  = \hbar^4\, \mathfrak{U}_\hbar^*\circ  \mathfrak{L}_\hbar \circ \mathfrak{U}_\hbar,
\end{equation}
or, equivalently, introducing the notation $\fQ_{\hbar}$ for the quadratic form associated to $\mathfrak{L}_\hbar$,
\begin{equation}
    \label{eq:frakquadratic}
    \cQ_h = \hbar^4\, \fQ_\hbar\circ \mathfrak{U_\hbar}.
\end{equation}

\begin{remark}
Note that a straightforward computation provides the equation
\begin{equation*}
    \mathfrak{P}_{\hbar} = \mathfrak{m}_{\hbar}^{-1/2}\left(\hbar D_{\fx} - \mathfrak{A}_{\hbar} + \frac{\hbar}{6}\delta'\delta^{-1} (\ft D_{\ft} + D_{\ft}\ft)\right)\mathfrak{m}_{\hbar}^{-1/2}.
\end{equation*}
In the next section, we will also make use of the following Taylor expansion of $\mathfrak{A}_{\hbar}$:
\begin{equation}
    \label{eq:taylorfrakA}
    \mathfrak{A}_{\hbar}(\fx,\ft) = \frac{1}{2}\delta(\fx)^{1/3}\ft^2 + \hbar \kappa(\fx)\delta(\fx)^{-1} \ft^3 + \hbar^2 \mathfrak{r}_{\hbar}(\fx,\ft).
\end{equation}
\end{remark}

\subsection{Discrete spectrum for low-energy windows}

This section is dedicated to the proof of Theorem \ref{thm:spectrediscret}. This is done by estimating a lower bound of the essential spectrum thanks to Persson's Theorem.

\begin{theorem}[Persson,\,\cite{Pe}]
    \label{thm:persson}
    Let $\bA\in C^1(\R^2,\R^2)$ be magnetic potential and $\mathcal{L}_\bA = (-i\nabla - \bA)^2$ the corresponding essentially self-adjoint magnetic Laplacian.
    Then, the bottom of the essential spectrum is given by 
    \begin{equation*}
        \inf \sigma_{\rm ess}(\mathcal{L}_\bA) = \Sigma(\mathcal{L}_\bA),
    \end{equation*}
    where 
    \begin{equation*}
        \Sigma(\mathcal{L}_\bA) = 
        \sup_{K\subset \R^2}\left[\inf_{\| \varphi\|=1} \langle \mathcal{L}_\bA \varphi,\varphi\rangle_{L^2}\,:\,\varphi\in C_{c}^\infty(\R^2\setminus K)\right],
    \end{equation*}
    and the sets $K$ are compact sets.
\end{theorem}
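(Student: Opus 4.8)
The plan is to establish the two inequalities $\Sigma(\mathcal{L}_\bA)\le \inf\sigma_{\rm ess}(\mathcal{L}_\bA)$ and $\Sigma(\mathcal{L}_\bA)\ge \inf\sigma_{\rm ess}(\mathcal{L}_\bA)$ separately, the common tool being the magnetic localization (IMS) formula: for $\chi_1,\chi_2\in C^\infty(\R^2)$ with $\chi_1^2+\chi_2^2=1$ and $\nabla\chi_1,\nabla\chi_2$ bounded, and for $u$ in the form domain,
\begin{equation*}
    \|(-i\nabla-\bA)(\chi_1 u)\|^2+\|(-i\nabla-\bA)(\chi_2 u)\|^2=\|(-i\nabla-\bA)u\|^2+\||\nabla\chi_1|\,u\|^2+\||\nabla\chi_2|\,u\|^2 .
\end{equation*}
Choosing $\chi_1\equiv 1$ near a given compact set $K$ with $\supp\chi_1$ compact, this splits the magnetic energy into a piece localized near $K$ and a piece supported in $\R^2\setminus K$, up to the localization error supported on $\supp\nabla\chi_1$.

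For $\Sigma\le\inf\sigma_{\rm ess}$, I would fix $\lambda\in\sigma_{\rm ess}(\mathcal{L}_\bA)$ and a singular (Weyl) sequence $(u_n)$, i.e. $u_n\in\dom(\mathcal{L}_\bA)$, $\|u_n\|=1$, $u_n\rightharpoonup 0$ in $L^2$, and $(\mathcal{L}_\bA-\lambda)u_n\to 0$ in $L^2$; then $\langle\mathcal{L}_\bA u_n,u_n\rangle\to\lambda$ and $(u_n)$ is bounded in $H_\bA^{1}(\R^2)$. Given a compact $K$, take cut-offs $\chi_1,\chi_2$ as above. Since $\bA\in C^1$, the embedding $H_\bA^{1}(\R^2)\hookrightarrow L^2_{\rm loc}(\R^2)$ is compact, so $\chi_1 u_n\to 0$ in $L^2$; expanding $\langle\mathcal{L}_\bA(\chi_1 u_n),\chi_1 u_n\rangle$ and using that $[\mathcal{L}_\bA,\chi_1]$ is a first-order operator with compactly supported coefficients controlled by the $H_\bA^{1}$-bound, one also obtains $\|(-i\nabla-\bA)(\chi_1 u_n)\|\to 0$. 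The localization formula then gives $\|\chi_2 u_n\|\to 1$ and $\langle\mathcal{L}_\bA(\chi_2 u_n),\chi_2 u_n\rangle\to\lambda$. Truncating $\chi_2 u_n$ at a large radius and using the density of $C_c^\infty(\R^2\setminus K)$ in the form closure of $\{u\in H_\bA^{1}(\R^2):u\equiv 0\text{ near }K\}$, this yields $\inf\{\langle\mathcal{L}_\bA\varphi,\varphi\rangle_{L^2}:\varphi\in C_c^\infty(\R^2\setminus K),\ \|\varphi\|=1\}\le\lambda$. Taking the supremum over $K$ and then the infimum over $\lambda\in\sigma_{\rm ess}$ gives the claim.

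For the converse, I would take $\lambda<\inf\sigma_{\rm ess}(\mathcal{L}_\bA)$, so that $P:=\mathbb{1}_{(-\infty,\lambda)}(\mathcal{L}_\bA)$ has finite rank and ${\rm Ran}\,P$ is spanned by $L^2$-normalized eigenfunctions $\phi_1,\dots,\phi_N$ (finiteness holding because any accumulation of eigenvalues in $(-\infty,\lambda)$ would lie in $\sigma_{\rm ess}$). Pick $R$ with $\sum_{j=1}^N\int_{|z|>R}|\phi_j(z)|^2\,dz<\delta$ and set $K=\overline{B_R}$. For $\varphi\in C_c^\infty(\R^2\setminus K)$, Cauchy--Schwarz gives $\|P\varphi\|^2=\sum_j|\langle\varphi,\phi_j\rangle|^2\le N\delta\|\varphi\|^2$; since $P$ commutes with $\mathcal{L}_\bA$ and $\mathcal{L}_\bA\ge 0$,
\begin{equation*}
    \langle\mathcal{L}_\bA\varphi,\varphi\rangle=\langle\mathcal{L}_\bA P\varphi,P\varphi\rangle+\langle\mathcal{L}_\bA(1-P)\varphi,(1-P)\varphi\rangle\ge \lambda\|(1-P)\varphi\|^2\ge\bigl(\lambda-|\lambda|N\delta\bigr)\|\varphi\|^2 .
\end{equation*}
Hence $\Sigma(\mathcal{L}_\bA)\ge\lambda-|\lambda|N\delta$; letting $\delta\to 0$ and taking the supremum over $\lambda<\inf\sigma_{\rm ess}$ gives $\Sigma(\mathcal{L}_\bA)\ge\inf\sigma_{\rm ess}(\mathcal{L}_\bA)$, and combining the two inequalities finishes the proof.

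The main obstacle is the first inequality, and within it the step showing that $\chi_1 u_n$ has vanishing magnetic energy and not merely vanishing $L^2$-norm: this relies on local elliptic regularity for $\mathcal{L}_\bA$ (legitimate since $\bA\in C^1$) to control the commutator term using only the a priori bound $\sup_n\|u_n\|_{H_\bA^{1}}<\infty$. The remaining points — the truncation at infinity and the identification of the relevant form domain with the closure of $C_c^\infty(\R^2\setminus K)$ — are routine density arguments.
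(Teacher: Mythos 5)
The paper does not prove this statement at all: it is quoted verbatim from Persson's paper \cite{Pe} and used as a black box, so there is no internal proof to compare against. Your two-sided argument is the classical proof of Persson's theorem and is essentially correct: the lower bound $\Sigma\ge\inf\sigma_{\rm ess}$ via the finite-rank spectral projection $\mathbb{1}_{(-\infty,\lambda)}(\mathcal{L}_\bA)$ and the decay of finitely many eigenfunctions is exactly the standard argument (and your bound $\lambda-|\lambda|N\delta$ handles the sign of $\lambda$ correctly), while the upper bound via a Weyl sequence, the magnetic IMS formula, and the compactness of $H^1_\bA\hookrightarrow L^2_{\rm loc}$ (legitimate since $\bA\in C^1$ is locally bounded) is also the standard route. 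One remark: the step you single out as the main obstacle is in fact unnecessary. To conclude $\inf\{\langle\mathcal{L}_\bA\varphi,\varphi\rangle:\varphi\in C_c^\infty(\R^2\setminus K),\ \|\varphi\|=1\}\le\lambda$ you only need $\langle\mathcal{L}_\bA(\chi_1u_n),\chi_1u_n\rangle=\|(-i\nabla-\bA)(\chi_1u_n)\|^2\ge0$ in the IMS identity, together with $\|\chi_1u_n\|\to0$ and the vanishing of the localization errors $\|\,|\nabla\chi_j|u_n\|$; the full statement $\|(-i\nabla-\bA)(\chi_1u_n)\|\to0$ is not needed, and even if you do want it, it follows from the commutator expansion $[\mathcal{L}_\bA,\chi_1]=-\Delta\chi_1-2i\nabla\chi_1\cdot(-i\nabla-\bA)$, the uniform $H^1_\bA$ bound and the local strong $L^2$ convergence, with no appeal to elliptic regularity. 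With that simplification the only remaining technical points are the ones you already flag as routine (truncation at infinity of $\chi_2u_n$ and density of $C_c^\infty(\R^2\setminus K)$ in the relevant form domain), so the proposal stands.
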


Using the local description of the magnetic Laplacian close to the curve $\Gamma$ derived in the previous section,
we are able to prove Theorem \ref{thm:spectrediscret}.

\begin{proof}[Proof of Theorem \ref{thm:spectrediscret}]
    By Theorem \ref{thm:persson}, we are to estimate the quadratic form $\cQ_h$ on test function $\varphi$ with support outside growing compacts. 
    This is done by considering successively different subdomains of the plane $\R^2$. Let $E \in (-\infty,E_0)$, where we set
    \begin{equation*}
        E_0 = \delta_*^{2/3}\tilde{\mu}_{1,c}.
    \end{equation*}

    \medskip
    \textit{Confinement away from $\Gamma$.} Let $\varphi\in C_{0}^\infty(\R^2)$. By a usual inequality in the study of magnetic Laplacians (see for instance \cite[Lemma 1.4.1]{SH}), we can write
    \begin{equation}
        \label{eq:ineqmag}
        \cQ_{h}(\varphi) \geq h\int_{\R^2} |B(z)| |\varphi(z)|^2\,dz.
    \end{equation}
    This inequality, combined with Assumption \ref{assum:confinement}, allows to say that, if the support of $\varphi$ is contained in $\R^2\setminus \Omega_0$, we have 
    \begin{equation}
        \label{eq:awayfromGamma}
        \cQ_{h}(\varphi) \geq hb_0 \int_{\R^2} |\varphi(z)|^2\,dz.
    \end{equation}

    \medskip
    \textit{Confinement in the tubular neighborhood $\Omega_0$.}
    We now treat the case where the test function $\varphi$ has its support in $\Omega_0$. We can thus use the tubular coordinates and in particular Equation \eqref{eq:checkquadratic}, that we recall here
    \begin{equation*}
    \mathcal{Q}_h (\varphi) = \hbar^{4} \check{\cQ}_{\hbar}( \check{\varphi}).
    \end{equation*}
    where the notation $\check{\varphi}$ has been introduce by Equation \eqref{eq:checkvarphi}, and defines a test function whose support is 
    in $\R\times(-\hbar^{-1}d_0,\hbar^{-1}d_0)$.
    
    \smallskip
    \textit{Step 1}. First, we prove a confinement estimate relative to the variable $\ct$. Let $r>0$ and consider the following support condition for $\check{\varphi}$:
    \begin{equation*}
        {\rm supp}(\check{\varphi}) \subset \{(x,\ct) \in \R\times(-\hbar^{-1}d_0,\hbar^{-1}d_0)\,:\, |\ct| > r\}.
    \end{equation*}
    We introduce the notation
    \begin{equation*}
        \check{P}_{\hbar} = m_{\hbar}^{-1/2}\left(\hbar D_{x} - \check{A}_{\hbar}(x,\ct)\right)m_{\hbar}^{-1/2}.
    \end{equation*}
    Observe that we have
    \begin{multline*}
        [D_{\ct},\check{P}_{\hbar}]
        = [D_{\ct},m_{\hbar}^{-1/2}]\left(\hbar D_{x} - \check{A}_{\hbar}(x,\ct)\right)m_{\hbar}^{-1/2} + m_{\hbar}^{-1/2}[D_{\ct},\hbar D_{x} - \check{A}_{\hbar}(x,\ct)]m_{\hbar}^{-1/2} \\
        + m_{\hbar}^{-1/2}\left(\hbar D_{x} - \check{A}_{\hbar}(x,\ct)\right)[D_{\ct},m_{\hbar}^{-1/2}],
    \end{multline*}
    and
    \begin{equation*}
        [D_{\ct},m_{\hbar}^{-1/2}] = \frac{i}{2}\hbar k(x)m_{\hbar}^{-3/2} \quad\mbox{and}\quad
        [D_{\ct},\hbar D_{x} - \check{A}_{\hbar}(x,\ct)] =  i\partial_{\ct}\check{A}_{\hbar}(x,\ct).
    \end{equation*}
    We get
    \begin{equation*}
        [D_{\ct},\check{P}_{\hbar}] = i\partial_{\ct}\check{A}_{\hbar}m_{\hbar}^{-1} + \frac{i}{2}\hbar \left(k(x)m_{\hbar}^{-1/2} \check{P}_{\hbar} + \check{P}_{\hbar} k(x)m_{\hbar}^{-1/2}\right).
    \end{equation*}
    In what follows we write $\check{B}_\hbar = \partial_{\ct}\check{A}_{\hbar}$. We now make use of the following inequality
    \begin{equation*}
        \left|\int_{\R^2} [D_{\ct},\check{P}_{\hbar}]\check{\varphi} \,\overline{\check{\varphi}} \,dx d\ct\right| = 2\left|\int_{\R^2} {\rm Im}(D_{\ct}\check{\varphi} \cdot \overline{\check{P}_{\hbar}\check{\varphi}})\,dx d\ct\right| 
        \leq \int_{\R^2} |D_{\ct}\check{\varphi}|^2 + |\check{P}_{\hbar}\check{\varphi}|^2 \,dx d\ct.
    \end{equation*}
    Using the previous computation of the commutator, we obtain
    \begin{equation*}
        \int_{\R^2} |\check{B}_\hbar||\check{\varphi}|^2 m_{\hbar}^{-1}\,dx d\ct 
        \leq \check{\cQ}_{\hbar}(\check{\varphi}) + 2\hbar \int_{\R^2} \left|{\rm Im}\left(\check{P}_{\hbar}\check{\varphi} \cdot \overline{\check{\varphi}}\right)\right| |k(x)| m_{\hbar}^{-1/2} \,dx d\ct + \hbar^2 \int_{\R^2}\frac{k(x)^2}{4m_{\hbar}^2} |\check{\varphi}|^2 \,dx d\ct.
    \end{equation*}

    By the different consequences of Assumption \ref{assum:uniformtubular}, we deduce the following inequality
    \begin{multline*}
         \int_{\R^2} |\check{B}_\hbar||\check{\varphi}|^2 \,dx d\ct \leq \check{\cQ}_{\hbar}(\check{\varphi}) +  K m_{0}^{-1/2} \hbar \int_{\R^2} |\check{P}_{\hbar} \check{\varphi}|^2 \,dx d\ct \\
         + (K m_{0}^{-1/2} \hbar + \frac{1}{4}K^2 m_{0}^{-2}\hbar^2) \int_{\R^2} |\check{\varphi}|^2 \,dx d\ct.
    \end{multline*}
    Meaning we can write for some constants $c_1>0$, $C_1,>0$ independent of $h$,
    \begin{equation*}
        \check{\cQ}_{\hbar}(\check{\varphi})\geq \int_{\R^2} \left(c_1|\check{B}_\hbar|-C_1\hbar\right)|\check{\varphi}|^2 \,dx d\ct.
    \end{equation*}
    By definition we have
    \begin{equation*}
        \check{B}_\hbar(x,\ct) = \hbar^{-1}\tilde{B}(x,\hbar t).
    \end{equation*}
    Using Assumption \ref{increasingBnearGamma}, we obtain by simple integration that
    \begin{equation*}
        |\check{B}_\hbar(x,\ct)| \geq \delta_{0}|\ct|\geq \delta_{0} r.
    \end{equation*}
    Finally, we have proved
    \begin{equation*}
        \check{\cQ}_{\hbar}(\check{\varphi}) \geq (c_1\delta_{0} r - \tilde{C}_1 \hbar) \|\check{\varphi}\|^2.
    \end{equation*}
    In what follows, we choose $r_0 >0$ large enough to have
    \begin{equation}
        \label{eq:defr0}
        c_1\delta_{0} r_0 > E,
    \end{equation} 
    and the previous inequality then gives
    \begin{equation}
        \label{eq:confinementt}
        \check{\cQ}_{\hbar}(\check{\varphi}) \geq (E - \tilde{C}_1 \hbar) \|\check{\varphi}\|^2.
    \end{equation}
    
    \smallskip
    \textit{Step 2}. We now prove confinement with respect to the $x$ variable. Let $R>0$ and suppose that the support of $\check{\varphi}$ is contained in the set $\{(x,\ct)\in \R\times(-\hbar^{-1}d_0,\hbar^{-1}d_0)\,:\,|x| > R,~|\ct| < r_0\}$. 

    We now make use of the coordinates $(\fx,\ft)$ introduced by Equation \eqref{eq:coordfrak} in Section \ref{subsubsect:delta}. 
    By Equation \eqref{eq:frakquadratic} we have 
    \begin{equation*}
        \check{\cQ}_{\hbar}(\check{\varphi}) = \fQ_{\hbar}(\phi),
    \end{equation*}
    where we set $\phi = \mathfrak{U}_\hbar \varphi$ and the support of $\phi$ is now contained into the set $\{(\fx,\ft) \in \R^2\,:\,|\fx| > R,~ |\ft| \leq r_0\delta(\fx)^{1/3}\}$.
    
    Using Equations \eqref{eq:frakL} and \eqref{eq:taylorfrakA}, we can write
    \begin{multline*}
        \fQ_\hbar (\phi) = \int_{\R^2} \delta(\fx)^{2/3} |D_{\ft}\phi|^2 +\\
        \left|\mathfrak{m}_{\hbar}^{-1/2}\left(\hbar D_{\fx} - \frac{1}{2}\delta(\fx)^{1/3}\ft^2 - \hbar\left(\kappa(\fx)\delta(\fx)^{-1}\ft^3-\frac{1}{6}\delta'(\fx)\delta(\fx)^{-1}(\ft D_{\ft} + D_{\ft}\ft)\right) - \hbar^{2}\mathfrak{r}_{\hbar}\right)\mathfrak{m}_{\hbar}^{-1/2}\phi\right|^2\,d\fx d\ft,
    \end{multline*}
    We introduce $\eps > 0$, that we will determine later on. We have
    \begin{equation}
    \label{eq:mainineq}
    \begin{aligned}
        \fQ_\hbar (\phi) &\geq (1-\eps)\int_{\R^2} \delta(\fx)^{2/3} |D_{\ft}\phi|^2 + \left|\left(\mathfrak{m}_{\hbar}^{-1/2}\left(\hbar D_{\fx} - \frac{1}{2}\delta(\fx)^{1/3}\ft^2\right)\mathfrak{m}_{\hbar}^{-1/2}\right)\phi\right|^2 \,d\fx d\ft\\
        &-\eps^{-1} \hbar^2 \int_{\R^2} \left|\mathfrak{m}_{\hbar}^{-1/2}\left(\kappa(\fx)\delta(\fx)^{-1}\ft^3-\frac{1}{6}\delta'(\fx)\delta(\fx)^{-1}(\ft D_{\ft} + D_{\ft}\ft) + \hbar \mathfrak{r}_{\hbar}\right)\mathfrak{m}_{\hbar}^{-1/2}\phi\right|^2\,d\fx d\ft.
    \end{aligned}
    \end{equation}
    The second term in this difference can be bounded as follows:
    \begin{multline*}
        \int_{\R^2} \left|\mathfrak{m}_{\hbar}^{-1/2}\left(\kappa(\fx)\delta(\fx)^{-1}\ft^3-\frac{1}{6}\delta'(\fx)\delta(\fx)^{-1}(\ft D_{\ft} + D_{\ft}\ft) + \hbar \mathfrak{r}_{\hbar}\right)\mathfrak{m}_{\hbar}^{-1/2}\phi\right|^2\,d\fx d\ft \\
        \leq \int_{\R^2} \left|\left(\kappa(\fx)\delta(\fx)^{-1}\ft^3 - \frac{1}{6}\delta'(\fx)\delta(\fx)^{-1} + \hbar \mathfrak{r}_{\hbar}\right)\phi\right|^{2}\,\mathfrak{m}_{\hbar}^{-1}d\fx d\ft + \\
            \int_{\R^2}\left(\frac{\delta'(\fx)}{3\delta(\fx)}\right)^2|\mathfrak{m}_{\hbar}^{-1/2}(\ft D_{\ft}+D_{\ft}\ft)\mathfrak{m}_{\hbar}^{-1/2}\phi|^2\,d\fx d\ft \\
        \leq \left(C_{\kappa}r^3 + \hbar C_{rmd} r^4\right)^2 \int_{\R^2} |\phi|^2 \,\mathfrak{m}_{\hbar}^{-1}d\fx d\ft + (1+C_{\delta}r)^2\int_{\R^2} |\mathfrak{m}_{\hbar}^{-1/2} D_{\ft} \,\mathfrak{m}_{\hbar}^{-1/2}\phi|^2\,d\fx d\ft,
    \end{multline*}
    where the last inequality follows from Assumptions \ref{assum:delta} and \ref{assum:kappa}.

    In order to go further, we need to compare the two terms $|D_{\ft}\phi|$ and $|\mathfrak{m}_{\hbar}^{-1/2}D_{\ft}\mathfrak{m}_{\hbar}^{-1/2}\phi|$.
    To this end, observe that on the support of $\phi$ we have
    \begin{equation*}
        |\mathfrak{m}_{\hbar}| = |1 - \hbar \delta(\fx)^{-1/3}k(\fx)\ft| \geq 1 - \hbar C_{\kappa} r_0.
    \end{equation*}
    As mentioned previously, $r$ is to be fixed later and $\hbar$ can be chosen accordingly: we  suppose that we have $\hbar < \hbar_0$ where $\hbar_0$ is small enough so that, on the support of $\phi$, we can write
    \begin{equation*}
        |\mathfrak{m}_\hbar (\fx,\ft)| \geq \frac{1}{2}.
    \end{equation*}
    We deduce 
    \begin{equation*}
    \begin{aligned}
        \int_{\R^2} |\mathfrak{m}_{\hbar}^{-1/2}D_{\ft}\mathfrak{m}_{\hbar}^{-1/2}\phi|^2 \,d\fx d\ft &\leq \hbar^{2}\int_{\R^2} \frac{\mathfrak{m}_{\hbar}^{-4}}{4} |k(\fx)^{2}\delta(\fx)^{-2/3}||\phi|^2\,d\fx d\ft + \int_{\R^2} \mathfrak{m}_{\hbar}^{-2} |D_{\ft}\phi|^2 \,d\fx d\ft\\
        &\leq 4 \int_{\R^2} |D_{\ft}\phi|^2 \,d\fx d\ft + 4C_{\kappa}^2 \hbar^2 \int_{\R^2} |\phi|^2 \,d\fx d\ft.
    \end{aligned}
    \end{equation*}

    The second term in Equation \eqref{eq:mainineq} can then be bounded by
    \begin{equation*}
        4\left(C_\kappa r_0^3+ \hbar C_{rmd} r_0^4 + C_{\kappa}^2 \hbar^2\right)\int_{\R^2} |\phi|^2 \,d\fx d\ft + 4\left(1+C_\delta r_0\right)^2\int_{\R^2} |D_{\ft}\phi|^2\,d\fx d\ft.
    \end{equation*}

    By choosing $\eps = \hbar$ in Equation \eqref{eq:mainineq}, we get
    \begin{multline}
        \label{eq:ineq2}
        \mathfrak{Q}_\hbar (\phi) \geq \left((1-\hbar)\inf_{|\fx|>R}\delta(\fx)^{2/3}  -4\hbar\left(1+C_\delta r_0\right)^2\right) \fq_{\hbar}(\phi)
        \\-4\hbar \left(C_\kappa r_0^3 + \hbar C_{rmd}r_0^4 + C_{\kappa}^2 \hbar^2\right)\int_{\R^2} |\phi|^2 \,d\fx d\ft,
    \end{multline}
    where we have introduced
    \begin{equation}
        \fq_{\hbar}(\phi) = \int_{\R^2} \left(|D_{\ft}\phi|^2 + \left|\left(\mathfrak{m}_{\hbar}^{-1/2}\left(\hbar \delta(\fx)^{-1/3}D_{\fx} - \frac{1}{2}\ft^2\right)\mathfrak{m}_{\hbar}^{-1/2}\right)\phi\right|^2 \right)\,d\fx d\ft.
    \end{equation}
    So as to make the Montgomery operator and its lowest eigenvalue appear, we need to symmetrize the term $\delta(\fx)^{-1/3}D_{\fx}$: we define
    \begin{equation*}
        \Xi(\fx,D_{\fx}) = \delta(\fx)^{-1/6}D_{\fx}\delta(\fx)^{-1/6},
    \end{equation*}
    and we obtain the following new expression of the quadratic form $\fq_{\hbar}$:
    \begin{equation*}
         \fq_{\hbar}(\phi) = \int_{\R^2} \left(|D_{\ft}\phi|^2 + \left|\left(\mathfrak{m}_{\hbar}^{-1/2}\left(\hbar \Xi(\fx,D_{\fx}) - \frac{1}{2}\ft^2 - \frac{i\hbar}{6}\delta'(\fx)\delta(\fx)^{-4/3}\right)\mathfrak{m}_{\hbar}^{-1/2}\right)\phi\right|^2 \right)\,d\fx d\ft.
    \end{equation*}
    We have the lower bound
    \begin{multline*}
        \fq_{\hbar}(\phi) \geq \int_{\R^2} \left(|D_{\ft}\phi|^2 + \left|\left(\mathfrak{m}_{\hbar}^{-1/2}\left(\hbar \Xi(\fx,D_{\fx}) - \frac{1}{2}\ft^2\right)\mathfrak{m}_{\hbar}^{-1/2}\right)\phi\right|^2 \right)\,d\fx d\ft\\ 
        - \frac{2\hbar}{6} {\rm Re}\int_{\R^2} i\delta'(\fx)\delta(\fx)^{-4/3}\left(\mathfrak{m}_{\hbar}^{-1/2}\left(\hbar \Xi(\fx,D_{\fx}) - \frac{1}{2}\ft^2\right)\mathfrak{m}_{\hbar}^{-1/2}\right)\phi\,\overline{\phi}\,\mathfrak{m}_\hbar^{-1}d\fx d\ft.
    \end{multline*}
    This becomes
    \begin{equation}
        \label{eq:ineq3}
        \begin{aligned}
        \fq_{\hbar}(\phi) &\geq \int_{\R^2} \left(|D_{\ft}\phi|^2 + \left|\left(\mathfrak{m}_{\hbar}^{-1/2}\left(\hbar \Xi(\fx,D_{\fx}) - \frac{1}{2}\ft^2\right)\mathfrak{m}_{\hbar}^{-1/2}\right)\phi\right|^2 \right)\,d\fx d\ft\\ 
        &- \frac{2\hbar^2}{3} {\rm Re}\int_{\R^2} i\delta'(\fx)\delta(\fx)^{-4/3}\left(\mathfrak{m}_{\hbar}^{-1/2}\Xi(\fx,D_{\fx})\mathfrak{m}_{\hbar}^{-1/2}\right)\phi\,\overline{\phi}\,d\fx d\ft.
        \end{aligned}
    \end{equation}
    By using the identity $2{\rm Re}(\partial_{\fx}\phi\,\overline{\phi}) = \partial_{\fx}|\phi|^2$, we can bound the second term by integrating by parts:
    \begin{equation*}
        \left|\frac{2\hbar^2}{3} {\rm Re}\int_{\R^2} i\delta'(\fx)\delta(\fx)^{-4/3}\left(\mathfrak{m}_{\hbar}^{-1/2}\Xi(\fx,D_{\fx})\mathfrak{m}_{\hbar}^{-1/2}\right)\phi\,\overline{\phi}\,d\fx d\ft\right| \leq \tilde{C}_1\hbar^{2} \int_{\R^2}|\phi|^2\,d\fx d\ft,
    \end{equation*}
    for some constant $\tilde{C}_1>0$.

    Meanwhile, the first term in Equation \eqref{eq:ineq3} can be bounded using functional calculus in order to make apparent the relation with the Montgomery operators. Indeed, $\Xi(\fx,D_{\fx})$ is a self-adjoint operator, so it can be diagonalized. 
    The only thing left in the way is once again the weight $\mathfrak{m}_{\hbar}$. As we did before, up to terms of order $O(\hbar^2 \Vert \phi\Vert)$, we can move around this weight so as to obtain:
    \begin{equation*}
        \fq_{\hbar}(\phi) \geq  \int_{\R^2} |D_{\ft}\phi|^2 + \left|\left(\hbar \Xi(\fx,D_{\fx}) - \frac{1}{2}\ft^2\right)\phi\right|^2 \,d\fx d\ft - \tilde{C}_2\hbar^{2}\int_{\R^2} |\phi|^2\,d\fx d\ft
    \end{equation*}
    for some other positive constant $\tilde{C}_2$.
    Using functional calculus and the min-max theorem we obtain,
    \begin{equation}
        \label{eq:ineq4}
        \begin{aligned}
        \fq_{\hbar}(\phi) &\geq \int_{\R^2} \mu_{1}(\zeta)|\phi|^2\,d\zeta d\ft - \tilde{C}_2\hbar^{2}\int_{\R^2} |\phi|^2\,d\fx d\ft\\
        &\geq (\tilde - \tilde{C}_2\hbar^2) \int_{\R^2} |\phi|^2\,d\fx d\ft.
        \end{aligned}
    \end{equation}
    We can now combine Equations \eqref{eq:ineq2} and \eqref{eq:ineq4} to obtain
    \begin{equation*}
        \mathfrak{Q}_{\hbar}(\phi) \geq \left((1-\hbar)\inf_{|\fx|>R}\delta(\fx)^{2/3}\tilde{\mu}_{1,c} - 4\hbar(1+C_{\delta}r_0)^2 - 4\hbar (C_\kappa r_0^3+ \hbar C_{rmd}r_0^4 + \tilde{C}_2 \hbar + C_{\kappa}^2 \hbar^2)\right) \|\phi\|_{L^2}^2.
    \end{equation*}
    Let's take $R_0$ big enough so that $\delta(\fx) > \delta_{*}$ for $|\fx| > R_0$: by the previous inequality, there exists $\hbar_{0}>0$ such that for $\hbar \in (0,\hbar_{0})$, we have
    \begin{equation}
        \label{eq:confinementfx}
        \mathfrak{Q}_{\hbar}(\phi) \geq (E - \tilde{C}_3 \hbar)  \int_{\R^2} |\phi|^2\,d\fx d\ft,
    \end{equation}
    for some positive constant $\tilde{C}_3$.
    
    \medskip
    \textit{Gluing the lower bounds}.
    Using the inequalities \eqref{eq:awayfromGamma},\eqref{eq:confinementt} and \eqref{eq:confinementfx}, we deduce that there exist $r_0 >0$, $R_0 >0$ and $h_0>0$ such that for all test function
    $\psi\in C_{c}^{\infty}(\R^2)$ supported either in $\R^2\setminus \Omega_0$, in $\Phi(\{(x,t)\in\R\times(-d_0,d_0)\,:\, |t|>r_0\hbar\})$, or in $\Phi(\{(x,t)\in\R\times(-d_0,d_0)\,:\,|x|> R_0,\ |t|\leq r_0\hbar\})$,
    and for $h\in (0,h_0)$, we have
    \begin{equation*}
        \cQ_{h}(\psi) \geq (E-\tilde{C}h^{1/3})h^{4/3}\|\psi\|_{L^2(\R^2)}^2,
    \end{equation*}
    for some positive constant $\tilde{C}$.

    We concentrate first on gluing the two lower bounds associated to the neighborhoods of $\Gamma$. We introduce a partition of unity with respect to $\ct$ such that 
    \begin{equation}
        \label{eq:partitionIMS}
        \chi_{1,r_0}^2 + \chi_{2,r_0}^2 = 1,\ \chi_{1,r_0} = \begin{cases}1 \mbox{ for } |\ct|\leq\frac{r_0}{2},\\ 0\mbox{ for }|\ct|\geq r_0,\end{cases}\quad\mbox{and}\quad \left(\chi_{1,r_0}'\right)^2 +  \left(\chi_{2,r_0}'\right)^2 \leq C r_0^{-2}.
    \end{equation}
    Let $\varphi \in C_{c}^{\infty}(\R^2)$ supported in $\Omega_0$, and still denoting by $\check{\varphi}$ the corresponding test function in the coordinates $(x,\ct)$, we suppose that $\check{\varphi}$ is supported away from $[-R_0,R_0]\times[-r_0,r_0]$.
    The IMS formula allows us to write:
    \begin{equation}
        \label{eq:boundcheckQ}
        h^{-4/3}\cQ_{h}(\varphi) = \check{\cQ}_{\hbar}(\check{\varphi}) \geq \check{\cQ}_{\hbar}(\chi_{1,r_0}\check{\varphi}) + \check{\cQ}_{\hbar}(\chi_{2,r_0}\check{\varphi}) - C r_{0}^{-2}\|\varphi\|^2.
    \end{equation}
    By support condition, the next estimate holds:
    \begin{equation*}
        \cQ_{h}(\varphi) \geq h^{4/3} (E-\tilde{C}h^{1/3} - Cr_{0}^{-2})\|\varphi\|^2.
    \end{equation*}
    Therefore, up to choosing $r_0$ larger and possibly $h_0$ smaller, we obtain the desired lower bound for any $E\in(0,E_0)$.
    Gluing this lower bound with the one away from the tubular neighborhood $\Omega_0$ is done in a similar fashion, using a partition of unity with respect to the distance at $\Gamma$.
\end{proof}

From the previous analysis it is possible to obtain a (very) rough Weyl law for the magnetic Laplacian in the energy window $(-\infty,Eh^{4/3}]$. A more accurate estimate 
of the eigenvalue counting function will be given in Section \ref{sect:dimreduction}.

\begin{corollary}
    \label{cor:roughweyllaw1}
    Let $E\in(-\infty,E_0)$. There exists $C>0$ such that the number of eigenvalues (counted with multiplicities) of $\magSch$ in the energy window $(-\infty,Eh^{4/3}]$, denoted by 
    $N(\magSch,Eh^{4/3})$, is bounded as follows
    \begin{equation*}
        N(\magSch,Eh^{4/3}) \leq C h^{-2}.
    \end{equation*}
\end{corollary}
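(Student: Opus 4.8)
The plan is to combine the confinement estimates already obtained inside the proof of Theorem~\ref{thm:spectrediscret} with a min--max localization, which reduces the count to a crude Weyl bound on a \emph{fixed} bounded domain. First, I reorganize those estimates so that the only region of $\R^2$ left unconfined above $Eh^{4/3}$ is fixed and bounded. Pick $0<d_0'<d_0$ and $R_0>0$ with $\delta(x)>\delta_*$ for $|x|>R_0$, and set $\mathcal M:=\Phi([-R_0,R_0]\times[-d_0',d_0'])$, a fixed compact subset of $\R^2$. Then for $h$ small, $\cQ_h(\varphi)>Eh^{4/3}\|\varphi\|^2$ for every $\varphi\in C_c^\infty(\R^2)$ supported in $\R^2\setminus\mathcal M$: indeed $\R^2\setminus\mathcal M$ is the union of $\R^2\setminus\Omega_0$, where $\cQ_h(\varphi)\geq hb_0\|\varphi\|^2$ by Assumption~\ref{assum:confinement} and \eqref{eq:ineqmag}; of $\Omega_0\cap\{|t|>d_0'\}$, where $|\tilde B|\geq\delta_0 d_0'$ (since $\tilde B(x,0)=0$ and $\partial_t\tilde B>\delta_0$) and hence $\cQ_h(\varphi)\geq h\delta_0 d_0'\|\varphi\|^2$ by Assumption~\ref{increasingBnearGamma} and \eqref{eq:ineqmag}; and of $\Omega_0\cap\{|x|>R_0,\ |t|\leq d_0'\}$, where the tangential estimate (Step~2 of that proof, with this $R_0$) gives $\cQ_h(\varphi)\geq(E_0-o(1))h^{4/3}\|\varphi\|^2$. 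Each of the three lower bounds exceeds $Eh^{4/3}\|\varphi\|^2$ for $h$ small, and they are glued by a partition of unity at a fixed scale, whose magnetic IMS error is $\O(h^2)\|\varphi\|^2$, exactly as in the gluing step of the proof of Theorem~\ref{thm:spectrediscret}.

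Second, I localize. Fix $\chi_1^2+\chi_2^2=1$ smooth with $\chi_1\equiv1$ on $\mathcal M$, $\chi_1$ supported in a fixed bounded set $\mathcal M^+$ with $\mathcal M\subset\mathcal M^+\Subset\Omega_0$, and $|\nabla\chi_j|=\O(1)$; the magnetic IMS formula gives $\cQ_h(\varphi)\geq\cQ_h(\chi_1\varphi)+\cQ_h(\chi_2\varphi)-Ch^2\|\varphi\|^2$. If $\cQ_h(\varphi)\leq Eh^{4/3}\|\varphi\|^2$, then dropping $\cQ_h(\chi_1\varphi)\geq0$ and using the first step on $\chi_2\varphi$ yields $\|\chi_2\varphi\|^2\leq\theta\|\varphi\|^2$ with some fixed $\theta<1$ and all small $h$, so $\|\chi_1\varphi\|^2\geq(1-\theta)\|\varphi\|^2$; combined with $\cQ_h(\chi_1\varphi)\leq Eh^{4/3}\|\varphi\|^2+Ch^2\|\varphi\|^2$ this gives $\cQ_h(\chi_1\varphi)\leq E_\sharp h^{4/3}\|\chi_1\varphi\|^2$ for the fixed constant $E_\sharp:=2E/(1-\theta)$, and shows that $\varphi\mapsto\chi_1\varphi$ is injective on any subspace where $\cQ_h\leq Eh^{4/3}\|\cdot\|^2$. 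Since $\chi_1\varphi\in H^1_0(\mathcal M^+)$, the min--max principle gives $N(\magSch,Eh^{4/3})\leq N\bigl(\magSch^{\mathcal M^+,\mathrm{Dir}},E_\sharp h^{4/3}\bigr)$, the counting function of the Dirichlet realization of $\magSch$ on the fixed bounded domain $\mathcal M^+$.

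Third, I bound the right-hand side crudely. Through the unitary $\varphi\mapsto m^{-1/2}\varphi\circ\Phi$ of Section~\ref{subsect:locdescSch}, this counting function equals $N\bigl(\tmagSch^{\mathcal R,\mathrm{Dir}},E_\sharp h^{4/3}\bigr)$ on the fixed rectangle $\mathcal R:=\Phi^{-1}(\mathcal M^+)$, where $\tmagSch=P_1^2+P_2^2-h^2 k^2/(4m^2)$ has coefficients ($m^{\pm1}$, $\tilde A$, $k$) that are smooth, hence bounded, on $\overline{\mathcal R}\Subset\R\times(-d_0,d_0)$; an elementary estimate using $(a-b)^2\geq\tfrac12 a^2-b^2$ then gives $\tmagSch^{\mathcal R,\mathrm{Dir}}\geq c h^2(-\Delta)^{\mathrm{Dir}}_{\mathcal R}-C$ for fixed $c,C>0$. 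As $E_\sharp h^{4/3}\to0$, for $h$ small $N(\tmagSch^{\mathcal R,\mathrm{Dir}},E_\sharp h^{4/3})\leq N\bigl((-\Delta)^{\mathrm{Dir}}_{\mathcal R},(C+1)h^{-2}/c\bigr)$, which by the textbook upper Weyl (Berezin--Li--Yau) bound on the fixed rectangle $\mathcal R$ is $\leq C'|\mathcal R|h^{-2}+\O(h^{-1})\leq Ch^{-2}$, proving the claim.

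The only mildly technical point is the gluing in the first step --- combining the three lower bounds while keeping the fixed-scale IMS errors below the fixed positive margin by which each beats $Eh^{4/3}$ --- but this is entirely parallel to the corresponding gluing in the proof of Theorem~\ref{thm:spectrediscret}; everything else is a routine min--max/Dirichlet-bracketing argument together with the classical Weyl upper bound on a fixed domain. I do not expect any genuine obstacle, consistent with the fact that the bound $Ch^{-2}$ is deliberately far from sharp.
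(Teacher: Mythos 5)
Your overall route is essentially the paper's: use the confinement estimates from the proof of Theorem~\ref{thm:spectrediscret} to push everything above $Eh^{4/3}$ outside a fixed bounded region, localize by an IMS/min--max argument to the Dirichlet magnetic Laplacian on a fixed bounded domain, and finish with a crude Weyl-type bound there. The paper implements the localization by a bracketing argument (an identification map onto a direct sum of model problems, keeping the $h^{1/3}$-scale transverse region until the very end) rather than your injectivity-of-$\varphi\mapsto\chi_1\varphi$ argument on the spectral subspace, and it concludes by invoking the standard Weyl law for the magnetic Laplacian on the bounded domain (even relaxing the energy to $Eh$), which is what your comparison $\tmagSch\geq c h^2(-\Delta)^{\mathrm{Dir}}-C$ plus Berezin--Li--Yau makes explicit; these differences are cosmetic.

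There is, however, one step whose justification is wrong as written: you claim that on $\Omega_0\cap\{|x|>R_0,\ |t|\leq d_0'\}$ the tangential estimate of Step~2 of the proof of Theorem~\ref{thm:spectrediscret} applies directly, and that the three lower bounds are glued by fixed-scale cutoffs with only $\O(h^2)$ IMS error. Step~2 is established only for test functions supported in $\{|\ct|\leq r_0\}$, i.e.\ $|t|\lesssim r_0 h^{1/3}$: the error terms in \eqref{eq:mainineq} are controlled by powers of $r_0$ (terms such as $C_\kappa r_0^3+\hbar C_{rmd}r_0^4$ and $(1+C_\delta r_0)^2$), and on a strip of fixed width $d_0'$ in $t$ one has $|\ct|\sim d_0'\hbar^{-1}$, so these terms diverge like negative powers of $\hbar$ and the lower bound $(E_0-o(1))h^{4/3}$ is not obtained this way. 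The repair is exactly the gluing already performed in the paper's proof of Theorem~\ref{thm:spectrediscret}: split the strip at $|t|\sim r_0h^{1/3}$, use the transverse confinement \eqref{eq:confinementt} on $\{|\ct|\geq r_0\}$ and \eqref{eq:confinementfx} on $\{|x|>R_0,\ |\ct|\leq r_0\}$, and accept the IMS cost $Cr_0^{-2}h^{4/3}$ of the $h^{1/3}$-scale transverse partition, which is of the same order as the energy window and is absorbed by taking $r_0$ large; equivalently, simply quote the conclusion of that gluing, which already yields $\cQ_h(\varphi)\geq(E-o(1))h^{4/3}\|\varphi\|^2$ for $\varphi$ supported outside a fixed compact neighborhood of a bounded portion of $\Gamma$ (your $\mathcal{M}$). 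The bounds you invoke on $\R^2\setminus\Omega_0$ and on $\Omega_0\cap\{|t|>d_0'\}$ via \eqref{eq:ineqmag}, Assumptions \ref{assum:confinement} and \ref{increasingBnearGamma} are fine, and with the above repair (and the harmless replacement of $E$ by some $E''\in(0,E_0)$ when $E\leq 0$, so that $E_\sharp>0$) your Steps~2 and~3 go through and give the stated $\O(h^{-2})$ bound.
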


\begin{proof}
    Similarly as in Equation \eqref{eq:partitionIMS}, we introduce a partition of unity with respect to $t$ such that 
    \begin{equation*}
        \tilde{\chi}_{1,d_0}^2 + \tilde{\chi}_{2,d_0}^2 = 1,\ \tilde{\chi}_{1,d_0} = \begin{cases}1 \mbox{ for } |t|\leq\frac{d_0}{2},\\ 0\mbox{ for }|t|\geq d_0,\end{cases}\quad\mbox{and}\quad \left(\tilde{\chi}_{1,d_0}'\right)^2 +  \left(\tilde{\chi}_{2,d_0}'\right)^2 \leq \tilde{C},
    \end{equation*}
    for some positive $\tilde{C}>0$ and we consider the functions $z\in\R^{2} \mapsto \tilde{\chi}_{j,d_0}({\rm dist}(z,\Gamma))$, $j=1,2$, that we still denoted by $\tilde{\chi}_{j,d_0}$. This defines a partition of unity on $\R^2$, and by the IMS formula 
    we have, for $\varphi\in \dom(\magSch)$,
    \begin{equation*}
        \cQ_h(\varphi) \geq \cQ_h(\tilde{\chi}_{1,d_0} \varphi) + \cQ_h(\tilde{\chi}_{2,d_0} \varphi) - \tilde{C}h^2\|\varphi\|^2.
    \end{equation*}
    Coupling this inequality with Equation \eqref{eq:boundcheckQ}, we obtain 
    \begin{equation}
        \label{eq:boundbelowIMS}
        \cQ_h(\varphi) \geq \cQ_h(\tilde{\chi}_{2,d_0} \varphi) + h^{4/3}\check{\cQ}_{\hbar}(\chi_{2,r_0}\check{\psi})+ h^{4/3}\check{\cQ}_{\hbar}(\chi_{2,r_0}\check{\psi}) - h^{4/3}(\tilde{C}h^{2/3}+C r_{0}^{-2})\|\varphi\|^2,
    \end{equation}
    where $\psi = \tilde{\chi}_{1,d_0}\varphi$ and the change of coordinates $(x,\ct)$ thus makes sense.
    
    Denoting by $I_{r_0}$ the open set $(-\infty,-r_0/2)\cup(r_0/2,+\infty)$, we consider the bounded linear isomorphism 
    \begin{equation*}
        D: \varphi\in L^2(\R^2) \mapsto (\tilde{\chi}_{2,d_0} \varphi,\chi_{1,r_0}\check{\psi},\chi_{2,r_0}\check{\psi})\in L^2(\R^2\setminus \Omega_{d_0/2})\oplus L^2(\R\times I_{r_0})\oplus L^2(\R\times(-r_0,r_0)),
    \end{equation*}
    whose inverse is given by 
    \begin{equation*}
        D^{-1}(\varphi,\check{\psi}_1,\check{\psi}_2) = \tilde{\chi}_{2,d_0} \varphi + \tilde{\chi}_{1,d_0}\left(\chi_{1,r_0}\psi_1+ \chi_{2,r_0}\psi_2\right),
    \end{equation*}
    where $\psi_1$ and $\psi_2$ are the norm-preserving pull-back of $\check{\psi}_1$ and $\check{\psi}_1$ by the dilated tubular coordinates in $\Omega_{d_0}$.

    The min-max theorem and the inequality Equation \eqref{eq:boundbelowIMS} then implies that, for $r_0$ chosen large enough and $h$ small enough, the counting function $N(\magSch,Eh^{4/3})$ is bounded by the number of eigenvalues 
    in the energy window $(-\infty,Eh^{4/3}]$ of the operator $\magSch \oplus h^{4/3}\check{\mathcal{L}}_\hbar \oplus h^{4/3}\check{\mathcal{L}}_\hbar$ with domain 
    \begin{equation*}
        \{(u_1,u_2,u_3)\in L^2(\R^2\setminus \Omega_{d_0/2})\oplus L^2(\R\times I_{r_0})\oplus L^2(\R\times(-r_0,r_0))\,:\, \magSch(D^{-1}(u_1,u_2,u_3))\in L^2(\R^2)\}.
    \end{equation*}
    The eigenvalues of the operator $\magSch \oplus h^{4/3}\check{\mathcal{L}}_\hbar \oplus h^{4/3}\check{\mathcal{L}}_\hbar$ are given by the sum of each of these operators on their respective domains.
    From Equations \eqref{eq:awayfromGamma} and \eqref{eq:confinementt}, it is straightforward to deduce that the spectrum of $\magSch$ on $L^2(\R^2\setminus \Omega_{d_0})$ and the one of $h^{4/3}\check{\mathcal{L}}_\hbar$ 
    on $L^2(\R\times I_{r_0})$ are bounded by below by $h b_0$ and $h^{4/3}E'$ respectively, where $E'\in(E,E_0)$ and if $r_0$ is chosen large enough. Thus, the only eigenvalues in the energy window 
    $(-\infty,Eh^{4/3}]$ comes from the operator $h^{4/3}\check{\mathcal{L}}_\hbar$ on the domain $L^{2}(\R\times (-r_0,r_0))$.

    The analysis of this operator has been already performed in the proof of Theorem \ref{thm:spectrediscret}. Up to introducing a new partition of unity for the $x$ variable,
    \begin{equation*}
        \check{\chi}_{1,R_0}^2 + \check{\chi}_{2,R_0}^2 = 1,\ \check{\chi}_{1,d_0} = \begin{cases}1 \mbox{ for } |x|\leq\frac{R_0}{2},\\ 0\mbox{ for }|x|\geq R_0,\end{cases}\quad\mbox{and}\quad \left(\check{\chi}_{1,R_0}'\right)^2 +  \left(\check{\chi}_{2,R_0}'\right)^2 \leq \check{C},
    \end{equation*}
    we can write the corresponding IMS formula and use Equation \eqref{eq:confinementfx}, for $R_0$ chosen large enough, we can bound $N(\magSch,Eh^{4/3})$ by the number of eigenvalues in $(-\infty,Eh^{4/3}]$ of the operator 
    $h^{4/3}\check{\mathcal{L}}_\hbar$ for the domain 
    \begin{equation*}
        \{u\in L^{2}([-R_0,R_0]\times[-r_0,r_0])\,:\,\check{\mathcal{L}}_\hbar u \in L^{2}([-R_0,R_0]\times[-r_0,r_0])\}.
    \end{equation*}
    This can in turn be very roughly bounded by the counting function of the operator $\magSch$ restricted to the bounded domain 
    \begin{equation*}
        \Omega_{R_0,d_0/2} = \Phi^{-1}([-R_0,R_0]\times [-d_0/2,d_0/2]),
    \end{equation*}
    with Dirichlet condition, i.e. by the number of eigenvalues of the magnetic Laplacian $\magSch$ under the energy $Eh^{4/3}$, or even, again trying to find a rough upper bound, under the energy level 
    $Eh$, when restricted to functions supported in $\Omega_{R_0,d_0/2}$. Then, by standard Weyl laws for magnetic Laplacian on bounded domains, we obtain the desired result.
\end{proof}

\section{Microlocalization on a compact region}
\label{sect:microlocalization}

\subsection{Microlocalization properties}

As we have established the existence of a discrete spectrum in the energy window $(-\infty,Eh^{4/3}]$ (for any $E< E_0$), we are interested in the localization properties of the corresponding eigenfunctions.

\subsubsection{Localization close to the zero locus}
The first result, already partially recalled in Section \ref{sect:intro}, concerns their localization close to the zero locus $\Gamma$ of the magnetic field and is expressed by the following Agmon-type estimate (see \cite[Proposition 3.4]{DR}).

\begin{proposition}
    \label{prop:localizationGamma}
    Let $E \in (-\infty,E_0)$. There exist $C>0$, $\alpha >0$ and $h_0 > 0$ such that, for $h\in(0,h_0 ]$ and for any eigenpair $(\mu_h, \psi_h)$ of $\magSch$ satisfying $\mu_h \leq Eh^{4/3}$, we have
    \begin{equation*}
        \begin{aligned}
            \int_{\R^2} e^{2\alpha h^{-1/3}{\rm dist}(z,\Gamma)} |\psi_h (z)|^2 \,dz &\leq C \Vert \psi_h \Vert^2, \\
            \cQ_{h}\left(e^{\alpha h^{-1/3}{\rm dist}(\cdot,\Gamma)} \psi_h \right) &\leq C h^{4/3}\Vert \psi_h \Vert^2.
        \end{aligned}
    \end{equation*}
\end{proposition}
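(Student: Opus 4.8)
The plan is to follow the classical Agmon strategy for magnetic Schrödinger operators, exactly as in \cite[Proposition~3.4]{DR}, feeding in the confinement estimates established in the proof of Theorem~\ref{thm:spectrediscret}. The key structural fact is that, once rescaled by $\hbar=h^{1/3}$, the lower bound for $\cQ_{h}$ on the region $\{{\rm dist}(\cdot,\Gamma)\geq r_{0}h^{1/3}\}$ grows linearly in $r_{0}$, so that for $r_{0}$ large it dominates both the eigenvalue bound $Eh^{4/3}$ and the $\O(h^{4/3})$ cost of an exponential weight calibrated on the scale $\hbar$, with room to spare; this is what will make the argument work for any fixed decay rate $\alpha>0$.

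Concretely, I would fix $\alpha>0$ and, for $\eps>0$, introduce the weight
\[
    \Phi_{\eps}(z)\;=\;\frac{\alpha\,h^{-1/3}\,{\rm dist}(z,\Gamma)}{1+\eps\,{\rm dist}(z,\Gamma)},
\]
which is bounded, Lipschitz, and satisfies $|\nabla\Phi_{\eps}|\leq\alpha h^{-1/3}$ almost everywhere, uniformly in $\eps$. Since $e^{\pm\Phi_{\eps}}$ and their gradients are bounded, $e^{2\Phi_{\eps}}\psi_h$ is an admissible test function for $\cQ_{h}$, and expanding $(-ih\nabla-\bA)(e^{\Phi_{\eps}}\psi_h)$ yields the magnetic Agmon identity
\[
    \cQ_{h}\!\left(e^{\Phi_{\eps}}\psi_h\right)\;=\;{\rm Re}\,\langle\magSch\psi_h,\,e^{2\Phi_{\eps}}\psi_h\rangle+h^{2}\big\||\nabla\Phi_{\eps}|\,e^{\Phi_{\eps}}\psi_h\big\|^{2}.
\]
Inserting $\magSch\psi_h=\mu_h\psi_h$ with $\mu_h\leq Eh^{4/3}$ and bounding $h^{2}|\nabla\Phi_{\eps}|^{2}\leq\alpha^{2}h^{4/3}$ gives the upper bound $\cQ_{h}(e^{\Phi_{\eps}}\psi_h)\leq(E+\alpha^{2})h^{4/3}\|e^{\Phi_{\eps}}\psi_h\|^{2}$.

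For the matching lower bound, I would fix a large $r_{0}>0$ and an IMS partition $\chi_{1}^{2}+\chi_{2}^{2}=1$ in the variable ${\rm dist}(\cdot,\Gamma)$, with $\chi_{1}\equiv1$ on $\{{\rm dist}(\cdot,\Gamma)\leq r_{0}h^{1/3}\}$, $\chi_{1}\equiv0$ on $\{{\rm dist}(\cdot,\Gamma)\geq2r_{0}h^{1/3}\}$, and $|\nabla\chi_{j}|^{2}\lesssim r_{0}^{-2}h^{-2/3}$. Since ${\rm supp}\,\chi_{2}\subset\{{\rm dist}(\cdot,\Gamma)\geq r_{0}h^{1/3}\}$, the very same gluing of the confinement estimates \eqref{eq:awayfromGamma} and \eqref{eq:confinementt} performed at the end of the proof of Theorem~\ref{thm:spectrediscret} (the $x$-confinement \eqref{eq:confinementfx} plays no role here) applies verbatim to $e^{\Phi_{\eps}}\psi_h$ and gives, for $h$ small,
\[
    \cQ_{h}\!\left(\chi_{2}\,e^{\Phi_{\eps}}\psi_h\right)\;\geq\;h^{4/3}\big(c_{1}\delta_{0}r_{0}-C\hbar\big)\big\|\chi_{2}\,e^{\Phi_{\eps}}\psi_h\big\|^{2}
\]
(on the part of ${\rm supp}\,\chi_{2}$ contained in $\Omega_{0}$ this is \eqref{eq:confinementt} with $r=r_{0}$, and outside $\Omega_{0}$ one uses the much stronger bound $\cQ_{h}\geq hb_{0}\|\cdot\|^{2}$). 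Combining this with the IMS localization formula, discarding the nonnegative term $\cQ_{h}(\chi_{1}e^{\Phi_{\eps}}\psi_h)$, invoking the upper bound above, and using $\Phi_{\eps}\leq2\alpha r_{0}$ on ${\rm supp}\,\chi_{1}$, I obtain
\[
    \big(c_{1}\delta_{0}r_{0}-C\hbar-E-\alpha^{2}-Cr_{0}^{-2}\big)\big\|\chi_{2}\,e^{\Phi_{\eps}}\psi_h\big\|^{2}\;\leq\;(E+\alpha^{2}+Cr_{0}^{-2})\,e^{4\alpha r_{0}}\,\|\psi_h\|^{2}.
\]
Choosing $r_{0}$ large (depending only on $E$ and $\alpha$) and then $h_{0}$ small makes the left-hand prefactor bounded below by a positive constant, whence $\|\chi_{2}e^{\Phi_{\eps}}\psi_h\|^{2}\leq C\|\psi_h\|^{2}$ and therefore $\|e^{\Phi_{\eps}}\psi_h\|^{2}\leq C\|\psi_h\|^{2}$ with $C$ independent of $\eps$ and $h$. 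Letting $\eps\to0$ and applying Fatou's lemma gives the first inequality of the proposition; the second one then follows immediately by combining it with the upper bound, i.e. $\cQ_{h}(e^{\alpha h^{-1/3}{\rm dist}(\cdot,\Gamma)}\psi_h)\leq(E+\alpha^{2})h^{4/3}\|e^{\alpha h^{-1/3}{\rm dist}(\cdot,\Gamma)}\psi_h\|^{2}\leq Ch^{4/3}\|\psi_h\|^{2}$.

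I expect the only genuinely delicate point to be ensuring that the confinement bound on ${\rm supp}\,\chi_{2}$ is quantitatively strong enough to absorb \emph{simultaneously} the eigenvalue ($\leq Eh^{4/3}$), the weight cost ($\alpha^{2}h^{4/3}$) and the IMS error ($\sim r_{0}^{-2}h^{4/3}$); this is exactly what forces $r_{0}$ to be large, and since $c_{1}\delta_{0}r_{0}$ is then arbitrarily large, every fixed rate $\alpha>0$ is admissible. The remaining points---the mere Lipschitz regularity of $z\mapsto{\rm dist}(z,\Gamma)$ (harmless: $|\nabla{\rm dist}|\leq1$ a.e.\ and the Agmon identity holds for Lipschitz weights), the density argument extending the confinement estimates from $C_{c}^{\infty}$ to $e^{\Phi_{\eps}}\psi_h$, and the uniformity of the constants in $\eps$---are routine, and the argument as a whole is a transcription of \cite[Proposition~3.4]{DR}.
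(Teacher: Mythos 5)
Your argument is correct and is exactly the adaptation the paper intends: the paper gives no proof of this proposition beyond citing \cite[Proposition~3.4]{DR} and the confinement estimates from the proof of Theorem~\ref{thm:spectrediscret}, and your writeup (regularized Agmon weight at scale $h^{-1/3}$, magnetic Agmon identity, IMS partition at distance $r_0h^{1/3}$, absorption using \eqref{eq:awayfromGamma} and \eqref{eq:confinementt}, then $\eps\to0$) is precisely that adaptation, with the scales and error terms correctly balanced. No gaps worth flagging.
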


This estimate allows us to reduce the spectral study in the energy window $(-\infty, Eh^{4/3}]$ of our magnetic Laplacian in a tubular neighborhood of $\Gamma$ of distance $\hbar^{1-\eta}$ for any $\eta \in (0, 1)$. 
For this reason, we introduce the open set
\begin{equation*}
    \Omega_{\hbar,\eta}  =\{z \in\R^2\,:\,{\rm dist}(z,\Gamma) < \hbar^{1-\eta}\},
\end{equation*}
where in particular, for $\hbar$ small enough, the changes of coordinates $(x,t)$ and $(x,\ct)$ make sense. We also introduce a cutoff function $\theta \in C_{0}^\infty(\R,[0,1])$ such that 
$\theta = 1$ in a neighborhood of 0. Then, for any eigenpair $(\mu_h, \psi_h)$ of $\magSch$ satisfying $\mu_h \leq Eh^{4/3}$, we can consider $\theta(\hbar^{\eta-1}{\rm dist}(\cdot,\Gamma))\psi_h$ and its 
transformation into the coordinates $(x,\ct)$, denoted by $\check{\psi}_\hbar$, as in Equation \eqref{eq:checkvarphi}:
\begin{equation}
    \label{eq:checkpsi}
    \check{\psi}_\hbar = \hbar^{1/2}(\theta(\hbar^{\eta-1}{\rm dist}(\cdot,\Gamma))\psi_h)(\Phi(x,\hbar t)).
\end{equation}
Observe that we obtain a quasi-mode of $\check{\mathcal{L}}_\hbar$ for the eignevalue $\check{\mu}_\hbar = h^{-4/3}\mu_h$:
\begin{equation}
    \label{eq:quasimode}
    \check{\mathcal{L}}_\hbar \check{\psi}_\hbar = \check{\mu}_\hbar\check{\psi}_\hbar + \O(\hbar^{\infty}).
\end{equation}

\subsubsection{Localization on the curve $\Gamma$}
We are now looking to establish the localization of low-energy eigenfunctions with respect to the space variable $x$ parametrizing $\Gamma$.

As the proof of Theorem \ref{thm:spectrediscret} has shown, the transversal derivative $\delta$ and is behavior at infinity is the reason for the existence of a confinement. 
It is thus natural to consider the Lithner-Agmon distance associated with $\delta$. More precisely, for a level $E\in (-\infty,E_0 h^{4/3}]$ of energy prescribed, we consider 
the following metric on $\Gamma$:
\begin{equation*}
    (\delta(x)^{2/3}\tilde{\mu}_{1,c} - E)_{+}^{1/2} \,dx^2,\ x\in\R,
\end{equation*}
where $a_{+} = {\rm max}(a,0)$ and $dx$ is the curvilineal one-form on the curve $\Gamma$. In what follows, the LA distance $d_{LA}(x,y)$ between two points $x$ and $y$ on the 
$\Gamma$ will be the length for the previous metric of the segment of $\Gamma$ joining the two points.

The following lemma is a necessary first Agmon-type estimate to treat the transverse variable $\ct$.

\begin{lemma}
    \label{lem:weightedAgmon}
    Let $E \in(-\infty,E_0)$, $\eps\in(0,1)$, $M>0$ and $\bz:\R\rightarrow\R$ a Lipschitzian function. There exist $h_0>0$ and $C>0$ such that, for all eigenpairs $(\mu_h,\psi_h)$ of $\magSch$ satisfying $\mu_h\leq E h^{4/3}$, if 
    we consider the quasi-mode $\check{\psi}_{\hbar}$ as in Equation \eqref{eq:checkpsi}, we have for all $p\geq 1$ and $h\leq h_0$:
    \begin{equation*}
        \begin{aligned}
            \|e^{M |\ct| + (1-\eps)\hbar^{-1}\chi_p \bz}\check{\psi}_{\hbar}\| &\leq C\|e^{(1-\eps)\hbar^{-1}\chi_p \bz}\check{\psi}_{\hbar}\|,\\
            \check{\cQ}_{\hbar}\left(e^{M |\ct| + (1-\eps)\hbar^{-1}\chi_p \bz}\check{\psi}_{\hbar}\right)&\leq C\|e^{(1-\eps)\hbar^{-1}\chi_p \bz}\check{\psi}_{\hbar}\|^2,
        \end{aligned}
    \end{equation*} 
    where $\chi_{p}(s) = \chi(p^{-1}s)$, with $0\leq \chi \leq 1$ a smooth cutoff function supported near 0.$\footnote{Dans le papier avec Dombrowski, les estimées sont sous-optimales parce qu'on fait une estimation de la forme quadratique pour v grand, donc avec un confinement meilleur!}$
\end{lemma}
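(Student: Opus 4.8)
The plan is to prove the two estimates via the standard Agmon commutator technique, treating the real exponential weight $w_{\hbar} = M|\ct| + (1-\eps)\hbar^{-1}\chi_p\bz$ (or rather a mollified, bounded version of it) as a multiplier and exploiting the ground-state energy of the Montgomery operator to close the estimate. First I would regularize the weight $M|\ct|$ near $\ct = 0$ and truncate it at large $|\ct|$ so that $e^{w_\hbar}\check\psi_\hbar$ is genuinely in the form domain of $\check{\mathcal L}_\hbar$; the final bounds follow by monotone convergence once the constants are uniform in the truncation. Then, starting from the eigenvalue equation $\check{\mathcal L}_\hbar \check\psi_\hbar = \check\mu_\hbar \check\psi_\hbar + \O(\hbar^\infty)$ from Equation \eqref{eq:quasimode} with $\check\mu_\hbar \leq E$, the identity to use is the ``IMS-type'' localization formula: for a real weight $w$,
\begin{equation*}
    \mathrm{Re}\,\langle \check{\mathcal L}_\hbar \check\psi_\hbar, e^{2w}\check\psi_\hbar\rangle = \check{\cQ}_\hbar(e^{w}\check\psi_\hbar) - \|(\partial_{\ct}w + \text{(tangential part)})\,e^{w}\check\psi_\hbar\|^2 + (\text{lower order}),
\end{equation*}
so that
\begin{equation*}
    \check{\cQ}_\hbar(e^{w}\check\psi_\hbar) \leq \check\mu_\hbar\|e^{w}\check\psi_\hbar\|^2 + \|\partial_{\ct}w\cdot e^w\check\psi_\hbar\|^2 + (\text{gradient of tangential weight}) + \O(\hbar^\infty).
\end{equation*}

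The key point is the control of the gradient terms produced by the two pieces of $w$. The term coming from $M|\ct|$ contributes $\|M\,e^w\check\psi_\hbar\|^2 = M^2\|e^w\check\psi_\hbar\|^2$, so it can be absorbed provided we first produce a gain of at least $M^2 + 1$ in the quadratic form. This gain is exactly what the proof of Theorem \ref{thm:spectrediscret} provides via the transverse confinement estimate \eqref{eq:confinementt}: recall that for test functions supported in $\{|\ct| > r\}$ one has $\check{\cQ}_\hbar(\check\varphi) \geq (c_1\delta_0 r - \tilde C_1\hbar)\|\check\varphi\|^2$, which is arbitrarily large for $r$ large. Thus I would split $e^w\check\psi_\hbar$ using the partition of unity $\chi_{1,r}^2 + \chi_{2,r}^2 = 1$ from \eqref{eq:partitionIMS} with $r$ chosen (depending on $M$ and $E$) so large that on the region $\{|\ct| > r\}$ the quadratic form dominates $(M^2 + E + 1)\|\cdot\|^2$; on the complementary region $\{|\ct| \leq r\}$, the weight $M|\ct|$ is bounded by $Mr$, so $e^{M|\ct|}$ is a harmless bounded factor there and the corresponding piece of the inequality is absorbed into the constant $C$. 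The gradient term coming from the tangential weight $(1-\eps)\hbar^{-1}\chi_p\bz$ is handled differently: it produces $(1-\eps)^2\hbar^{-2}|\nabla(\chi_p\bz)|^2$ against $\delta(\fx)^{2/3}$ (in the $(\fx,\ft)$ coordinates of \eqref{eq:frakL}), and the factor $(1-\eps)^2 < 1$ together with the lower bound $\delta(\fx)^{2/3}\tilde\mu_{1,c} - E \geq 0$ on the relevant region (and the Lipschitz bound on $\bz$, controlling $|\bz'|$) is what lets this term be absorbed after using the Montgomery lower bound $\fq_\hbar(\phi) \geq (\tilde\mu_{1,c} - \tilde C\hbar^2)\|\phi\|^2$ established in \eqref{eq:ineq4}; here the cutoff $\chi_p$ ensures $\nabla(\chi_p\bz)$ stays bounded uniformly in $p$ (the $\chi_p'$ term is $\O(p^{-1})$ and the $\chi_p\bz'$ term is $\O(1)$ by Lipschitzness). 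Crucially, all constants are uniform in $p \geq 1$, which is the point of stating the lemma with the family $\chi_p$.

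The main obstacle I expect is bookkeeping the interaction between the two weights and the metric coefficients $m_\hbar$, $\mathfrak m_\hbar$, $\delta^{2/3}$ simultaneously: the commutators $[D_{\ct}, e^w]$ and $[\hbar D_x - \check A_\hbar, e^w]$ must be expanded carefully in the curved coordinates, and the cross term between $\partial_{\ct}w = M\,\mathrm{sgn}(\ct)$ and the tangential derivative of $w$ has to be shown to be lower order (it vanishes at leading order because the two weights depend on transverse and tangential variables respectively, but the curvature coupling in $P_2$ mixes them at order $\hbar$). A clean way to organize this is to perform the commutator computation once in the coordinates $(\fx,\ft)$ where the leading operator is $\delta(\fx)^{2/3}D_{\ft}^2 + \mathfrak P_\hbar^2$, mirroring Step 1 and Step 2 of the proof of Theorem \ref{thm:spectrediscret} almost verbatim but now with the extra bounded factor $e^{M|\ft|\delta^{-1/3}}$ — note $|\ct| = \delta^{-1/3}|\ft|$ — and the extra tangential factor $e^{(1-\eps)\hbar^{-1}\chi_p\bz}$, absorbing both perturbations into the spectral gap furnished by $\tilde\mu_{1,c}$ and by large $r$. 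Once the quadratic-form estimate is in hand, the first ($L^2$) estimate of the lemma is immediate since the quadratic form is nonnegative and the right-hand side already bounds $\check{\cQ}_\hbar(e^{M|\ct|+(1-\eps)\hbar^{-1}\chi_p\bz}\check\psi_\hbar) + \|e^{M|\ct|+(1-\eps)\hbar^{-1}\chi_p\bz}\check\psi_\hbar\|^2$ up to the constant $C$, after absorbing the $M^2$ term as described.
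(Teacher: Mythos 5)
Your overall architecture — Agmon commutator identity for the weight $M|\ct|+(1-\eps)\hbar^{-1}\chi_p\bz$, then an IMS partition in $\ct$ as in \eqref{eq:partitionIMS}, with the transverse confinement estimate \eqref{eq:confinementt} providing an arbitrarily large gain $\sim\delta_0 r$ on $\{|\ct|>r\}$ and the trivial bound $e^{M|\ct|}\leq e^{Mr}$ on $\{|\ct|\leq r\}$ — is exactly the paper's proof, and your treatment of the $M^2$ term is correct. The genuine gap is in your handling of the tangential gradient term. You propose to absorb $(1-\eps)^2|\partial_x(\chi_p\bz)|^2$ by means of the Montgomery lower bound $\fq_\hbar(\phi)\geq(\tilde\mu_{1,c}-\tilde C\hbar^2)\|\phi\|^2$ of \eqref{eq:ineq4} together with the sign condition $\delta(\fx)^{2/3}\tilde\mu_{1,c}-E\geq 0$ ``on the relevant region''. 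This cannot work here, for two reasons. First, in this lemma $\bz$ is an \emph{arbitrary} Lipschitz function: the term is of size $(1-\eps)^2 K^2$ with $K$ the Lipschitz constant, and the spectral gain $\delta^{2/3}\tilde\mu_{1,c}-E$ is an $O(1)$ quantity independent of $K$, so it cannot dominate it. Second, the region where the eigenfunctions actually live is precisely the classically allowed set $K_E=\{\delta^{2/3}\tilde\mu_{1,c}\leq E\}$, where your sign condition fails; the comparison $(1-\eps)^2(\bz')^2\leq(\delta^{2/3}\tilde\mu_{1,c}-E)_+$ is the mechanism of Proposition \ref{prop:AgmonGamma}, where $\bz$ is the specific Lithner--Agmon distance, and it is not available for the general $\bz$ of this lemma.

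The repair is already contained in your own scheme, and it is what the paper does: bound $|\partial_x(\chi_p\bz)|\leq |\chi_p\bz'|+|\chi_p'\bz|\leq C_\bz$ uniformly in $p$ (Lipschitz bound plus $\chi_p'=p^{-1}\chi'(p^{-1}\cdot)$), so that the tangential term contributes a constant $K(1-\eps)^2\|e^{L_p}\check\psi_\hbar\|^2$ on the same footing as $E+M^2$, and then choose the radius $r_0$ in the transverse partition large enough that $E+M^2+K(1-\eps)^2+Cr_0^{-2}\leq\frac12\delta_0 r_0$; the whole constant is then beaten by the confinement gain of \eqref{eq:confinementt}, with $r_0$ (hence $C$ and $h_0$) depending on $\bz$, which the statement allows. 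With this modification no Montgomery input and no passage to the $(\fx,\ft)$ coordinates is needed for this lemma — the paper's argument stays entirely in the $(x,\ct)$ variables — and the rest of your plan (regularization of the weight, recovery of the $L^2$ bound from the form bound) goes through as you describe.
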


\begin{proof}
    By Equation \eqref{eq:quasimode}, we know that $\check{\psi}_{\hbar}$ is a quasi-mode for the operator 
    $\check{\mathcal{L}}_\hbar$. Considering the Lipschitzian function $L_p(x,\ct) = M |\ct| + (1-\eps) \hbar^{-1}\chi_p \bz(x)$, we have the following Agmon formula
    \begin{equation*}
        \check{\cQ}_{\hbar}(e^{L_p} \check{\psi}_{\hbar}) = \check{\mu}_{\hbar}\|e^{L_p} \check{\psi}_{\hbar}\|^2 + \|\partial_{\ct}L e^{L_p} \check{\psi}_{\hbar}\|^2 + \|\hbar D_{x}L e^{L_p} \check{\psi}_{\hbar}\|^2 + \O(\hbar^\infty)\|e^{L_p} \check{\psi}_{\hbar}\|^2,
    \end{equation*}
    from which we get the following inequality 
    \begin{equation*}
        \check{\cQ}_{\hbar}(e^{L_p} \check{\psi}_{\hbar}) \leq (E + M^2+\O(\hbar^\infty))\|e^{L_p} \check{\psi}_{\hbar}\|^2 + (1-\eps)^2\|\bz' e^{L_p} \check{\psi}_{\hbar}\|^2 + (1-\eps)^2\|\bz \chi_{p}' e^{L_p} \check{\psi}_{\hbar}\|^2.
    \end{equation*}
    Since $\bz$ is a Lipschitzian function, there exists $K\geq 0$ such that for all $x\in\R$, we have 
    \begin{equation*}
        |\bz(x)| \leq |\bz(0)| + K|x|.
    \end{equation*}
    Therefore, for all $p\geq 1$, $\eps > 0$ and $M > 0$, we can write 
    \begin{equation*}
        \check{\cQ}_{\hbar}(e^{L_p} \check{\psi}_{\hbar}) \leq (E + M^2+ K(1-\eps)^2 + \O(\hbar^\infty))\|e^{L_p} \check{\psi}_{\hbar}\|^2 + (1-\eps)^2\|\bz \chi_{p}' e^{L_p} \check{\psi}_{\hbar}\|^2.
    \end{equation*}
    We introduce a partition of unity 
    \begin{equation*}
        \chi_{1,r}(\ct)^2 + \chi_{2,r}(\ct)^2 = 1,
    \end{equation*}
    where $\chi_{2,r}$ is supported in $\{|\ct|\geq r\}$. We assume we have chosen this partition in such way that there exists $C>0$ such that for all $r>0$
    \begin{equation*}
        \chi_{1,r}'^{2}+\chi_{2,r}'^{2} \leq Cr^{-2}.
    \end{equation*}
    The IMS formula implies that 
    \begin{multline*}
        \check{\cQ}_{\hbar}(\chi_{1,r}e^{L_p} \check{\psi}_{\hbar})+\check{\cQ}_{\hbar}(\chi_{2,r}e^{L_p} \check{\psi}_{\hbar})-Cr^{-2}\|e^{L_p} \check{\psi}_{\hbar}\|^2 \leq (E + M^2+ K(1-\eps)^2 + \O(\hbar^\infty))\|e^{L_p} \check{\psi}_{\hbar}\|^2 \\+ (1-\eps)^2\|\bz \chi_{p}' e^{L_p} \check{\psi}_{\hbar}\|^2.
    \end{multline*}
    By the inequality \eqref{eq:confinementt} in the proof of Theorem \ref{thm:spectrediscret}, we can choose $r_0$ sufficiently large and $h_0$ small enough such that we have, for $h\in(0,h_0)$ and $p\geq 1$ 
    \begin{equation*}
        \check{\cQ}_{\hbar}(\chi_{2,r_0}e^{L_p} \check{\psi}_{\hbar}) \geq \delta_{c} r_0 \|\chi_{2,R_0}e^{L_p} \check{\psi}_{\hbar}\|^2,
    \end{equation*}
    and 
    \begin{equation*}
        E + M^2+ K(1-\eps)^2 + C r_{0}^{-2} \leq \frac{1}{2}\delta_{c}r_0.
    \end{equation*}
    For these choices, we find that for all $h\in(0,h_0)$ and $p\geq 1$, we have 
    \begin{equation*}
        \frac{1}{2}\delta_{c}r_0\|\chi_{2,R_0}e^{L_p} \check{\psi}_{\hbar}\|^2 \leq \frac{1}{2}\delta_{c}r_0e^{Mr_0}\|e^{(1-\eps)\hbar^{-1}\chi_{p}\bz}\check{\psi}_{\hbar}\|^2 + (1-\eps)^2\|\bz \chi_{p}' e^{L_p} \check{\psi}_{\hbar}\|^2,
    \end{equation*}
    and letting $p$ go to infinity, the last term disappears as we have $\chi_{p}'(\cdot) = p^{-1}\chi'(p^{-1}\cdot)$, and the conlusion follows.
\end{proof}

We are now able to prove the desired localization with respect to the variable $x$.

\begin{proposition}
    \label{prop:AgmonGamma}
    Let $E \in(-\infty,E_0)$. We consider the compact set
    \begin{equation*}
        K_E = \{x \in \Gamma \,:\, \delta(x)^{2/3} \tilde{\mu}_{1,c} \leq E\},
    \end{equation*}
    and we introduce the Lipschitzian function $\bz(x) = d_{LA}(x,K)$ for $x\in\Gamma$.
    Then, for every $\eps\in(0,1)$, there exist $C_\eps>0$ and $h_0 > 0$ such that, for $h\in(0,h_0]$ and for all eigenpair $(\mu_h, \psi_h)$ of $\magSch$ satisfying $\mu_h \leq Eh^{4/3}$, we have
    \begin{equation}
        \label{eq:AgmonGamma}
        \begin{aligned}
            \Vert e^{(1-\eps) \hbar^{-1} \bz} \check{\psi}_{\hbar}\Vert &\leq C_\eps e^{\eps/\hbar}\Vert \check{\psi}_{\hbar}\Vert,\\
            \check{\cQ}_{\hbar}(e^{(1-\eps) \hbar^{-1} \bz}\check{\psi}_{\hbar}) &\leq C_\eps e^{\eps/\hbar}\Vert \check{\psi}_{\hbar}\Vert^2.
        \end{aligned}
    \end{equation}
\end{proposition}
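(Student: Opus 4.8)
The plan is to run an Agmon-type energy estimate for the quasimode $\check{\psi}_{\hbar}$ of Equation \eqref{eq:quasimode}, with the Lipschitz weight $\Phi=(1-\eps)\hbar^{-1}\bz$, and to combine it with a refined version of the quadratic-form lower bound obtained in the proof of Theorem \ref{thm:spectrediscret}, in which the effective potential $W(x):=\delta(x)^{2/3}\tilde{\mu}_{1,c}$ appears. As usual I first carry out the argument with $\bz$ replaced by $\chi_p\bz$, so that $e^{\Phi}$ is a bounded multiplier and every quantity below is finite; the estimates are obtained with constants uniform in $p$ and the limit $p\to\infty$ is taken exactly as in the proof of Lemma \ref{lem:weightedAgmon}, the $\chi_p'$ terms being controlled through the a priori exponential decay of genuine eigenfunctions of $\magSch$ below its essential spectrum. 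I suppress the subscript $p$ below. \emph{Step 1 (Agmon identity).} Since $\bz$ depends only on $x$, conjugating $\magSchbar$ by $e^{\Phi}$ modifies only the longitudinal magnetic momentum $\hbar D_x-\check{A}_{\hbar}$, replacing it by $\hbar D_x-\check{A}_{\hbar}-i(1-\eps)\bz'$. Pairing the quasimode equation with $e^{2\Phi}\check{\psi}_{\hbar}$ and taking real parts yields
\begin{equation*}
\check{\cQ}_{\hbar}\!\left(e^{\Phi}\check{\psi}_{\hbar}\right)=\check{\mu}_{\hbar}\,\big\Vert e^{\Phi}\check{\psi}_{\hbar}\big\Vert^2+(1-\eps)^2\!\int_{\R^2}|\bz'|^2\,\big|e^{\Phi}\check{\psi}_{\hbar}\big|^2\,m_{\hbar}^{-1}\,dx\,d\ct+\O(\hbar^\infty)\big\Vert e^{\Phi}\check{\psi}_{\hbar}\big\Vert^2 .
\end{equation*}

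\emph{Step 2 (lower bound with effective potential).} I revisit Step 2 of the proof of Theorem \ref{thm:spectrediscret}, keeping the factor $\delta(\fx)^{2/3}$ in place of its infimum, and apply the bound $\fM(\nu)\ge\tilde{\mu}_{1,c}$ for the Montgomery operators fibrewise in the coordinates $(\fx,\ft)$ (via functional calculus in the $x$-variable). Here Assumption \ref{assum:delta} is essential: it makes $W$ slowly varying, so that the commutator, weight- and IMS-errors remain $\O(\hbar)$ uniformly in $x$. This produces, for any $v$ supported in $\{|\ct|\le r_0\}$,
\begin{equation*}
\check{\cQ}_{\hbar}(v)\ \ge\ \int_{\R^2}\big(W(x)-C\hbar\big)\,|v|^2\,dx\,d\ct .
\end{equation*}
On the region $\{|\ct|\ge r_0\}$ I use an IMS partition in $\ct$ together with Lemma \ref{lem:weightedAgmon}: the inequality $\Vert e^{M|\ct|}e^{\Phi}\check{\psi}_{\hbar}\Vert\le C_M\Vert e^{\Phi}\check{\psi}_{\hbar}\Vert$ shows that the weighted mass of $e^{\Phi}\check{\psi}_{\hbar}$ carried by $\{|\ct|\ge r_0\}$ is $\le C_M e^{-2Mr_0}\Vert e^{\Phi}\check{\psi}_{\hbar}\Vert^2$, hence negligible once $Mr_0$ is fixed large; on that set one only needs the crude confinement $\check{\cQ}_{\hbar}(\cdot)\ge(c_1\delta_0 r_0-C\hbar)\Vert\cdot\Vert^2$ of Step 1 of the same proof.

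\emph{Step 3 (forbidden-region estimate).} Inserting Step 2 into the identity of Step 1, using $\check{\mu}_{\hbar}\le E$ and the eikonal inequality $|\bz'(x)|^2\le\big(\delta(x)^{2/3}\tilde{\mu}_{1,c}-E\big)_+$ (valid by construction of $\bz$, whose line element is $(\delta^{2/3}\tilde{\mu}_{1,c}-E)_+^{1/2}|dx|$), one gets
\begin{equation*}
\int_{\R^2}\Big(\big(2\eps-\eps^2\big)(W-E)_+-C\hbar\Big)\big|e^{\Phi}\check{\psi}_{\hbar}\big|^2\,dx\,d\ct\ \le\ \int_{\R^2}(E-W)_+\,|\check{\psi}_{\hbar}|^2\,dx\,d\ct+o(1)\,\big\Vert e^{\Phi}\check{\psi}_{\hbar}\big\Vert^2 ,
\end{equation*}
where $\bz=0$ (hence $e^{\Phi}=1$) on $K_E=\{W\le E\}$, so the first term on the right is $\le C\Vert\check{\psi}_{\hbar}\Vert^2$. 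I then fix $\rho>0$ small enough that $\{W-E\le\rho\}\subset\{\bz\le\eps\}$ — possible since $K_E$ is compact, $\bz$ is continuous with $\bz|_{K_E}=0$, and $\{W-E\le\rho\}$ is a compact neighbourhood of $K_E$ shrinking to it as $\rho\to0$ (using $\liminf_{|x|\to\infty}\delta>\delta_*$). Splitting the left-hand integral over $\{W-E\ge\rho\}$ and its complement, the integrand on the former is $\ge\frac12\eps\rho\,|e^{\Phi}\check{\psi}_{\hbar}|^2$ for $\hbar$ small, while on the latter $e^{2\Phi}\le e^{2(1-\eps)\eps/\hbar}\le e^{2\eps/\hbar}$; rearranging and absorbing the $o(1)\Vert e^{\Phi}\check{\psi}_{\hbar}\Vert^2$ term yields $\Vert e^{\Phi}\check{\psi}_{\hbar}\Vert^2\le C_\eps\,e^{2\eps/\hbar}\Vert\check{\psi}_{\hbar}\Vert^2$, which is the first line of \eqref{eq:AgmonGamma} (after renaming $\eps$). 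Feeding this back into the identity of Step 1 and bounding $|\bz'|^2\le(W-E)_+$, whose weighted integral is now controlled, gives the stated bound on $\check{\cQ}_{\hbar}(e^{\Phi}\check{\psi}_{\hbar})$.

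\emph{Main obstacle.} The delicate step is Step 2: upgrading the global lower bound of Theorem \ref{thm:spectrediscret} to a genuinely pointwise-in-$x$ inequality with effective potential $\delta^{2/3}\tilde{\mu}_{1,c}$, while keeping all errors of size $\O(\hbar)$ uniformly even though $\delta$ — and hence the Lipschitz constant of the Agmon weight — may be unbounded; this is exactly where Assumption \ref{assum:delta} is used. The secondary, bookkeeping-type difficulty is the joint handling of the transverse cut-off (through Lemma \ref{lem:weightedAgmon}), the longitudinal truncation $\chi_p$ and the passage to the limit $p\to\infty$, since $e^{\Phi}$ is an unbounded multiplier in that limit.
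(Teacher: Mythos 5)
Your proposal is correct and follows essentially the same route as the paper: a weighted Agmon identity with the truncated weight $\chi_p\bz$, a lower bound of the quadratic form featuring the pointwise effective potential $\delta(x)^{2/3}\tilde{\mu}_{1,c}-\O(\hbar)$ obtained by revisiting Step 2 of the proof of Theorem \ref{thm:spectrediscret} (symmetrization to $\Xi(\fx,D_{\fx})$ and functional calculus, with Lemma \ref{lem:weightedAgmon} absorbing the polynomially growing transverse error terms), and then the standard forbidden-region argument with the eikonal bound $|\bz'|^2\le(\delta^{2/3}\tilde{\mu}_{1,c}-E)_+$ and compactness of a slightly enlarged well before letting $p\to\infty$. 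The only difference in execution is that the paper applies the refined lower bound directly to the weighted quasimode $\psi_{\hbar}^{\rm wg}$, using Lemma \ref{lem:weightedAgmon} to control $\Vert \ft D_{\ft}\psi_{\hbar}^{\rm wg}\Vert$ and the cubic/quartic terms, rather than proving your support-restricted inequality on $\{|\ct|\le r_0\}$ together with an exterior IMS step; both implementations carry through.
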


\begin{proof}
Recall that by Equation \eqref{eq:quasimode}, $\check{\psi}_\hbar$ is a quasi-mode of $\check{\mathcal{L}}_\hbar$ for the eignevalue $\check{\mu}_\hbar = h^{-4/3}\mu_h$:
\begin{equation*}
    \check{\mathcal{L}}_\hbar \check{\psi}_\hbar = \check{\mu}_\hbar \check{\psi}_\hbar + \O(\hbar^{\infty}).
\end{equation*}
We start from the following Agmon-type estimate on $\check{\mathcal{L}}_\hbar$:
\begin{multline*}
     \check{\cQ}_{\hbar}(e^{(1-\eps) \hbar^{-1}\chi_{p} \bz} \check{\psi}_\hbar) \leq (\check{\mu}_\hbar+\O(\hbar^\infty))\Vert e^{(1-\eps) \hbar^{-1}\chi_{p} \bz} \check{\psi}_\hbar\Vert^{2} + (1-\eps)^{2}\Vert \bz'e^{(1-\eps) \hbar^{-1}\chi_{p} \bz} \check{\psi}_\hbar\Vert^{2} 
     \\+ (1-\eps)^{2}\Vert \chi_{p}'\bz e^{(1-\eps) \hbar^{-1} \chi_{p}\bz} \check{\psi}_\hbar\Vert^{2}.
\end{multline*}
We introduce the notation $\psi_{\hbar}^{\rm wg} = \mathfrak{U}_\hbar (e^{(1-\eps) \hbar^{-1} \chi_{p}\bz} \check{\psi}_\hbar)$, where the norm-preserving operator $\mathfrak{U}_\hbar$ has been introduced in 
Equation \eqref{eq:reductionfrak} and corresponds to the change of variable $(\fx,\ft)$ given by Equation \eqref{eq:coordfrak}. We reformulate the previous equation as follows
\begin{equation}
    \label{eq:standardagmon}
    \mathfrak{Q}_{\hbar}(\psi_{\hbar}^{\rm wg}) \leq \check{\mu}_\hbar \Vert \psi_{\hbar}^{\rm wg}\Vert^2 + (1-\eps)^{2}\Vert \bz'\psi_{\hbar}^{\rm wg} \Vert^{2} + (1-\eps)^{2}\Vert \chi_{p}'\bz \psi_{\hbar}^{\rm wg} \Vert^{2} + \O(\hbar^\infty)\Vert \psi_{\hbar}^{\rm wg}\Vert^2.
\end{equation}

By Equation \eqref{eq:mainineq} that we established in the proof of Theorem \ref{thm:spectrediscret}, we found 
\begin{equation*}
    \begin{aligned}
        \mathfrak{Q}_\hbar (\psi_{\hbar}^{\rm wg}) &\geq (1-\hbar)\int_{\R^2} \delta(\fx)^{2/3} |D_{\ft}\psi_{\hbar}^{\rm wg}|^2 + \left|\left(\mathfrak{m}_{\hbar}^{-1/2}\left(\hbar D_{\fx} - \frac{1}{2}\delta(\fx)^{1/3}\ft^2\right)\mathfrak{m}_{\hbar}^{-1/2}\right)\psi_{\hbar}^{\rm wg}\right|^2 \,d\fx d\ft\\
        &-\hbar \int_{\R^2} \left|\mathfrak{m}_{\hbar}^{-1/2}\left(\kappa(\fx)\delta(\fx)^{-1}\ft^3-\frac{1}{6}\delta'(\fx)\delta(\fx)^{-1}(\ft D_{\ft} + D_{\ft}\ft) + \hbar \mathfrak{r}_{\hbar}\right)\mathfrak{m}_{\hbar}^{-1/2}\psi_{\hbar}^{\rm wg}\right|^2\,d\fx d\ft.
    \end{aligned}
\end{equation*}
The analysis in the proof of Theorem \ref{thm:spectrediscret} has also shown that the last term of this inequality can be bounded as follow
\begin{multline*}
    \int_{\R^2} \left|\mathfrak{m}_{\hbar}^{-1/2}\left(\kappa(\fx)\delta(\fx)^{-1}\ft^3-\frac{1}{6}\delta'(\fx)\delta(\fx)^{-1}(\ft D_{\ft} + D_{\ft}\ft) + \hbar \mathfrak{r}_{\hbar}\right)\mathfrak{m}_{\hbar}^{-1/2}\psi_{\hbar}^{\rm wg}\right|^2\,dx d\ft \leq \\C \|\ft D_{\ft}\psi_{\hbar}^{\rm wg}\|^2 + C\|\psi_{\hbar}^{\rm wg}\|^2,
\end{multline*}
for some positive constant $C$.
As we have
\begin{equation*}
    \ft D_{\ft} = \ct D_{\ct},
\end{equation*}
we deduce from Lemma \ref{lem:weightedAgmon} that we have
\begin{equation*}
    \|\ft D_{\ft}\psi_{\hbar}^{\rm wg}\| \leq C\|\psi_{\hbar}^{\rm wg}\|.
\end{equation*}
We infer that
\begin{multline*}
    \mathfrak{Q}_\hbar (\psi_{\hbar}^{\rm wg}) \geq (1-\hbar)\int_{\R^2} \delta(\fx)^{2/3} |D_{\ft}\psi_{\hbar}^{\rm wg}|^2 + \left|\left(\mathfrak{m}_{\hbar}^{-1/2}\left(\hbar D_{\fx} - \frac{1}{2}\delta(\fx)^{1/3}\ft^2\right)\mathfrak{m}_{\hbar}^{-1/2}\right)\psi_{\hbar}^{\rm wg}\right|^2 \,d\fx d\ft
    \\-2C \hbar \|\psi_{\hbar}^{\rm wg}\|^2.
\end{multline*}
Now observe that we can write 
\begin{equation*}
    \begin{aligned}
    &\int_{\R^2} \delta(\fx)^{2/3} |D_{\ft}\psi_{\hbar}^{\rm wg}|^2 + \left|\left(\mathfrak{m}_{\hbar}^{-1/2}\left(\hbar D_{\fx} - \frac{1}{2}\delta(\fx)^{1/3}\ft^2\right)\mathfrak{m}_{\hbar}^{-1/2}\right)\psi_{\hbar}^{\rm wg}\right|^2 \,d\fx d\ft\\
    &\geq \int_{\R^2} |D_{\ft}\varphi_{\hbar}^{\rm wg}|^2 + \left|\left(\hbar D_{\fx}\delta(\fx)^{-1/3} - \frac{1}{2}\ft^2\right)\varphi_{\hbar}^{\rm wg}\right|^2\,d\fx d\ft - c\hbar^2\|\psi_{\hbar}^{\rm wg}\|^2\\
    &= \int_{\R^2} |D_{\ft}\varphi_{\hbar}^{\rm wg}|^2 + \left|\left(\hbar \Xi(\fx,D_{\fx}) - \frac{1}{2}\ft^2 + \frac{i\hbar}{6}\delta'(\fx)\delta(\fx)^{-4/3}\right)\varphi_{\hbar}^{\rm wg}\right|^2\,d\fx d\ft - c\hbar^2\|\psi_{\hbar}^{\rm wg}\|^2,
    \end{aligned}
\end{equation*}
where we have introduced
\begin{equation*}
    \varphi_{\hbar}^{\rm wg}(\fx,\ft) = \delta(\fx)^{1/3}\psi_{\hbar}^{\rm wg}(\fx,\ft),
\end{equation*}
and for some positive constant $c$.
We are reduced to a similar analysis as for the Equation \eqref{eq:ineq2}: indeed, for some positive constant $\tilde{c}$, we can write 
\begin{equation*}
    \mathfrak{Q}_\hbar(\psi_{\hbar}^{\rm wg}) \geq (1-\hbar)\check{\fq}_{\hbar}(\varphi_{\hbar}^{\rm wg}) - \tilde{c}\hbar \|\psi_{\hbar}^{\rm wg}\|^2.
\end{equation*}
Using here again the functional calculus associated to the self-adjoint operator $\Xi(\fx,D_{\fx})$, we obtain
\begin{equation*}
    \mathfrak{Q}_\hbar(\psi_{\hbar}^{\rm wg}) \geq (1-\hbar) \tilde{\mu}_{1,c} \|\delta^{1/3}\psi_{\hbar}^{\rm wg}\|^2 -\tilde{c}\hbar \|\psi_{\hbar}^{\rm wg}\|^2.
\end{equation*}

Coming back to Equation \eqref{eq:standardagmon}, we are now working with the following inequality:
\begin{multline*}
    \tilde{\mu}_{1,c}\| \delta^{1/3} \psi_{\hbar}^{\rm wg} \|^2 - \tilde{c}\hbar \|\psi_{\hbar}^{\rm wg}\|^2 \leq \check{\mu}_\hbar \|\psi_{\hbar}^{\rm wg}\|^2 + (1-\eps)^{2}\Vert \bz'e^{(1-\eps) \hbar^{-1}\chi_{p} \bz} \psi_{h}\Vert^{2} \\
    + (1-\eps)^2\Vert \chi_{p}'\bz e^{(1-\eps) \hbar^{-1} \chi_{p}\bz} \psi_{h}\Vert^{2} + \O(\hbar^\infty)\Vert \psi_{\hbar}^{\rm wg}\Vert^2.
\end{multline*}

What follows is a classical argument to obtain the desired Agmon estimates \eqref{eq:AgmonGamma}. We can rewrite the last inequality as 
\begin{equation*}
    \int_{\R^2} \left(\delta(\fx)^{2/3} \tilde{\mu}_{1,c} - E - (1-\eps)^2(\bz')^2(\fx)-\tilde{c}\hbar\right)|\psi_{\hbar}^{\rm wg}|^2\,d\fx d\ft \leq(1-\eps)^2 \Vert \chi_{p}'\bz e^{(1-\eps) \hbar^{-1} \chi_{p}\bz} \psi_{h}\Vert^{2}.
\end{equation*}
We introduce $K_{E,\eps} := \{x \in \Gamma \,:\, \delta(x)^{2/3} \tilde{\mu}_{1,c} -E\leq \eps\}$. Then, outside $K_{E,\eps}$, we have
\begin{equation*}
    \begin{aligned}
    \delta(x)^{2/3} \tilde{\mu}_{1,c} - E - (1-\eps)^2(\bz')^2 &\geq \delta(x)^{2/3} \tilde{\mu}_{1,c} - E - (1-\eps)^2(\delta(x)^{2/3}\tilde{\mu}_{1,c} - E)_{+} \\
    &\geq (1-(1-\eps)^2)(\delta(x)^{2/3} \tilde{\mu}_{1,c} - E) \\
    &\geq (1-(1-\eps)^2)\eps \geq \frac{1}{2}\eps^2,
    \end{aligned}
\end{equation*}
for $\eps$ small enough. It follows that we have 
\begin{multline*}
    \frac{1}{2}\eps^2 \|\psi_{\hbar}^{\rm wg}\|_{L^2(\R^2\setminus K_{E,\eps})}^2 \leq -\int_{K_{E,\eps}} \left(\delta(x)^{1/3} \tilde{\mu}_{1,c} - E - (1-\eps)^2(\bz')^2(u)-\tilde{c}\hbar\right)|\psi_{\hbar}^{\rm wg}|^2\,dx d\ft \\
    +(1-\eps)^2\Vert \chi_{p}'\bz e^{(1-\eps) \hbar^{-1} \chi_{p}\bz} \psi_{h}\Vert^{2}.
\end{multline*}
As $K_{E,\eps}$ is compact and as we have $\bz \leq \eps$ on $K_{E,\eps}$, we obtain for some constant $\tilde{C}_\eps >0$:
\begin{equation*}
    \|\psi_{\hbar}^{\rm wg}\|^2 \leq \tilde{C}_\eps e^{\eps/\hbar}\|\phi_{\hbar}\|^2 + 2\eps^{-2}(1-\eps)^2\Vert \chi_{p}'\bz e^{(1-\eps) \hbar^{-1} \chi_{p}\bz} \psi_{h}\Vert^{2},
\end{equation*}
and letting $p$ go to infinity, as $|\chi_{p}'|\leq 1/p$, the last term disappears and we are left with the desired estimates \eqref{eq:AgmonGamma}.
\end{proof}

\subsubsection{Localization in impulsion}
We denote by $\xi$ the dual variable to $x$ (in the coordinates $(x,\ct)$). To establish localization for the impulsion variable $\xi$, it is easier to prove first
a localization with respect to the operator $\Xi(\fx,D_{\fx})$ is the coordinates $(\fx,\ft)$. We consider the compact set 
\begin{equation*}
    \bigcup_{j\geq 1}\{\xi\in\R\,:\,\delta_{c}^{2/3}\mu_{j}(\xi \delta_{*}^{-1/3}) \in [0,E]\} \subset [\xi_{min},\xi_{max}] =: \tilde{K}_E.
\end{equation*}
The compacity of $\tilde{K}_E$ follows from the properties of the dispersive curves $\mu_j$ resumed in Section \ref{subsect:Montgomery}. 
To quantify this, we introduce a smooth cutoff function $\tilde{\chi}$ with values in $[0,1]$ such that $\tilde{\chi}=0$ near $\tilde{K}_E$ and 1 away from $\tilde{K}_E$.

\begin{lemma}
    \label{lem:localizationfrequence}
    Consider an eigenpair $(\mu_h,\psi_h)$ of $\magSch$ with $\mu_h \leq E h^{4/3}$. Then, considering the function 
    \begin{equation*}
        \phi_\hbar = \mathfrak{U}_{\hbar} \check{\psi}_\hbar,
    \end{equation*}
    where $\mathfrak{U}_\hbar$ is the norm-preserving operator corresponding to the change of coordinates $(\fx,\ft)$, we have 
    \begin{equation}
        \label{eq:localizationfrequence}
        \tilde{\chi}(\hbar\Xi(\fx,D_{\fx})) \phi_{\hbar} = \O(\hbar^\infty).
    \end{equation}
\end{lemma}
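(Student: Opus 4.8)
The plan is to work in the coordinates $(\fx,\ft)$ and to combine the quasi-mode equation for $\phi_\hbar=\mathfrak{U}_\hbar\check\psi_\hbar$ with the a priori weighted estimates already available and the sharp lower bound on $\mathfrak{Q}_\hbar$ extracted from the proof of Theorem~\ref{thm:spectrediscret}, and then to close the argument by a Helffer--Sjöstrand commutator estimate together with a bootstrap that turns $\O(\hbar)$ into $\O(\hbar^\infty)$. Transporting Equation~\eqref{eq:quasimode} through $\mathfrak{U}_\hbar$ and using \eqref{eq:reductionfrak}, one has $\mathfrak{L}_\hbar\phi_\hbar=\check\mu_\hbar\phi_\hbar+\O(\hbar^\infty)$ with $\check\mu_\hbar\le E$, hence $\mathfrak{Q}_\hbar(\phi_\hbar)\le(E+\O(\hbar^\infty))\|\phi_\hbar\|^2$ and $\|D_\ft\phi_\hbar\|+\|\mathfrak{P}_\hbar\phi_\hbar\|\le C\|\phi_\hbar\|$. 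From Lemma~\ref{lem:weightedAgmon} (taking $\bz\equiv0$) and Proposition~\ref{prop:AgmonGamma} one gets, in these coordinates: first, that modulo $\O(\hbar^\infty)$ the function $\phi_\hbar$ is supported in an arbitrarily small neighbourhood $\mathcal{V}$ of $K_E$ in the variable $\fx$, on which $\delta$ and its derivatives are bounded and $\delta<\delta_*$; second, the polynomial weighted bounds $\|\ft^N\phi_\hbar\|+\|\ft^N D_\ft\phi_\hbar\|\le C_N\|\phi_\hbar\|$ for all $N$. Multiplying $\phi_\hbar$ by a cutoff $\zeta(\fx)$ supported in $\mathcal{V}$ changes it only by $\O(\hbar^\infty)$ and produces again a quasi-mode modulo $\O(\hbar^\infty)$, so I may and do assume that $\phi_\hbar$ is supported in $\mathcal{V}$.

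The heart of the argument is a forbidden-region lower bound: for every cutoff $g\in C^\infty(\R)$ supported away from $\tilde K_E$ there exist $c_0>0$ and $C>0$ such that
\[
\mathfrak{Q}_\hbar\big(g(\hbar\Xi(\fx,D_\fx))\phi_\hbar\big)\ \ge\ (E+c_0)\big\|g(\hbar\Xi(\fx,D_\fx))\phi_\hbar\big\|^2-C\hbar\|\phi_\hbar\|^2-\O(\hbar^\infty)\|\phi_\hbar\|^2 .
\]
To obtain it I would revisit the chain of inequalities \eqref{eq:mainineq}--\eqref{eq:ineq4} from the proof of Theorem~\ref{thm:spectrediscret}, now applied to $g(\hbar\Xi(\fx,D_\fx))\phi_\hbar$ in place of $\phi$: the weighted bounds above absorb the cross-terms in $\ft D_\ft$ and the $\mathfrak{r}_\hbar$- and $\mathfrak{m}_\hbar$-corrections, while a \emph{finite} partition of unity in $\fx$ over the compact set $\mathcal{V}$ allows one to freeze $\delta(\fx)$ on each piece, so that there the functional calculus of the single self-adjoint operator $\Xi(\fx,D_\fx)$ reproduces, up to $\O(\hbar)$, the operator $\delta(\fx)^{2/3}\tilde\mu_1\big(\hbar\Xi(\fx,D_\fx)\,\delta(\fx)^{-1/3}\big)$. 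The gain $c_0$ is then exactly what the definition of $\tilde K_E$ encodes: $\delta(\fx)^{2/3}\tilde\mu_1(\xi\delta(\fx)^{-1/3})$ is the bottom of the spectrum of $D_{\ct}^2+(\xi-\tfrac{\delta(\fx)}{2}\ct^2)^2$, and, using the rescaling relating these operators to the family $(\fM(\nu))_{\nu\in\R}$, the properties of the dispersive curves recalled in Section~\ref{subsect:Montgomery}, and the fact that $\delta<\delta_*$ on $K_E$, this quantity stays $\ge E+c_0$ whenever $\fx\in\mathcal{V}$ and $\xi\in\supp g$; spectral calculus in $\Xi(\fx,D_\fx)$ then gives the displayed inequality.

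It remains to compare energies and bootstrap. Fix a cutoff $g$ supported away from $\tilde K_E$ and set $u_\hbar:=g(\hbar\Xi(\fx,D_\fx))\phi_\hbar$. On the one hand the forbidden-region bound gives $\langle(\mathfrak{L}_\hbar-E)u_\hbar,u_\hbar\rangle\ge(c_0-C\hbar)\|u_\hbar\|^2-\O(\hbar^\infty)\|\phi_\hbar\|^2$. On the other hand, moving $g(\hbar\Xi(\fx,D_\fx))$ past $\mathfrak{L}_\hbar$ produces the term $\langle g(\hbar\Xi(\fx,D_\fx))^2(\mathfrak{L}_\hbar-E)\phi_\hbar,\phi_\hbar\rangle\le\O(\hbar^\infty)$ (because $\check\mu_\hbar\le E$) together with a commutator term built from $[\mathfrak{L}_\hbar,g(\hbar\Xi(\fx,D_\fx))]$; by the Helffer--Sjöstrand formula and the $\ft$-weighted a priori bounds this commutator is $\O(\hbar)$ and is microlocalized on $\supp g'$, so the term is bounded by $C\hbar\,\|g^{(1)}(\hbar\Xi(\fx,D_\fx))\phi_\hbar\|\,\|\phi_\hbar\|$ for any cutoff $g^{(1)}$ supported away from $\tilde K_E$ with $g^{(1)}\equiv1$ near $\supp g$. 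Comparing the two estimates yields $\|g(\hbar\Xi(\fx,D_\fx))\phi_\hbar\|\le C\hbar\,\|g^{(1)}(\hbar\Xi(\fx,D_\fx))\phi_\hbar\|+\O(\hbar^\infty)\|\phi_\hbar\|$, and iterating this inequality along a nested finite sequence of cutoffs $\tilde\chi=g^{(0)},g^{(1)},\dots,g^{(N)}$ supported away from $\tilde K_E$ with $g^{(j+1)}\equiv1$ near $\supp g^{(j)}$ gives $\|\tilde\chi(\hbar\Xi(\fx,D_\fx))\phi_\hbar\|\le C_N\hbar^N\|\phi_\hbar\|+\O(\hbar^\infty)$ for every $N$, which is \eqref{eq:localizationfrequence}. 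The step I expect to be the main obstacle is precisely the commutator estimate for $[\mathfrak{L}_\hbar,g(\hbar\Xi(\fx,D_\fx))]$: because $\mathfrak{A}_\hbar$ grows like $\ft^2$ and both the metric factor $\mathfrak{m}_\hbar$ and the coefficient $\delta(\fx)$ fail to commute with $\Xi(\fx,D_\fx)$, one must insert the weighted estimates of Lemma~\ref{lem:weightedAgmon} into the resolvent expansion of $g(\hbar\Xi(\fx,D_\fx))$ and carefully keep track of where each contribution is microlocalized in order to close the induction.
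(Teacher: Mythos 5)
Your overall plan (pass to the $(\fx,\ft)$ coordinates, use the quasimode equation, treat $\{\hbar\Xi\notin\tilde K_E\}$ as a classically forbidden region, and bootstrap over nested cutoffs) is close in spirit to the paper's proof, but the paper does not argue through a quadratic-form lower bound: it truncates in $\ft$ at scale $\hbar^{-\eta}$, writes $(\mathfrak{L}_\hbar-\check\mu_\hbar)\tilde\chi(\hbar\Xi)\tilde\phi_\hbar=[\mathfrak{L}_\hbar,\tilde\chi(\hbar\Xi)]\tilde\phi_\hbar+\O(\hbar^\infty)$, and then \emph{inverts} the cut operator $\mathfrak{L}_{\hbar,0}^{cut}-\check\mu_\hbar$ (invertible by min--max thanks to $\chi^\sharp$ and the definition of $\xi_{max}$), the remainder $\mathfrak{R}_\hbar(\mathfrak{L}_{\hbar,0}^{cut}-\check\mu_\hbar)^{-1}$ being $\O(\hbar^{1-3\eta})$ precisely because of the $\theta(\hbar^\eta\ft)$ truncation; the conclusion then follows by induction on the supports of $\tilde\chi,\underline{\tilde\chi}$, carried out \emph{jointly} for $\|\tilde\chi(\hbar\Xi)\tilde\phi_\hbar\|$ and $\|\tilde\chi(\hbar\Xi)D_\ft\tilde\phi_\hbar\|$.

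The genuine gap in your write-up is that your error terms are not microlocalized, so the bootstrap cannot reach $\O(\hbar^\infty)$. When you run the chain \eqref{eq:mainineq}--\eqref{eq:ineq4} on $u_\hbar=g(\hbar\Xi)\phi_\hbar$, the subleading terms are $\hbar$ times weighted norms such as $\|\ft^3u_\hbar\|^2$ and $\|\ft D_\ft u_\hbar\|^2$; since these weights are unbounded, the global Agmon bounds only give $-C\hbar\|\phi_\hbar\|^2$, which is exactly what your displayed forbidden-region inequality states (your later use of $(c_0-C\hbar)\|u_\hbar\|^2$ is not what you proved). With an error carrying the full norm of $\phi_\hbar$ and no cutoff away from $\tilde K_E$, the comparison argument yields only $\|g(\hbar\Xi)\phi_\hbar\|=\O(\hbar^{1/2})\|\phi_\hbar\|$, and iterating over nested cutoffs does not improve it, because that term is never gained upon. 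The same defect appears in your commutator bound: $[\mathfrak{L}_\hbar,g(\hbar\Xi)]$ contains $[\delta^{2/3},g(\hbar\Xi)]D_\ft^2$ and $\ft D_\ft$-type contributions, so its action on $\phi_\hbar$ is controlled by $C\hbar$ times \emph{microlocalized} norms of derivative- and $\ft$-weighted quantities, not by $C\hbar\|g^{(1)}(\hbar\Xi)\phi_\hbar\|$ alone; dropping the cutoff from those terms again destroys the gain at the first step. To close the induction you must keep every remainder localized on $\supp\underline{\tilde\chi}$ and propagate the companion quantities (at least $\|\underline{\tilde\chi}(\hbar\Xi)D_\ft\phi_\hbar\|$, and in your formulation the $\ft$-weighted analogues) through the iteration --- this is precisely what the paper's estimate \eqref{eq:estimbracket}, its companion bound for $\|\tilde\chi(\hbar\Xi)D_\ft\tilde\phi_\hbar\|$ obtained from the quasimode equation, and the $\theta(\hbar^\eta\ft)$ truncation accomplish. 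You correctly identify the commutator as the main obstacle, but you do not resolve it, and the iteration inequality you display is not the one your estimates actually yield.
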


\begin{proof}
    We will only prove the result when $\tilde{\chi}$ is 0 near $(-\infty,\xi_{max}+\frac{\eps}{2})$ and 1 near $(\xi_{max}+\eps,+\infty)$, the estimate on $(-\infty,\xi_{min}-\eps)$ following from similar arguments.
    We introduce another smooth cutoff function $\underline{\tilde{\chi}}$ with the same properties as $\tilde{\chi}$ and satisfying ${\rm supp}(\tilde{\chi})\subset {\rm supp}(\underline{\tilde{\chi}})$.
    In order to lighten the notation, we write
    \begin{equation*}
        \tilde{\chi}(\hbar\Xi) = \tilde{\chi}(\hbar\Xi(\fx,D_{\fx})).
    \end{equation*} 
    Recall that by Proposition \ref{prop:AgmonGamma}, the functions $\phi_\hbar$ are quasi-modes for the operator $\mathfrak{L}_\hbar$.
    Observe also that Lemma \ref{lem:weightedAgmon} gives an exponential decreasing of the quasi-modes along the transverse derivative $\ft$. For this reason, we introduce a cutoff function 
    $\theta\in C_{c}^\infty(\R,[0,1])$, equal to 1 in a neighborhood of 0, and $\eta\in(0,1/3)$. Now, considering the functions $\tilde{\phi}_\hbar = \theta(\hbar^\eta \ft)\phi_\hbar$,
    we still have 
    \begin{equation}
        \label{eq:quasimode2}
        \mathfrak{L}_{\hbar} \tilde{\phi}_{\hbar} = \check{\mu}_{\hbar}\tilde{\phi}_{\hbar} + \O(\hbar^\infty).
    \end{equation}
    Then, we write 
    \begin{equation}
        \label{eq:localizationfrequenceproof}
        \left(\mathfrak{L}_{\hbar} -\check{\mu}_\hbar\right)\tilde{\chi}(\hbar\Xi)\tilde{\phi}_{\hbar} = [\mathfrak{L}_{\hbar},\tilde{\chi}(\hbar\Xi)]\tilde{\phi}_{\hbar} + \O(\hbar^\infty).
    \end{equation}
    Recall the expression of the operator $\mathfrak{L}_{\hbar}$:
    \begin{equation*}
        \mathfrak{L}_{\hbar} = \delta^{2/3}\left[D_{\ft}^2+ \left(\mathfrak{m}_{\hbar}^{-1/2}(\hbar\Xi -\delta^{-1/3}\mathfrak{A}_{\hbar} + \frac{\hbar}{6}\delta'\delta^{-4/3}(i+\ft D_{\ft}+D_{\ft}\ft))\mathfrak{m}_{\hbar}^{-1/2}\right)^2\right] -\hbar^2\frac{k}{\mathfrak{m}_\hbar}.
    \end{equation*}
    Thanks to this explicit form and to the localization result in the $\fx$ variable given by Proposition \ref{prop:AgmonGamma} for $\phi_{\hbar}$, we can write for a certain constant $C>0$ that
    \begin{equation}
        \label{eq:estimbracket}
        \| [\mathfrak{L}_{\hbar},\tilde{\chi}(\hbar\Xi)]\tilde{\phi}_{\hbar}\| \leq C\hbar \|\underline{\tilde{\chi}}(\hbar\Xi)\tilde{\phi}_{\hbar}\| + C\hbar\|\underline{\tilde{\chi}}(\hbar\Xi)D_{\ft}\tilde{\phi}_{\hbar}\| + \O(\hbar^\infty)\|\tilde{\phi}_{\hbar}\|.
    \end{equation}
    Furthermore, we can also use the expression of $\mathfrak{L}_{\hbar}$ to decompose it as follows
    \begin{equation*}
        \mathfrak{L}_{\hbar} = \mathfrak{L}_{\hbar,0}+\hbar \mathfrak{R}_{\hbar},
    \end{equation*}
    where 
    \begin{equation*}
        \mathfrak{L}_{\hbar,0} = \delta(\fx)^{2/3}\left(D_{\ft}^2+ \mathfrak{P}_{\hbar,0}^2\right),
    \end{equation*}
    and where we introduced  
    \begin{equation*}
        \mathfrak{P}_{\hbar,0} = \mathfrak{m}_{\hbar}^{-1/2}\left(\hbar \Xi(\fx,D_{\fx}) -\frac{1}{2}\ft^2\right)\mathfrak{m}_{\hbar}^{-1/2},
    \end{equation*}
    and the remainder $\mathfrak{R}_{\hbar}$ can be written as 
    \begin{equation}
        \label{eq:remainder}
        \mathfrak{R}_{\hbar} =  \mathfrak{R}_{\hbar,1} + \hbar\mathfrak{R}_{\hbar,2}
    \end{equation}
    with 
    \begin{equation*}
        \mathfrak{R}_{\hbar,1} = -2\mathfrak{P}_{\hbar,0}\mathfrak{m}_{\hbar}^{-1}\left(\kappa\delta^{-1}\ft^3-\frac{1}{6}\delta'\delta^{-1}(i+\ft D_{\ft} + D_{\ft}\ft)\right),
    \end{equation*}
    and
    \begin{multline*}
        \mathfrak{R}_{\hbar,2} = \left(\mathfrak{m}_{\hbar}^{-1}\delta^{1/3}\left(\kappa\delta^{-1}\ft^3-\frac{1}{6}\delta'\delta^{-1}(i+\ft D_{\ft} + D_{\ft}\ft)\right) + \hbar\delta^{2/3}\mathfrak{r}_{\hbar}\right)^2 
        - 2 \mathfrak{P}_{\hbar,0} \mathfrak{m}_{\hbar}^{-1}\mathfrak{r}_{\hbar} - \frac{k}{\mathfrak{m}_\hbar}.
    \end{multline*}
    We introduce an increasing smooth function $\xi\in\R\mapsto \chi^\sharp(\xi)$ that coincides with $\nu_{max}+\frac{\eps}{4}$ on $(-\infty,\nu_{max}+\frac{\eps}{4})$ and with the identity
    on $(\nu_{max}+\frac{\eps}{2},+\infty)$. We define the operator
    \begin{equation*}
        \mathfrak{L}_{\hbar,0}^{cut} = \delta^{2/3}\left(D_{\ft}^2+ \left(\mathfrak{m}_{\hbar}^{-1/2}\left(\chi^\sharp(\hbar \Xi) -\frac{1}{2}\ft^2\right)\mathfrak{m}_{\hbar}^{-1/2}\right)^2\right),
    \end{equation*}
    acting on the domain
    \begin{equation*}
        \dom(\mathfrak{L}_{\hbar,0}^{cut}) = \left\{u \in L^2(\R^2)\,:\, \mathfrak{L}_{\hbar,0}^{cut}u \in L^2(\R^2) \right\}.
    \end{equation*}

    We notice by a simple application of the min-max theorem that $\mathfrak{L}_{\hbar,0}^{cut}-\check{\mu}_\hbar$ is invertible due to the choice of the cutoff function $\chi^\sharp$ and the definition of $\xi_{max}$.
    Moreover, since ${\rm supp}(\tilde{\chi}) \subset \{\chi^\sharp  = {\rm Id}\}$, using Equation \eqref{eq:localizationfrequenceproof} we have 
    \begin{equation*}
        \left(\mathcal{L}_{\hbar,0}^{cut}-\check{\mu}_\hbar + \mathfrak{R}_{\hbar}\right)\tilde{\chi}(\hbar\Xi)\tilde{\phi}_\hbar = [\mathcal{L}_{\hbar},\tilde{\chi}(\hbar\Xi)]\tilde{\phi}_{\hbar} + \O(\hbar^\infty),
    \end{equation*}
    which can be written as 
    \begin{equation*}
        \left({\rm Id}+ \mathfrak{R}_{\hbar}(\mathfrak{L}_{\hbar,0}^{cut}-\check{\mu}_\hbar)^{-1}\right)(\mathfrak{L}_{\hbar,0}^{cut}-\check{\mu}_\hbar)\tilde{\chi}(\hbar\Xi)\tilde{\phi}_\hbar = [\mathfrak{L}_{\hbar},\tilde{\chi}(\hbar\Xi)]\tilde{\phi}_{\hbar} + \O(\hbar^\infty).
    \end{equation*}
    By using Equation \eqref{eq:remainder} and  definition of the domain of $\mathfrak{L}_{\hbar,0}^{cut}$, we get that 
    \begin{equation*}
        \|\mathfrak{R}_{\hbar}(\check{\mathcal{L}}_{\hbar,0}^{cut}-\check{\mu}_\hbar)^{-1}\|_{L^2\rightarrow L^2} = \O(\hbar^{1-3\eta}),
    \end{equation*}
    as the polynomial term in $\ft$ have a bounded action on $\tilde{\phi}_\hbar$ by the cutofff function $\theta(\hbar^\eta\cdot)$.

    The operator ${\rm Id}+\mathfrak{R}_{\hbar}(\mathfrak{L}_{\hbar,0}^{cut}-\check{\mu}_\hbar)^{-1}$ is then bijective as soon as $\hbar$ is small enough.
    With Equation \eqref{eq:estimbracket}, we obtain 
    \begin{equation*}
        \|\tilde{\chi}(\hbar\Xi)\tilde{\phi}_{\hbar}\|^2\leq C\hbar \|\underline{\tilde{\chi}}(\hbar\Xi)\tilde{\phi}_{\hbar}\| + C\hbar \|\underline{\tilde{\chi}}(\hbar\Xi)D_{\ft}\tilde{\phi}_{\hbar}\|+\O(\hbar^\infty)\|\tilde{\phi}_{\hbar}\|^2.
    \end{equation*}
    Using Equation \eqref{eq:quasimode2} once again, we find 
    \begin{equation*}
        \|\tilde{\chi}(\hbar\Xi)\tilde{\phi}_{\hbar}\|^2+\|\tilde{\chi}(\hbar\Xi)D_{\ft}\tilde{\phi}_{\hbar}\|\leq C\hbar \|\underline{\tilde{\chi}}(\hbar\Xi)\tilde{\phi}_{\hbar}\| + C\hbar \|\underline{\tilde{\chi}}(\hbar\Xi)D_{\ft}\tilde{\phi}_{\hbar}\|+\O(\hbar^\infty)\|\tilde{\phi}_{\hbar}\|^2.
    \end{equation*}
    The estimate \eqref{eq:localizationfrequence} follows by an induction argument on the support of the cutoff functions $\tilde{\chi}$ and $\underline{\tilde{\chi}}$, and from the fact 
    that $\phi_\hbar$ and $\tilde{\phi}_\hbar$ coincide up to $\O(\hbar^\infty)$ for the graph norm of $\mathfrak{L}_\hbar$.
\end{proof}

From the previous lemma, we obtain the desired localization with respect to impulsion $D_x$.

\begin{proposition}
    \label{prop:localizationfrequence}
    Let $E\in(-\infty,E_0)$. There exist $h_0 >0$ and $\check{\chi} \in C^{\infty}(\R)$ satisfying $0\leq \check{\chi}\leq 1$ and equal to 0 on a neighborhood of 0, such that, for all $h\in(0,h_0)$ and for all eigenpair $(\mu_h,\psi_h)$ of $\magSch$ with $\mu_h\leq E h^{4/3}$, we have 
    \begin{equation}
        \label{eq:proplocalizationfrequence}
        \check{\chi}(\hbar D_{x})\check{\psi}_{\hbar} = \O(\hbar^{\infty}).
    \end{equation}
    Moreover, we have 
    \begin{equation*}
        \check{\cQ}_{\hbar}\left(\check{\chi}(\hbar D_{x})\check{\psi}_{\hbar}\right) = \O(\hbar^\infty).
    \end{equation*}
\end{proposition}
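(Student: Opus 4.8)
The plan is to transfer the frequency localization of Lemma~\ref{lem:localizationfrequence}, which is expressed through the self-adjoint operator $\hbar\Xi(\fx,D_{\fx})$ in the coordinates $(\fx,\ft)$, to a localization in $\hbar D_{x}$ in the coordinates $(x,\ct)$. The first step is to record the operator identity linking the two. Conjugating $\hbar D_{x}$ by the change of variables \eqref{eq:coordfrak} together with the weight $\delta^{1/6}$ --- equivalently, comparing the expressions of $\check{P}_{\hbar}$ and $\mathfrak{P}_{\hbar}$ in Section~\ref{subsubsect:delta} --- and using $\Xi(\fx,D_{\fx})=\delta^{-1/6}D_{\fx}\delta^{-1/6}$, one obtains
\begin{equation*}
    \Theta_{\hbar}:=\mathfrak{U}_{\hbar}(\hbar D_{x})\mathfrak{U}_{\hbar}^{-1}=\delta(\fx)^{1/6}\bigl(\hbar\Xi(\fx,D_{\fx})\bigr)\delta(\fx)^{1/6}+\frac{\hbar}{6}\delta'(\fx)\delta(\fx)^{-1}\bigl(\ft D_{\ft}+D_{\ft}\ft\bigr).
\end{equation*}
Since $\mathfrak{U}_{\hbar}$ is norm-preserving, $\|f(\hbar D_{x})\check{\psi}_{\hbar}\|=\|f(\Theta_{\hbar})\phi_{\hbar}\|$ for every $f\in C^{\infty}(\R)$, where $\phi_{\hbar}=\mathfrak{U}_{\hbar}\check{\psi}_{\hbar}$ is the quasi-mode of $\mathfrak{L}_{\hbar}$ from Lemma~\ref{lem:localizationfrequence}; so it suffices to exhibit a cutoff $\check{\chi}$ with $\check{\chi}(\Theta_{\hbar})\phi_{\hbar}=\O(\hbar^{\infty})$.

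Next I would locate $\phi_{\hbar}$ with respect to $\Theta_{\hbar}$. By Proposition~\ref{prop:AgmonGamma}, $\phi_{\hbar}$ concentrates in the $\fx$ variable in the compact set $K_{E}$, on which $\delta$ is bounded above and below; by Lemma~\ref{lem:weightedAgmon} (recall $\ft D_{\ft}=\ct D_{\ct}$) together with $|\delta'|\le C_{\delta}\delta^{2/3}$ from Assumption~\ref{assum:delta}, the second summand of $\Theta_{\hbar}$ sends $\phi_{\hbar}$ to $\O(\hbar)\|\phi_{\hbar}\|$; and by Lemma~\ref{lem:localizationfrequence}, $\phi_{\hbar}$ is localized with respect to $\hbar\Xi(\fx,D_{\fx})$ in $[\xi_{min},\xi_{max}]$. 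Conjugating $\hbar\Xi$ by the multiplication operator $\delta^{1/6}$, which is bounded on the essential $\fx$-support of $\phi_{\hbar}$ and commutes with $\hbar\Xi$ modulo $\O(\hbar)$, one deduces that there is a fixed compact interval $\check{K}_{E}\subset\R$ such that $\tilde{\chi}_{\sharp}(\Theta_{\hbar})\phi_{\hbar}=\O(\hbar^{\infty})$ for every $\tilde{\chi}_{\sharp}\in C^{\infty}(\R)$ vanishing on a neighbourhood of $\check{K}_{E}$. One then fixes $\check{\chi}\in C^{\infty}(\R)$, $0\le\check{\chi}\le1$, vanishing on a neighbourhood of the compact set $\check{K}_{E}\cup\{0\}$ and equal to $1$ away from it.

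The estimate $\check{\chi}(\Theta_{\hbar})\phi_{\hbar}=\O(\hbar^{\infty})$ is then obtained by repeating the scheme of the proof of Lemma~\ref{lem:localizationfrequence}, with $\tilde{\chi}(\hbar\Xi)$ replaced by $\check{\chi}(\Theta_{\hbar})$: introduce $\tilde{\phi}_{\hbar}=\theta(\hbar^{\eta}\ft)\phi_{\hbar}$ with $\eta\in(0,1/3)$, still a quasi-mode of $\mathfrak{L}_{\hbar}$ up to $\O(\hbar^{\infty})$; write $(\mathfrak{L}_{\hbar}-\check{\mu}_{\hbar})\check{\chi}(\Theta_{\hbar})\tilde{\phi}_{\hbar}=[\mathfrak{L}_{\hbar},\check{\chi}(\Theta_{\hbar})]\tilde{\phi}_{\hbar}+\O(\hbar^{\infty})$; bound the commutator (which is $\O(\hbar)$, the principal symbol of $\Theta_{\hbar}$ being scalar, the $\ft$-growth being absorbed by $\theta(\hbar^{\eta}\cdot)$ and Lemma~\ref{lem:weightedAgmon} and the $\delta$-growth by the $\fx$-localization) by $C\hbar$ times $\underline{\check{\chi}}(\Theta_{\hbar})\tilde{\phi}_{\hbar}$ and $\underline{\check{\chi}}(\Theta_{\hbar})D_{\ft}\tilde{\phi}_{\hbar}$ for a slightly larger cutoff $\underline{\check{\chi}}$; and invert $\mathfrak{L}_{\hbar}^{cut}-\check{\mu}_{\hbar}$ on the range of $\check{\chi}(\Theta_{\hbar})$, where $\mathfrak{L}_{\hbar}^{cut}$ is obtained from $\mathfrak{L}_{\hbar}$ by replacing $\hbar\Xi$ with $\chi^{\sharp}(\hbar\Xi)$ for a cutoff $\chi^{\sharp}$ equal to the identity outside the region cut out by $\check{\chi}$ --- its uniform invertibility holding because, choosing $\check{K}_{E}$ large enough, the lowest eigenvalue $\delta(\fx)^{2/3}\mu_{1}(\delta(\fx)^{-1/3}\xi)$ of the (operator-valued) principal symbol of $\mathfrak{L}_{\hbar}$ exceeds $E$ for $\fx$ near $K_{E}$ and $\xi$ outside $\check{K}_{E}$, thanks to the divergence at infinity of the first Montgomery dispersive curve $\mu_{1}$. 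An induction on nested cutoffs closes the estimate, and since $\phi_{\hbar}=\tilde{\phi}_{\hbar}+\O(\hbar^{\infty})$ in the graph norm of $\mathfrak{L}_{\hbar}$ this gives $\check{\chi}(\Theta_{\hbar})\phi_{\hbar}=\O(\hbar^{\infty})$, hence $\check{\chi}(\hbar D_{x})\check{\psi}_{\hbar}=\O(\hbar^{\infty})$. For the quadratic-form bound I would write
\begin{equation*}
    \check{\cQ}_{\hbar}\bigl(\check{\chi}(\hbar D_{x})\check{\psi}_{\hbar}\bigr)=\bigl\langle\check{\chi}(\hbar D_{x})\magSchbar\check{\psi}_{\hbar},\check{\chi}(\hbar D_{x})\check{\psi}_{\hbar}\bigr\rangle+\bigl\langle[\magSchbar,\check{\chi}(\hbar D_{x})]\check{\psi}_{\hbar},\check{\chi}(\hbar D_{x})\check{\psi}_{\hbar}\bigr\rangle,
\end{equation*}
the first term being $\check{\mu}_{\hbar}\|\check{\chi}(\hbar D_{x})\check{\psi}_{\hbar}\|^{2}+\O(\hbar^{\infty})=\O(\hbar^{\infty})$ by the quasi-mode equation \eqref{eq:quasimode} and the bound just obtained, and the second being $\O(\hbar^{\infty})$ by Cauchy--Schwarz, once one checks --- using \eqref{eq:quasimode} and the exponential $\ct$-decay of Lemma~\ref{lem:weightedAgmon} to tame the $\ct$-growth of the coefficients of $\magSchbar$ --- that $[\magSchbar,\check{\chi}(\hbar D_{x})]$ maps $\check{\psi}_{\hbar}$ to at most a polynomial power of $\hbar^{-1}$ times $\|\check{\psi}_{\hbar}\|$. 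The main obstacle is exactly the one met in Lemma~\ref{lem:localizationfrequence}: the coefficients of $\mathfrak{L}_{\hbar}$ are unbounded both in $\ft$ and through $\delta(\fx)$, so a naive large-$\xi$ ellipticity estimate fails (the term $\frac{1}{2}\delta^{1/3}\ft^{2}$ is not negligible on the $\ft$-support of the truncated quasi-mode), which is precisely why one must first borrow the $\fx$- and $\ft$-Agmon estimates and work with the auxiliary cut operator $\mathfrak{L}_{\hbar}^{cut}$.
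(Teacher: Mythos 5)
Your starting point is sound: the conjugation formula $\Theta_\hbar=\mathfrak{U}_\hbar(\hbar D_x)\mathfrak{U}_\hbar^{-1}=\delta^{1/6}(\hbar\Xi)\delta^{1/6}+\tfrac{\hbar}{6}\delta'\delta^{-1}(\ft D_\ft+D_\ft\ft)$ is correct, and controlling the correction term on the quasimode via Lemma \ref{lem:weightedAgmon} is legitimate. The gap is in the two steps that carry the whole proof. First, passing from ``$\phi_\hbar$ is $\O(\hbar^\infty)$-localized for $\hbar\Xi$'' to ``$\phi_\hbar$ is $\O(\hbar^\infty)$-localized for $\Theta_\hbar$'' is precisely the content of the proposition, and it does not follow from the remark that $\delta^{1/6}$ is bounded on the essential $\fx$-support and commutes with $\hbar\Xi$ ``modulo $\O(\hbar)$'': the operators $\hbar\Xi$ and $\delta^{1/6}\hbar\Xi\delta^{1/6}$ differ at order one (by the factor $\delta^{1/3}$) and do not commute, and an $\O(\hbar)$ commutation error can at best give an $\O(\hbar)$ statement, never $\O(\hbar^\infty)$; some quantitative mechanism is needed and is missing. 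Second, the fallback ``repeat the scheme of Lemma \ref{lem:localizationfrequence} with $\tilde\chi(\hbar\Xi)$ replaced by $\check\chi(\Theta_\hbar)$'' is not routine. That scheme hinges on replacing $\mathfrak{L}_{\hbar,0}$ by the cut operator on the range of the cutoff, which uses the exact functional-calculus cancellation $({\rm id}-\chi^\sharp)(\hbar\Xi)\,\tilde\chi(\hbar\Xi)=0$. In your version the cut is taken in $\hbar\Xi$ while the cutoff is $\check\chi(\Theta_\hbar)$; these do not commute, so the cross term $({\rm id}-\chi^\sharp)(\hbar\Xi)\,\check\chi(\Theta_\hbar)\tilde\phi_\hbar$ does not vanish, and estimating it is again the two-operator comparison you are trying to prove --- the argument is circular at this point. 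In addition, $\check\chi(\Theta_\hbar)$ is the functional calculus of an operator whose operator-valued part $\tfrac{\hbar}{6}\delta'\delta^{-1}(\ft D_\ft+D_\ft\ft)$ is unbounded in $\ft$ and $D_\ft$, so its commutators with the $\ct$-growing coefficients of $\mathfrak{L}_\hbar$ are not covered by the calculus used in the paper and would have to be built separately. A consistent elliptic variant would cut $\hbar D_x$ itself in the $(x,\ct)$ coordinates (so cutoff and cut commute exactly), but then the uniform lower bound for the cut operator needs its own min--max or G\aa rding-type argument; none of this is in your sketch.

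For comparison, the paper's proof avoids any new parametrix: it pulls $\hbar\Xi$ back to the $(x,\ct)$ coordinates, where it reads $\delta^{-1/3}\hbar D_x+\tfrac13\delta'\delta^{-4/3}\,\ct\,\hbar D_\ct$, and neutralizes the cross term $\ct\,\hbar D_\ct$ by inserting two further localizations of the quasimode: in $\ct$ at scale $\hbar^{-\eta'}$ (from Lemma \ref{lem:weightedAgmon}) and in $D_\ct$ at scale $\hbar^{-(1-\eta)}$ (from the energy bound $\|D_\ct\check\psi_\hbar\|\lesssim\|\check\psi_\hbar\|$), with $\eta>\eta'$. A support computation on the composed symbols then shows that $\check\chi(\hbar D_x)$ applied to the localized quasimode is $\O(\hbar^{1-\eta})$ times a slightly enlarged cutoff, and an induction on nested cutoffs upgrades this to $\O(\hbar^\infty)$. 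This scale-separation device is exactly the ingredient your write-up lacks. Your treatment of the quadratic-form bound is fine in spirit, but it rests on \eqref{eq:proplocalizationfrequence}, so the gap above is the essential one.
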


\begin{proof}
    First, we can give the following explicit expression of the pull-back of the operator $\hbar\Xi(\fx,D_{\fx})$ in the coordinates $(x,\ct)$:
    \begin{equation*}
        \delta(x)^{-1/3}\hbar D_{x} + \frac{1}{3}\delta'(x)\delta(x)^{-4/3} \ct \hbar D_{\ct}.
    \end{equation*}
    This last operator is a self-adjoint semiclassical pseudodifferential operator in both the variables $x$ and $\ct$, whose symbol we will simply denote by $a$:
    \begin{equation*}
        \Op_{\hbar}^{w}(a) = \delta(x)^{-1/3}\hbar D_{x} + \frac{1}{3}\delta'(x)\delta(x)^{-4/3} \ct \hbar D_{\ct}.
    \end{equation*}
    By Lemma \ref{lem:localizationfrequence}, we have 
    \begin{equation}
        \label{eq:localizationfrequenceproof}
        \tilde{\chi}(\Op_{\hbar}^{w}(a))\check{\psi}_{\hbar} = \O(\hbar^{\infty}).
    \end{equation}
    We observe that to get a localization in impulsion $D_x$, we need to understant the localization in impulsion $D_{\ct}$, at the semiclassical scale $\hbar$, of our eigenfunction.
    We introduce a new smooth cutoff function $\chi^{\sharp}\in C_{0}^{\infty}(\R)$ with $0\leq \chi^{\sharp}\leq 1$ equal to 1 near 0. 
    For $\eta \in (0,1)$, by consideration on the support of the cutoff  we have that
    \begin{equation*}
        \left\|\left(1-\chi^{\sharp}(\hbar^{1-\eta} D_{\ct})\right)\check{\psi}_{\hbar}\right\| \leq \left\|\hbar^{1-\eta} D_{\ct}\left(1-\chi^{\sharp}(\hbar^{1-\eta} D_{\ct})\right)\check{\psi}_{\hbar}\right\| \leq \hbar^{1-\eta} \|D_{\ct}\check{\psi}_{\hbar}\|.
    \end{equation*}
    And since $\check{\psi}_\hbar$ is a quasi-mode for $\check{\mathcal{L}}_\hbar$, the last term is bounded by $E\|\check{\psi}_\hbar\|$:
    \begin{equation*}
        \left\|\left(1-\chi^{\sharp}(\hbar^{1-\eta} D_{\ct})\right)\check{\psi}_{\hbar}\right\| \leq E\hbar^{1-\eta} \|\check{\psi}_{\hbar}\|.
    \end{equation*}
    Moreover, for any $\eta' \in (0,\eta)$, as a corollary of Lemma \ref{lem:weightedAgmon} we can see that 
    \begin{equation*}
        \chi^{\sharp}(\hbar^{\eta'}\ct)\check{\psi}_{\hbar} = \check{\psi}_{\hbar} + \O(\hbar^\infty).
    \end{equation*}
    We are now ready to prove Equation \eqref{eq:proplocalizationfrequence} and determine the support of $\check{\chi}$. By Equation \eqref{eq:localizationfrequenceproof}, we have
    \begin{equation*}
        \begin{aligned}
        \check{\chi}(\hbar D_{x}) \check{\psi}_{\hbar}  &= \check{\chi}(\hbar D_{x})\left(1-\tilde{\chi}(\Op_{\hbar}^{w}(a))\right)\check{\psi}_{\hbar} + \O(\hbar^{\infty})\\
        &=\check{\chi}(\hbar D_{x})\left(1-\tilde{\chi}(\Op_{\hbar}^{w}(a))\right)\chi^{\sharp}(\hbar^{\eta'}\ct)\left(1-\chi^{\sharp}(\hbar^{1-\eta}D_{\ct})\right)\check{\psi}_{\hbar} \\
        &+\check{\chi}(\hbar D_{x})\left(1-\tilde{\chi}(\Op_{\hbar}^{w}(a))\right)\chi^{\sharp}(\hbar^{\eta'}\ct)\chi^{\sharp}(\hbar^{1-\eta}D_{\ct})\check{\psi}_{\hbar}  +\O(\hbar^{\infty}).
        \end{aligned}
    \end{equation*}
    We observe that the first term in the right-hand side of the last equation can be bounded by
    \begin{equation*}
        \hbar^{1-\eta}\|\underline{\check{\chi}}(\hbar D_x)\check{\psi}_{\hbar}\|,
    \end{equation*}
    where $\underline{\check{\chi}}$ is a cutoff function with similar properties as $\check{\chi}$ but equal to 1 on ${\rm supp}(\check{\chi})$. For the main term, we look at the intersection of the supports of the symbols of the 
    pseudodifferentials operators $1-\tilde{\chi}(\Op_{\hbar}^{w}(a))$, $\chi^\sharp(\hbar^{\eta'}\ct)$ and $\chi^\sharp(\hbar^{1-\eta}D_{\ct})$: noting $\zeta$ the dual variable to $\ct$, and $\xi$ the dual variable of $x$, we must necessarily have 
    \begin{equation*}
        \delta(x)^{-1/3}\xi + \frac{1}{3}\delta'(x)\delta(x)^{-4/3} \ct \zeta \in [\xi_{min}, \xi_{max}],\ |\ct| \leq \hbar^{-\eta'},\ |\zeta| \leq \hbar^{\eta}.
    \end{equation*}
    As $\eta > \eta'$, we obtain by localization in the varibale $x$, that the variable $\xi$ lives inside a compact set whose definition depends of $\xi_{min}$, $\xi_{max}$ and $\chi^{\sharp}$. Choosing $\check{\chi}$ with support away from this compact set,
    we obtain by pseudodifferential calculus that 
    \begin{equation*}
        \|\check{\chi}(\hbar D_{x}) \check{\psi}_{\hbar}\| \leq \hbar^{1-\eta}\|\underline{\check{\chi}}(\hbar D_x)\check{\psi}_{\hbar}\| + \O(\hbar^{\infty}).
    \end{equation*}
    By induction on the supports of $\check{\chi}$ and $\underline{\check{\chi}}$, we obtain the desired estimate.
\end{proof}

\subsection{Reduction to a bounded pseudodifferential operator}
\label{subsect:reducbounded}

Let $E\in(-\infty,E_0)$. Thanks to the previous localization estimates, we are able to exhibit a pseudodifferential operator $\hbar^{4}\mathcal{N}_\hbar$ whose spectrum is close to that of $\magSch$ up to $\O(\hbar^\infty)$ in the energy window $(-\infty,Eh^{4/3}]$.

\subsubsection{A pseudodifferential operator with operator-valued symbol}

Our starting point is the expression of $\check{\mathcal{L}}_\hbar$, the magnetic Laplacian in dilated tubular coordinates $(x,\ct)$ given by Equation \eqref{eq:rescaledmagSch},
which we recall here:
\begin{equation*}
    \check{\mathcal{L}}_\hbar = D_{\ct}^2 + \left(m_{\hbar}^{-1/2}\left(\hbar D_{x} - \check{A}_{\hbar}(x,\ct)\right)m_{\hbar}^{-1/2}\right)^2
    -\hbar^{2}\frac{k(x)^2}{4m_{\hbar}^2}.
\end{equation*}
This expression makes sense for test function supported in $\R\times (-\hbar^{-1}d_0,\hbar^{-1}d_0)$, and as a first step, we wish to define an operator on the full plane.
To this end, we introduce a cutoff function $\theta \in C_{c}^\infty(\R)$ equal to 1 in a neighborhood of 0 and real $\eta$ in $(0,1)$.
We also consider a smooth compactly supported function $\mathring{\chi}$ that is equal to 1 on a neighborhood of the compact set $K_{E}$ introduced in Proposition \ref{prop:AgmonGamma}, for some small $\eps>0$, 
and we introduce a smooth bounded function $\mathring{\delta}$ such that 
\begin{equation*}
    \forall x\in K_E,\ \mathring{\delta}(x) = \delta(x)\quad\mbox{and}\quad \forall x\in\R\setminus K_E,\ \mathring{\delta}(x) > \sup_{K_E} \delta,
\end{equation*}
which exists by definition of the compact set $K_E$, as well as a new magnetic potential
\begin{equation*}
    \mathring{A}_{\hbar}(x,\ct) = \frac{1}{2}\mathring{\delta}(x)\ct^2 + \hbar  \mathring{\kappa}(x)\theta(\hbar^\eta \ct)\ct^3 + \hbar^2 \mathring{r}_\hbar(x,\ct),
\end{equation*} 
where 
\begin{equation*}
    \mathring{\kappa}(x) = \mathring{\chi}(x)\kappa(x)\quad\mbox{and}\quad \mathring{r}_\hbar(x,\ct) = \mathring{\chi}(x)r_\hbar(x,\theta(\hbar^\eta \ct)\ct).
\end{equation*}
We also make a choice of smooth function $\mathring{\Xi}(\xi)$ that is increasing and bounded, as well as coincide with the identity on a neighborhood of the support of $1-\check{\chi}$, where $\check{\chi}$ 
is the smooth function introduced in Proposition \ref{prop:localizationfrequence}. We can now define the desired operator:
\begin{equation*}
    \mathcal{N}_\hbar = D_{\ct}^2 + \left(\mathring{m}_{\hbar}^{-1/2}\left(\mathring{\Xi}(\hbar D_{x}) - \mathring{A}_{\hbar}(x,\ct)\right)\mathring{m}_{\hbar}^{-1/2}\right)^2
    -\hbar^{2}\frac{k(x)^2}{4\mathring{m}_{\hbar}^2},
\end{equation*}
where we have set 
\begin{equation*}
    \mathring{m}_{\hbar}(x,\ct) = 1 - \hbar k(x) \theta(\hbar^\eta \ct)\ct.
\end{equation*}
For $\hbar$ small enough, these cutoffs allow the operator $\mathcal{N}_\hbar$ to be defined on the whole plane $\R^2$.

This construction has been guided by the fact that it is possible to see the operator $\mathcal{N}_\hbar$ as a semiclassical pseudodifferential operator with operator-valued symbol, in the sense of Appendix \ref{sect:pseudoropvalued}. To see this, we recall the definition of the Hilbert space $B^{2,4}(\R)$:
\begin{equation*}
    B^{2,4}(\R) = \{u\in L^2(\R)\,:\, u''\in L^2(\R),\ \langle \ct\rangle^4 u \in L^2(\R)\},
\end{equation*}
and endowed with the norm 
\begin{equation*}
    \|u\|_{B^{2,4}(\R)} = \|u''\|_{L^2(\R)} + \|\langle \ct\rangle^4 u\|_{L^2(\R)}.
\end{equation*}

\begin{lemma}
    \label{lem:Nhbarpseudo}
    The operator $\mathcal{N}_\hbar$ can be written as the Weyl quantization $n_{\hbar}^w$ of a symbol $n_\hbar$ in $S(\R^2,\mathscr{L}(B^{2,4}(\R),L^2(\R)))$ admitting the semiclassical expansion
    \begin{equation}
        \label{eq:nhbarexpansion}
        n_\hbar = \sum_{j=0}^\infty \hbar^j n_j,\quad n_j \in S^{-j\eta}(\R^2,\mathscr{L}(B^{2,4}(\R),L^2(\R)))
    \end{equation}
    where the principal symbol $n_0$ is given by
    \begin{equation*}
        n_0(x,\xi) = D_{\ct}^2 + p_0^2(x,\xi),
    \end{equation*}
    and the subprincipal symbol $n_1$ by
    \begin{equation*}
        n_1(x,\xi) = -2 \mathring{\kappa}(x)\theta(\hbar^\eta\ct)\ct^3 p_0(x,\xi) + 2k(x) \theta(\hbar^\eta\ct)\ct\, p_0^2(x,\xi),
    \end{equation*}
    where
    \begin{equation*}
        p_0(x,\xi) = \mathring{\Xi}(\xi) - \frac{1}{2}\mathring{\delta}(x)\ct^2.
    \end{equation*}
\end{lemma}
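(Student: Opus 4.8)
The plan is to recognise $\mathcal{N}_\hbar$ as a composition of a few elementary operator-valued pseudodifferential operators and then read off its symbol expansion from the composition (Moyal) calculus of Appendix~\ref{sect:pseudoropvalued}.

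First I would check that each factor entering $\mathcal{N}_\hbar$ is the $\hbar$-Weyl quantization of a symbol in $S(\R^2,\mathscr{L}(B^{2,4}(\R),L^2(\R)))$, of the claimed order. The Fourier multiplier $\mathring{\Xi}(\hbar D_x)$ is \emph{exactly} the Weyl quantization of the scalar symbol $(x,\xi)\mapsto \mathring{\Xi}(\xi)$, which lies in $S^0$ because $\mathring{\Xi}$ and all its derivatives are bounded. The operators $D_{\ct}^2$, $\mathring{m}_\hbar^{-1/2}$, $\mathring{A}_\hbar$ and $k^2/\mathring{m}_\hbar^2$ are all $\xi$-independent: $D_{\ct}^2\in\mathscr{L}(B^{2,4}(\R),L^2(\R))$ by definition of $B^{2,4}(\R)$; multiplication by $\mathring{m}_\hbar^{-1/2}$ and $k^2/\mathring{m}_\hbar^2$ is bounded on $B^{2,4}(\R)$ for $\hbar$ small, since $|\ct|\lesssim\hbar^{-\eta}$ on $\mathrm{supp}\,\theta(\hbar^\eta\cdot)$ forces $\mathring{m}_\hbar\geq \tfrac12$ and makes these functions smooth in $(x,\ct)$ with all derivatives bounded uniformly in $\hbar$; and multiplication by $\mathring{A}_\hbar$ sends $B^{2,4}(\R)\to L^2(\R)$, the powers $\ct^2,\ct^4$ that survive after squaring being absorbed by the weight $\langle\ct\rangle^4$ while every higher power of $\ct$ carries a cutoff $\theta(\hbar^\eta\ct)$, each such cutoff costing exactly one factor $\hbar^{-\eta}$ in the $\mathscr{L}(B^{2,4}(\R),L^2(\R))$-norm. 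This last point is precisely the source of the orders $S^{-j\eta}$ in \eqref{eq:nhbarexpansion}.

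Next, using $\mathring{m}_\hbar=1-\hbar k(x)\theta(\hbar^\eta\ct)\ct$ I would record the symbol expansions $\mathring{m}_\hbar^{-1/2}=1+\tfrac{\hbar}{2}k(x)\theta(\hbar^\eta\ct)\ct+\O(\hbar^2)$ and $\mathring{A}_\hbar=\tfrac12\mathring{\delta}(x)\ct^2+\hbar\mathring{\kappa}(x)\theta(\hbar^\eta\ct)\ct^3+\O(\hbar^2)$ (in the classes $S^0$ and $S^{-\eta}$ respectively) and then apply the $\hbar$-Moyal product to $\mathring{m}_\hbar^{-1/2}\#(\mathring{\Xi}(\xi)-\mathring{A}_\hbar)\#\mathring{m}_\hbar^{-1/2}$. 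The one computation worth singling out is that, because $\mathring{m}_\hbar^{-1/2}$ is $\xi$-independent and the $\xi$-dependence of the middle factor is carried only by $\mathring{\Xi}(\xi)$, the first-order Poisson-bracket corrections produced on the left and on the right cancel exactly; hence, up to $\O(\hbar^2)$, the symbol of this product is simply $\mathring{m}_\hbar^{-1}\big(\mathring{\Xi}(\xi)-\mathring{A}_\hbar\big)=p_0(x,\xi)+\hbar\big(k\theta(\hbar^\eta\ct)\ct\,p_0-\mathring{\kappa}\theta(\hbar^\eta\ct)\ct^3\big)+\O(\hbar^2)$. Squaring and using once more that symmetrised Moyal products carry no first-order correction (the Poisson bracket being antisymmetric), the symbol of $\big(\mathring{m}_\hbar^{-1/2}(\mathring{\Xi}(\hbar D_x)-\mathring{A}_\hbar)\mathring{m}_\hbar^{-1/2}\big)^2$ is $p_0^2+\hbar\big(2k\theta(\hbar^\eta\ct)\ct\,p_0^2-2\mathring{\kappa}\theta(\hbar^\eta\ct)\ct^3 p_0\big)+\O(\hbar^2)$. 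Adding the exact symbol $D_{\ct}^2$ and observing that $-\hbar^2 k^2/(4\mathring{m}_\hbar^2)$ is $\O(\hbar^2)$, hence invisible at orders $0$ and $1$, yields the announced $n_0$ and $n_1$.

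Finally I would promote this to a full asymptotic expansion $n_\hbar\sim\sum_{j\geq 0}\hbar^j n_j$: iterating the composition formula of Appendix~\ref{sect:pseudoropvalued}, each term of order $\hbar^j$ involves at most $j$ derivatives in $x$ falling on $\mathring{m}_\hbar^{\pm1/2}$ or on $\mathring{\Xi}$, together with finitely many powers of $\ct$ always escorted by a cutoff $\theta(\hbar^\eta\ct)$, so it lies in $S^{-j\eta}$ with seminorms uniform in $(x,\xi)$ (here one uses that $\mathring{\Xi},\mathring{\delta},\mathring{\kappa}$ and all their derivatives are bounded), and the truncation remainders are controlled by the Borel-type resummation built into the calculus. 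The only genuinely delicate bookkeeping I expect is exactly this last step — keeping precise track of how every polynomial weight in $\ct$ is paid for by a power of $\hbar$ coming from a cutoff $\theta(\hbar^\eta\ct)$, so that the series is a bona fide asymptotic expansion with $n_j\in S^{-j\eta}$ and not merely a formal one — rather than the essentially mechanical identification of the principal and subprincipal symbols.
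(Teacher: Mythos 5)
Your proposal is correct and takes essentially the same route as the paper: write $\mathring{\Xi}(\hbar D_x)-\mathring{A}_\hbar$ and the factors $\mathring{m}_\hbar^{-1/2}$ as Weyl quantizations of operator-valued symbols, track how each power of $\ct$ beyond the weight $\langle\ct\rangle^4$ is paid for by a cutoff $\theta(\hbar^\eta\ct)$ at the cost of $\hbar^{-\eta}$ (hence the classes $S^{-j\eta}$), and read off $n_0$, $n_1$ from the composition formula of Proposition \ref{prop:symb_cal}. One small caveat on your justification of the squaring step: for operator-valued symbols $\{a,a\}$ does not vanish by antisymmetry alone; it vanishes here because all the symbols involved take values in multiplication operators in $\ct$, so $\partial_x a$ and $\partial_\xi a$ commute --- which is precisely the paper's observation that $\{p_0,p_0\}=0$.
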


\begin{proof}
    Observe that we can write 
    \begin{equation*}
        \mathring{\Xi}(\hbar D_x) - \mathring{A}_{\hbar}(x,\ct) = \Op_{\hbar}^w(p_\hbar),
    \end{equation*}
    with 
    \begin{equation*}
        p_\hbar(x,\xi) = p_0(x,\xi) - \hbar \mathring{\chi}(x)\kappa(x)\theta(\hbar^\eta \ct)\ct^3 - \hbar^2 \mathring{\chi}(x)r_\hbar(x,\theta(\hbar^\eta \ct)\ct).
    \end{equation*}
    We easily check that $p_0$ is a symbol in $S^0(\R^2,\mathscr{L}(B^{2,4}(\R),L^2(\R)))$, while the symbol 
    \begin{equation*}
        \mathring{\kappa}(x)\theta(\hbar^\eta \ct)\ct^3,
    \end{equation*}
    is in the symbol class $S^{-\eta}(\R^2,\mathscr{L}(B^{2,4}(\R),L^2(\R)))$ since the multiplication by $\theta(\hbar^\eta \ct)\ct^3$ is a bounded operator from $B^{2,4}(\R)$ to $L^2(\R)$ due to the cutoff $\theta$, with norm $\hbar^{-\eta}$. Similarly, the symbol $\mathring{r}_\hbar(x,\ct)$ can be developed as a semiclassical expansion with symbols in classes $S^{-j\eta}$.

    Using the same arguments for the operator $\mathring{m}_{\hbar}^{-1/2}$ using the Taylor expansion of the squared root function, the operator $\mathcal{N}_\hbar$ can be written as the composition of semiclassical pseudodifferential operators with operator-valued symbols that admit semiclassical expansions with symbols in the classes $S^{-j\eta}$. By composition, in particular by Proposition \ref{prop:symb_cal} in Appendix \ref{sect:pseudoropvalued}, $\mathcal{N}_\hbar$ is itself a semiclassical pseudodifferential operator with a symbol as in Equation \eqref{eq:nhbarexpansion}.

    The principal symbols $n_0$ and $n_1$ are obtained through direct computation and application of Proposition \ref{prop:symb_cal}, once we notice that we have $\{p_0,p_0\}(x,\xi) = 0$.
\end{proof}

The next lemma allows us to talk about the spectrum of the operator $\mathcal{N}_\hbar$.

\begin{lemma}
    \label{lem:Nhbarautoadjoint}
    The operator $\mathcal{N}_\hbar$ is essentially self-adjoint on $\mathcal{S}(\R,B^{2,4}(\R))\subset L^2(\R^2)$ with domain given by $L^2(\R,B^{2,4}(\R))$.
\end{lemma}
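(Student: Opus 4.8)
The plan is to run the standard elliptic parametrix argument in the operator‑valued semiclassical calculus of Appendix~\ref{sect:pseudoropvalued}, exploiting that the principal symbol $n_0(x,\xi)=D_{\ct}^2+p_0(x,\xi)^2$ is a \emph{uniformly} elliptic family of Montgomery‑type operators with domain $B^{2,4}(\R)$. There are three ingredients: symmetry and semiboundedness of $\mathcal{N}_\hbar$ on $\mathcal{S}(\R,B^{2,4}(\R))$, the easy inclusion $L^2(\R,B^{2,4}(\R))\subset\dom(\overline{\mathcal{N}_\hbar})$, and the construction of a parametrix for $\mathcal{N}_\hbar+\lambda$ for $\lambda$ large and $\hbar$ small enough.

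\emph{Symmetry and the easy inclusion.} Reading off its defining formula, $\mathcal{N}_\hbar$ is assembled from $D_{\ct}^2$, the bounded self‑adjoint Fourier multiplier $\mathring{\Xi}(\hbar D_x)$, and multiplications by real functions of $(x,\ct)$; since by Lemma~\ref{lem:Nhbarpseudo} one has $n_\hbar\in S(\R^2,\mathscr{L}(B^{2,4}(\R),L^2(\R)))$, the operator $\mathcal{N}_\hbar=n_\hbar^w$ maps $\mathcal{S}(\R,B^{2,4}(\R))$ into $L^2(\R^2)$ and an integration by parts shows it is symmetric there. Writing $\mathcal{N}_\hbar=D_{\ct}^2+P_\hbar^2-\hbar^2 k^2/(4\mathring{m}_\hbar^2)$ with $P_\hbar:=\mathring{m}_\hbar^{-1/2}(\mathring{\Xi}(\hbar D_x)-\mathring{A}_\hbar)\mathring{m}_\hbar^{-1/2}$ symmetric, one gets $\langle\mathcal{N}_\hbar u,u\rangle\ge -C\|u\|^2$, so $\mathcal{N}_\hbar$ is bounded from below. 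Moreover $n_\hbar^w\colon L^2(\R,B^{2,4}(\R))\to L^2(\R^2)$ is bounded (boundedness of operator‑valued pseudodifferential operators, Appendix~\ref{sect:pseudoropvalued}), and $\mathcal{S}(\R,B^{2,4}(\R))$ is dense in $L^2(\R,B^{2,4}(\R))$ for the $B^{2,4}$‑graph norm; hence every $u\in L^2(\R,B^{2,4}(\R))$ lies in $\dom(\overline{\mathcal{N}_\hbar})$ with $\overline{\mathcal{N}_\hbar}u=n_\hbar^w u$.

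\emph{Uniform ellipticity of $n_0$.} This is the heart of the matter. For each $(x,\xi)$, $n_0(x,\xi)$ is a nonnegative self‑adjoint operator with domain $B^{2,4}(\R)$ and, by the rescaling~\eqref{eq:n0rescaling}, is unitarily conjugate to $\mathring{\delta}(x)^{2/3}\fM\bigl(\mathring{\Xi}(\xi)\mathring{\delta}(x)^{-1/3}\bigr)$ through the dilation $\mathcal{T}(x)$. Since $\mathring{\delta}$ is bounded above and bounded below away from $0$ and $\mathring{\Xi}$ is bounded, the parameter $\mathring{\Xi}(\xi)\mathring{\delta}(x)^{-1/3}$ stays in a fixed compact interval $\Lambda\subset\R$, and the $\mathcal{T}(x)^{\pm1}$ are uniformly bounded on both $L^2(\R)$ and $B^{2,4}(\R)$ for $x$ in the range of $\mathring{\delta}$. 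As $\fM(\cdot)$ is an analytic family of type B with compact resolvent, $\sup_{\nu\in\Lambda}\|(\fM(\nu)+1)^{-1}\|_{\mathscr{L}(L^2(\R),B^{2,4}(\R))}<\infty$, whence there is $\lambda_0>0$ with
\begin{equation*}
    \sup_{(x,\xi)\in\R^2}\bigl\|(n_0(x,\xi)+\lambda_0)^{-1}\bigr\|_{\mathscr{L}(L^2(\R),B^{2,4}(\R))}<\infty .
\end{equation*}
Differentiating $n_0$ in $(x,\xi)$ yields multiplications by bounded functions of $(x,\xi)$ times operators in $\mathscr{L}(B^{2,4}(\R),L^2(\R))$ (using that $\mathring{\delta},\mathring{\Xi}$ have bounded derivatives of all orders), so the resolvent identity $\partial(n_0+\lambda_0)^{-1}=-(n_0+\lambda_0)^{-1}(\partial n_0)(n_0+\lambda_0)^{-1}$ and induction show that $(x,\xi)\mapsto(n_0(x,\xi)+\lambda_0)^{-1}$ belongs to $S(\R^2,\mathscr{L}(L^2(\R),B^{2,4}(\R)))$.

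\emph{Parametrix and conclusion.} For $\lambda\ge\lambda_0$, Proposition~\ref{prop:symb_cal} then provides a symbol $R_\hbar\in S(\R^2,\mathscr{L}(L^2(\R),B^{2,4}(\R)))$ with principal part $(n_0+\lambda)^{-1}$ such that $R_\hbar^w(n_\hbar+\lambda)^w=\mathrm{Id}+\hbar^{1-\eta}T_\hbar^w$ and $(n_\hbar+\lambda)^w R_\hbar^w=\mathrm{Id}+\hbar^{1-\eta}S_\hbar^w$, where $S_\hbar^w$ is bounded on $L^2(\R^2)$ and $T_\hbar^w$ on $L^2(\R,B^{2,4}(\R))$ (the two remainders sit in different operator scales according to the order of the compositions). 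Hence, for $\hbar$ small enough, $\mathrm{Id}+\hbar^{1-\eta}S_\hbar^w$ and $\mathrm{Id}+\hbar^{1-\eta}T_\hbar^w$ are invertible, so $(n_\hbar+\lambda)^w$ is a bijection from $L^2(\R,B^{2,4}(\R))$ onto $L^2(\R^2)$; its range therefore contains the dense subspace $(n_\hbar+\lambda)^w\bigl(\mathcal{S}(\R,B^{2,4}(\R))\bigr)$. A symmetric operator which is bounded below and has dense range is essentially self‑adjoint, so $\mathcal{N}_\hbar$ is essentially self‑adjoint on $\mathcal{S}(\R,B^{2,4}(\R))$. Finally $\overline{\mathcal{N}_\hbar}+\lambda$ and $(n_\hbar+\lambda)^w$ agree on $\mathcal{S}(\R,B^{2,4}(\R))$ and are both continuous from $L^2(\R,B^{2,4}(\R))$ to $L^2(\R^2)$, hence coincide on $L^2(\R,B^{2,4}(\R))$; since $(n_\hbar+\lambda)^w$ is already bijective from this subspace onto $L^2(\R^2)$ while $\overline{\mathcal{N}_\hbar}+\lambda$ is injective on all of $\dom(\overline{\mathcal{N}_\hbar})$, the two domains must coincide, i.e. $\dom(\overline{\mathcal{N}_\hbar})=L^2(\R,B^{2,4}(\R))$. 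I expect the ellipticity step to be the main obstacle: proving that $\{n_0(x,\xi)+\lambda\}$ is invertible \emph{uniformly} in $(x,\xi)$ with resolvent and all $(x,\xi)$‑derivatives bounded in $\mathscr{L}(L^2(\R),B^{2,4}(\R))$ — which is exactly where the reduction to the Montgomery family with a bounded spectral parameter is used — while everything downstream is routine bookkeeping in the calculus of Appendix~\ref{sect:pseudoropvalued}.
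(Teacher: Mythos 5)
Your proposal is correct and follows essentially the same route as the paper, whose proof is exactly the combination of formal symmetry with the uniform ellipticity of the shifted principal symbol (the paper uses $n_0\pm i$, you use $n_0+\lambda$ with semiboundedness — a cosmetic difference) and the parametrix Theorem~\ref{thm:appelliptic} of the appendix. Your detailed verification of the uniform invertibility of $n_0+\lambda_0$ via the rescaling to the Montgomery family with bounded spectral parameter, and the identification of the domain of the closure, are exactly the points the paper leaves implicit.
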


This follows from the fact that $\mathcal{N}_\hbar$ is formally symmetric and from the ellipticity of the symbol $n_0\pm i$ in $S^0(\R^2,\mathscr{L}(B^{2,4},L^2(\R)))$, in the sense of Definition \ref{def:symelliptic}.

The proof of Theorem \ref{thm:spectrediscret} then applies to the operator $\mathcal{N}_\hbar$ with minor modifications. This leads to the following proposition
\begin{proposition}
    \label{prop:spectrediscretN}
    Let $E\in(-\infty,E_0)$. There exists $h_0>0$ such that, for $\hbar \in(0,\hbar_0)$ the spectrum of $\mathcal{N}_\hbar$ in the energy window is purely discrete.
\end{proposition}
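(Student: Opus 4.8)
The plan is to mimic, almost verbatim, the proof of Theorem~\ref{thm:spectrediscret}, now applied to $\mathcal{N}_\hbar$ while keeping track of the truncations $\theta(\hbar^\eta\ct)$, $\mathring{\delta}$, $\mathring{\Xi}$, $\mathring{\chi}$ built into its definition. By Lemma~\ref{lem:Nhbarautoadjoint} the operator $\mathcal{N}_\hbar$ is essentially self-adjoint and bounded below, so it suffices to bound its essential spectrum from below. One fixes an intermediate value $E'\in(E,E_0)$ and, as à la Persson in Theorem~\ref{thm:spectrediscret}, estimates the quadratic form $\langle\mathcal{N}_\hbar u,u\rangle$ from below on functions $u$ supported outside growing compact sets of $\R^2_{x,\ct}$, the goal being $\langle\mathcal{N}_\hbar u,u\rangle\geq(E'-C\hbar)\|u\|^2$ for $u$ supported outside a fixed large box; since $E'>E$, for $\hbar$ small this gives $\inf\sigma_{\mathrm{ess}}(\mathcal{N}_\hbar)\geq E'-C\hbar>E$, hence discreteness of the spectrum below $E$. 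Note that $\mathcal{N}_\hbar$ is already defined on all of $\R^2$, so there is no analogue of the ``away from $\Gamma$'' region (Assumption~\ref{assum:confinement} plays no role here): the complement of a box $[-R,R]\times[-r,r]$ is covered by the two regimes $\{|\ct|>r\}$ and $\{|x|>R,\ |\ct|\leq r\}$.

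For the regime $\{|\ct|>r\}$ I would reproduce \emph{Step~1} of the proof of Theorem~\ref{thm:spectrediscret}, computing the commutator $[D_\ct,\mathring{P}_\hbar]$ with $\mathring{P}_\hbar=\mathring{m}_\hbar^{-1/2}(\mathring\Xi(\hbar D_x)-\mathring{A}_\hbar)\mathring{m}_\hbar^{-1/2}$; it again produces the transverse field $\partial_\ct\mathring{A}_\hbar$ together with curvature contributions of size $\O(\hbar)$. The point is that $\partial_\ct\mathring{A}_\hbar(x,\ct)=\mathring{\delta}(x)\ct$ up to terms that are lower-order in $\hbar$, controlled uniformly thanks to Assumption~\ref{assum:kappa} and the cutoff $\theta(\hbar^\eta\ct)$, so that $|\partial_\ct\mathring{A}_\hbar|\geq c\,|\ct|$ for $\hbar$ small with $c>0$; this yields $\langle\mathcal{N}_\hbar u,u\rangle\geq(c'r-C\hbar)\|u\|^2$, which exceeds $E'$ once $r=r_0$ is fixed large. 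For the regime $\{|x|>R,\ |\ct|\leq r_0\}$ I would pass to the coordinates $(\fx,\ft)$ applied to the truncated operator and bound the quadratic form below by $\big(\inf_{|\fx|>R}\mathring{\delta}(\fx)^{2/3}\tilde\mu_{1,c}-C\hbar\big)\|u\|^2$, running the functional-calculus argument with the Montgomery operator exactly as in \eqref{eq:mainineq}--\eqref{eq:ineq4} (only a lower bound on the Montgomery part is needed, so it does not matter that $\mathring\Xi$ ranges over a proper subset of $\R$, since $\inf_{\nu}\tilde\mu_1(\nu)=\tilde\mu_{1,c}$). The decisive observation is that, because $E<E_0=\delta_*^{2/3}\tilde\mu_{1,c}$, one has $\sup_{K_E}\delta<\delta_*$, so $\mathring{\delta}$ can be chosen (in addition to the constraints already imposed) with $\inf_{\R\setminus K_E}\mathring{\delta}^{2/3}\tilde\mu_{1,c}>E'$; and for $|\fx|>R$ large one is outside $\supp\mathring\chi$, hence $\mathring\kappa$ and $\mathring{r}_\hbar$ vanish there and all errors are genuinely $\O(\hbar)$. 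One then glues the two lower bounds with IMS partitions of unity in $\ct$ and in $x$, as in the last step of the proof of Theorem~\ref{thm:spectrediscret}.

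The one genuinely new point compared with Theorem~\ref{thm:spectrediscret} is that $\mathcal{N}_\hbar$ is pseudodifferential, not differential, in $x$, because of $\mathring\Xi(\hbar D_x)$. This matters twice. First, Theorem~\ref{thm:persson} is stated for magnetic Laplacians, so one must justify the same description of $\inf\sigma_{\mathrm{ess}}$ for $\mathcal{N}_\hbar$: this follows from the general fact that a semibounded self-adjoint operator whose form domain contains $\mathcal{S}(\R^2)$ and which has controlled commutators with cutoffs obeys Persson's formula, the needed commutator bounds being exactly what the operator-valued pseudodifferential calculus of Appendix~\ref{sect:pseudoropvalued} provides (via Lemma~\ref{lem:Nhbarpseudo}). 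Second, the IMS localizations and the various ``moving the weight $\mathring{m}_\hbar$ around'' manipulations generate commutators $[\mathring\Xi(\hbar D_x),\chi(x)]$, which are $\O(\hbar)$ by symbolic calculus, uniformly once the cutoffs are fixed. Beyond that, the proof is pure bookkeeping: one checks, using Assumptions~\ref{assum:delta}--\ref{assum:kappa} and the presence of the cutoff $\theta(\hbar^\eta\ct)$ in $\mathring{A}_\hbar$ and $\mathring{m}_\hbar$, that every error produced by the truncations is $\O(\hbar)$ uniformly in $\hbar$. This uniform control is the main (if unglamorous) obstacle.
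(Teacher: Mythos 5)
Your overall plan (redo Theorem~\ref{thm:spectrediscret} for $\mathcal{N}_\hbar$, which is also what the paper asserts with ``minor modifications'') is fine for the regimes $\{|\ct|>r\}$ and $\{|x|>R\}$, but there is a genuine gap at the step where you convert the quadratic-form bounds into a statement on $\sigma_{\mathrm{ess}}(\mathcal{N}_\hbar)$. The ``general fact'' you invoke --- semibounded, form domain containing $\mathcal{S}(\R^2)$, controlled commutators with cutoffs $\Rightarrow$ Persson's formula --- is false: Persson's characterization also needs local compactness, i.e.\ compactness of $\chi(\mathcal{N}_\hbar+i)^{-1}$ for $\chi\in C_c^\infty(\R^2)$, and this \emph{fails} for $\mathcal{N}_\hbar$, whose domain is only $L^2(\R_x,B^{2,4}(\R_\ct))$: since $\mathring{\Xi}(\hbar D_x)$ is bounded, there is no gain whatsoever in the $x$-frequency direction. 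A minimal counterexample to your claimed principle is $A=\mathring{\Xi}(\hbar D_x)^2+V(x)$ on $L^2(\R)$ with $0\le V\to+\infty$: test functions supported in $\{|x|>R\}$ give $\langle Au,u\rangle\ge\inf_{|x|>R}V\,\|u\|^2\to\infty$, so the Persson quantity is $+\infty$, yet $u_n=e^{inx}\phi(x)$ ($\phi\in C_c^\infty$ fixed) is bounded in graph norm, weakly null and not norm null, so $A$ does not have compact resolvent and $\sigma_{\mathrm{ess}}(A)\neq\emptyset$. The same mechanism lives inside $\mathcal{N}_\hbar$: Weyl sequences $e^{i\xi_n x/\hbar}\phi(x)u(\ct)$ with $\xi_n\to\pm\infty$, supported in a \emph{fixed} compact of $(x,\ct)$, generate essential spectrum at the finite energies $\mathring{\delta}(x_0)^{2/3}\tilde{\mu}_k\bigl(\mathring{\Xi}(\pm\infty)\mathring{\delta}(x_0)^{-1/3}\bigr)$, which your position-space decomposition into $\{|\ct|>r\}\cup\{|x|>R,\ |\ct|\le r\}$ can never see. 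So your remark that ``it does not matter that $\mathring{\Xi}$ ranges over a proper subset of $\R$'' is exactly where the new difficulty sits: the proposition is true only because the saturation values of $\mathring{\Xi}$ are chosen (via $\tilde K_E=[\xi_{\min},\xi_{\max}]$) large enough that these limiting energies exceed $E$, and any correct proof must use this.

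The repair is to localize in phase space, not just in position. Either insert frequency cutoffs $\chi(\hbar D_x)$ into your IMS scheme and, on the support of $1-\chi$ (where $|\mathring{\Xi}|$ is near its saturation values), bound the fiber operator from below by $\mathring{\mu}_1(x,\xi)\ge E'$ using the dispersion curves; or argue directly with the operator-valued symbol: by the choices of $\mathring{\delta}$ at infinity and of the saturation of $\mathring{\Xi}$ one has $n_0(x,\xi)\ge E'\,\mathrm{Id}-C\,\Pi_{\mathrm{low}}(x,\xi)$, where $\Pi_{\mathrm{low}}(x,\xi)$ is the finite-rank spectral projection of $n_0(x,\xi)$ on $(-\infty,E']$, a symbol supported in a compact set of $(x,\xi)$; quantizing, using Lemma~\ref{lem:Nhbarpseudo} for the $\mathscr{O}(\hbar)$ lower-order terms and the G\r{a}rding inequality (Theorem~\ref{thm:garding}), one gets $\mathcal{N}_\hbar\ge E'-C\,\Pi_{\mathrm{low}}^w-C'\hbar$ with $\Pi_{\mathrm{low}}^w$ compact, hence $\sigma_{\mathrm{ess}}(\mathcal{N}_\hbar)\subset[E'-C'\hbar,+\infty)$ and discreteness below $E$. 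Two smaller points: your requirement $\inf_{\R\setminus K_E}\mathring{\delta}^{2/3}\tilde{\mu}_{1,c}>E'$ is incompatible with continuity at $\partial K_E$ (there $\delta^{2/3}\tilde{\mu}_{1,c}=E<E'$); what you actually need, and use, is a bound for $|x|>R$ only, which does require pinning down $\mathring{\delta}$ at infinity (e.g.\ $\mathring{\delta}\ge\min(\delta,\delta_*)$ there), a harmless strengthening of the construction. Finally, the passage to the coordinates $(\fx,\ft)$ is not innocent for $\mathcal{N}_\hbar$: the $x$-dependent dilation in $\ct$ does not conjugate $\mathring{\Xi}(\hbar D_x)$ into a function of a nice first-order operator as it does for $\hbar D_x$, so the $\{|x|>R\}$ estimate is better run either in the original variables or through the symbolic/fiberwise argument above.
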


\subsubsection{Equivalence of spectra}

To prove that the two spectra coincide, we need to prove that we can construct quasi-modes of $\magSch$ from eigenstates of $\mathcal{N}_\hbar$: this is possible thanks to the following lemma,
which states the exponential decreasing with respect to $\ct$ of such eigenstates.

\begin{lemma}
    \label{lem:weightedAgmonbis}
    Let $E \in(-\infty,E_0)$ and $M>0$. There exist $h_0>0$ and $C>0$ such that, for all eigenpairs $(\mathring{\mu}_h,\mathring{\psi}_\hbar)$ of $\mathcal{N}_\hbar$ satisfying $\mathring{\mu}_\hbar\leq E $, we have for all $h\in (0,h_0)$:
    \begin{equation*}
        \begin{aligned}
            \|e^{M |\ct|}\mathring{\psi}_{\hbar}\| &\leq C\|\mathring{\psi}_{\hbar}\|,\\
            \mathring{\cQ}_{\hbar}\left(e^{M |\ct|}\mathring{\psi}_{\hbar}\right)&\leq C\|\mathring{\psi}_{\hbar}\|^2,
        \end{aligned}
    \end{equation*}
    where $\mathring{\cQ}_{\hbar}$ denotes the sesquilinear form associated to $\mathcal{N}_\hbar$. 
\end{lemma}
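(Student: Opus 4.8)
The plan is to adapt the Agmon argument already used for $\check{\mathcal{L}}_\hbar$ in Lemma~\ref{lem:weightedAgmon}, the confining mechanism again being the linear growth in $\ct$ of $\partial_\ct\mathring{A}_\hbar=\mathring{\delta}(x)\ct+\O(\hbar^{1-2\eta})$, with $\mathring{\delta}\ge\delta_{\rm min}>0$. Fix $M>0$ and $E<E_0$. Since $\mathring{\psi}_\hbar$ lies in $\dom(\mathcal{N}_\hbar)=L^2(\R,B^{2,4}(\R))$ it already decays polynomially in $\ct$, but to make the weighted quantities a priori finite (and to bypass the non-smoothness of $|\ct|$ at the origin) I would first run the argument with a smooth bounded weight $\Phi_N$ satisfying $0\le\Phi_N\le M|\ct|$, $|\Phi_N'|\le M$, $|\Phi_N''|\le C$ and $\Phi_N=M|\ct|$ on $\{N^{-1}\le|\ct|\le N\}$, establish bounds independent of $N$, and let $N\to\infty$ by monotone convergence at the end.

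The key structural point, which I would record first, is that the multiplication operator $e^{\Phi_N}$, being a function of $\ct$ alone, commutes with every building block of $\mathcal{N}_\hbar$ except $D_\ct^2$: it commutes with the $x$-Fourier multiplier $\mathring{\Xi}(\hbar D_x)$ and with the multiplications by $\mathring{A}_\hbar$, $\mathring{m}_\hbar$ and $k$. Hence the Agmon identity reduces to the one-dimensional computation for $D_\ct^2$ and, using $\mathcal{N}_\hbar\mathring{\psi}_\hbar=\mathring{\mu}_\hbar\mathring{\psi}_\hbar$, $\mathring{\mu}_\hbar\le E$ and $|\Phi_N'|\le M$, reads
\begin{equation*}
    \mathring{\cQ}_\hbar\!\left(e^{\Phi_N}\mathring{\psi}_\hbar\right)=\mathring{\mu}_\hbar\,\|e^{\Phi_N}\mathring{\psi}_\hbar\|^2+\|\Phi_N'\,e^{\Phi_N}\mathring{\psi}_\hbar\|^2\le(E+M^2)\,\|e^{\Phi_N}\mathring{\psi}_\hbar\|^2 .
\end{equation*}

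Next I would localize in $\ct$ with a partition of unity $\chi_{1,r_0}^2+\chi_{2,r_0}^2=1$ of the type~\eqref{eq:partitionIMS}, $\chi_{2,r_0}$ supported in $\{|\ct|\ge r_0/2\}$; since these cutoffs depend only on $\ct$, the IMS localization error again comes solely from $D_\ct^2$ and is $\le Cr_0^{-2}\|\cdot\|^2$. On the range of $\chi_{2,r_0}$ I would invoke the confinement estimate for $\mathcal{N}_\hbar$ — the analogue of~\eqref{eq:confinementt}, which holds because the proof of Theorem~\ref{thm:spectrediscret} transfers to $\mathcal{N}_\hbar$ (Proposition~\ref{prop:spectrediscretN}) — of the form $\mathring{\cQ}_\hbar(v)\ge(c_1 r-C\hbar^{1-2\eta})\|v\|^2$ whenever $\supp v\subset\{|\ct|>r\}$, together with the trivial bound $\mathring{\cQ}_\hbar\ge-C\hbar^2\|\cdot\|^2$ coming from the single negative term $-\hbar^2 k^2/(4\mathring{m}_\hbar^2)$. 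Choosing $r_0$ large enough that $c_1r_0$ exceeds $|E|+M^2$ by a fixed amount and that $Cr_0^{-2}$ is small, these inputs force $\|\chi_{2,r_0}u\|^2\le\theta\|u\|^2$ with $\theta<1$ for $\hbar$ small, where $u=e^{\Phi_N}\mathring{\psi}_\hbar$; hence $\|u\|^2\le(1-\theta)^{-1}\|\chi_{1,r_0}u\|^2$. Since $\Phi_N\le Mr_0$ on $\supp\chi_{1,r_0}$, this yields $\|e^{\Phi_N}\mathring{\psi}_\hbar\|\le C_M\|\mathring{\psi}_\hbar\|$ uniformly in $N$; letting $N\to\infty$ gives the first inequality, and substituting it back into the Agmon identity gives the second.

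The only genuine obstacle is to check that the confinement of Theorem~\ref{thm:spectrediscret} survives the cutoffs $\theta(\hbar^\eta\ct)$, $\mathring{\chi}$, $\mathring{\delta}$ and $\mathring{\Xi}$ entering $\mathcal{N}_\hbar$. This is handled by redoing the commutator computation of Step~1 of that proof for $[D_\ct,\mathring{m}_\hbar^{-1/2}(\mathring{\Xi}(\hbar D_x)-\mathring{A}_\hbar)\mathring{m}_\hbar^{-1/2}]$: its leading term is $i\,\partial_\ct\mathring{A}_\hbar\,\mathring{m}_\hbar^{-1}=i\,\mathring{\delta}(x)\ct\,\mathring{m}_\hbar^{-1}+\O(\hbar^{1-2\eta})$, giving the linear growth in $\ct$, while the contributions of the cutoffs and of the remainder $\mathring{r}_\hbar$ are $\O(\hbar^{1-2\eta})$ for $\eta$ small and hence negligible against $\delta_{\rm min}\,r$. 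This is exactly what Proposition~\ref{prop:spectrediscretN} records, and everything else is routine bookkeeping.
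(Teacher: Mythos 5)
Your argument is correct and follows essentially the same route as the paper, which simply adapts the proof of Lemma \ref{lem:weightedAgmon} to $\mathcal{N}_\hbar$: Agmon identity for the weight in $\ct$ (only $D_{\ct}^2$ fails to commute with it), IMS localization in $\ct$, and the confinement estimate \eqref{eq:confinementt} transferred to $\mathcal{N}_\hbar$, with $r_0$ taken large. The extra care you take --- regularizing the weight $M|\ct|$ and checking that the cutoffs in $\mathring{A}_\hbar$ only perturb $\partial_{\ct}\mathring{A}_\hbar$ by $\O(\hbar^{1-2\eta})$ --- is precisely the ``minor changes'' the paper alludes to.
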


We do not write the proof of this lemma as it is coincides with the proof of Lemma \ref{lem:weightedAgmon}, up to minor changes.

\begin{proposition}
    \label{prop:equivspectra}
    Let $E\in(-\infty,E_0)$. There exists $h_0 >0$ such that for all $h\in (0,h_0)$, the spectra of $\magSch$ and $h^{4/3}\mathcal{N}_\hbar$ coincide in 
    the interval $I_h = (-\infty,Eh^{4/3})$ up to $\mathscr{O}(h^\infty)$.
\end{proposition}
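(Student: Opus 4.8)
The plan is to run the classical two-sided quasi-mode comparison, using the localization estimates already established to move eigenfunctions back and forth between $\magSch$ and $h^{4/3}\mathcal{N}_\hbar$ with an $\O(h^\infty)$ error, and then to convert the resulting quasi-mode bounds into the rank inequalities of Definition \ref{def:spectracoincide} by means of the spectral theorem and the rough counting bound of Corollary \ref{cor:roughweyllaw1}. Discreteness of both spectra below the relevant thresholds (Theorem \ref{thm:spectrediscret} and Proposition \ref{prop:spectrediscretN}), together with $N(\magSch,Eh^{4/3})\leq Ch^{-2}$, guarantees all the ranks involved are finite, so the arguments below are legitimate.

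First I would transfer spectral data from $\magSch$ to $\mathcal{N}_\hbar$. Given an eigenpair $(\mu_h,\psi_h)$ with $\mu_h\leq Eh^{4/3}$, I form $\check\psi_\hbar$ as in \eqref{eq:checkpsi}; by Proposition \ref{prop:localizationGamma} it satisfies the quasi-mode relation \eqref{eq:quasimode} for $\check{\mathcal{L}}_\hbar$, by Proposition \ref{prop:AgmonGamma} it is exponentially concentrated where $x$ lies in a small neighbourhood of $K_{E,\eps}$ — hence where $\mathring\delta=\delta$, $\mathring\chi=1$, $\mathring\kappa=\kappa$ — by Lemma \ref{lem:weightedAgmon} it is exponentially concentrated in $\ct$ (so the cutoff $\theta(\hbar^\eta\ct)$ and the polynomial factors $\ct^j$ are harmless), and by Proposition \ref{prop:localizationfrequence} it is microlocalized where $\mathring\Xi(\hbar D_x)$ acts as $\hbar D_x$. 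Since $\mathcal{N}_\hbar$ is built so as to coincide with $\check{\mathcal{L}}_\hbar$ exactly on this part of phase space, commuting the cutoffs defining $\mathcal{N}_\hbar$ through $\check\psi_\hbar$ and invoking these three microlocalizations simultaneously yields $\mathcal{N}_\hbar\check\psi_\hbar=\check\mu_\hbar\check\psi_\hbar+\O(\hbar^\infty)\|\psi_h\|$ with $\check\mu_\hbar=h^{-4/3}\mu_h$, together with $\|\check\psi_\hbar\|\geq\tfrac12\|\psi_h\|$ for $h$ small. Applying this to an orthonormal family of eigenfunctions of $\magSch$ with eigenvalues in a subinterval $L_h\subset I_h$, and checking that the transferred family is almost orthonormal (again using the concentration away from the boundary of the tube), the spectral theorem for the self-adjoint operator $\mathcal{N}_\hbar$ (Lemma \ref{lem:Nhbarautoadjoint}, Proposition \ref{prop:spectrediscretN}) gives a $Ch^\infty$-enlargement $K_h$ of $L_h$ with $\mathrm{rank}\,\mathbb 1_{L_h}(\magSch)\leq\mathrm{rank}\,\mathbb 1_{K_h}(h^{4/3}\mathcal{N}_\hbar)$.

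Next I would run the same argument in the other direction. Given an eigenpair $(\mathring\mu_\hbar,\mathring\psi_\hbar)$ of $\mathcal{N}_\hbar$ with $\mathring\mu_\hbar\leq E$, Lemma \ref{lem:weightedAgmonbis} gives exponential decay in $\ct$, and the analogues for $\mathcal{N}_\hbar$ of Propositions \ref{prop:AgmonGamma} and \ref{prop:localizationfrequence} (whose proofs go through verbatim since $\mathcal{N}_\hbar$ agrees with the genuine operator on the relevant region) give concentration in $x$ near $K_{E,\eps}$ and microlocalization in $D_x$. Cutting $\mathring\psi_\hbar$ off by a function of $\hbar^{1-\eta}|\ct|$ supported in $\{|\ct|<\hbar^{-1}d_0\}$ and pulling back through $\mathfrak U_\hbar^{-1}$ composed with the tubular chart, the transverse exponential decay kills the commutator and boundary contributions modulo $\O(h^\infty)$ and produces $\psi_h\in\dom(\magSch)$ with $\magSch\psi_h=h^{4/3}\mathring\mu_\hbar\psi_h+\O(h^\infty)\|\psi_h\|$; the almost-orthonormality check is the same, and one obtains the reverse rank inequality. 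Combining the two rank inequalities with the $\O(h^\infty)$ control of the quasi-eigenvalues gives exactly condition (2) of Definition \ref{def:spectracoincide} with $\alpha=+\infty$, which is the claim.

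The hard part will be the first transfer step: one must commute \emph{every} cutoff entering the definition of $\mathcal{N}_\hbar$ — the spatial truncations $\mathring\delta$, $\mathring\chi$, the frequency truncation $\mathring\Xi$, and the transverse cutoff $\theta(\hbar^\eta\ct)$ — through $\check\psi_\hbar$ and show each error is $\O(\hbar^\infty)$ rather than a fixed power of $\hbar$. This forces one to use the three microlocalizations (in $\ct$, in $x$, in $\xi$) at once, to keep in mind that the frequency cutoff of Proposition \ref{prop:localizationfrequence} only localizes at the semiclassical scale $\hbar$ and so must be combined with the a priori bound on $D_\ct\check\psi_\hbar$ coming from the quasi-mode equation, and to absorb the polynomial-in-$\ct$ weights using the exponential transverse decay from Lemma \ref{lem:weightedAgmon}. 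Once this is in place, the almost-orthonormality of the transferred families and the passage to ranks are routine.
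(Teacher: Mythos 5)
Your proposal follows essentially the same route as the paper: transfer eigenfunctions of $\magSch$ into quasimodes of $h^{4/3}\mathcal{N}_\hbar$ via \eqref{eq:checkpsi} using Propositions \ref{prop:localizationGamma}, \ref{prop:AgmonGamma}, \ref{prop:localizationfrequence} and Lemma \ref{lem:weightedAgmon}; transfer back by cutting off in $\ct$ and using the transverse decay of Lemma \ref{lem:weightedAgmonbis} (you are in fact more explicit than the paper here in noting that the reverse direction also uses $x$- and frequency-localization of the eigenfunctions of $\mathcal{N}_\hbar$, which the paper leaves implicit); then convert the two quasimode statements into the rank inequalities of Definition \ref{def:spectracoincide} by the spectral theorem and an almost-orthogonality argument on the transferred eigenspaces. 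This is exactly the paper's scheme.

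The one step where your justification is too quick is the multiplicity comparison in the direction from $\mathcal{N}_\hbar$ to $\magSch$. Your opening claim that discreteness plus $N(\magSch,Eh^{4/3})\leq Ch^{-2}$ guarantees that "all the ranks involved are finite, so the arguments below are legitimate" does not suffice: almost-orthonormality of a transferred family with pairwise overlaps $\O(\hbar^\infty)$ yields linear independence (hence preservation of dimensions and the rank inequality) only when the cardinality $N$ of the family is at most polynomially, or at least subexponentially, large in $\hbar^{-1}$, since the Gram-matrix perturbation is controlled by $N\cdot\O(\hbar^\infty)$; mere finiteness of $N$ gives nothing. Corollary \ref{cor:roughweyllaw1} supplies this control for the family coming from $\magSch$, which you invoke, but for the reverse inequality you need the analogous bound on the number of eigenvalues of $\mathcal{N}_\hbar$ below $E$. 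In the paper this is Corollary \ref{cor:roughweyllaw2}, which is only proved later through the adiabatic reduction of Section \ref{sect:dimreduction} (a forward reference the paper acknowledges explicitly). You should either cite such a counting bound for $\mathcal{N}_\hbar$ or prove one directly (for instance by adapting the IMS/min-max argument of Corollary \ref{cor:roughweyllaw1} to $\mathcal{N}_\hbar$); without it, the reverse rank inequality does not follow from the "routine" almost-orthonormality check.
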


\begin{proof}
    Let us start by proving that for any $\lambda\in I_h \cap \sigma(\magSch)$, we have 
    \begin{equation}
        \label{eq:quasimodeN}
        {\rm dist}(\lambda,h^{4/3}\sigma(\mathcal{N}_\hbar)) = \O(h^\infty).
    \end{equation}
    For $\psi$ an eigenfunction corresponding to $\lambda$, by definition of the interval $I_h$ and Propositions \ref{prop:AgmonGamma} and \ref{prop:localizationfrequence},
    as well as the construction of the operator $\mathcal{N}_\hbar$, we find
    \begin{equation*}
        h^{4/3}\mathcal{N}_\hbar \check{\psi} = \lambda \check{\psi} + \O(h^\infty),
    \end{equation*}
    where $\check{\psi}$ is given by Equation \eqref{eq:checkpsi}. Thus, Equation \eqref{eq:quasimodeN} follows from the spectral theorem.

    Reciprocally, if $\lambda$ is in $I_h \cap h^{4/3}\sigma(\mathcal{N}_\hbar)$, and denoting by $\mathring{\psi}\in L^2(\R_x\times\R_{\ct})$ a corresponding eigenfunction,
    we can consider the function $\psi$ defined in tubular coordinates $(x,t)$, introduced by Equation \eqref{eq:standardtubucoord}, by 
    \begin{equation*}
        \psi(x,t) = \hbar^{-1/2} \theta(\hbar^{\eta-1} t)\psi(x,\hbar^{-1}t).
    \end{equation*}
    By Lemma \ref{lem:weightedAgmonbis}, we deduce that $\psi$ is a quasi-mode for $\magSch$:
    \begin{equation*}
        \magSch \psi = \lambda \psi + \O(h^\infty),
    \end{equation*}
    and the spectral theorem gives once again 
    \begin{equation*}
        {\rm dist}(\lambda,\sigma(\magSch)) = \O(h^\infty).
    \end{equation*}
    Thus, to prove that the two spectra coincide, we need only to deal with multiplicities.
    Following Definition \ref{def:spectracoincide}, we consider a $h$-dependent interval $J_h \subset I_h$ and we write 
    $J_h\cap\sigma(\magSch) = \{\lambda_1, \dots ,\lambda_p\}$ (where the $\lambda_j$ are distinct). We emphasize the fact that 
    these eigenvalues depend on the semiclassical parameter $h$ as does $p$. We also consider the associated eigenspaces $(E_j)_{1\leq j\leq p}$
    and we note that $\dim \bigoplus_{j=1}^p E_j = \O(h^{-2})$ thanks to the rough Weyl estimate given by Corollary \ref{cor:roughweyllaw1}.
    
    Denoting by $(\check{E}_j)_{1\leq j\leq p}$ the spaces of quasimodes of $h^{4/3}\mathcal{N}_\hbar$ obtained by considering the images of the eigenspaces 
    $E_j$ under the transformation given by Equation \eqref{eq:checkpsi}, we observe that by Proposition \ref{prop:localizationGamma}, we have 
    \begin{equation*}
        \dim \check{E}_j = \dim E_j,
    \end{equation*}
    as soon as $h$ is small enough. Moreover, as the dimension of $\bigoplus_{j=1}^p \check{E}_j$ is controlled by $\O(h^{-2})$, we have 
    \begin{equation}
        \label{eq:quasimodefull}
        \|(\oplus_{j=1}^p h^{4/3}\mathcal{N}_\hbar - \lambda)\psi\| \leq \eps_h \|\psi\|,\quad \eps_h = \O(h^\infty),
    \end{equation}
    for all $\psi = (\psi_1,\dots,\psi_p)\in \bigoplus_{j=1}^p \check{E}_j$, and where $\lambda = (\lambda_1,\dots,\lambda_p)$.
    Setting $J_h = [a_h,b_h]$, we consider $\check{J}_h = [a_h-\eps_h,b_h+\eps_h]$. We wish to prove that we have 
    \begin{equation}
        \label{eq:spectruminclusion}
        {\rm rank} \mathbb{1}_{J_h}(\magSch) \leq {\rm rank} \mathbb{1}_{\check{J}_h}(h^{4/3}\mathcal{N}_\hbar).
    \end{equation}
    If we had ${\rm rank} \mathbb{1}_{\check{J}_h}(h^{4/3}\mathcal{N}_\hbar) < {\rm rank} \mathbb{1}_{J_h}(\magSch)$, then the projection 
    $\Pi: \bigoplus_{j=1}^p \check{E}_j \rightarrow {\rm ran} \mathbb{1}_{\check{J}_h}(h^{4/3}\mathcal{N}_\hbar)$ could not be injective.
    Considering a non-zero $\psi$ in its kernel, the spectral theorem would give $\|(\oplus_{j=1}^p h^{4/3}\mathcal{N}_\hbar - \lambda)\psi\| > \eps_h \|\psi\|$,
    which is in contradiction with Equation \eqref{eq:quasimodefull}, as $\psi\neq 0$. Therefore, inequality \eqref{eq:spectruminclusion} holds.
    
    We prove the reciprocal inclusion of the spectrum of $h^{4/3}\mathcal{N}_\hbar$ (with multiplicities) into the one of $\magSch$ by similar considerations, using also in this case a rough Weyl estimate given by Corollary \ref{cor:roughweyllaw2}.
\end{proof}

%%%%%%%%%%%%%%%%%%%%%%%%%%%%%%%%%%%%%%%%%%%%%%%%%%%%%%%%%%%%%%%%%%%%%%%%%%%%%%%%%%%%%%%%%%%%%%%%%%%%%%%%%%%%%%%%%%%%%%%%%%%%%%

\section{Superadiabatic projectors and effective Hamiltonians}
\label{sect:dimreduction}

The preceding sections have shown (modulo the proof of Corollary \ref{cor:roughweyllaw2} presented in this section) the equivalence of the spectral analysis of the magnetic Laplacian $\magSch$ to that of the pseudodifferential operator 
\begin{equation*}
    \hbar^4 \mathcal{N}_\hbar = \hbar^4 n_{\hbar}^w,
\end{equation*}
with principal symbol $n_0$ given by a reparametrization of the Montgomery operators in the variable $(x,\xi)\in\R^{2}$. 
We present now a general scheme, inspired by the Born-Oppenheimer approximation and developed in \cite{BFKRV}, which applied to the operator $\mathcal{N}_\hbar$, reduce its spectral study to that of a finite number of one-dimensional 
scalar semiclassical pseudodifferential operators, each one related to a dispersion curve $\mu_k(\cdot)$, $k\in\N$, of the Montgomery operators $(\mathfrak{M}(\nu))_{\nu\in\R}$.

\subsection{A general reduction scheme}
\label{subsect:reductionscheme}

In this section we fix two Hilbert spaces $\A$ and $\B$, with the assumption that $\A$ is continuously embedded in $\B$. We suppose given an operator-valued self-adjoint symbol $H_{\hbar}\in S^0(\R^{2n},\mathscr{L}(\A,\B))$, as in Appendix \ref{sect:pseudoropvalued}, admitting
the following semiclassical asymptotic expansion
\begin{equation*}
    H_{\hbar} \sim \sum_{j=0}^{\infty}\hbar^{j} H_{j}.
\end{equation*}
As we are interested in the spectral analysis of the operator $H_{\hbar}^w$, it is natural to look for (almost) invariant subspaces. 
To do so, the idea consists in using the pseudodifferential calculus to construct orthogonal projectors that (almost) commute with $H_{\hbar}^w$. 
First, recall that, given two operator-valued symbols $p$ and $q$ whose composition makes sense, the composition of the two Weyl quantizations $p^w$ and $q^w$ 
is still a pseudodifferential operator, whose symbol is given by the Moyal product denoted by $p\circledast q$:
\begin{equation*}
    (p \circledast q)^w = p^w \circ q^w.
\end{equation*}
Thus, we are looking for admissible symbols $\Pi_\hbar$ in $S^0(\R^{2n},\mathscr{L}(\A))\cap S^0(\R^{2n},\mathscr{L}(\B))$ with semiclassical asymptotic expansion
\begin{equation}
    \Pi_\hbar \sim \sum_{j=0}^{\infty} \hbar^{j}\Pi_{j}(x,\xi),
\end{equation}
satisfying
\begin{equation}
    \label{eq:symbolicinvariantsubspace}
        \Pi_\hbar = \Pi_{\hbar}^{*},\quad \Pi_\hbar \sim \Pi_\hbar \circledast \Pi_\hbar,
        \quad\mbox{and}\quad H_{\hbar}\circledast \Pi_\hbar -  \Pi_\hbar \circledast H_{\hbar} \sim 0.
\end{equation}
The corresponding quantized operator $\Pi_{\hbar}^w$ is called a superadiabatic projector: it acts continuously on $L^2(\R^n,\A)$ and $L^2(\R^n,\B)$, and commutes up to $\mathscr{O}(\hbar^\infty)$ with $H_\hbar^w$.

\vspace{0.25cm}

It is possible to construct such superadiabatic projectors for each isolated part of the spectrum of the principal symbol $H_{0}$ of $H_{\hbar}$. More precisely, 
we now assume that $H_0(x,\xi)$, $(x,\xi)\in\R^{2n}$, is a self-adjoint operator on the Hilbert space $\B$ with domain $\A\subset \B$, and we denote its spectrum by $\sigma(x,\xi)$. 

Suppose that there exists a smoothly parametrized bounded subset $\sigma_0(x,\xi)\subset \sigma(x,\xi)$, $(x,\xi)\in\R^{2n}$, of the spectrum of $H_0(x,\xi)$, in the sense that the associated spectral projector $\Pi_0(x,\xi)$, $(x,\xi)\in\R^{2n}$ defines a symbols, and such that for some constant $\eps >0$ we have
\begin{equation}
    \label{eq:spectrumisolated}
    {\rm dist}_H(\sigma_{0}(x,\xi),\sigma(x,\xi)\setminus \sigma_0(x,\xi)) > \eps.
\end{equation}
Then, we have the following theorem.

\begin{theorem}[\cite{BFKRV}]
    \label{thm:superadiabproj}
    With the above assumptions, there exists an admissible operator-valued self-adjoint symbol $\Pi_\hbar\in S(\R^{2d},\mathscr{L}(\mathscr{B},\mathscr{A}))$ with principal symbol $\Pi_0$ and unique up to a remainder of order $\mathscr{O}(h^\infty)$, satisfying
    \[
    \Pi_\hbar^w\circ \Pi_\hbar^w=\Pi_\hbar^w+\mathscr{O}(\hbar^\infty),
    \]
    where the remainder $\mathscr{O}(\hbar^\infty)$ is a pseudodifferential operator whose symbol is in the same class as $\Pi$, and 
    \[
    [H_\hbar^w,\Pi_\hbar^w]=\mathscr{O}(\hbar^\infty)\,,
    \]
    where the remainder $\mathscr{O}(\hbar^\infty)$ is a pseudodifferential operator whose symbol is in the same class as $H_\hbar$.
\end{theorem}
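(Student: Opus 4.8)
The plan is to build the symbol $\Pi_\hbar$ as a formal power series in $\hbar$, solving the three requirements of \eqref{eq:symbolicinvariantsubspace} order by order, and then to Borel-resum the series and transfer the relations to the operator level via the symbolic calculus of Appendix \ref{sect:pseudoropvalued}; this is the construction of \cite{BFKRV}, a variant of the Nenciu--Panati--Spohn--Teufel scheme. First I would set $\Pi_0(x,\xi)$ to be the spectral projector of $H_0(x,\xi)$ onto $\sigma_0(x,\xi)$, given by the Riesz formula $\Pi_0(x,\xi)=\frac{1}{2\pi i}\oint_{\Gamma(x,\xi)}(z-H_0(x,\xi))^{-1}\,dz$ over a contour $\Gamma(x,\xi)$ enclosing $\sigma_0(x,\xi)$ and staying at distance $\sim\eps$ from $\sigma(x,\xi)$; since $H_0$ is an $\mathscr{L}(\A,\B)$-valued symbol and the resolvent is bounded uniformly in $(x,\xi)$ along $\Gamma(x,\xi)$, differentiating the resolvent identity shows $(z-H_0)^{-1}$, hence $\Pi_0$, is a symbol in $S^0(\R^{2n},\mathscr{L}(\B,\A))$, consistently with the hypothesis. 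Suppose then that $\Pi^{(N)}=\sum_{j=0}^N\hbar^j\Pi_j$ has been built, with $\Pi^{(N)}=(\Pi^{(N)})^{*}$ and obeying the idempotency and commutation relations modulo $\O(\hbar^{N+1})$; writing the order-$\hbar^{N+1}$ obstructions as $G_{N+1}$ (idempotency) and $F_{N+1}$ (commutation), one looks for $\Pi_{N+1}$ solving the pointwise linear equations $\Pi_0\Pi_{N+1}+\Pi_{N+1}\Pi_0-\Pi_{N+1}=-G_{N+1}$ and $[H_0,\Pi_{N+1}]=-F_{N+1}$.

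The algebraic core is the classical fact that, by associativity of the Moyal product, by $[H_0,\Pi_0]=0$, and by the induction hypotheses, $G_{N+1}$ is block-diagonal and $F_{N+1}$ is block-off-diagonal with respect to the splitting $\mathrm{Id}=\Pi_0+(1-\Pi_0)$. Then $X\mapsto\Pi_0 X+X\Pi_0-X$ is invertible on the diagonal block (eigenvalues $\pm1$), which fixes the diagonal part of $\Pi_{N+1}$; and $\mathrm{ad}_{H_0}=[H_0,\,\cdot\,]$ is invertible on the off-diagonal block, with $\mathrm{ad}_{H_0}^{-1}(Y)=\frac{1}{2\pi i}\oint_{\Gamma(x,\xi)}(z-H_0)^{-1}Y(z-H_0)^{-1}\,dz$ for off-diagonal $Y$, which fixes the off-diagonal part — here the uniform gap \eqref{eq:spectrumisolated} is what makes $\mathrm{ad}_{H_0}^{-1}$ bounded. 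One then checks that the resulting $\Pi_{N+1}$ is formally self-adjoint (symmetrizing if necessary) and, using once more the smoothness and uniform invertibility of the resolvent along $\Gamma(x,\xi)$ and the stability of the symbol classes under products and under the Moyal composition of Proposition \ref{prop:symb_cal}, that $\Pi_{N+1}$ lies in the same class as the input symbols (so as to match the $S^{-j\eta}$ bookkeeping in the application to $\mathcal{N}_\hbar$). Borel resummation yields an admissible symbol $\Pi_\hbar\sim\sum_j\hbar^j\Pi_j$ for which the three relations of \eqref{eq:symbolicinvariantsubspace} hold modulo $\O(\hbar^\infty)$ at the symbol level; quantizing and invoking Appendix \ref{sect:pseudoropvalued} gives $\Pi_\hbar^w\circ\Pi_\hbar^w=\Pi_\hbar^w+\O(\hbar^\infty)$, $[H_\hbar^w,\Pi_\hbar^w]=\O(\hbar^\infty)$, and self-adjointness of $\Pi_\hbar^w$ modulo $\O(\hbar^\infty)$ from formal self-adjointness of the symbol.

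For uniqueness modulo $\O(\hbar^\infty)$ I would compare two admissible solutions $\Pi_\hbar,\tilde\Pi_\hbar$ with the same principal symbol: if $\Pi_j=\tilde\Pi_j$ for $j\le N$, subtracting the order-$\hbar^{N+1}$ idempotency and commutation identities shows $\Pi_{N+1}-\tilde\Pi_{N+1}$ solves the homogeneous versions of the two linear equations above, so both its diagonal and off-diagonal parts vanish and $\Pi_{N+1}=\tilde\Pi_{N+1}$. The step I expect to be the main obstacle is not a single estimate but the bookkeeping of symbol-class membership: one must propagate the $\mathscr{L}(\A)$- and $\mathscr{L}(\B)$-valued seminorm bounds (domain versus ambient space) simultaneously through the resolvent contour integrals, through $\mathrm{ad}_{H_0}^{-1}$, and through the Moyal products, and check that everything is uniform in $(x,\xi)$ — this is precisely where the continuous embedding $\A\hookrightarrow\B$ and the uniform gap \eqref{eq:spectrumisolated} are used in an essential way. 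Since the statement is quoted verbatim from \cite{BFKRV}, one may alternatively content oneself with the outline above and refer there for the full verification.
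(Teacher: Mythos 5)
The paper offers no proof of this statement: it is imported verbatim from \cite{BFKRV}, so there is no internal argument to compare yours against, and your outline indeed reproduces the construction used in that reference (Riesz formula for $\Pi_0$, order-by-order resolution of the idempotency and commutation equations through the splitting ${\rm Id}=\Pi_0+(1-\Pi_0)$, invertibility of ${\rm ad}_{H_0}$ on off-diagonal blocks via the uniform gap \eqref{eq:spectrumisolated}, Borel resummation, and uniqueness by the same linear algebra). The one detail to fix is your contour formula for ${\rm ad}_{H_0}^{-1}$: for $X=\frac{1}{2\pi i}\oint_{\Gamma(x,\xi)}(z-H_0)^{-1}Y(z-H_0)^{-1}\,dz$ one has $[H_0,X]=[\Pi_0,Y]$, which for off-diagonal $Y$ equals $\Pi_0Y(1-\Pi_0)-(1-\Pi_0)Y\Pi_0$ rather than $Y$; the correct inverse is obtained by inserting $[\Pi_0,Y]$ in place of $Y$ in the integrand, and this sign bookkeeping on the two off-diagonal blocks does not affect the structure of the induction or the symbol-class estimates.
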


In order to better understand the role that superadiabatic projectors play in the spectral analysis of the operator $H_\hbar^w$, we recall the following property.

\begin{proposition}[\cite{BFKRV}]
\label{prop:commutespectral}
With the same assumptions as in Theorem~\ref{thm:superadiabproj}, we have the following properties:
\begin{enumerate}
    \item[(i)]  For any function $\chi\in C_{c}^\infty(\R)$ we have 
    $[\chi(H_\hbar^w),\Pi_\hbar^w] = \mathscr{O}(\hbar^\infty)$, in $L^2(\R^d,\mathscr{B})$.
    \item[(ii)] Assume that there exists an energy level $E \in\R$ such that 
    \begin{equation}\label{eq:localspectralH0}
		\exists \eps>0,\ \sigma(\restriction{H_{0}}{{\rm Ran}(\Pi_0^\perp)}) \subset (E+\eps,+\infty).
    \end{equation}
    Then for any function $\chi\in C_{c}^\infty(\R)$ such that ${\rm supp}(\chi) \subset (-\infty,E]$, we have
	\begin{equation*}
		 \Pi_\hbar^w\circ\chi(H_\hbar^w) = \chi(H_\hbar^w) + \mathscr{O}(\hbar^\infty).
	\end{equation*}
\end{enumerate}
\end{proposition}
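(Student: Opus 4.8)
The plan is to reduce to the commutator estimate of Theorem~\ref{thm:superadiabproj} through the Helffer--Sjöstrand functional calculus. Fix an almost-analytic extension $\tilde\chi\in C_c^\infty(\C)$ of $\chi$, with $\tilde\chi_{|\R}=\chi$, $\supp\tilde\chi$ compact and $\bar\partial\tilde\chi(z)=\O(|{\rm Im}\,z|^\infty)$, so that $\chi(H_\hbar^w)=-\tfrac1\pi\int_{\C}\bar\partial\tilde\chi(z)(z-H_\hbar^w)^{-1}\,dL(z)$, $dL$ being Lebesgue measure on $\C$. Commuting $\Pi_\hbar^w$ through the resolvent by means of $[(z-H_\hbar^w)^{-1},\Pi_\hbar^w]=(z-H_\hbar^w)^{-1}[H_\hbar^w,\Pi_\hbar^w](z-H_\hbar^w)^{-1}$, and invoking Theorem~\ref{thm:superadiabproj} — which gives $[H_\hbar^w,\Pi_\hbar^w]=\O(\hbar^\infty)$ as a bounded operator on $L^2(\R^d,\mathscr B)$, the point being that $\Pi_\hbar^w$ maps $L^2(\R^d,\mathscr B)$ into $L^2(\R^d,\mathscr A)=\dom(H_\hbar^w)$ so that this commutator is meaningful with symbol $H_\hbar\circledast\Pi_\hbar-\Pi_\hbar\circledast H_\hbar$ in the $\O(\hbar^\infty)$ symbol class — one obtains $\|[(z-H_\hbar^w)^{-1},\Pi_\hbar^w]\|\le|{\rm Im}\,z|^{-2}\,\O(\hbar^\infty)$. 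The factor $|{\rm Im}\,z|^{-2}$ is absorbed by the infinite-order vanishing of $\bar\partial\tilde\chi$ near the real axis, so that $[\chi(H_\hbar^w),\Pi_\hbar^w]=-\tfrac1\pi\int_{\C}\bar\partial\tilde\chi(z)[(z-H_\hbar^w)^{-1},\Pi_\hbar^w]\,dL(z)$ converges in operator norm and is $\O(\hbar^\infty)$.

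\textbf{Part (ii).} By part (i), $[\Pi_\hbar^w,\chi(H_\hbar^w)]=\O(\hbar^\infty)$, hence, splitting ${\rm Id}=\Pi_\hbar^w+({\rm Id}-\Pi_\hbar^w)$, it is enough to show $\chi(H_\hbar^w)({\rm Id}-\Pi_\hbar^w)=\O(\hbar^\infty)$. The idea is to construct a resolvent for $z-H_\hbar^w$ \emph{in the gap above the complementary band}: using the operator-valued symbolic calculus of Appendix~\ref{sect:pseudoropvalued}, one builds a family $R_\hbar(z)\colon L^2(\R^d,\mathscr B)\to L^2(\R^d,\mathscr A)$, holomorphic in $z$ on a thin complex neighbourhood $\mathcal U$ of $(-\infty,E+\eps/2]$ and bounded uniformly in $(z,\hbar)$, such that $(z-H_\hbar^w)R_\hbar(z)=({\rm Id}-\Pi_\hbar^w)+\O(\hbar^\infty)$ for $z\in\mathcal U$, the remainder uniform in $z$. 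Its principal symbol is $\Pi_0^\perp(x,\xi)$ composed with the inverse of $z-H_0(x,\xi)$ restricted to the invariant subspace ${\rm Ran}\,\Pi_0^\perp(x,\xi)$; this is legitimate because $\Pi_0$ commutes with $H_0$, being its spectral projector, and because hypothesis~\eqref{eq:localspectralH0} forces the spectrum of $H_0(x,\xi)$ on ${\rm Ran}\,\Pi_0^\perp(x,\xi)$ into $(E+\eps,\infty)$, hence at uniform distance from $\mathcal U$, so that this inverse is holomorphic in $z$ and uniformly bounded. The lower-order symbols are produced by the usual iterative parametrix scheme in an isolated band, the compatibility conditions at each order being supplied by the defining relations~\eqref{eq:symbolicinvariantsubspace} of $\Pi_\hbar$; this is what turns the $\O(\hbar)$ error into $\O(\hbar^\infty)$.

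Picking $\tilde\chi$ with $\supp\tilde\chi\subset\mathcal U$ — possible since $\supp\chi\subset(-\infty,E]$ lies in the interior of $\mathcal U\cap\R$ — one writes, for ${\rm Im}\,z\neq0$, $(z-H_\hbar^w)^{-1}({\rm Id}-\Pi_\hbar^w)=R_\hbar(z)-(z-H_\hbar^w)^{-1}\big[(z-H_\hbar^w)R_\hbar(z)-({\rm Id}-\Pi_\hbar^w)\big]$, and inserts this into the Helffer--Sjöstrand representation of $\chi(H_\hbar^w)({\rm Id}-\Pi_\hbar^w)$. Since $R_\hbar$ is holomorphic on $\mathcal U$, we have $\bar\partial(\tilde\chi R_\hbar)=(\bar\partial\tilde\chi)R_\hbar$ and $\tilde\chi R_\hbar$ is compactly supported in $\mathcal U$, so by Stokes' formula $\int_{\mathcal U}\bar\partial\tilde\chi(z)R_\hbar(z)\,dL(z)=0$; the remaining integral carries the extra factor $(z-H_\hbar^w)^{-1}\O(\hbar^\infty)$ and is $\O(\hbar^\infty)$ by the same $|{\rm Im}\,z|^{-1}$-against-infinite-vanishing argument as in part (i). This gives $\chi(H_\hbar^w)({\rm Id}-\Pi_\hbar^w)=\O(\hbar^\infty)$, hence $\Pi_\hbar^w\circ\chi(H_\hbar^w)=\chi(H_\hbar^w)+\O(\hbar^\infty)$.

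\textbf{Main obstacle.} The functional-calculus bookkeeping is routine; the real work is the construction of $R_\hbar(z)$ in the operator-valued, possibly \emph{unbounded} setting. One must verify that every error produced by the iterative parametrix is a genuine $\O(\hbar^\infty)$-bounded operator on $L^2(\R^d,\mathscr B)$ — which relies throughout on the ranges of $\Pi_\hbar^w$ and of $R_\hbar(z)$ landing in $\dom(H_\hbar^w)=L^2(\R^d,\mathscr A)$, cf. the proof of Lemma~\ref{lem:Nhbarpseudo} — and that $R_\hbar(z)$ remains holomorphic and \emph{uniformly} bounded all the way to ${\rm Im}\,z=0$, which is precisely where the uniform gap supplied by~\eqref{eq:localspectralH0}, together with the ellipticity of the calculus (Definition~\ref{def:symelliptic}, Proposition~\ref{prop:symb_cal}), enters.
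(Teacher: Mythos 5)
A preliminary remark: the paper does not prove Proposition \ref{prop:commutespectral} — it is quoted from \cite{BFKRV} — so there is no in-paper argument to compare with, and I judge your proposal on its own terms. Part (i) is correct and is the standard route: Helffer--Sjöstrand formula, the identity $[(z-H_\hbar^w)^{-1},\Pi_\hbar^w]=(z-H_\hbar^w)^{-1}[H_\hbar^w,\Pi_\hbar^w](z-H_\hbar^w)^{-1}$, the $\O(\hbar^\infty)$ commutator from Theorem \ref{thm:superadiabproj}, and absorption of the $|{\rm Im}\,z|^{-2}$ loss by the almost-analytic extension. One point to make explicit: the commutator remainder in Theorem \ref{thm:superadiabproj} is $\O(\hbar^\infty)$ as an operator $L^2(\R^d,\mathscr{A})\to L^2(\R^d,\mathscr{B})$, so in the triple product you also need $\|(z-H_\hbar^w)^{-1}\|_{L^2(\mathscr{B})\to L^2(\mathscr{A})}\leq C|{\rm Im}\,z|^{-1}$ for $z$ in the compact support of $\tilde\chi$; this follows from comparing the $\mathscr{A}$-norm with the graph norm (ellipticity of $H_\hbar\pm i$, as used for Lemma \ref{lem:Nhbarautoadjoint}), not from the mapping property of $\Pi_\hbar^w$ you invoke. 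Minor repair.

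The genuine gap is in part (ii), exactly where you place the ``main obstacle'': the existence of $R_\hbar(z)$ with $(z-H_\hbar^w)R_\hbar(z)=({\rm Id}-\Pi_\hbar^w)+\O(\hbar^\infty)$, holomorphic and uniformly bounded up to the real axis, is asserted, not proved, and the phrase ``usual iterative parametrix scheme'' conceals the difficulty. The recursion is not the standard elliptic one: at order $\hbar^j$, $j\geq 1$, one must solve $(z-H_0)R_j=F_j$ for $z$ real and lying inside (or arbitrarily close to) $\sigma_0(x,\xi)$, where $z-H_0$ is \emph{not} invertible on ${\rm Ran}\,\Pi_0$. Solvability with uniform bounds requires the ${\rm Ran}\,\Pi_0$-component of $F_j$ to vanish to the right order in $z$, and these cancellations — which do hold, but only because of precise identities tying $\Pi_1,\Pi_2,\dots$ to the Moyal corrections of $H_\hbar$ — are precisely the content one must verify; already at order one, checking that $\Pi_0\big(-\Pi_1+H_1R_0+\tfrac1{2i}\{H_0,R_0\}\big)$ vanishes on $\sigma_0$ is a nontrivial computation using both relations in \eqref{eq:symbolicinvariantsubspace}. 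Saying the compatibility is ``supplied by'' those relations is a statement of faith, and you yourself defer it as ``the real work''. A cleaner route avoids the parametrix altogether: set $\widehat H_\hbar^w:=H_\hbar^w+\Lambda\Pi_\hbar^w$ with $\Lambda>0$ large. Since $\Pi_0$ commutes with $H_0$ and $\sigma_0$ is bounded, the symbol of $\widehat H_\hbar-(E+3\eps/4)$ is nonnegative up to $\O(\hbar)$ by \eqref{eq:localspectralH0}, so the Gårding inequality (Theorem \ref{thm:garding}) gives $\sigma(\widehat H_\hbar^w)\subset[E+\eps/2,+\infty)$ for small $\hbar$, hence $\chi(\widehat H_\hbar^w)=0$. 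The second resolvent identity yields $\big[(z-H_\hbar^w)^{-1}-(z-\widehat H_\hbar^w)^{-1}\big]({\rm Id}-\Pi_\hbar^w)=\Lambda(z-H_\hbar^w)^{-1}\,\Pi_\hbar^w(z-\widehat H_\hbar^w)^{-1}({\rm Id}-\Pi_\hbar^w)$, and since $[\Pi_\hbar^w,\widehat H_\hbar^w]=[\Pi_\hbar^w,H_\hbar^w]=\O(\hbar^\infty)$, $\Pi_\hbar^w({\rm Id}-\Pi_\hbar^w)=\O(\hbar^\infty)$, and $(z-\widehat H_\hbar^w)^{-1}$ is uniformly bounded on a complex neighbourhood of ${\rm supp}\,\tilde\chi$, the right-hand side is $\O(\hbar^\infty)|{\rm Im}\,z|^{-1}$; inserting this into the Helffer--Sjöstrand formula gives $\chi(H_\hbar^w)({\rm Id}-\Pi_\hbar^w)=\chi(\widehat H_\hbar^w)({\rm Id}-\Pi_\hbar^w)+\O(\hbar^\infty)=\O(\hbar^\infty)$, which, combined with your part (i), is exactly what (ii) requires — using only ingredients already stated in Theorem \ref{thm:superadiabproj} and the appendix.
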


It follows that, under the assumption of the second point of the previous proposition, the spectral analysis of the self-adjoint operator $H_\hbar^w$ in the energy window $(-\infty,E]$ can be reduced to that of the self-adjoint operator $\Pi_\hbar^w\circ H_\hbar^w\circ \Pi_\hbar^w$. 

\subsection{Effective one-dimensional Hamiltonians}
We apply the reduction scheme exposed in the preceding section to the case of the operator $\mathcal{N}_\hbar$. 

\subsubsection{Superadiabatic projectors and partial isometries}

We recall from Section \ref{sect:intro} that the principal symbol $n_0$ is indeed, for all $(x,\xi)\in\R^2$, a self-adjoint operator on $L^2(\R)$
with domain $B^{2,4}(\R)$ and its spectrum is given by 
\begin{equation*}
    \sigma(x,\xi) = \{\mathring{\mu}_k(x,\xi):=\mathring{\delta}^{2/3}(x)\mu_{k}(\mathring{\Xi}(\xi)\mathring{\delta}^{-1/3}(x))\,:\, k\in \N\}.
\end{equation*}
It follows from the properties of the dispersive curves $(\mu_k(\cdot))_{k\in\N}$ as well as from the definitions of the functions $\mathring{\delta}$ and $\mathring{\Xi}$
that each eigenvalue $\mathring{\mu}_k$, $k\in\N$, is uniformly isolated from the rest of the spectrum in the sense of Equation \eqref{eq:spectrumisolated}. 
By Theorem \ref{thm:superadiabproj}, we deduce there exists a corresponding superadiabatic projectors that we denote by $\Pi_\hbar^k$ satisfying
\begin{equation*}
    \Pi_\hbar^k = \Pi_0^k + \O(\hbar).
\end{equation*}
Since the operators $\Pi_\hbar^k$, $k\in\N$, correspond
to two by two disjoint parts of the spectrum $\sigma(x,\xi)$, there are two by two almost orthogonals up to $\O(\hbar^\infty)$ (\cite[Proposition 1.3]{BFKRV}).

\begin{remark}
    The precise assumption of Theorem \ref{thm:superadiabproj} requires the symbol $n_\hbar$ to be admissible, which he is not since, by Lemma \ref{lem:Nhbarpseudo}, the symbols appearing in its asymptotic expansion are in the classes $S^{-j\eta}(\R^2,\mathscr{L}(B^{2,4}(\R),L^2(\R)))$, $j\in\N$. However, since this is due to the presence of the cutoffs function $\theta$, the recursive construction of the symbol $\Pi_\hbar$ proposed in \cite{BFKRV} is still valid. The symbol $\Pi_\hbar$ then admits an asymptotic expansion of the same kind as $n_\hbar$.
\end{remark}

In order to reduce our analysis to one-dimensional scalar semiclassical pseudodifferential operators, we follow the strategy of \cite{BFKRV} and introduce partial isometries associated with each superadiabatic projector.
More precisely, we recall that by Equation \eqref{eq:smootheigenfunction}, we can single out a smoothly
parametrized normalized basis $\mathring{u}_k$ for each $k\in\N$ given by 
\begin{equation*}
    \forall (x,\xi)\in\R^2,\quad \mathring{u}_k(x,\xi) = \mathcal{T}^*(x)\tilde{u}_{k}(\mathring{\Xi}(\xi)\mathring{\delta}(x)^{-1/3}).
\end{equation*}
We then introduce the symbol $\ell_0 \in S^0(\R^2,\mathscr{L}(B^{2,4}(\R),\C))\cap S^0(\R^2,\mathscr{L}(L^2(\R),\C))$ defined by
\begin{equation*}
    \forall (x,\xi)\in\R^2,\quad \ell_0^k(x,\xi) = \langle \mathring{u}_k(x,\xi),\cdot\rangle_{L^2(\R)}.
\end{equation*}
This allows us to apply \cite[Theorem 1.5]{BFKRV} to our setting.

\begin{proposition}
    \label{prop:elloperators}
    For each $k\in\N$, there exists an admissible symbol $\ell_\hbar^k \in S^0(\R^2,\mathscr{L}(B^{2,4}(\R),\C))\cap S^0(\R^2,\mathscr{L}(L^2(\R),\C))$ satisfying $\ell_\hbar^k = \ell_0^k + \O(\hbar)$ such that 
    \begin{equation*}
        (\ell_\hbar^k)^w \circ(\ell_\hbar^k)^{w,*} = {\rm Id}_{L^2(\R,\C)} + \O(\hbar^\infty)\quad\mbox{and}\quad (\ell_\hbar^k)^{w,*}\circ (\ell_\hbar^k)^w = (\Pi_\hbar^k)^w + \O(\hbar^\infty).
    \end{equation*}
\end{proposition}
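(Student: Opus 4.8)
The statement is exactly the operator-valued partial-isometry construction of \cite[Theorem~1.5]{BFKRV} transposed to $\mathcal{N}_\hbar$, so the plan is to record the order-zero picture, reduce the claim to two relations at the level of Moyal products, and then run the inductive construction of that reference, paying attention to the graded symbol classes $S^{-j\eta}$. First I would note that, by Equation \eqref{eq:smootheigenfunction} together with the explicit rescaling \eqref{eq:n0rescaling} to the Montgomery family and the smooth choice \eqref{eq:basisMon}, the section $(x,\xi)\mapsto\mathring{u}_k(x,\xi)\in B^{2,4}(\R)$ is globally defined, smooth, normalised in $L^2(\R)$, and exponentially decaying in $\ct$ with all derivatives uniformly controlled; hence $\ell_0^k=\langle\mathring{u}_k,\cdot\rangle$ genuinely defines a symbol in $S^0(\R^2,\mathscr{L}(B^{2,4}(\R),\C))\cap S^0(\R^2,\mathscr{L}(L^2(\R),\C))$, its pointwise adjoint $(\ell_0^k)^*$ is multiplication by $\mathring{u}_k$, and pointwise on $\R^2$ one has $\ell_0^k\,(\ell_0^k)^*=1$ and $(\ell_0^k)^*\,\ell_0^k=\Pi_0^k$. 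Since Weyl quantization sends the pointwise adjoint of a symbol to the adjoint of its quantization, the proposition reduces to producing an admissible symbol $\ell_\hbar^k\sim\sum_{j\ge0}\hbar^j\ell_j^k$ with $\ell_0^k$ as above and
\begin{equation*}
    \ell_\hbar^k\circledast(\ell_\hbar^k)^{*}\sim 1\qquad\text{and}\qquad (\ell_\hbar^k)^{*}\circledast\ell_\hbar^k\sim\Pi_\hbar^k .
\end{equation*}

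These two relations are then solved by induction on $j$, following the proof of \cite[Theorem~1.5]{BFKRV} verbatim. Assuming $\ell^{(N-1)}_\hbar=\sum_{j<N}\hbar^j\ell_j^k$ already realises both relations modulo $\O(\hbar^N)$, cancelling the order-$\hbar^N$ errors amounts to a pair of pointwise linear equations for the next term, written as $\ell_N^k=\langle w_N,\cdot\rangle$ with $w_N\in B^{2,4}(\R)$: the component of $w_N$ along $\mathring{u}_k$ is pinned down by the first (scalar) relation together with the self-adjointness normalisation, while the component orthogonal to $\mathring{u}_k$ is determined by the second relation, whose solvability rests on the projector identities satisfied to all orders by the superadiabatic projector $\Pi_\hbar^k$ — furnished by Theorem~\ref{thm:superadiabproj} applied to the eigenvalue $\mathring{\mu}_k$, which is uniformly isolated inside $\sigma(n_0)$ by the properties of the dispersive curves — and on the fact that $\mathring{u}_k$ spans ${\rm Ran}(\Pi_0^k)$. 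This determines $\ell_\hbar^k$ uniquely modulo $\O(\hbar^\infty)$, with $\ell_\hbar^k=\ell_0^k+\O(\hbar)$; a Borel-type resummation of the formal series yields an honest symbol with that asymptotic expansion, and re-quantization together with the symbolic calculus (Proposition~\ref{prop:symb_cal}) gives the two operator identities of the statement.

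The one point requiring care — and the main obstacle — is that, by Lemma~\ref{lem:Nhbarpseudo}, $n_\hbar$ is not admissible in the strict sense: the cutoffs $\theta(\hbar^\eta\ct)$ put $n_j$, hence $\Pi_j^k$, hence $\ell_j^k$, in $S^{-j\eta}$ rather than in $S^0$. As in the Remark following Theorem~\ref{thm:superadiabproj}, I would check that the recursion is unaffected: the Moyal product of a symbol in $S^{-i\eta}$ with one in $S^{-j\eta}$ lies in $S^{-(i+j)\eta}$ with a gain of $\hbar^{i+j}$ at each order (Proposition~\ref{prop:symb_cal}), the multiplication operators $\theta(\hbar^\eta\ct)\ct^m\colon B^{2,4}(\R)\to L^2(\R)$ remain bounded with norm $\O(\hbar^{-m\eta})$, and the polynomial weights act boundedly on $\mathring{u}_k$ thanks to its exponential decay. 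Tracking these exponents through the induction shows $\ell_\hbar^k$ admits an asymptotic expansion with $\ell_j^k\in S^{-j\eta}$ in both $\mathscr{L}(B^{2,4}(\R),\C)$ and $\mathscr{L}(L^2(\R),\C)$, so $\ell_\hbar^k$ is admissible in the graded sense compatible with $n_\hbar$ and $\Pi_\hbar^k$, and the remainders in the statement are genuine $\O(\hbar^\infty)$ in $\mathscr{L}(L^2(\R))$.
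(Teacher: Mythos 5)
Your proposal is correct and follows essentially the same route as the paper, which proves this proposition simply by checking that $\ell_0^k=\langle\mathring{u}_k,\cdot\rangle$ is a bona fide symbol and invoking \cite[Theorem 1.5]{BFKRV}, with the graded classes $S^{-j\eta}$ handled exactly as in the paper's remark on the superadiabatic projectors. Your additional unpacking of the inductive construction behind that cited theorem is consistent with the paper's (implicit) argument and introduces no gap.
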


\subsubsection{Dimensional reduction}

We now fix an energy level $E\in (-\infty,E_0)$ and, thanks to the introduction of the superadiabatic projectors $(\Pi_\hbar^k)_{k\in\N}$ and their corresponding partial isometries 
$(\ell_\hbar^k)_{k\in\N}$, we answer the question of finding one-dimensional scalar semiclassical pseudodifferential operators whose union of spectra coincide 
with the spectrum of $\mathcal{N}_\hbar$ in the interval $I = (-\infty,E)$. To this end, we introduce the integer $k_E \in\N$, defined as the largest integer $k$ for which 
the set $\{\mathring{\mu}_k \leq E\}$ is non-empty. We consider the symbol $\Pi_\hbar$ defined by 
\begin{equation*}
    \Pi_\hbar = \Pi_\hbar^1+\cdots+\Pi_\hbar^{k_E}.
\end{equation*}
We have 
\begin{equation}
    \label{eq:Pisuperadiabproj}
    \Pi_\hbar^w = \Pi_\hbar^w \circ \Pi_\hbar^w +\O(\hbar^\infty)\quad\mbox{and}\quad [\mathcal{N}_\hbar,\Pi_\hbar^w] =\O(\hbar^\infty).
\end{equation}
Moreover, Proposition \ref{prop:commutespectral} implies that there exists a family of real numbers $(\eps_\hbar)_{\hbar\in(0,\hbar_0)}$ satisfying $\eps_\hbar =\O(\hbar^\infty)$ such that, for any 
$\psi \in {\rm ran}(\mathbb{1}_{(-\infty,E)}(\mathcal{N}_\hbar))$, we have 
\begin{equation}
    \label{eq:Pihbarprojspectral}
    \Pi_\hbar^w \psi = \psi + \eps_\hbar \|\psi\|_{L^2}.
\end{equation}

We are now ready to state and prove the following theorem.

\begin{theorem}
    \label{thm:effectivespectrum}
    Let $E\in(-\infty,E_0)$. The spectrum of $\mathcal{N}_\hbar$ in $I = (-\infty,E)$ coincides modulo $\O(\hbar^\infty)$ with the one 
    of the bounded operator 
    \begin{equation*}
        \begin{bmatrix}
            \mathring{\mu}_{\hbar,1}^{w} & 0 & \cdots & 0 \\
            0 & \mathring{\mu}_{\hbar,2}^{w} &  & \vdots \\
            \vdots &  &  \ddots & 0 \\
            0 & \cdots & 0 & \mathring{\mu}_{\hbar,k_E}^{w}
         \end{bmatrix},
    \end{equation*}
    acting on $L^2(\R,\C^{k_E}$), where for $k\in\{1,\dots,k_E\}$, $\mathring{\mu}_{\hbar,k}^w$ is a bounded $\hbar$-semiclassical pseudodifferential operator 
    with symbol in $S^0(\R^2)$ and defined by 
    \begin{equation*}
        \mathring{\mu}_{\hbar,k}^w = (\ell_{\hbar}^k)^w \circ \mathcal{N}_\hbar \circ (\ell_{\hbar}^k)^{w,*}.
    \end{equation*}
\end{theorem}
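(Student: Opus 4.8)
The plan is to combine three ingredients. First, Proposition \ref{prop:commutespectral}(ii), applied with $\Pi_\hbar = \Pi_\hbar^1+\dots+\Pi_\hbar^{k_E}$, reduces the spectral analysis of $\mathcal{N}_\hbar$ in $I=(-\infty,E)$ to that of $\Pi_\hbar^w\circ\mathcal{N}_\hbar\circ\Pi_\hbar^w$. Second, using \eqref{eq:Pisuperadiabproj} and the two-by-two $\O(\hbar^\infty)$-orthogonality of the $(\Pi_\hbar^k)^w$, this operator is $\O(\hbar^\infty)$-close to the block-diagonal operator $\bigoplus_{k=1}^{k_E}(\Pi_\hbar^k)^w\circ\mathcal{N}_\hbar\circ(\Pi_\hbar^k)^w$, which the partial isometries $(\ell_\hbar^k)^w$ of Proposition \ref{prop:elloperators} turn, modulo $\O(\hbar^\infty)$, into $\bigoplus_{k=1}^{k_E}\mathring{\mu}_{\hbar,k}^w$ on $L^2(\R,\C^{k_E})$. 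Third, this approximate unitary identification is converted into coincidence of spectra in the sense of Definition \ref{def:spectracoincide} by a quasimode-plus-multiplicity argument identical in structure to the proof of Proposition \ref{prop:equivspectra}.

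I would start by checking that each $\mathring{\mu}_{\hbar,k}^w=(\ell_\hbar^k)^w\circ\mathcal{N}_\hbar\circ(\ell_\hbar^k)^{w,*}$ is a bounded scalar $\hbar$-pseudodifferential operator with symbol in $S^0(\R^2)$. By the operator-valued symbolic calculus (Proposition \ref{prop:symb_cal}), its symbol is the Moyal product $\ell_\hbar^k\circledast n_\hbar\circledast(\ell_\hbar^k)^*$, an asymptotic series whose terms are obtained from $\langle\mathring{u}_k,\cdot\rangle_{L^2(\R)}$, the symbol $n_\hbar$ of Lemma \ref{lem:Nhbarpseudo}, and $\mathring{u}_k$ by composition and differentiation; since $\mathring{u}_k(x,\xi)$ is Schwartz in $\ct$ with all $(x,\xi)$-derivatives bounded and $n_0=D_{\ct}^2+p_0^2$, each such term is a scalar function in $S^0(\R^2)$. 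The principal symbol equals $\langle\mathring{u}_k,n_0\mathring{u}_k\rangle_{L^2(\R)}=\mathring{\mu}_k$, and the subprincipal symbol is produced by the same computation, yielding the formula for $\mathring{\mu}_{1,k}$ announced in Theorem \ref{thm:main}.

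Next I would record the operator identities making $(\ell_\hbar^k)^w$ an approximate intertwiner: from Proposition \ref{prop:elloperators}, \eqref{eq:Pisuperadiabproj}, and the two-by-two $\O(\hbar^\infty)$-orthogonality,
\begin{equation*}
  \mathcal{N}_\hbar\,(\ell_\hbar^k)^{w,*}=(\ell_\hbar^k)^{w,*}\,\mathring{\mu}_{\hbar,k}^w+\O(\hbar^\infty),\qquad (\ell_\hbar^j)^w\,(\ell_\hbar^k)^{w,*}=\delta_{jk}\,\mathrm{Id}+\O(\hbar^\infty).
\end{equation*}
From these, the two quasimode constructions are immediate. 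Given an eigenpair $(\lambda,v)$, $v=(v_1,\dots,v_{k_E})$, of the block operator with $\lambda\in I$, the vector $\psi=\sum_k(\ell_\hbar^k)^{w,*}v_k$ satisfies $\mathcal{N}_\hbar\psi=\lambda\psi+\O(\hbar^\infty)\|v\|$ and $\|\psi\|=\|v\|(1+\O(\hbar^\infty))$. Conversely, given an eigenpair $(\lambda,\psi)$ of $\mathcal{N}_\hbar$ with $\lambda\in I$, Proposition \ref{prop:commutespectral}(ii) (equivalently \eqref{eq:Pihbarprojspectral}) gives $\psi=\Pi_\hbar^w\psi+\O(\hbar^\infty)\|\psi\|$, so $v_k:=(\ell_\hbar^k)^w\psi$ yields $\mathring{\mu}_{\hbar,k}^wv_k=\lambda v_k+\O(\hbar^\infty)\|\psi\|$ and $\|v\|=\|\psi\|(1+\O(\hbar^\infty))$. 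The spectral theorem then gives mutual $\O(\hbar^\infty)$-closeness of the two spectra as subsets of $I$.

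Finally, for multiplicities I would argue exactly as in Proposition \ref{prop:equivspectra}: for an $h$-dependent subinterval $L_h\subset I$, the direct sum of the eigenspaces of one operator attached to eigenvalues in $L_h$ maps, via the construction above, to an almost-orthonormal family of $\O(\hbar^\infty)$-quasimodes for the other; this map being almost isometric it is injective, and combined with the rough upper bound on the number of low-lying eigenvalues of Corollary \ref{cor:roughweyllaw2} this yields the rank inequalities of Definition \ref{def:spectracoincide} for a slightly enlarged interval $K_h$ with $\mathrm{d}_H(K_h,L_h)=\O(\hbar^\infty)$. I expect the main obstacle to be the careful control of the $\O(\hbar^\infty)$ remainders through the calculus: as noted in the remark on the admissibility of $n_\hbar$ in Section \ref{sect:dimreduction}, its expansion lives in the classes $S^{-j\eta}$ because of the cutoff $\theta(\hbar^\eta\ct)$, so one must verify that $\Pi_\hbar^k$, $\ell_\hbar^k$ and all composed operators retain this structure and act boundedly on $L^2(\R,B^{2,4}(\R))$ and $L^2(\R^2)$ with the stated remainders — and that the multiplicity step genuinely relies on the forward-referenced Corollary \ref{cor:roughweyllaw2}, whose proof must therefore be supplied first.
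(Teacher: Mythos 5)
Your overall strategy is the paper's own: reduce to ${\rm Ran}(\Pi_\hbar^w)$ via Proposition \ref{prop:commutespectral} and Equation \eqref{eq:Pihbarprojspectral}, identify the compressed operator with the block matrix $\bigoplus_k\mathring{\mu}_{\hbar,k}^w$ through the (almost) partial isometries of Proposition \ref{prop:elloperators} together with the pairwise $\O(\hbar^\infty)$-orthogonality of the $(\Pi_\hbar^k)^w$, build quasimodes in both directions ($\phi=\ell_\hbar^w\psi$ and $\psi=\ell_\hbar^{w,*}\phi$), and then settle multiplicities by an almost-isometric/injectivity argument in the sense of Definition \ref{def:spectracoincide}. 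The symbol discussion (principal symbol $\mathring{\mu}_k$, subprincipal symbol via the Moyal expansion, exponential decay of $\mathring{u}_k$ curing the $S^{-j\eta}$ classes coming from the cutoff $\theta(\hbar^\eta\ct)$) matches what the paper defers to Proposition \ref{prop:effectiveham}.

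The one genuine defect is your multiplicity step: you assert that it ``genuinely relies'' on Corollary \ref{cor:roughweyllaw2}, but in the paper that corollary is deduced \emph{from} Theorem \ref{thm:effectivespectrum}, and it is precisely the Weyl bound ``used in the proof of Proposition \ref{prop:equivspectra}''; invoking it here is circular unless you supply an independent proof, which you do not. The reliance is also unnecessary, and this is how the paper escapes: the remainders in your own intertwining identities are operator-norm estimates, so the quasimode error $\tilde{\eps}_\hbar$ in $\|(\oplus_j(\mathcal{N}_{\hbar,\rm adiab}-\lambda_j))\phi\|\leq\tilde{\eps}_\hbar\|\phi\|$ is uniform in $\phi$ (it depends only on the energy level $E$, not on the eigenfunction), hence the estimate passes to the full direct sum $\bigoplus_j\tilde{V}_j$ with no bound on $p$ or on $\dim V_j$, and the contradiction with the spectral theorem goes through directly. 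A dimension count of the type $\O(h^{-2})$ is needed only in Proposition \ref{prop:equivspectra}, where the per-eigenfunction Agmon-type errors must be summed; here you should simply drop the appeal to Corollary \ref{cor:roughweyllaw2} and note the uniformity of $\tilde{\eps}_\hbar$ instead.
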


As an immediate corollary, we get the following (rough) Weyl estimate on the number of discrete eigenvalues, used 
in the proof of Proposition \ref{prop:equivspectra}.

\begin{corollary}
    \label{cor:roughweyllaw2}
    For $E\in(-\infty,E_0)$, we denote by $N(\mathcal{N}_\hbar,E)$ the number of discrete eigenvalues lying in $(-\infty,E)$. Then, 
    for some constant $C>0$, it satisfies 
    \begin{equation*}
        N(\mathcal{N}_\hbar,E) \leq C \hbar^{-1}.
    \end{equation*}
\end{corollary}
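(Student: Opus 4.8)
The plan is to deduce the estimate directly from Theorem~\ref{thm:effectivespectrum} together with the classical one-dimensional semiclassical Weyl bound for the counting function, the bulk of the work being a confinement statement for the sublevel sets of the effective symbols.

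First I would fix a level $E' \in (E, E_0)$ and invoke Theorem~\ref{thm:effectivespectrum}: the spectrum of $\mathcal{N}_\hbar$ in $(-\infty, E)$ coincides, modulo $\O(\hbar^\infty)$, with the union (counted with multiplicities) of the spectra of the finitely many bounded one-dimensional $\hbar$-pseudodifferential operators $\mathring{\mu}_{\hbar,k}^w$, $k \in \{1, \dots, k_E\}$, whose symbols lie in $S^0(\R^2)$ with principal part $\mathring{\mu}_k$. Unwinding Definition~\ref{def:spectracoincide} (applied to $L_h = (-\infty,E)$, using that $\mathcal{N}_\hbar$ is bounded below and has discrete spectrum there), this yields, for $\hbar$ small enough,
\[
N(\mathcal{N}_\hbar, E) \;\leq\; \sum_{k=1}^{k_E} N\big(\mathring{\mu}_{\hbar,k}^w, E'\big),
\]
the passage from $E$ to $E'$ absorbing the $\O(\hbar^\infty)$ discrepancy between the two spectra. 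Since $k_E$ is independent of $\hbar$, it then suffices to show $N(\mathring{\mu}_{\hbar,k}^w, E') = \O(\hbar^{-1})$ for each fixed $k$.

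The main point, which I expect to be the only substantive step, is the confinement of the sublevel set $\Sigma_k := \{(x,\xi)\in\R^2 : \mathring{\mu}_k(x,\xi) \leq E'\}$. Writing $\mathring{\mu}_k(x,\xi) = \mathring{\delta}(x)^{2/3}\mu_k\big(\mathring{\Xi}(\xi)\mathring{\delta}(x)^{-1/3}\big)$ as in Section~\ref{subsect:reducbounded}, one uses $\mu_k \geq \mu_1 \geq \tilde{\mu}_{1,c}$, the divergence $\mu_k(\nu) \to +\infty$ as $\nu \to \pm\infty$, the boundedness of $\mathring{\delta}$ and of $\mathring{\delta}^{-1/3}$, and the facts that $E' < E_0 = \delta_*^{2/3}\tilde{\mu}_{1,c}$ and $\liminf_{|x|\to\infty}\delta(x) > \delta_*$: exactly as in the proof of Theorem~\ref{thm:spectrediscret}, the growth of $\mathring{\delta}$ confines the $x$-variable on $\Sigma_k$, while the range of the bounded truncation $\mathring{\Xi}$ (large enough by construction) confines the frequency, so that $\Sigma_k$ is bounded and in particular has finite Lebesgue measure. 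One then picks $\chi \in C_c^\infty(\R,[0,1])$ with $\chi \equiv 1$ on $\big[\inf_{\R^2}\mathring{\mu}_k - 1,\, E'\big]$, so that $\mathbb{1}_{(-\infty,E')}(\mathring{\mu}_{\hbar,k}^w) \leq \chi(\mathring{\mu}_{\hbar,k}^w)$ and hence $N(\mathring{\mu}_{\hbar,k}^w, E') \leq {\rm tr}\,\chi(\mathring{\mu}_{\hbar,k}^w)$. By the standard trace expansion for compactly supported functions of a self-adjoint $\hbar$-pseudodifferential operator in one dimension (see \cite{DimSj}), ${\rm tr}\,\chi(\mathring{\mu}_{\hbar,k}^w) = (2\pi\hbar)^{-1}\int_{\R^2}\chi(\mathring{\mu}_k(x,\xi))\,dx\,d\xi + \O(1)$, and the integral is finite since $\supp(\chi\circ\mathring{\mu}_k) \subset \Sigma_k$ has finite measure; this gives $N(\mathring{\mu}_{\hbar,k}^w, E') = \O(\hbar^{-1})$.

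Summing over $k\in\{1,\dots,k_E\}$ then produces the bound $N(\mathcal{N}_\hbar, E) \leq C\hbar^{-1}$. As indicated, the only genuinely nontrivial ingredient is the confinement of the sets $\Sigma_k$, which is the one-dimensional residue of the confinement mechanism already exploited in Section~\ref{spectral}; it is also what makes the estimate sharper than the crude bound $\O(\hbar^{-6})$ of Corollary~\ref{cor:roughweyllaw1}, the gain reflecting precisely the reduction from a two-dimensional to a one-dimensional problem. If one prefers not to quote the trace expansion, an alternative is to bound $N(\mathring{\mu}_{\hbar,k}^w, E')$ by the min-max principle, covering the bounded set $\Sigma_k$ by $\O(\hbar^{-1})$ phase-space cubes of side $\sqrt{\hbar}$ and invoking a G\^arding inequality on the complement.
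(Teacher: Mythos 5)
Your argument is exactly the one the paper intends: the text offers no written proof of Corollary \ref{cor:roughweyllaw2}, declaring it immediate from Theorem \ref{thm:effectivespectrum}, and the intended reasoning is precisely your reduction to the $k_E$ one-dimensional effective operators followed by the standard bound $N(\mathring{\mu}_{\hbar,k}^w,E')\leq {\rm tr}\,\chi(\mathring{\mu}_{\hbar,k}^w)=\O(\hbar^{-1})$ coming from the finite phase-space volume of the sublevel sets. The only point to keep an eye on is your confinement of $\{\mathring{\mu}_k\leq E'\}$ for $E'>E$: the truncations $\mathring{\delta}$, $\mathring{\Xi}$ as defined only guarantee that the symbol exceeds $E$ (not $E'$) outside a compact region, but since $E<E_0$ is arbitrary one simply performs the construction (and applies Theorem \ref{thm:effectivespectrum}) at an intermediate level in $(E',E_0)$, which restores the margin and makes your argument go through verbatim.
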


The following proof is inspired by \cite[Corollary 1.6]{BFKRV}.

\begin{proof}[Proof of Theorem \ref{thm:effectivespectrum}]
    We introduce the symbol $\ell_\hbar$ defined by 
    \begin{equation*}
        \ell_\hbar = (\ell_\hbar^1,\dots,\ell_\hbar^{k_E}).
    \end{equation*}
    We easily check that this symbol is in the symbol class 
    \begin{equation*}
        S^0(\R^2,\mathscr{L}(B^{2,4}(\R),\C^{k_E}))\cap S^0(\R^2,\mathscr{L}(L^2(\R),\C^{k_E})).
    \end{equation*}
    By Proposition \ref{prop:elloperators} and by the property of two-by-two almost orthogonality of the superadiabatic projectors, we have 
    \begin{equation*}
        \ell_\hbar^w \circ\ell_\hbar^{w,*} = {\rm Id}_{L^2(\R,\C^{k_E})} + \O(\hbar^\infty)\quad\mbox{and}\quad \ell_\hbar^{w,*}\circ \ell_\hbar^w = \Pi_\hbar^w + \O(\hbar^\infty).
    \end{equation*}
    Also, we denote by $\mathcal{N}_{\hbar, \rm adiab}$ the operator defined by the matrix introduced in the statement of Theorem \ref{thm:effectivespectrum}. 
    Then, we observe that we can write
    \begin{equation*}
        \mathcal{N}_{\hbar, \rm adiab} = \ell_\hbar^w \circ \mathcal{N}_\hbar \circ \ell_\hbar^{w,*} + \O(\hbar^\infty),
    \end{equation*}
    where the remainder is understood as a bounded operator on $L^2(\R)^{k_E}$. Indeed, by the composition rule of operator-valued semiclassical pseudodifferential oeprators,
    the principal symbol of the right-hand side is given by the matrix 
    \begin{equation*}
        {\rm diag}(\mathring{\mu}_{1},\dots,\mathring{\mu}_{k_E}),
    \end{equation*}
    while the diagonal structure for the higher order terms follows from the two-by-two orthogonality (up to $\O(\hbar^\infty)$) of the operators $\Pi_\hbar^k$, $k\in\{1,\dots,k_E\}$.

    By simple considerations on the scalar semiclassical pseudodifferential 
    operators $(\mathring{\mu}_{\hbar,k})$, the spectrum of $\mathcal{N}_{\hbar, \rm adiab}$ is purely discrete in $(-\infty,E)$. Thus it makes sense to compare its spectrum 
    with the one of $\mathcal{N}_\hbar$.
    First observe that for $\lambda \in \sigma(\mathcal{N}_\hbar)\cap (-\infty,E)$, writing $\psi$ an associated eigenvector, by considering the vector 
    \begin{equation*}
        \phi = \ell_\hbar^w \psi \in L^2(\R,\C),
    \end{equation*}
    we have 
    \begin{equation*}
        \mathcal{N}_{\hbar,\rm adiab}\phi = \ell_\hbar^w \circ\mathcal{N}_\hbar \circ\Pi_\hbar^w \psi + \O(\hbar^\infty)\|\psi\|_{L^2(\R^2)},
    \end{equation*}
    since $\ell_\hbar^{w,*}\circ \ell_\hbar^w = \Pi_\hbar^w + \O(\hbar^\infty)$. By Equation \eqref{eq:Pihbarprojspectral}, since $\psi$ is in ${\rm ran}(\mathbb{1}_{(-\infty,E)}(\mathcal{N}_\hbar))$, we have $\Pi_\hbar^w\psi = \psi + \eps_\hbar\|\psi\|_{L^2(\R^2)}$. Thus, we find 
    \begin{equation}
        \label{eq:quasimodeadiab}
        \mathcal{N}_{\hbar,\rm adiab}\phi = \ell_\hbar^w (\lambda \psi) + (\eps_\hbar+\O(\hbar^\infty))\|\psi\|_{L^2(\R^2)} = \lambda \phi +\tilde{\eps}_\hbar\|\phi\|_{L^2(\R,\C^{k_E})},
    \end{equation}
    where the last equality follows from the relation $\ell_\hbar^w\circ \ell_\hbar^{w,*} = {\rm Id}_{L^2(\R,\C^{k_E})} + \O(\hbar^\infty)$ and where $\tilde{\eps}_\hbar = \O(\hbar^\infty)$ is an $\hbar$-scalar family independent of $\psi$.
    By the spectral theorem, we obtain that ${\rm dist}(\lambda,\sigma(\mathcal{N}_{\hbar,\rm adiab})) = \O(\hbar^\infty)$.
    
    Reciprocally, starting from an eigenpair $(\lambda,\phi)$ of $\mathcal{N}_{\hbar,\rm adiab}$ with $\lambda \in (-\infty,E)$, by considering this time the vector 
    \begin{equation*}
        \psi = \ell_\hbar^{w,*}\phi,
    \end{equation*}
    similar considerations lead to 
    \begin{equation*}
        {\rm dist}(\lambda,\mathcal{N}_\hbar) = \O(\hbar^\infty).
    \end{equation*}
    
    Thus, to show that the spectra coincide, we are left once again to address the problem of multiplicity.
    Following Definition \ref{def:spectracoincide}, we consider a $\hbar$-dependent interval $J_\hbar \subset (-\infty,E)$ and we write 
    $J_\hbar\cap\sigma(\mathcal{N}_\hbar) = \{\lambda_1, \dots ,\lambda_p\}$ (where the $\lambda_j$ are distinct). 
    We also consider the associated eigenspaces $(V_j)_{1\leq j\leq p}$.
    
    We introduce, for $j\in\{1,\dots,p\}$ the subspaces $\tilde{V}_j$ defined by
    \begin{equation*}
        \tilde{V}_j = \ell_\hbar^w(V_j).
    \end{equation*}
    By the previous considerations, the spaces $\tilde{V}_j$ are composed of quasimodes for the operator $\mathcal{N}_{\hbar,\rm adiab}$ for the corresponding eigenvalue $\lambda_j$.
    Moreover, we observe that since we have the relation $\ell_\hbar^{w,*} \circ \ell_\hbar^w = \Pi_\hbar^w + \O(\hbar^\infty)$, by Equation \eqref{eq:Pihbarprojspectral} we can write 
    \begin{equation*}
        \dim \tilde{V}_j = \dim V_j,
    \end{equation*}
    as soon as $\hbar$ is small enough, i.e. if $\hbar \leq \hbar_0$ for some $\hbar_0>0$. Now, observe that since the family $(\tilde{\eps}_\hbar)_{\hbar\in(0,\hbar_0)}$ only depends on the energy level $E$, Equation \eqref{eq:quasimodeadiab} readily implies
    \begin{equation}
        \label{eq:quasimodefull2}
        \|(\oplus_{j=1}^p \mathcal{N}_{\hbar,\rm adiab} - \lambda)\phi\| \leq \tilde{\eps}_\hbar \|\phi\|,
    \end{equation}
    for all $\phi = (\phi_1,\dots,\phi_p)\in \bigoplus_{j=1}^p \tilde{V}_j$, and where $\lambda = (\lambda_1,\dots,\lambda_p)$ and, we recall, 
    $\tilde{\eps}_\hbar = \O(\hbar^\infty)$.
    Setting $J_\hbar = [a_\hbar,b_\hbar]$, we consider the family of intervals $\tilde{J}_\hbar = [a_h-\tilde{\eps}_\hbar,b_h+\tilde{\eps}_\hbar]$. We wish to prove that we have 
    \begin{equation}
        \label{eq:spectruminclusion2}
        {\rm rank} \mathbb{1}_{J_\hbar}(\mathcal{N}_\hbar) \leq {\rm rank} \mathbb{1}_{\tilde{J}_\hbar}(\mathcal{N}_{\hbar,\rm adiab}).
    \end{equation}
    If we had ${\rm rank} \mathbb{1}_{\tilde{J}_\hbar}(\mathcal{N}_{\hbar,\rm adiab}) < {\rm rank} \mathbb{1}_{J_\hbar}(\mathcal{N}_\hbar)$, then the projection 
    $P : \bigoplus_{j=1}^p \tilde{V}_j \rightarrow {\rm ran} \mathbb{1}_{\tilde{J}_\hbar}(\mathcal{N}_{\hbar,\rm adiab})$ could not be injective: considering a non-zero $\psi$ in its kernel, the spectral theorem would give $\|(\oplus_{j=1}^p \mathcal{N}_{\hbar,\rm adiab} - \lambda)\psi\| > \tilde{\eps}_\hbar \|\psi\|$,
    which is in contradiction with Equation \eqref{eq:quasimodefull2}, as $\psi\neq 0$. Therefore, inequality \eqref{eq:spectruminclusion2} holds.

    The reciprocal inclusion follows from similar considerations, considering the spaces of quasimodes given by the (almost) partial isometry $\ell_\hbar^{w,*}$.
\end{proof}

\subsection{Study of the effective Hamiltonians}
\label{subsect:onedimensionpseudos}

We are left with the analysis of the effective Hamiltonians $\mathring{\mu}_{\hbar,k}$, $k\in\{1,\dots,k_E\}$, and with a study of their spectrum.

\begin{proposition}
    \label{prop:effectiveham}
    With the same notation as in Theorem \ref{thm:effectivespectrum}, for $k\in\{1,\dots,k_E\}$, the symbol $\mathring{\mu}_{\hbar,k}$ is admissible, i.e. it enjoys a semiclassical asymptotic expansion in $S^0(\R^2)$ as follows
    \begin{equation*}
        \mathring{\mu}_{\hbar,k} \sim \mathring{\mu}_{k} + \sum_{j\geq 1}\hbar^j \mathring{\mu}_{j,k},\quad \mbox{with}\quad \forall j\geq 1,\ \mathring{\mu}_{j,k}\in S^0(\R^2).
    \end{equation*}
    Moreover, the subprincipal symbol $\mathring{\mu}_{1,k}$ is given in the compact region $K_E$ by 
    \begin{equation}
        \label{eq:expressionsubprinc}
        \mathring{\mu}_{1,k} = 2k\langle \mathring{u}_k, p_0^2 \ct \,\mathring{u}_k\rangle- 2 \mathring{\kappa}\langle \mathring{u}_k, p_0 \ct^3 \mathring{u}_k\rangle +  {\rm Im} \langle \mathring{u}_k, \{n_0,\mathring{u}_k\}\rangle + \mu_k {\rm Im} \langle \partial_x \mathring{u}_k,\partial_\xi \mathring{u}_k\rangle.
    \end{equation}
\end{proposition}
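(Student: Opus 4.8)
The plan is to unwind the definition $\mathring{\mu}_{\hbar,k}^w = (\ell_\hbar^k)^w \circ \mathcal{N}_\hbar \circ (\ell_\hbar^k)^{w,*}$ using the operator-valued symbolic calculus recalled in Appendix~\ref{sect:pseudoropvalued}, and to read off the first two terms of the resulting scalar symbol. First I would observe that $\mathring{\mu}_{\hbar,k}$ is admissible: by Lemma~\ref{lem:Nhbarpseudo} the symbol $n_\hbar$ has an expansion $n_\hbar \sim \sum_j \hbar^j n_j$ with $n_j \in S^{-j\eta}(\R^2,\mathscr{L}(B^{2,4}(\R),L^2(\R)))$, and by Proposition~\ref{prop:elloperators} the symbols $\ell_\hbar^k$ and $\ell_\hbar^{k,*}$ admit expansions $\ell_\hbar^k = \ell_0^k + \O(\hbar)$ with values in $\mathscr{L}(B^{2,4}(\R),\C)$ and $\mathscr{L}(L^2(\R),\C)$ respectively. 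Applying the composition rule (Proposition~\ref{prop:symb_cal}) twice, the Moyal product $\ell_\hbar^k \circledast n_\hbar \circledast \ell_\hbar^{k,*}$ is a scalar symbol whose homogeneous components lie in $S^0(\R^2)$ (the negative orders coming from the $\theta$-cutoffs are harmless here because, once paired against the exponentially decaying $\mathring u_k$, the relevant matrix elements are bounded; this is exactly the mechanism used in the Remark following Proposition~\ref{prop:elloperators}). The principal symbol is $\ell_0^k \, n_0 \, \ell_0^{k,*} = \langle \mathring{u}_k, n_0 \mathring{u}_k\rangle = \mathring{\mu}_k$ since $\mathring u_k$ is a normalized eigenvector of $n_0$ for the eigenvalue $\mathring\mu_k$, using \eqref{eq:smootheigenvalues} and \eqref{eq:smootheigenfunction}.

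For the subprincipal symbol I would expand the Moyal product to first order in $\hbar$. Writing $\ell_\hbar^k = \ell_0 + \hbar \ell_1 + \cdots$ (dropping the superscript $k$), the order-$\hbar$ term of $\ell_\hbar^k \circledast n_\hbar \circledast \ell_\hbar^{k,*}$ collects four kinds of contributions: (a) $\ell_1 n_0 \ell_0^* + \ell_0 n_0 \ell_1^*$; (b) $\ell_0 n_1 \ell_0^* = \langle \mathring u_k, n_1 \mathring u_k\rangle$, which by Lemma~\ref{lem:Nhbarpseudo} produces exactly the first two terms $2k\langle \mathring u_k, p_0^2 \ct\, \mathring u_k\rangle - 2\mathring\kappa \langle \mathring u_k, p_0 \ct^3 \mathring u_k\rangle$ once one substitutes $n_1 = -2\mathring\kappa\,\theta\ct^3 p_0 + 2k\,\theta\ct\, p_0^2$ and notes $\theta(\hbar^\eta\ct)\to 1$ on the support of the exponentially localized $\mathring u_k$ modulo $\O(\hbar^\infty)$; (c) the half-Poisson-bracket terms $\tfrac{1}{2i}\{\ell_0, n_0\}\ell_0^* + \tfrac{1}{2i}\ell_0\{n_0, \ell_0^*\}$ and $\tfrac{1}{2i}(\{\ell_0, n_0 \ell_0^*\} - \ell_0\{n_0,\ell_0^*\} - \{\ell_0,n_0\}\ell_0^*)$-type corrections from the triple product. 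I would then impose the normalization and eigenvalue relations: differentiating $\langle \mathring u_k, \mathring u_k\rangle = 1$ gives $\mathrm{Re}\langle \partial \mathring u_k, \mathring u_k\rangle = 0$, and differentiating $n_0 \mathring u_k = \mathring\mu_k \mathring u_k$ lets one replace $n_0 \partial \mathring u_k$ by $\mathring\mu_k \partial \mathring u_k + (\partial \mathring\mu_k)\mathring u_k - (\partial n_0)\mathring u_k$. Repeatedly using these, together with the fact that $\ell_1$ is determined (up to $\O(\hbar^\infty)$) by $(\ell_\hbar^k)^w(\ell_\hbar^k)^{w,*} = \mathrm{Id} + \O(\hbar^\infty)$, which forces $2\,\mathrm{Re}(\ell_0\ell_1^*) = -\,\mathrm{Im}\,\ell_0\{ \cdot\}$-type real-part constraints, the $\ell_1$-contributions either cancel or combine into the remaining two terms $\mathrm{Im}\langle \mathring u_k, \{n_0, \mathring u_k\}\rangle + \mu_k\,\mathrm{Im}\langle \partial_x \mathring u_k, \partial_\xi \mathring u_k\rangle$. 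Since $\mathring\mu_{\hbar,k}$ is self-adjoint, only the real part of the computed symbol survives, which is what turns the various $\tfrac{1}{2i}\{\cdot,\cdot\}$ expressions into the imaginary-part pairings written in \eqref{eq:expressionsubprinc}.

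The main obstacle is bookkeeping: tracking precisely which pieces of the first-order Moyal expansion of the \emph{triple} product survive, and showing that the contributions involving the unknown correction $\ell_1$ to the partial isometry drop out (or reduce to the stated $\mathrm{Im}$-pairings) purely by virtue of the self-adjointness of $\mathcal N_\hbar$, the normalization $\|\mathring u_k\|=1$, the eigenvalue equation, and the near-isometry relation $(\ell_\hbar^k)^w(\ell_\hbar^k)^{w,*} = \mathrm{Id} + \O(\hbar^\infty)$. This is the step where the cited results \cite[Theorem 1.5]{BFKRV} and its companion computations are used most heavily; I would organize it by first computing the symbol of $(\ell_\hbar^k)^w\circ n_\hbar^w$ to order $\hbar$, then composing on the right with $(\ell_\hbar^k)^{w,*}$, and only at the end imposing the reality and normalization constraints. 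A secondary, milder point is justifying that restricting attention to the compact region $K_E$ (where $\mathring\delta = \delta$, $\mathring\kappa = \kappa$, $\mathring\Xi = \mathrm{id}$, and the $\theta$-cutoff is inert up to $\O(\hbar^\infty)$ against the localized eigenfunctions) does not affect the computed subprincipal symbol there — which follows from the exponential decay in $\ct$ of $\mathring u_k$ established via Lemma~\ref{lem:weightedAgmonbis}.
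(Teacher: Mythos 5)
Your plan follows essentially the same route as the paper's proof: admissibility via the exponential decay of the Montgomery eigenfunctions rendering the $\theta(\hbar^\eta\cdot)$ cutoffs inert in the composition formula, then a first-order Moyal expansion of the triple product $\ell_\hbar^k \circledast n_\hbar \circledast \ell_\hbar^{k,*}$, with $\langle \mathring u_k, n_1\mathring u_k\rangle$ producing the first two terms, the half-bracket terms producing ${\rm Im}\langle \mathring u_k,\{n_0,\mathring u_k\}\rangle$, and the $\ell_1^k$-contribution reduced via $n_0\mathring u_k=\mathring\mu_k\mathring u_k$ to $2\mathring\mu_k\,\Re\langle \mathring u_k,\mathring u_k^1\rangle$ and then eliminated through the order-$\hbar$ consequence of $(\ell_\hbar^k)^w(\ell_\hbar^k)^{w,*}={\rm Id}+\O(\hbar^\infty)$, i.e. the compatibility relation $\ell_0^k\ell_1^{k,*}+\ell_1^k\ell_0^{k,*}+\frac{1}{2i}\{\ell_0^k,\ell_0^{k,*}\}=0$ that the paper quotes from \cite{BFKRV}. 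The only difference is presentational (you would re-derive that compatibility relation rather than cite it), so the proposal is correct and matches the paper's argument.
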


\begin{proof}
    The fact that the symbol $\mathring{\mu}_{\hbar,k}$ is admissible, that is, that the symbols $\mathring{\mu}_{j,k}$ are in the symbol class $S^0(\R^2)$ rather than $S^{-j\eta}(\R^2)$, $j\geq 1$, as one would expect from the asymptotic expansion of the symbols $n_\hbar$, $\Pi_\hbar^k$ and $\ell_\hbar^k$, comes from the exponential decay of the eigenvalues of the Montgomery operators. Indeed, based on \cite{BFKRV}[Remark 3.1], it can be shown that the symbol $\ell_\hbar^k$ is, at each point in the phase space $\R^2$, a linear form on $L^2(\R_{\ct})$ associated with an exponentially decaying function in $L^2(\R_{\ct})$. In this way, the formula for the composition of pseudodifferential operators (see Proposition \ref{prop:symb_cal}) shows that the presence of the cutoffs $\theta(\hbar^\eta\cdot)$ in the definition of the symbol $n_\hbar$ becomes redundant. Since the truncated functions appearing in the definition of $\mathcal{N}_\hbar$ are bounded (as well as all their derivatives) by design, the asymptotic expansion of $\mathring{\mu}_{\hbar,k}$ is in the desired class of symbols.
    
    We are left to compute the subprincipal symbol $\mathring{\mu}_{1,k}$. By Proposition \ref{prop:symb_cal}, it is given by
    \begin{equation*}
        \mathring{\mu}_{1,k} = \ell_0^k n_1 \ell_0^{k,*} + \frac{1}{2i}\{\ell_0^k, n_0\} \ell_0^{k,*} + \frac{1}{2i} \ell_0^k \{n_0,\ell_0^{k,*}\}+ \ell_1^k n_0 \ell_0^{k,*} + \ell_0^k n_0 \ell_1^{k,*}.
    \end{equation*}
    By the definition of $\ell_0^k$, the equation above can be rewritten as follows
    \begin{equation*}
        \mathring{\mu}_{1,k} = \langle \mathring{u}_k, n_1 \mathring{u}_k \rangle +  {\rm Im} \langle \mathring{u}_k, \{n_0,\mathring{u}_k\}\rangle+ (\ell_1^k n_0 \ell_0^{k,*} + \ell_0^k n_0 \ell_1^{k,*}),
    \end{equation*}
     where $\{n_0,\mathring{u}_k\}(x,\xi) = \partial_\xi n_0\partial_x \mathring{u}_k-\partial_x n_0\partial_\xi \mathring{u}_k$. 
    
    The first term of this sum can be easily computed thanks to Lemma \ref{lem:Nhbarpseudo}. 
    To identify the third term, we introduce the notation $\ell_k^1 = \langle \mathring{u}_k^1,\cdot\rangle$. This allows us to rewrite the term under consideration as
    \begin{equation*}
        \ell_1^k n_0 \ell_0^{k,*} + \ell_0^k n_0 \ell_1^{k,*} = 2 \mu_k \Re \langle \mathring{u}_k, \mathring{u}_k^1\rangle.
    \end{equation*}
    To determine this scalar product, we make use of \cite{BFKRV}[Lemma 2.4] and the compatibility relation therein satisfied by $\ell_1^k$:
    \begin{equation*}
        \ell_0^k \ell_1^{k,*} + \ell_1^k\ell_0^{l,*} +\frac{1}{2i}\{\ell_0^k,\ell_0^{k,*}\} = 0,
    \end{equation*}
    which can be rewritten as
    \begin{equation*}
        \Re \langle \mathring{u}_k, \mathring{u}_k^1\rangle = \frac{1}{2}{\rm Im} \langle \partial_x \mathring{u}_k,\partial_\xi \mathring{u}_k\rangle,
    \end{equation*}
    and this concludes the proof.
\end{proof}

Combining the results of Theorem \ref{thm:effectivespectrum} and Proposition \ref{prop:effectiveham}, one obtains Theorem \ref{thm:main}.
We now give the proof of Corollary \ref{cor:bottomsemiexcited}.
 
\begin{proof}[Proof of Corollary \ref{cor:bottomsemiexcited}]
    This result is a direct application of the Birkhoff normal form procedure implemented in \cite{Sjo1}, or equivalently of the statement \cite{DimSj}[Theorem 14.9]. Indeed, under Assumption \ref{assum:uniquewell}, the principal symbol $\mathring{\mu}_1$ has a unique minimum attained at $(0,\nu_c)$. Moreover, this minimum is nondegenerate as a straightforward computation using Equation \eqref{eq:smootheigenvalues} shows that the Hessian of $\mathring{\mu}_1$ at this critical point is diagonal with eigenvalues given by 
    \begin{equation*}
        \frac{2}{3}\frac{\delta''(0)}{\delta_c^{1/3}}\tilde{\mu}_{1,c}\quad\mbox{and}\quad \partial_\nu^2\tilde{\mu}_1(\nu_{1,c}).
    \end{equation*}

    The structure of the spectrum described by Equation \eqref{eq:asympgenstuct1} is then given by \cite{DimSj}[Theorem 14.9]. However, to obtain the more precise asymptotics of Equation \eqref{eq:asymptprecise}, we need to give some details about the functions $f_0$ and $f_1$.

    The first step in the Birkhoff normal form procedure is to build the function $f_0$ such that it satisfies
    \begin{equation*}
        f_0(\sigma) = \delta_c^{2/3}\tilde{\mu}_{1,c} + \frac{C_1}{2}\sigma + \O(\sigma^2),
    \end{equation*}
    with $C_1$ being the geometric mean of the eigenvalues of the Hessian of $\mathring{\mu}_1$. We then readily observe that
    \begin{equation*}
        C_1 = 2 \delta_c^{2/3} c_1.
    \end{equation*}

    Thus, to complete the proof of Corolloray \ref{cor:bottomsemiexcited}, we are left with computing $f_1(0)$: in the Birkhoff normal form, this value coincides with the evaluation of the subprincipal symbol $\mathring{\mu}_{1,1}$ at the bottom of the well, that is, at $(0,\nu_c)$.

    First, observe that by a simple use of Equations \eqref{eq:n0rescaling} and \eqref{eq:smootheigenfunction}, we have
    \begin{equation*}
        2k(0)\langle \mathring{u}_1(0,\nu_c), p_0^2 \ct \,\mathring{u}_1(0,\nu_c)\rangle- 2 \mathring{\kappa}(0)\langle \mathring{u}_1(0,\nu_c), p_0 \ct^3 \mathring{u}_1(0,\nu_c)\rangle = \delta_c^{2/3}\langle L \tilde{u}_{1,c},\tilde{u}_{1,c}\rangle,
    \end{equation*}
    where the operator $L$ is defined in Corollary \ref{cor:bottomsemiexcited}.
     
    We are left to prove that the other two terms of $\mathring{\mu}_{1,1}$ in the expression \eqref{eq:expressionsubprinc} vanish at the bottom of the well. To do so, we derive a more precise expression of these two terms. 
    
    To treat the term $\langle \mathring{u}_k, \{n_0,\mathring{u}_k\}\rangle$, we again use Equation \eqref{eq:n0rescaling}, which we recall here:
    \begin{equation*}
        n_0(x,\xi) = \mathring{\delta}(x)^{2/3} \mathcal{T}^*(x) \mathfrak{M}(\Theta(x,\xi)) \mathcal{T}(x),
    \end{equation*}
    where $\Theta(x,\xi) = \xi\mathring{\delta}(x)^{-1/3}$.
    This formula allows to write
    \begin{equation*}
        \partial_\xi n_0 = \mathring{\delta}^{1/3} \mathcal{T}^* \left(\partial_\nu \mathfrak{M}\circ \Theta\right) \mathcal{T},
    \end{equation*}
    and 
    \begin{equation*}
        \partial_x n_0 = -\frac{\mathring{\delta}'}{3\mathring{\delta}} \Theta\,\mathcal{T}^* \left(\partial_\nu \mathfrak{M}\circ \Theta\right) \mathcal{T}+ \partial_x \mathcal{T}^* \mathcal{T} n_0 + n_0 \mathcal{T}^*\partial_x \mathcal{T},
    \end{equation*}
    with similar formulae for the derivatives of $u_k$.
    Combined together and taking the scalar product with $u_k$, one is left with
    \begin{equation*}
         -\frac{\mathring{\delta}'(x)}{3 \mathring{\delta}(x)^{2/3}}\left\langle\tilde{u}_k(\nu),\partial_\nu\mathfrak{M}(\nu)\mathcal{I}\tilde{u}_k(\nu) - \left(\mathcal{I}\mathfrak{M}(\nu)+ \mathfrak{M}(\nu)\mathcal{I}^*\right)\partial_\nu \tilde{u}_k(\nu)  \right\rangle,
    \end{equation*}
    evaluated at $\nu = \Theta(x,\xi)$ and where $\mathcal{I}$ is given by
    \begin{equation*}
        \mathcal{I} = \frac{1}{2}+ \ft\partial_{\ft}.
    \end{equation*}

    \vspace{0.25cm}

    Finally, to treat the term ${\rm Im} \langle \partial_x \mathring{u}_k,\partial_\xi \mathring{u}_k\rangle$, we can use the same formulae as before to obtain
    \begin{equation*}
        \begin{aligned}
        {\rm Im} \langle \partial_x \mathring{u}_k,\partial_\xi \mathring{u}_k\rangle 
        &= {\rm Im} \langle \mathcal{T}\partial_x \mathcal{T}^* \tilde{u}_k(\Theta), \mathring{\delta}^{-1/3} \partial_\nu \tilde{u}_k(\Theta)\rangle\\
        &= -\frac{\mathring{\delta}'}{3 \mathring{\delta}^{4/3}}{\rm Im}\langle\mathcal{I}\tilde{u}_k(\Theta),\partial_\nu \tilde{u}_k(\Theta)\rangle.
        \end{aligned}
    \end{equation*}
    Thus, evaluating at the bottom of the well, we find the desired quantity, and this concludes the proof.
    % Compiling every expression, one is left with
    % \begin{multline*}
    %     \mathring{\mu}_{1,k} = 2k\langle u_k, p_0^2 \ct \,u_k\rangle- 2 \mathring{\kappa}\langle u_k, p_0 \ct^3 u_k\rangle \\
    %     -\frac{\delta'}{3 \delta^{2/3}} \left(\tilde{\mu}(\xi \delta^{-1/3}){\rm Im}\langle\tilde{u}_k,\partial_\nu\tilde{u}_k\rangle- 2{\rm Im} \langle \tilde{u}_k(\xi \delta^{-1/3}),((\xi \delta^{-1/3})^2-4\xi \delta^{-1/3}\ft^2+\frac{5}{4}\ft^4)\partial_\nu \tilde{u}_k(\xi \delta^{-1/3})\rangle\right)\\
    %     -\frac{\delta'}{3 \delta^{4/3}}\mu_k {\rm Im}\langle\mathcal{I}\tilde{u}_k(\xi \delta^{-1/3}),\partial_\nu \tilde{u}_k(\xi \delta^{-1/3})\rangle.
    % \end{multline*}
\end{proof}

%%%%%%%%%%%%%%%%%%%%%%%%%%%%%%%%%%%%%%%%%%%%%%%%%%%%%%%%%%%%%%%%%%%%%%%%%%%%%%%%%%%%%%%%%%%%%%%%%%%%%%%%%%%%%%%%%%%%%%%%%%%%%%%%%%%%%%%%%%%%%%%%%%%%%%%%%%%%%%%%%%%%%%%%%%%%%%

\appendix
\section{Pseudodifferential calculus with operator-valued symbols}
\label{sect:pseudoropvalued}

We review here elements of pseudodifferential calculus with operator-valued symbols that we use along this article. The reader can refer to~\cite{Zwobook} for proofs in the scalar case that can be adapted to our setting, as in~\cite{Ker}.  
\smallskip 

In this section, we denote a point $(x,\xi)\in\R^{2d}$ by $X$. Consider a family of Hilbert spaces $\mathscr{A}_{X}$ indexed by $X\in\R^{2d}$, and satisfying the following properties:
\begin{itemize}
    \item[(i)] $\mathscr{A}_{X}$ = $\mathscr{A}_{Y}$ as vector spaces for all $X,Y\in\R^{2d}$, denoted simply by $\mathscr{A}$,
    \item[(ii)] There exist $N_0 \geq 0$ and $C >0$ such that
    $\Vert u \Vert_{\mathscr{A}_X} \leq C\langle X-Y\rangle^{N_0}\Vert u \Vert_{\mathscr{A}_Y}$ for all $u\in\mathscr{A},\,X,Y\in\R^{2d}$.
\end{itemize}

Let $\mathscr{B}_{X},\,X\in\R^{2d}$ be another family of such Hilbert spaces. The class of symbols we are interested in is defined as follows: for $\eta\in\R$,
we say that $p \in C^{\infty}(\R^{2d},\mathscr{L}(\mathscr{A},\mathscr{B}))$ belongs to $S^\eta(\R^{2d},\mathscr{L}(\mathscr{A}_X,\mathscr{B}_X))$ if for every 
$\alpha \in \N^{2n}$, we can find a constant $C_{\alpha}$ such that
\begin{equation}
    \Vert \partial_{X}^{\alpha}p\Vert_{\mathscr{L}(\mathscr{A}_X,\mathscr{B}_X)} \leq h^\eta C_{\alpha},\quad X \in \R^{2d}.
\end{equation}
The symbol $p$ can depend on the semiclassical parameter $h \in (0,h_0]$, as long as the precedent inequalities are uniform for $h$ small. In particular, we have the following characterization. 

\begin{proposition}
    Let $p: \R^{2d} \longrightarrow \mathscr{L}(\mathscr{A},\mathscr{B})$, then $p$ is in the symbol class $S^0(\R^{2d},\mathscr{L}(\mathscr{A}_X,\mathscr{B}_X))$ if and only if,
    \begin{enumerate}
        \item For all $(a,b) \in \mathscr{A}_0 \times \mathscr{B}_0$, the application defined for $X \in\R^{2d}$ by
        \begin{equation*}
            p^{(a,b)}(X) := \langle p(X)a,b\rangle
        \end{equation*}
        is smooth.
        \item For all $\alpha\in\N^{2n}$, there exists $C_{\alpha}$ such that, for all $(a,b) \in \mathscr{A}_0 \times \mathscr{B}_0$,
        \begin{equation*}
            \lvert \partial_{X}^{\alpha}p^{(a,b)} \rvert \leq C_{\alpha}\Vert a\Vert_{\mathscr{A}_X}\Vert b\Vert_{\mathscr{B}_X}.
        \end{equation*}
    \end{enumerate}
\end{proposition}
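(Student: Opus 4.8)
The plan is to prove the two implications separately, the forward one being essentially formal and the converse resting on a functional-analytic upgrade from weak to strong regularity.

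For the direct implication, assume $p\in S^0(\R^{2d},\mathscr{L}(\mathscr{A}_X,\mathscr{B}_X))$. For fixed $(a,b)\in\mathscr{A}_0\times\mathscr{B}_0$ the map $T\mapsto\langle Ta,b\rangle$ is a bounded linear functional on $\mathscr{L}(\mathscr{A},\mathscr{B})$, so $p^{(a,b)}=\langle p(\cdot)a,b\rangle$ is smooth, which is (1); and since this fixed functional commutes with differentiation, $\partial_X^\alpha p^{(a,b)}(X)=\langle(\partial_X^\alpha p)(X)a,b\rangle$ for every $\alpha$. Applying the Cauchy--Schwarz inequality in $\mathscr{B}_X$ together with the operator bound $\|\partial_X^\alpha p(X)\|_{\mathscr{L}(\mathscr{A}_X,\mathscr{B}_X)}\le C_\alpha$ then gives (2). (When the inner products genuinely depend on $X$, the extra term produced by differentiating the pairing is controlled through the comparison hypothesis (ii) and contributes only lower-order corrections.)

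For the converse, note first that (2) with $\alpha=0$ already forces $p(X)$ to be uniformly bounded: by the Hilbert-space duality identity
\begin{equation*}
    \|p(X)\|_{\mathscr{L}(\mathscr{A}_X,\mathscr{B}_X)}=\sup\bigl\{\,|\langle p(X)a,b\rangle|\ :\ \|a\|_{\mathscr{A}_X}\le 1,\ \|b\|_{\mathscr{B}_X}\le 1\,\bigr\}\le C_0,
\end{equation*}
and the same identity will produce the higher-order estimates once the derivatives are known to exist in the right sense. The heart of the matter is therefore to promote the hypotheses to the statement that $X\mapsto p(X)$ is smooth as an $\mathscr{L}(\mathscr{A},\mathscr{B})$-valued map. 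I would proceed in two stages. First, fix $a\in\mathscr{A}$: the map $X\mapsto p(X)a\in\mathscr{B}$ is weakly smooth, since pairing against an arbitrary $b\in\mathscr{B}_0$ — a total set, $\mathscr{B}_0$ being a Hilbert space — returns $p^{(a,b)}$, and on any compact $K$ the hypothesis (2) together with (ii) bounds all weak derivatives $\partial_X^\alpha p^{(a,b)}$ by $C_\alpha(K)\,\|a\|_{\mathscr{A}_0}\|b\|_{\mathscr{B}_0}$, uniformly for $b$ in the unit ball of $\mathscr{B}_0$. A ``loss of one derivative'' argument then upgrades this to strong smoothness: at each order the weak derivative is norm-Lipschitz (its difference quotient, tested against unit vectors, is bounded by the next-order constant), hence norm-continuous, hence — using separability of $\mathscr{B}$ via Pettis measurability, and the Bochner fundamental theorem of calculus — it is the strong derivative of the previous one; having bounds at every order, one obtains $p(\cdot)a\in C^\infty(\R^{2d},\mathscr{B})$. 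Second, the same scheme is run at the level of $Y=\mathscr{L}(\mathscr{A},\mathscr{B})$: the rank-one functionals $T\mapsto\langle Ta,b\rangle$, with $(a,b)$ ranging over the (locally uniformly equivalent, by (ii)) unit balls of $\mathscr{A}_X\times\mathscr{B}_X$, form a norming family against which $p$ is weakly smooth with uniformly bounded weak derivatives, so the same lemma yields $p\in C^\infty(\R^{2d},\mathscr{L}(\mathscr{A},\mathscr{B}))$. Finally, reading the displayed duality identity with $\partial_X^\alpha p(X)$ in place of $p(X)$ and invoking (2) gives the uniform bounds $\|\partial_X^\alpha p(X)\|_{\mathscr{L}(\mathscr{A}_X,\mathscr{B}_X)}\le C_\alpha$, i.e.\ $p\in S^0$.

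The only genuinely delicate point is this weak-to-strong regularity upgrade: weak differentiability together with pointwise bounds does not by itself yield strong differentiability, and one really exploits the availability of uniform bounds at \emph{all} orders — to make the derivative at each level Lipschitz, hence continuous — together with separability of the target to legitimate the Bochner integration. Everything else — the commutation of fixed continuous functionals with $\partial_X$, Cauchy--Schwarz, and the Hilbert-space description of the operator norm as a supremum of matrix coefficients — is routine, the role of hypothesis (ii) being simply to keep the $X$-dependent norms on $\mathscr{A}$ and $\mathscr{B}$ locally uniformly equivalent to fixed reference norms, so that ``locally bounded'' is unambiguous.
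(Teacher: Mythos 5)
The paper itself does not prove this proposition: it belongs to the review appendix, whose introduction defers to \cite{Zwobook} and \cite{Ker}, so there is no in-paper proof to compare yours against; judged on its own terms, your argument is correct in substance and is the standard one. The forward direction is indeed immediate. For the converse, the mechanism you identify is the right one: the bounds in (2) at every order allow you (via the Riesz representation of the bounded sesquilinear form $(a,b)\mapsto\partial_X^\alpha p^{(a,b)}(X)$) to produce operators $p_\alpha(X)\in\mathscr{L}(\mathscr{A}_X,\mathscr{B}_X)$ of norm at most (a constant times) $C_\alpha$ representing the weak derivatives, and a Taylor expansion with integral remainder of the scalar functions $p^{(a,b)}$, uniform over the unit balls of $\mathscr{A}_X\times\mathscr{B}_X$ precisely because of (2) at the next order (together with (i)--(ii) to compare norms at nearby base points), shows that difference quotients of $p$, and then of each $p_\alpha$, converge in operator norm to $p_{\alpha+e_j}$; iterating gives smoothness of $p$ as an $\mathscr{L}(\mathscr{A},\mathscr{B})$-valued map with the required bounds. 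Two remarks on your write-up. First, the detour through Pettis measurability, Bochner integration and separability of $\mathscr{B}$ is superfluous, and separability is not among the hypotheses: once the representatives $p_\alpha(X)$ exist and the second-order scalar Taylor estimate is uniform over unit vectors, norm differentiability follows directly, with at worst a Riemann integral of a norm-continuous Banach-valued function, so you should excise that step rather than lean on it. Second, your parenthetical claim that an $X$-dependence of the inner products is ``controlled through (ii)'' is not justified: (ii) compares norms at different base points and says nothing about derivatives of the pairing; the proposition should be read with a fixed duality bracket (as in the paper's application, where $\mathscr{A}=B^{2,4}(\R)$ and $\mathscr{B}=L^2(\R)$ carry fixed norms), and with that reading your duality identity for the operator norm is exact, while for merely equivalent $X$-dependent norms it holds up to constants, which is all the symbol class requires. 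Neither point is a genuine gap in the main line of your argument.
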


When a symbol $p$ depends on the semiclassical parameter, we consider asymptotic expansions in this parameter according to the following definition.

\begin{definition}[Admissible symbols] \label{def:classicalsymbols} Let $\left( p_j \right)_{j \in \N}$ a sequence of symbols in $S^0(\R^{2d},\mathscr{L}(\mathscr{A}_X,\mathscr{B}_X))$. We say that $p \in S^0(\R^{2d},\mathscr{L}(\mathscr{A}_X,\mathscr{B}_X))$ has as \textit{semiclassical asymptotic expansion}  $\sum_{j\geq 0} h^jp_j$ if for all $N \in \N$, there exists $r_N \in S^0(\R^{2d},\mathscr{L}(\mathscr{A}_X,\mathscr{B}_X))$ such that
\[
p - \sum_{j=0}^N h^j p_j = h^{N+1} r_N.
\]
We then simply write
\[
p \sim \sum_{j = 0}^{+\infty} h^j p_j \quad \mbox{in} \quad S^0(\R^{2d},\mathscr{L}(\mathscr{A}_X,\mathscr{B}_X)).
\]
\end{definition}

Reciprocally, asymptotic expansions define symbols as stated in the following proposition. 

\begin{proposition}[Borel Summation] \label{prop:Borelsum} Let $\left( p_j \right)_{j \in \N}$ a sequence of symbols in $S(\R^{2d},\mathscr{L}(\mathscr{A}_X,\mathscr{B}_X))$, then there exists a symbol $p \in S^0(\R^{2d},\mathscr{L}(\mathscr{A}_X,\mathscr{B}_X))$ unique up to $\O(h^\infty)$ such that
\[
p \sim \sum_{j = 0}^{+\infty} h^j p_j \quad \mbox{in} \quad S^0(\R^{2d},\mathscr{L}(\mathscr{A}_X,\mathscr{B}_X)).
\]
\end{proposition}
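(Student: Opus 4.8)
The plan is to run the classical Borel summation argument in the operator-valued setting; property (ii) of the Hilbert-space families plays no role, so everything is a transcription of the scalar lemma (cf.\ \cite{Zwobook}). Uniqueness up to $\O(h^\infty)$ is immediate from Definition \ref{def:classicalsymbols}: if $p,q\in S^0$ both admit $\sum_{j\geq0}h^jp_j$ as semiclassical asymptotic expansion, then for every $N$ the difference $p-q$ equals $h^{N+1}$ times an element of $S^0$, hence all its $S^0$-seminorms are $\O(h^{N+1})$ for every $N$, i.e.\ $p-q=\O(h^\infty)$. It therefore suffices to construct one symbol with the prescribed expansion.

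For existence, I would fix a cutoff $\chi\in C_c^\infty(\R,[0,1])$ with $\chi\equiv1$ on a neighbourhood of $0$ and $\supp\chi\subset[-1,1]$, and set
\[
C_j:=\max_{|\alpha|\leq j}\ \sup_{0<h\leq h_0}\ \sup_{X\in\R^{2d}}\left\|\partial_X^\alpha p_j(X)\right\|_{\mathscr{L}(\mathscr{A}_X,\mathscr{B}_X)},
\]
which is finite since $p_j\in S^0$. I then choose a sequence $\varepsilon_j\in(0,1]$ decreasing to $0$ with $\varepsilon_j C_j\leq 2^{-j}$ for all $j\geq1$, and put
\[
p(X):=p_0(X)+\sum_{j\geq1}\chi\!\left(\tfrac{h}{\varepsilon_j}\right)h^j p_j(X).
\]
Since $\chi(h/\varepsilon_j)=0$ as soon as $\varepsilon_j<h$, for each fixed $h>0$ only finitely many terms are nonzero, so $p(X)\in\mathscr{L}(\mathscr{A},\mathscr{B})$ is well defined and smooth in $X$.

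The core of the argument is then to verify $p\in S^0(\R^{2d},\mathscr{L}(\mathscr{A}_X,\mathscr{B}_X))$ with seminorms bounded uniformly in $h\in(0,h_0]$. Fixing $\alpha$, the finitely many terms with $1\leq j<|\alpha|$ contribute a finite constant; for $j\geq|\alpha|$, on $\supp\chi(h/\varepsilon_j)$ one has $h\leq\varepsilon_j\leq1$, so
\[
\left\|\chi\!\left(\tfrac{h}{\varepsilon_j}\right)h^j\partial_X^\alpha p_j(X)\right\|_{\mathscr{L}(\mathscr{A}_X,\mathscr{B}_X)}\leq h^{j}C_j\leq h^{j-1}\varepsilon_j C_j\leq h^{j-1}2^{-j}\leq 2^{-j},
\]
and these sum to a finite bound; the same estimate shows the differentiated series converges uniformly, legitimising term-by-term differentiation. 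For the asymptotic expansion I would split, for each $N$,
\[
p-\sum_{j=0}^N h^j p_j=\sum_{j=1}^N\left(\chi\!\left(\tfrac{h}{\varepsilon_j}\right)-1\right)h^j p_j+h^{N+1}\sum_{j>N}\chi\!\left(\tfrac{h}{\varepsilon_j}\right)h^{j-N-1}p_j,
\]
note that each summand in the first (finite) sum is $\O(h^\infty)$ in $S^0$ because $\chi(h/\varepsilon_j)-1$ is supported away from $h=0$, and observe that the series in the second sum lies in $S^0$ by exactly the estimate above (the single term $j=N+1$ being a finite constant, the rest bounded by $2^{-j}$). This yields $p-\sum_{j=0}^N h^j p_j=h^{N+1}r_N$ with $r_N\in S^0$, i.e.\ $p\sim\sum_j h^j p_j$.

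The only genuinely delicate point is the one visible in the construction: a single sequence $(\varepsilon_j)$ has to tame, at step $j$, all the derivative seminorms of $p_j$ up to order $j$ simultaneously — this is precisely why $C_j$ is defined as a maximum over $|\alpha|\leq j$ — and one must keep track of the exponents of $h$ carefully so that the tail in the split above still factors a full power $h^{N+1}$ while leaving a genuine $S^0$ symbol. Beyond that the proof is pure bookkeeping, and the operator-valued character enters only through the norms $\|\cdot\|_{\mathscr{L}(\mathscr{A}_X,\mathscr{B}_X)}$ with no extra difficulty.
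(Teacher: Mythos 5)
Your proof is correct and is precisely the standard Borel-summation construction of \cite{Zwobook} transcribed to the operator-valued seminorms $\Vert\cdot\Vert_{\mathscr{L}(\mathscr{A}_X,\mathscr{B}_X)}$, which is exactly the adaptation the paper invokes for this proposition (it offers no separate proof, pointing to the scalar case). The only cosmetic points are that the bound $h^{j-1}\leq 1$ tacitly assumes $h_0\leq 1$, and that in the remainder $r_N$ the finitely many indices $N+1<j<|\alpha|$ should be controlled by the individual $S^0$ seminorms of $p_j$ rather than by $2^{-j}$ — both dealt with exactly as in your verification that $p\in S^0$.
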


We will use the semiclassical Weyl quantization that we recall in the next definition.  

\begin{definition}
    Let $p \in S^0(\R^{2d},\mathscr{L}(\mathscr{A}_X,\mathscr{B}_X))$ a symbol, we will denote by $p^w$ or ${\rm op}_h(p)$ its Weyl quantization, that is, the operator defined for $u\in \mathscr{S}(\R^{d},\mathscr{A})$ by
    \begin{equation}\label{def:pseudo}
        p^w u(x) = \frac{1}{(2\pi h)^n}\iint_{\R^{2d}} e^{i\frac{(x-y)\cdot \xi}{h}}p\left(\frac{x+y}{2},\xi\right)u(y)\,dyd\xi.
    \end{equation}
\end{definition}

We have the following continuity result.

\begin{proposition}
    Let $p \in S^0(\R^{2d},\mathscr{L}(\mathscr{A}_X,\mathscr{B}_X))$. Then $p^w$ is uniformly continuous from $\mathscr{S}(\R^{d},\mathscr{A})$ to $\mathscr{S}(\R^{d},\mathscr{B})$, and extends as a continuous operator from $\mathscr{S}'(\R^{d},\mathscr{A})$ to $\mathscr{S}'(\R^{d},\mathscr{B})$.
\end{proposition}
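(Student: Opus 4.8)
The statement is the operator-valued analogue of the classical $\mathscr{S}\to\mathscr{S}$ continuity of Weyl quantizations (see \cite{Zwobook}), and the plan is simply to run that argument while carrying the operator norms; the only genuinely new point is the bookkeeping of the polynomially varying Hilbert norms permitted by property (ii).

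We first make sense of the integral \eqref{def:pseudo} for $u\in\mathscr{S}(\R^{d},\mathscr{A})$ as an oscillatory integral. Fixing $M\in\N$ and using $e^{i(x-y)\cdot\xi/h}=\langle\xi\rangle^{-2M}\langle hD_y\rangle^{2M}e^{i(x-y)\cdot\xi/h}$ with $\langle hD_y\rangle^{2}=1-h^{2}\Delta_y$, we integrate by parts in $y$, so that $\langle hD_y\rangle^{2M}$ acts on $p\big(\tfrac{x+y}{2},\xi\big)u(y)$; the Leibniz rule then produces a finite sum of terms $h^{|\beta|+|\gamma|}\big(\partial_X^\beta p\big)\big(\tfrac{x+y}{2},\xi\big)\,\partial_y^\gamma u(y)$. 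Bounding each of these in $\mathscr{B}_0$ by means of $\|\partial_X^\beta p\|_{\mathscr{L}(\mathscr{A}_X,\mathscr{B}_X)}\le C_\beta$ together with property (ii) applied with $Y=0$ to both families (first to replace $\mathscr{A}_0$ by $\mathscr{A}_{((x+y)/2,\xi)}$, then $\mathscr{B}_{((x+y)/2,\xi)}$ by $\mathscr{B}_0$) gives the pointwise bound $C\langle\xi\rangle^{-2M}\langle x\rangle^{2N_0}\langle y\rangle^{2N_0}\langle\xi\rangle^{2N_0}\|\partial_y^\gamma u(y)\|_{\mathscr{A}_0}$. For $2M>2N_0+d$ the integral is absolutely convergent, its value is independent of $M$, and $p^wu(x)$ is a well-defined element of $\mathscr{B}$ for each $x$.

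Next we check that $p^wu\in\mathscr{S}(\R^{d},\mathscr{B})$ with continuous dependence on $u$. Smoothness and a formula for $\partial_x^\alpha(p^wu)$ follow by differentiating under the regularised integral, each $x$-derivative falling either on $e^{i(x-y)\cdot\xi/h}$, producing a factor $\xi/h$ absorbed by increasing $M$, or on $p$, producing a bounded $\partial_X$-derivative. To obtain decay in $x$ we insert a second regularisation $e^{i(x-y)\cdot\xi/h}=\langle x-y\rangle^{-2L}\langle hD_\xi\rangle^{2L}e^{i(x-y)\cdot\xi/h}$, with $\langle hD_\xi\rangle^{2}=1-h^{2}\Delta_\xi$, and integrate by parts in $\xi$; since $\langle hD_\xi\rangle^{2L}$ falls on $\langle\xi\rangle^{-2M}$ (keeping the $\xi$-decay) and on $p$ (a bounded $\partial_X$-derivative), the preceding estimates yield
\begin{equation*}
    \langle x\rangle^{N}\,\|\partial_x^\alpha(p^wu)(x)\|_{\mathscr{B}_0}\ \lesssim\ \langle x\rangle^{N+2N_0}\int_{\R^d}\langle x-y\rangle^{-2L}\,\langle y\rangle^{2N_0}\!\!\!\sum_{|\gamma|\le M_{N,\alpha}}\!\!\!\|\partial_y^\gamma u(y)\|_{\mathscr{A}_0}\,dy .
\end{equation*}
The main point is now to absorb the polynomial growth in $x$ coming from property (ii): by Peetre's inequality $\langle x\rangle\le\sqrt 2\,\langle x-y\rangle\langle y\rangle$, choosing $2L>N+2N_0+d+1$ turns the $\langle x\rangle$-power into a $\langle y\rangle$-power and leaves an integrable factor $\langle x-y\rangle^{-d-1}$, whence
\begin{equation*}
    \langle x\rangle^{N}\,\|\partial_x^\alpha(p^wu)(x)\|_{\mathscr{B}_0}\ \lesssim\ \sum_{|\gamma|\le M_{N,\alpha}}\ \sup_{y\in\R^d}\langle y\rangle^{N+4N_0+d+1}\,\|\partial_y^\gamma u(y)\|_{\mathscr{A}_0},
\end{equation*}
uniformly in $x$. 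Thus each Schwartz seminorm of $p^wu$ is controlled by finitely many Schwartz seminorms of $u$ (the constants depending on the $C_\alpha$, on $N_0$, and on $h$), so $p^w\colon\mathscr{S}(\R^{d},\mathscr{A})\to\mathscr{S}(\R^{d},\mathscr{B})$ is continuous; being linear, it is in particular uniformly continuous.

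Finally, to extend $p^w$ to tempered distributions we use transposition. The pointwise adjoints $p(X)^*\in\mathscr{L}(\mathscr{B}_X,\mathscr{A}_X)$ define a symbol $p^{*}\in S^0(\R^{2d},\mathscr{L}(\mathscr{B}_X,\mathscr{A}_X))$ obeying the same bounds, since $\|T^{*}\|=\|T\|$ and $\partial_X^\alpha(p^{*})=(\partial_X^\alpha p)^{*}$, and a direct computation from \eqref{def:pseudo} identifies the transpose of $p^w$ with $(p^{*})^w$. By the previous step applied to $p^{*}$, the operator $(p^{*})^w$ is continuous from $\mathscr{S}(\R^{d},\mathscr{B})$ to $\mathscr{S}(\R^{d},\mathscr{A})$; hence setting $\langle p^wT,\varphi\rangle:=\langle T,(p^{*})^w\varphi\rangle$ for $T\in\mathscr{S}'(\R^{d},\mathscr{A})$ and $\varphi$ in the corresponding space of test functions defines $p^wT\in\mathscr{S}'(\R^{d},\mathscr{B})$ and a continuous operator $p^w\colon\mathscr{S}'(\R^{d},\mathscr{A})\to\mathscr{S}'(\R^{d},\mathscr{B})$ extending the one above.
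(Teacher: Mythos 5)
The paper does not prove this proposition at all: the appendix states it and refers to \cite{Zwobook} (scalar case) and \cite{Ker} for the adaptation to operator-valued symbols, and your argument is exactly that standard adaptation (double regularisation by integration by parts in $y$ and $\xi$, Peetre's inequality to absorb the $\langle X\rangle^{N_0}$ factors from property (ii), then extension to $\mathscr{S}'$ by duality), so in substance it is the intended proof and it is correct. Two small points deserve care: in the duality step, if the inner products of $\mathscr{A}_X,\mathscr{B}_X$ genuinely vary with $X$ you cannot simultaneously have $\partial_X^\alpha(p^*)=(\partial_X^\alpha p)^*$, $\Vert p^*(X)\Vert_{\mathscr{L}(\mathscr{B}_X,\mathscr{A}_X)}=\Vert p(X)\Vert_{\mathscr{L}(\mathscr{A}_X,\mathscr{B}_X)}$ and the identity $\langle p^w u,v\rangle=\langle u,(p^*)^wv\rangle$ for the fixed pairing defining $\mathscr{S}'$ — you should take the adjoint with respect to the fixed inner products of $\mathscr{A}_0,\mathscr{B}_0$, accept that $p^*$ is then only a polynomially growing operator-valued symbol, and note that your first estimate handles such symbols verbatim; and "uniformly continuous" is meant uniformly in $h\in(0,h_0]$, which follows from your bounds after the change of variables $\xi=h\eta$ (equivalently, by tracking that every $h$ produced by the integrations by parts appears with a nonnegative power), rather than from linearity alone.
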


The Calderòn-Vaillancourt theorem extends to semiclassical pseudodifferential operators with operator-valued symbols.

\begin{proposition}
    Assume $\mathscr{A}_X = \mathscr{A}$, $\mathscr{B}_X = \mathscr{B}$ for all $X\in\R^{2d}$. If $p \in S^0(\R^{2d},\mathscr{L}(\mathscr{A},\mathscr{B}))$, then $p^w$ extends to a bounded operator
    \begin{equation*}
        L^{2}(\R^{d},\mathscr{A}) \longrightarrow L^{2}(\R^{d},\mathscr{B}).
    \end{equation*}
\end{proposition}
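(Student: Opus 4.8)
The plan is to prove this operator-valued Calderón--Vaillancourt estimate by adapting the classical scalar argument (as in \cite{Zwobook}) via the Cotlar--Stein lemma, systematically replacing absolute values by operator norms $\|\cdot\|_{\mathscr{L}(\mathscr{A},\mathscr{B})}$. Since the fibres $\mathscr{A}_X=\mathscr{A}$, $\mathscr{B}_X=\mathscr{B}$ are constant here, condition (ii) of the symbol family plays no role, and the only inequalities used below are the triangle inequality and submultiplicativity $\|ST\|\le\|S\|\,\|T\|$ of operator norms.

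First I would remove the semiclassical parameter by rescaling. Introducing the dilation $U_h u(x)=h^{d/4}u(\sqrt h\,x)$, which is unitary on both $L^2(\R^d,\mathscr{A})$ and $L^2(\R^d,\mathscr{B})$ (it acts only on the base variable), one checks that $U_h^{*}\,p^w\,U_h={\rm op}_1(\tilde p)$ with $\tilde p(x,\xi):=p(\sqrt h\,x,\sqrt h\,\xi)$, where ${\rm op}_1$ denotes the non-semiclassical ($h=1$) Weyl quantization. Since $\partial_X^\alpha\tilde p=h^{|\alpha|/2}(\partial_X^\alpha p)(\sqrt h\,\cdot)$ and $h\le h_0$, the symbol $\tilde p$ belongs to $S^0(\R^{2d},\mathscr{L}(\mathscr{A},\mathscr{B}))$ with seminorms bounded uniformly in $h\in(0,h_0]$. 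As $U_h$ is unitary it therefore suffices to prove the bound for ${\rm op}_1$, uniformly over families of symbols with bounded seminorms.

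Now fix $\chi_0\in C_c^\infty(\R^{2d})$ supported in a ball $B(0,R)$ with $\sum_{j\in\Z^{2d}}\chi_0(\cdot-j)=1$, and set $p_j:=\chi_0(\cdot-j)\,\tilde p$, so that ${\rm op}_1(\tilde p)=\sum_j T_j$ with $T_j:={\rm op}_1(p_j):L^2(\R^d,\mathscr{A})\to L^2(\R^d,\mathscr{B})$, each $p_j$ supported in $B(j,R)$ with seminorms bounded uniformly in $j$. The two ingredients are: (i) a \emph{uniform single-block bound} $\|T_j\|_{L^2(\mathscr{A})\to L^2(\mathscr{B})}\le C$, obtained from the $\mathscr{L}(\mathscr{A},\mathscr{B})$-valued kernel $K_j(x,y)=(2\pi)^{-d}\int e^{i(x-y)\cdot\xi}p_j(\tfrac{x+y}{2},\xi)\,d\xi$ by integrating by parts in $\xi$ (legitimate because $\xi$ ranges over a ball of fixed radius) to get $\|K_j(x,y)\|_{\mathscr{L}(\mathscr{A},\mathscr{B})}\le C_N\langle x-y\rangle^{-N}$ with $C_N$ controlled by finitely many seminorms of $p_j$, followed by Schur's test on $\|K_j(x,y)\|_{\mathscr{L}(\mathscr{A},\mathscr{B})}$; and (ii) \emph{almost orthogonality} $\|T_j^{*}T_k\|_{L^2(\mathscr{A})\to L^2(\mathscr{A})}+\|T_jT_k^{*}\|_{L^2(\mathscr{B})\to L^2(\mathscr{B})}\le C_N\langle j-k\rangle^{-N}$ for all $N$. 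For (ii) one writes the kernel of $T_j^{*}T_k$ (respectively $T_jT_k^{*}$) as an oscillatory integral in which, because $\mathrm{supp}\,p_j$ and $\mathrm{supp}\,p_k$ are at distance $\gtrsim|j-k|$, the phase is non-stationary with gradient bounded below by $c\,|j-k|$ on the relevant (compact) range of integration; repeated integration by parts then produces the claimed decay, the amplitude being a product of $\mathscr{L}(\mathscr{A},\mathscr{B})$-valued factors estimated exactly as in the scalar case.

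Finally I would invoke the Cotlar--Stein lemma, which holds by the same proof for families $(T_j)$ of bounded operators between two fixed Hilbert spaces (the $m$-fold products $SS^{*}SS^{*}\cdots$ simply alternate between $L^2(\mathscr A)$ and $L^2(\mathscr B)$): choosing $N>4d$ in (ii) makes $\sup_j\sum_k\|T_j^{*}T_k\|^{1/2}$ and $\sup_j\sum_k\|T_jT_k^{*}\|^{1/2}$ finite, so $\sum_j T_j$ converges strongly to a bounded operator of norm $\le C$. On $\mathscr{S}(\R^d,\mathscr{A})$ this sum coincides with ${\rm op}_1(\tilde p)$ — the tail symbol $\sum_{|j|>M}\chi_0(\cdot-j)\,\tilde p$ is bounded in $S^0$ and vanishes on $B(0,M-R)$, so by the kernel estimates of ingredient (i) its quantization tends to $0$ on any fixed Schwartz function — hence the densely defined operator ${\rm op}_1(\tilde p)$ extends to a bounded operator with the asserted norm; undoing the dilation $U_h$ yields the statement for $p^w$, with a bound uniform in $h$. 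The main work is ingredient (ii): checking that the scalar non-stationary phase estimates go through with operator-valued amplitudes, which they do for the reasons noted above, while uniformity in $h$ is already secured by the initial rescaling.
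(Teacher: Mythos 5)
Your proof is correct and follows exactly the route the paper intends: the paper gives no proof of this proposition but refers to the scalar Calder\'on--Vaillancourt argument (rescaling to $h=1$, phase-space block decomposition, kernel estimates, Cotlar--Stein), to be adapted verbatim by replacing absolute values with $\mathscr{L}(\mathscr{A},\mathscr{B})$-norms, which is precisely what you do. The adaptation is legitimate since Schur's test, non-stationary phase, and Cotlar--Stein all carry over to operators between the Bochner spaces $L^2(\R^d,\mathscr{A})$ and $L^2(\R^d,\mathscr{B})$, as you note.
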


We also have the usual composition stability and formulae of pseudodifferential calculus. Let $\mathscr{C}_X,\,X\in\R^{2d}$ be a third family of Hilbert spaces.

\begin{proposition}[Symbolic calculus]\label{prop:symb_cal}
    Consider 
    \[
    p \in S¨0(\R^{2d},\mathscr{L}(\mathscr{B}_X,\mathscr{C}_X)\;\;\mbox{and}\;\;q \in S^0(\R^{2d},\mathscr{L}(\mathscr{A}_X,\mathscr{B}_X)).
    \]
    Then $p^w\circ q^w= r^w_h$, where $r \in S^{0}(\R^{2d},\mathscr{L}(\mathscr{A}_X,\mathscr{C}_X))$ is given by 
    \begin{equation}
        r = \left.{\rm exp}\left(\frac{ih}{2}\sigma(D_x,D_{\xi};D_y,D_{\eta})\right)(p(x,\xi)q(y,\eta))\right|_{x=y,\xi = \eta},
    \end{equation}
    where $\sigma$ is the usual symplectic form.
\end{proposition}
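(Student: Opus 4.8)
The plan is to follow the scalar argument of \cite{Zwobook} essentially verbatim, isolating the two structural facts that replace scalar multiplication. The first is that the composition $\mathscr{L}(\mathscr{B}_X,\mathscr{C}_X)\times\mathscr{L}(\mathscr{A}_X,\mathscr{B}_X)\to\mathscr{L}(\mathscr{A}_X,\mathscr{C}_X)$, $(P,Q)\mapsto P\circ Q$, is bounded bilinear with norm $\le 1$, uniformly in $X$. The second is that, by the characterization of $S^0$ recalled above, it is enough to prove the identity and all symbol bounds after testing: for fixed $a\in\mathscr{A}$, $c\in\mathscr{C}$, the quantity $\langle r(x,\xi)a,c\rangle_{\mathscr{C}}$ reduces to a \emph{scalar} oscillatory integral, to which the scalar theory applies, with constants controlled by $\|a\|_{\mathscr{A}_X}\|c\|_{\mathscr{C}_X}$. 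Everywhere, the polynomial comparability (ii) of the families $\mathscr{A}_X$, $\mathscr{B}_X$, $\mathscr{C}_X$ is what allows one to move from the base points $(y,\eta)$, $(z,\zeta)$ occurring in the integrals to the base point $(x,\xi)$ at which the symbol class is measured.

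Concretely, I would first assume $p$ and $q$ are Schwartz in $X$ with values in the relevant operator spaces, so that all integrals below converge as Bochner (equivalently, weak) integrals. Writing $p^w\circ q^w u$ out of \eqref{def:pseudo} as an iterated integral --- legitimate for $u\in\mathscr{S}(\R^d,\mathscr{A})$ by Fubini, since the integrand is absolutely Bochner integrable thanks to the operator-norm bounds on $p,q$ --- and performing the usual affine change of variables in the phase, one gets $p^w\circ q^w=r^w$ with $r$ an oscillatory integral of the form
\begin{equation*}
    r(x,\xi)=\frac{1}{(\pi h)^{2d}}\int_{\R^{4d}} e^{\frac{2i}{h}\,\sigma\big((y,\eta)-(x,\xi),\,(z,\zeta)-(x,\xi)\big)}\,p(y,\eta)\,q(z,\zeta)\,dy\,d\eta\,dz\,d\zeta,
\end{equation*}
the product $p(y,\eta)q(z,\zeta)$ taken in $\mathscr{L}(\mathscr{A},\mathscr{C})$. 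The phase is a nondegenerate quadratic form whose unique critical point is $(y,\eta)=(z,\zeta)=(x,\xi)$, so the stationary phase lemma --- applied, after testing against $a$ and $c$, in its scalar form --- yields
\begin{equation*}
    r\sim\exp\!\Big(\frac{ih}{2}\,\sigma(D_x,D_\xi;D_y,D_\eta)\Big)\big(p(x,\xi)q(y,\eta)\big)\Big|_{x=y,\,\xi=\eta}
\end{equation*}
with remainder $\O(h^\infty)$ in $\mathscr{L}(\mathscr{A}_X,\mathscr{C}_X)$, uniformly in $X$, controlled by finitely many symbol seminorms of $p$ and $q$; this is what the formula in the statement means.

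Then I would check $r\in S^0(\R^{2d},\mathscr{L}(\mathscr{A}_X,\mathscr{C}_X))$, and likewise for each term $r_j$ of the expansion. For the $r_j$ this is immediate, as they are finite sums of products $\partial^\alpha p(X)\,\partial^\beta q(X)$. For $r$ one differentiates under the integral sign: $\partial_X^\alpha$ either hits the amplitude, producing again products of derivatives of $p$ and $q$ bounded by symbol seminorms, or the phase, producing polynomial factors in the integration variables that are absorbed --- exactly as in the scalar case --- by integrations by parts in $(y,\eta,z,\zeta)$; property (ii) turns the ensuing $\langle(y,\eta)-(x,\xi)\rangle^{N_0}$- and $\langle(z,\zeta)-(x,\xi)\rangle^{N_0}$-losses into a fixed power, harmless after truncating the expansion. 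The general case then follows by density and continuity: $(p,q)\mapsto r$ and $(p,q)\mapsto p^w\circ q^w$ are continuous for the symbol seminorms on bounded sets (the latter by the Calder\'on--Vaillancourt estimate), and any $p\in S^0$ is a seminorm-bounded limit of the Schwartz-in-$X$ symbols $\chi(\eps X)p(X)$, $\chi\in C_c^\infty(\R^{2d})$, $\chi(0)=1$, so the identity $p^w\circ q^w=r^w$ and the asymptotic expansion pass to the limit.

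The main obstacle is the bookkeeping in the last two paragraphs: making the oscillatory integrals and the stationary-phase expansion rigorous for operator-valued amplitudes on all of $\R^{2d}$ --- justifying the repeated integrations by parts, the interchanges of limits, and the passage to Bochner (or weak) integrals --- while keeping every bound uniform in the base point $X$. This is exactly where property (ii) and the reduction-by-testing of the characterization proposition do the work, and it is the only genuine departure from the scalar proof recalled in \cite{Zwobook}.
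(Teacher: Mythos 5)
Your proposal is correct in outline and coincides with what the paper intends: the appendix states Proposition \ref{prop:symb_cal} without proof, deferring to the scalar argument of \cite{Zwobook} adapted to operator-valued symbols as in \cite{Ker}, and that adaptation is exactly your scheme (exact integral/stationary-phase formula, reduction to scalar estimates by testing against $a\in\mathscr{A}$, $c\in\mathscr{C}$ via the characterization proposition, with property (ii) absorbing the change of base point, then density in symbol seminorms). No essential idea is missing.
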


We recall the notion of ellipticity for operator-valued symbols. 

\begin{definition}
    \label{def:symelliptic}
	Let $p\in S^0(\R^{2d},\mathscr{L}(\mathscr{A},\mathscr{B}))$. The symbol $p$ is said to be \emph{elliptic} in $S^0(\R^{2d},\mathscr{L}(\mathscr{A},\mathscr{B}))$ if there exists $C>0$ such that, for all $X\in\R^{2d}$, $p(X)$ is invertible and 
	\begin{equation*}
		\Vert p(X)^{-1} \Vert_{\mathscr{L}(\mathscr{A},\mathscr{B})} \leq C.
	\end{equation*}
\end{definition}

The ellipticity condition on a symbol allows for the construction of a parametrix to the associated quantized operator.

\begin{theorem}
	\label{thm:appelliptic}
	Let $p$ be an elliptic symbol in $S^0(\R^{2d},\mathscr{L}(\mathscr{A},\mathscr{B}))$, then there exists $h_0>0$ such that, for all $h\in(0,h_0)$, the operator $P = p^w$ is invertible in $\mathscr{L}(L^2(\R^d,\mathscr{A}),L^2(\R^d,\mathscr{B}))$ and there exists $q\in S^0(\R^{2d},\mathscr{L}(\mathscr{B},\mathscr{A}))$
	such that 
	$	P^{-1} = q^w$,
	and $q = p^{-1} + h r$ with $r\in S^0(\R^{2d},\mathscr{L}(\mathscr{B},\mathscr{A}))$.
\end{theorem}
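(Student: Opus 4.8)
The plan is to construct a two-sided parametrix by the symbolic calculus and then upgrade it to a genuine bounded inverse via a Neumann series.

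\emph{Step 1 (the pointwise inverse is a symbol).} First I would check that $X\mapsto p(X)^{-1}$ belongs to $S^0(\R^{2d},\mathscr{L}(\mathscr{B},\mathscr{A}))$. Ellipticity gives the uniform bound $\Vert p(X)^{-1}\Vert_{\mathscr{L}(\mathscr{B},\mathscr{A})}\le C$, and differentiating the identity $p(X)p(X)^{-1}=\mathrm{Id}_{\mathscr{B}}$ produces, by the Leibniz rule and an induction on $|\alpha|$, expressions for $\partial_X^{\alpha}(p^{-1})$ as finite sums of products $p^{-1}(\partial_X^{\beta}p)(\partial_X^{\gamma}p^{-1})\cdots$, each of which is uniformly bounded in $X$ using the symbol estimates of $p$ together with the bound on $p^{-1}$.

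\emph{Step 2 (parametrix).} By the symbolic calculus (Proposition~\ref{prop:symb_cal}), $p^w\circ (p^{-1})^w$ has symbol $p\circledast p^{-1}=\mathrm{Id}_{\mathscr{B}}+h\,s_1$ with $s_1\in S^0(\R^{2d},\mathscr{L}(\mathscr{B}))$. I would then correct $p^{-1}$ recursively: looking for $q\sim\sum_{j\ge 0}h^jq_j$ with $q_0=p^{-1}$, the equation $p\circledast q\sim \mathrm{Id}_{\mathscr{B}}$ determines each $q_j$ by inverting the leading factor $p$ against the lower-order contributions of the Moyal product, and Borel summation (Proposition~\ref{prop:Borelsum}) produces an actual symbol $q\in S^0(\R^{2d},\mathscr{L}(\mathscr{B},\mathscr{A}))$ realizing this expansion. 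The standard associativity argument shows that the analogously built left parametrix coincides with $q$ modulo $\O(h^\infty)$, so that $p^w q^w=\mathrm{Id}+R$ and $q^w p^w=\mathrm{Id}+R'$, where $R=r^w$, $R'=(r')^w$ and $r,r'$ have symbols that are $\O(h^N)$ in $S^0$ for every $N$. Here one must keep track of target spaces: since $\mathscr{A}\hookrightarrow\mathscr{B}$ continuously, $p\circledast p^{-1}\in S^0(\mathscr{L}(\mathscr{B}))$ while $p^{-1}\circledast p\in S^0(\mathscr{L}(\mathscr{A}))$, and the recursion stays within these classes.

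\emph{Step 3 (genuine inverse and conclusion).} By the Calderón--Vaillancourt theorem, $\Vert R\Vert,\Vert R'\Vert_{L^2\to L^2}=\O(h^\infty)$, so for $h$ small the operators $\mathrm{Id}+R$ on $L^2(\R^d,\mathscr{B})$ and $\mathrm{Id}+R'$ on $L^2(\R^d,\mathscr{A})$ are invertible; this forces $P=p^w$ to be invertible with $P^{-1}=q^w(\mathrm{Id}+R)^{-1}=(\mathrm{Id}+R')^{-1}q^w$. It then remains to see that $P^{-1}$ is again the Weyl quantization of an $S^0$ symbol and to identify its principal part. I would do this either (i) by estimating Moyal powers directly, showing that $\sum_{j\ge 1}(-1)^j r^{\circledast j}$ converges in $S^0$ once $h$ is small (each seminorm of $r^{\circledast j}$ is dominated by a term of the form $(C_kh^{N})^j$ because all seminorms of $r$ are $\O(h^\infty)$), or (ii) by a Beals-type commutator characterization: $\mathrm{ad}_{\ell}(P^{-1})=-P^{-1}\,\mathrm{ad}_{\ell}(P)\,P^{-1}$ for every linear symbol $\ell$, with $\mathrm{ad}_{\ell}(P)$ of order $h$, and an induction on the number of commutators shows all iterated commutators of $P^{-1}$ are $\O(h^{N})$-bounded, hence $P^{-1}=q^w$ with $q\in S^0(\R^{2d},\mathscr{L}(\mathscr{B},\mathscr{A}))$. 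Once this is known, reading the identity $q\circledast p=\mathrm{Id}_{\mathscr{A}}$ at leading order gives $q_0p=\mathrm{Id}_{\mathscr{A}}$, i.e. $q_0=p^{-1}$, so $q-p^{-1}\in hS^0(\R^{2d},\mathscr{L}(\mathscr{B},\mathscr{A}))$, which is exactly the claimed decomposition $q=p^{-1}+hr$.

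The main obstacle is the last point of Step 3: the stability of the pseudodifferential class under the Neumann series, equivalently the closedness of the calculus under $L^2$-invertible inversion. Route (ii) is the cleanest and transfers essentially verbatim from the scalar treatment in \cite{Zwobook}, since the Beals estimates only use the symbolic calculus, the Calderón--Vaillancourt theorem, and the algebra structure of $\mathscr{L}(\mathscr{A})$ and $\mathscr{L}(\mathscr{B})$ — none of which requires scalar symbols — the one genuine point of vigilance being the bookkeeping of the Hilbert spaces $\mathscr{A}\hookrightarrow\mathscr{B}$ throughout.
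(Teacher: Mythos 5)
Your argument is correct and is essentially the proof the paper relies on: the appendix gives no proof of Theorem \ref{thm:appelliptic}, deferring to \cite{Zwobook} (scalar case) and \cite{Ker}, where the result is obtained exactly by your route — symbol estimates for $p^{-1}$ obtained by differentiating $p\,p^{-1}=\mathrm{Id}$, a Borel-summed two-sided parametrix, a Neumann series for $\mathrm{Id}+R$ with $\Vert R\Vert=\O(h^\infty)$ by Calder\'on--Vaillancourt, and a Beals-type characterization to conclude that $P^{-1}$ is again a Weyl quantization with symbol $p^{-1}+hr$. Your closing caveat is also the right one: the only genuinely nontrivial ingredient is the Beals theorem in the operator-valued setting (an $\O(h^\infty)$-small bounded operator is not automatically a quantization, so route (ii) is indeed needed), and its proof transfers from the scalar case as in \cite{Balazard} and \cite{Ker}, with only the bookkeeping of $\mathscr{A}\hookrightarrow\mathscr{B}$ that you mention.
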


To conclude this section, we recall the existence of a Gärding inequality for semiclassical pseudodifferential operators with operator-valued symbols.

\begin{theorem}\label{thm:garding}
    Suppose that the Hilbert space $\mathscr{A}$ is continuously embedded in $\mathscr{B}$. For a symbol $p\in S^0(\R^{2d},\mathscr{L}(\mathscr{A},\mathscr{B}))$ whose values are positive self-adjoint operators on $\mathscr{B}$ with domain $\mathscr{A}$, i.e. satisfying
    \begin{equation*}
        \forall X\in\R^{2d}, \forall a \in \mathscr{A},\quad \langle p(X)a,a\rangle_{\mathscr{B}} \geq 0, 
    \end{equation*}
    Then, there exists a constant $C>0$ and $h_0>0$ such that for $h\in (0,h_0]$, we have 
    \begin{equation*}
        \langle p^w u, u\rangle_{L^2(\R^d,\mathscr{B})} \geq - C h \|u\|_{L^2(\R^d,\mathscr{A})}^2.
    \end{equation*}
\end{theorem}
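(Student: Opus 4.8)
The plan is to run the classical Friedrichs symmetrization argument, transporting the Gaussian coherent state (anti-Wick) quantization to the operator-valued framework; I will use only the uniform boundedness of $p$ and its $X$-derivatives as maps $\mathscr{A}\to\mathscr{B}$, together with the pointwise positivity hypothesis, and I work in the setting $\mathscr{A}_X=\mathscr{A}$, $\mathscr{B}_X=\mathscr{B}$ of the statement. First I would introduce, for $X=(x,\xi)\in\R^{2d}$, the normalized Gaussian coherent state $\varphi_X^h(y)=(\pi h)^{-d/4}e^{i\xi\cdot y/h}e^{-|y-x|^2/(2h)}$ and the associated wave-packet transform $\mathcal{W}_h\colon L^2(\R^d,\mathscr{A})\to L^2(\R^{2d},\mathscr{A})$ given by $\mathcal{W}_h u(X)=(2\pi h)^{-d/2}\langle u,\varphi_X^h\rangle_{L^2(\R^d)}$, which is an isometry onto its range. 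The \emph{Friedrichs quantization} of $p$ is then $p^{\mathrm{Fr}}:=\mathcal{W}_h^{*}M_p\,\mathcal{W}_h$, where $M_p$ is multiplication by $X\mapsto p(X)$; since $\sup_X\|p(X)\|_{\mathscr{L}(\mathscr{A},\mathscr{B})}<\infty$, the operator $M_p$ is bounded from $L^2(\R^{2d},\mathscr{A})$ to $L^2(\R^{2d},\mathscr{B})$, so $p^{\mathrm{Fr}}$ is bounded from $L^2(\R^d,\mathscr{A})$ to $L^2(\R^d,\mathscr{B})$. Its positivity is immediate: for $u\in L^2(\R^d,\mathscr{A})$ one has $\mathcal{W}_h u(X)\in\mathscr{A}$ for a.e. $X$, hence
\begin{equation*}
\langle p^{\mathrm{Fr}}u,u\rangle_{L^2(\R^d,\mathscr{B})}=\int_{\R^{2d}}\langle p(X)\,\mathcal{W}_h u(X),\,\mathcal{W}_h u(X)\rangle_{\mathscr{B}}\,dX\ge0
\end{equation*}
by the hypothesis $\langle p(X)a,a\rangle_{\mathscr{B}}\ge0$ — this is the only place the positivity of $p$ is used.

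The crux of the argument will be to show that $p^w-p^{\mathrm{Fr}}=r^w$ for some $r\in h\,S^0(\R^{2d},\mathscr{L}(\mathscr{A},\mathscr{B}))$, which by the Calderón--Vaillancourt theorem for operator-valued symbols (the corresponding Proposition in Appendix \ref{sect:pseudoropvalued}) yields $\|p^w-p^{\mathrm{Fr}}\|_{L^2(\R^d,\mathscr{A})\to L^2(\R^d,\mathscr{B})}\le Ch$. I see two routes. The first is to redo the standard oscillatory-integral computation relating the anti-Wick and Weyl quantizations; this goes through verbatim because the operator value $p(X)$ is inert under all the Fourier operations in the $(y,\xi)$ variables, and it identifies $p^{\mathrm{Fr}}$ with the Weyl quantization of the Gaussian-smoothed symbol $\tilde p=p*G_h$ with $G_h(X)=(\pi h)^{-d}e^{-|X|^2/h}$; a Taylor expansion then gives $\tilde p-p=\tfrac{h}{4}\Delta_X p+h^2(\cdots)\in h\,S^0(\R^{2d},\mathscr{L}(\mathscr{A},\mathscr{B}))$, since $\Delta_X p$ and all higher $X$-derivatives are uniformly bounded $\mathscr{A}\to\mathscr{B}$. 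The second route is to estimate $\langle(p^w-p^{\mathrm{Fr}})u,u\rangle$ directly, Taylor-expanding $X\mapsto p(X)$ to second order about the centers of the coherent states and controlling the Hessian term by $\sup_X\|\partial_X^2 p(X)\|_{\mathscr{L}(\mathscr{A},\mathscr{B})}$.

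Granting this, the conclusion is routine: for $u\in L^2(\R^d,\mathscr{A})$,
\begin{align*}
\langle p^wu,u\rangle_{L^2(\R^d,\mathscr{B})}
&=\langle p^{\mathrm{Fr}}u,u\rangle_{L^2(\R^d,\mathscr{B})}+\langle(p^w-p^{\mathrm{Fr}})u,u\rangle_{L^2(\R^d,\mathscr{B})}\\
&\ge-\|(p^w-p^{\mathrm{Fr}})u\|_{L^2(\R^d,\mathscr{B})}\,\|u\|_{L^2(\R^d,\mathscr{B})}\\
&\ge-Ch\,\|u\|_{L^2(\R^d,\mathscr{A})}\,\|u\|_{L^2(\R^d,\mathscr{B})}\ge-C'h\,\|u\|_{L^2(\R^d,\mathscr{A})}^2,
\end{align*}
where the last inequality uses the continuous embedding $\mathscr{A}\hookrightarrow\mathscr{B}$.

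The hard part will be the remainder estimate of the second paragraph: making rigorous, in the operator-valued setting, either the identification of $p^{\mathrm{Fr}}$ with the Weyl quantization of a smoothed symbol or the direct second-order expansion, and in particular checking that the remainder symbol genuinely lies in $h\,S^0(\R^{2d},\mathscr{L}(\mathscr{A},\mathscr{B}))$ rather than in a worse class — which boils down to $p(X)$ commuting with all scalar phase-space operations and to second $X$-derivatives of $p$ remaining bounded $\mathscr{A}\to\mathscr{B}$. A minor additional point is that $p$ may itself depend on $h$; this is harmless since only the uniform-in-$h$ symbol seminorms enter the argument.
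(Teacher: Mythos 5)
Your argument is correct: positivity of the operator-valued Friedrichs (anti-Wick) quantization $p^{\mathrm{Fr}}=\mathcal{W}_h^{*}M_p\mathcal{W}_h$, the identification of $p^{\mathrm{Fr}}$ with the Weyl quantization of the Gaussian-smoothed symbol and the resulting $\mathscr{O}(h)$ comparison with $p^w$ in $\mathscr{L}(L^2(\R^d,\mathscr{A}),L^2(\R^d,\mathscr{B}))$ via Calder\'on--Vaillancourt, together with the embedding $\mathscr{A}\hookrightarrow\mathscr{B}$, do yield the stated inequality (the only cosmetic point being that the outer $\mathcal{W}_h^{*}$ must be the adjoint of the $\mathscr{B}$-valued wave-packet transform). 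The paper gives no proof of this theorem, stating it as background in Appendix \ref{sect:pseudoropvalued} with a pointer to the scalar case in \cite{Zwobook} adapted as in \cite{Ker}, and your Friedrichs symmetrization is precisely that standard adaptation, so you are following essentially the intended route.
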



\begin{thebibliography}{99}

\bibitem{Balazard} A. Balazard-Konlein. {\it Calcul fonctionnel pour des op\'erateurs h-admissibles \`a symbole op\'erateur et applications}. Th\`ese 3\'eme cycle, Nantes (1985).

\bibitem{BFKRV} L. Benedetto, C. Fermanian Kammerer, N. Raymond \& É. Vacelet. The superadiabatic projectors method applied to the spectral theory of magnetic operators.

\bibitem{BonRay} V. Bonnaillie-Noël \& N. Raymond. Breaking a magnetic zero locus: model operators and numerical approach. {\it Journal of Applied Mathematics and Mechanics}, 95 (2), pp.120-139, (2015).

\bibitem{BonRayHer} V. Bonnaillie-Noël, F. Hérau \& N. Raymond. Magnetic WKB Constructions. {\it Archive for Rational Mechanics and Analysis}, 221 (2), pp.817-891, (2016).

\bibitem{Boil} G. Boil \& S. Vu Ngoc. {\it Long-time dynamics of coherent states in strong magnetic fields}. {\it American Journal of Mathematics}, 143 (6): 1747-1789, (2021).

\bibitem{BornOpp} M. Born \& R. Oppenheimer. Zur Quantentheorie der Molekeln. {\it Ann. Phys.}, 84, 457–484, (1927).

\bibitem{DimSj} M. Dimassi \& J. Sjöstrand. Spectral Asymptotics in the Semi-Classical Limit. Cambridge University Press, (1999).

\bibitem{DHS} B. Dombrowksi, P. D. Hislop \& E. Soccorsi, Edge currents and eigenvalue estimates for magnetic barrier Schrödinger operators. {\it Asymptotic Analysis}, 89(3-4):331-363, (2014).

\bibitem{DR} N. Dombrowski \& N. Raymond. Semiclassical Analysis with Vanishing Magnetic Fields. {\it Journal of Spectral Theory}, European Mathematical Society, 3, 3, p.~423-464, (2013).

\bibitem{FLTRVN} R. Fahs, L. Le Treust, N. Raymond \& S. Vũ Ngoc. Boundary states of the Robin magnetic Laplacian. {\it Documenta Mathematica}, 29 (5), pp.1157-1200, (2024).

\bibitem{SH} S. Fournais \& B. Helffer. {\it Spectral methods in surface superconductivity}. Progress in Non-linear Differential Equations and their Applications, 77. Birkhäuser Boston Inc., Boston, (2010).

\bibitem{HelKor} B. Helffer \& Y. A. Kordyukov. Spectral gaps for periodic Schrödinger operators with hypersurface
magnetic wells: analysis near the bottom. {\it J. Funct. Anal.}, 257(10), 3043–3081, (2009).

\bibitem{HL} B. Helffer and M. L\'eautaud. On critical points of eigenvalues of the Montgomery family of quartic oscillators. {\it To appear in Indiana Univ. Math. J.}, (2022).

\bibitem{HeMo} B. Helffer \& A. Morame.
Magnetic Bottles in Connection with Superconductivity. {\it Journal of Functional Analysis}, Volume 185, Issue 2, 604-680, (2001).

\bibitem{HeRo1} B. Helffer \& D. Robert. Asymptotique des niveaux d’énergie pour des hamiltoniens à un degré de liberté. {\it Duke Math. J.}, 49(4):853–868, (1982).

\bibitem{HeRo} B. Helffer \& D. Robert. Puits de potentiel généralisés et asymptotique semi-classique. {\it Annales de l'I.H.P. Physique théorique}, Tome 41, no. 3, pp. 291-331, (1984).

\bibitem{HP} B. Helffer and M. Persson. Spectral properties of higher order anharmonic oscillators. {\it Journal of Mathematical }, vol. 165, no 1, p.~110-126, (2010).

\bibitem{HeSjII} B. Helffer \& J. Sjöstrand. Analyse semi-classique pour l'équation de Harper. II : comportement semi-classique près d'un rationnel. {\it Mémoires de la Société Mathématique de France}, Série 2, no. 40, (1990).

\bibitem{Kato} T. Kato. Perturbation Theory for Linear Operators. {\it Springer-Verlag}, Berlin, Heidelberg, New York, (1966).

\bibitem{Ker} P. Keraval. Formules de Weyl par réduction de dimension. Applications à des Laplaciens électro-magnétiques. PhD thesis, Université de Rennes 1, 2018.

\bibitem{MS_reduction} A. Martinez \& V. Sordoni. {\it A general reduction scheme for the time-dependent Born-
Oppenheimer approximation}. C. R. Acad. Sci. Paris. 334, 185-188 (2002).

\bibitem{Mon} R. Montgomery. Hearing the zero locus of a magnetic field. {\it Communications in Mathematical Physics}, vol. 168, no 3, p.~651-675, (1995).

\bibitem{NS_KG} G. Nenciu \& V. Sordoni. {\it Semiclassical limit for multistate Klein-Gordon systems: Almost invariant subspaces, and scattering theory}. Journal of Mathematical Physics, 45, (2001).

\bibitem{PK} X.-B. Pan \& K.-H. Kwek. Schrödinger operators with nondegenerately vanishing magnetic fields in
bounded domains. {\it Trans. Amer. Math. Soc.}, 354(10), 4201–4227, (2002).

\bibitem{Pe} A. Persson. Bounds for the discrete part of the spectrum of a semi-bounded Schrödinger operator. {\it Math. Scand.}, 8, p.~143–153. 14, 47, (1960).

\bibitem{Ray} N. Raymond. Breaking a magnetic zero locus: asymptotic analysis. {\it Mathematical Models and Methods in Applied Sciences}, 24 (14), pp.2785-2817, (2014).

\bibitem{Sjo1} J. Sjöstrand. Semi-excited states in nondegenerate potential wells. {\it Asymptotic Analysis}, vol. 6, no. 1, pp. 29-43, (1992).

\bibitem{SjoZwo} J. Sjöstrand \& M. Zworski. Elementary linear algebra for advanced spectral problems. {\it Ann.
Inst. Fourier}, 57(7):2095–2141, (2007).

\bibitem{Zwobook} M. Zworski. Semiclassical  analysis. Graduate Studies in Mathematics, 138, {\it American Mathematical Society}, Providence, RI, (2012).

\end{thebibliography}
\end{document}